\numberwithin{equation}{section}
\def\NN{\mathbb{N}}
\def\RR{\mathbb{R}}
\def\ZZ{\mathbb{Z}}
\def\TT{\mathbb{T}}
\newtheorem{theorem}{Theorem}[section]
\newtheorem{proposition}[theorem]{Proposition}
\newtheorem{corollary}[theorem]{Corollary}
\newtheorem{lemma}[theorem]{Lemma}
\theoremstyle{definition}
\newtheorem*{definition*}{Definition}
\newtheorem{definition}[theorem]{Definition}
\theoremstyle{remark}
\newtheorem{remark}[theorem]{Remark}
\newcommand{\norm}[1]{\left\|#1\right\|}
\newcommand{\abs}[1]{\left\vert#1\right\vert}
\DeclareMathOperator\Div{div}
\DeclareMathOperator\curl{curl}
\DeclareMathOperator\Id{\rm Id}
\newcommand{\id}{{\rm Id}}
\DeclareMathOperator\dist{dist}
\DeclareMathOperator\supp{supp}
\DeclareMathOperator{\SDiff}{SDiff}
\DeclareMathOperator{\SHom}{SHom}
\DeclarePairedDelimiter\floor{\lfloor}{\rfloor}  %usa automáticamente \left y \right si se llama de la forma \floor*{x}
\DeclarePairedDelimiter\ceil{\lceil}{\rceil}
\begin{document}

\title[Steady 3d Euler flows]{Steady 3d Euler flows via a topology-preserving convex integration scheme}

\author[A.~Enciso, J.~Pe\~nafiel-Tom\'as and D.~Peralta-Salas]{Alberto Enciso, Javier Pe\~nafiel-Tom\'as and Daniel Peralta-Salas}

\address{
\newline
\textbf{{\small Alberto Enciso}} \vspace{0.1cm}
\vspace{0.1cm}
\newline \indent Instituto de Ciencias Matem\'aticas, Consejo Superior de Investigaciones Cient\'ficas, 28049 Madrid, Spain}
\email{aenciso@icmat.es}

\address{
\vspace{-0.25cm}
\newline
\textbf{{\small Javier Pe\~nafiel-Tom\'as}} 
\vspace{0.1cm}
\newline \indent Instituto de Ciencias Matem\'aticas, Consejo Superior de Investigaciones Cient\'ficas, 28049 Madrid, Spain}
\email{javier.penafiel@icmat.es}

\address{
\vspace{-0.25cm}
\newline
\textbf{{\small Daniel Peralta-Salas}} \vspace{0.1cm}
\vspace{0.1cm}
\newline \indent Instituto de Ciencias Matem\'aticas, Consejo Superior de Investigaciones Cient\'ficas, 28049 Madrid, Spain}
\email{dperalta@icmat.es}

\begin{abstract}
Given any smooth solenoidal vector field \(v_0\) on \(\mathbb{T}^3\), we show the existence of infinitely many H\"older-continuous steady Euler flows \(v\) with the same topology as~$v_0$, in certain weak sense. In particular, we show that \(v\) possesses a unique flow of the highest H\"older regularity, which is conjugate to the flow of~\(v_0\) via a volume-preserving H\"older homeomorphism of \(\mathbb{T}^3\). This result extends to the case of Euler equations on toroidal domains, which has applications to the study of plasmas. The proof relies on a novel convex integration scheme incorporating the key idea that the velocity field of the subsolutions must remain diffeomorphic to~$v_0$ at each iteration step.
\end{abstract}

\maketitle

\setcounter{tocdepth}{1}
\tableofcontents

\section{Introduction}
The study of steady solutions to the incompressible 3D Euler equations,
\begin{equation}\label{E.Euler}
	v\cdot \nabla v+\nabla p=0\,,\qquad \Div v=0\,,
\end{equation}
can be traced back to Euler's original paper~\cite{Euler}. It has evolved into a major research topic in fluid mechanics, owing to the role of steady Euler flows as equilibrium states and to their central importance in the long-term dynamics of the system.

It has long been known that the space of steady states is infinite dimensional. On~$\TT^3:=(\RR/\ZZ)^3$, all known continuous steady Euler flows are generalized Beltrami fields (that is, solenoidal fields satisfying $\curl v = f v$ for some function~$f$) or exhibit a Euclidean symmetry. This mirrors the dichotomy for analytic steady Euler flows on the plane recently established by Elgindi, Huang, Said and  Xie~\cite{ElgindiHuang}, although  it is not yet clear whether this is merely coincidental. On the other hand, it is known that Beltrami fields can exhibit topologically complex stream lines~\cite{ASENS} and that they typically feature chaotic regions~\cite{FMS}. This can be used to show that Beltrami fields are typically isolated within the class of analytic steady Euler flows on~$\TT^3$, up to a finite-dimensional curl eigenspace~\cite{Willi}. Note that all these results only control the velocity field on a subset of the fluid region.

Concerning the global structure of steady Euler flows, that is, the question of whether there exists a steady state diffeomorphic to some prescribed solenoidal field~$v_0\in C^\infty(\TT^3,\RR^3)$, not much is known in general. A serious obstruction to the possible orbit structure of steady states was first obtained by Cieliebak and Volkov~\cite{CV}, who observed that a $C^1$~steady Euler flow cannot have any Reeb components. Some elaboration on their argument shows~\cite[Appendix~A]{ARMA} that the space of smooth solenoidal fields~$v_0$ which are not diffeomorphic to a $C^1$~steady Euler flow\footnote{Although \cite[Corollary~A.2]{ARMA} is stated for smooth states, the proof works verbatim for steady solutions of class~$C^1$.} is dense in $H^s(\TT^3,\RR^3)$ for any $s<\frac32$. Braided fields on a solid torus, which arise in plasma physics and cannot exhibit any Reeb components, are typically not diffeomorphic to a smooth steady Euler flow either~\cite{ARMA}.

Our objective in this paper is to show that, nevertheless, for any smooth solenoidal field~$v_0$ there exist many weak steady Euler flows ``with the same topology''. These solutions are H\"older continuous but, as a consequence of the Cieliebak--Volkov obstruction, typically they are not continuously differentiable. The proof is based on a novel convex integration scheme in which the topology of the vector field is preserved in each step of the iteration.

\subsection{Main results}

Let us elaborate on this. As we shall consider steady solutions that are not continuously differentiable, let us start by recalling that a vector field $v\in L^2(\TT^3,\RR^3)$ is a {\em (weak) steady Euler flow}\/ if it is divergence-free in the sense of distributions and if
\[
\int_{\TT^3}v_i\,v_j\, \partial_i w_j\, dx=0
\]
for any solenoidal field $w\in C^\infty(\TT^3,\RR^3)$.

Making precise the idea that the smooth field~$v_0$ and the H\"older continuous field~$v$ are topologically equivalent in some weak sense is not immediate. When one has two fields that are at least Lipschitz, one says that they are homeomorphic (or equivalent up to a homeomorphism) if there exists a homeomorphism~$\phi$ mapping the flow of one field into that of the other. However, in the class of H\"older continuous fields, particle trajectories exist but are possibly nonunique, so the field does not a priori define a unique flow. To fix ideas, let us start by recalling the precise definition of a flow\footnote{This notion of flow implies that the
	map $X$ is a regular Lagrangian flow in the sense of Ambrosio~\cite{Am} and in the stronger
	sense of Di Perna–Lions~\cite{DiPer}.}:

\begin{definition*}
	\label{def flujos}
	Let $\alpha\in(0,1)$, and let $v\in C^\alpha(\TT^3,\RR^3)$ be a vector field that is divergence-free in the sense of distributions. A {\em flow}\/ of $v$ is a map $X\in  C^1_{\rm loc}(\RR, C^0(\TT^3,\TT^3))$ such that:
	\begin{enumerate}
		\item For all $x\in \TT^3$ and all $t\in\RR$, $X$ solves the ODE $\partial_t X(x,t)=v(X(x,t))$ with initial condition $X(x,0)=x$.
		
		\item {$X$ is volume preserving, that is, 
			\(\abs{X_t(A)}=\abs{A}\)
			for any measurable set $A\subset \RR^3$ and any $t\in \RR$, with $X_t:=X(t,\cdot)$.}
		
		\item {$X$ satisfies the group property
			$X_s\circ X_t=X_{s+t}$ for all $s,t\in \RR$.
			In particular, $X_t$ is a volume-preserving homeomorphism of $\TT^3$ for all $t\in \RR$.}
	\end{enumerate} 
\end{definition*}

The way we proceed is by approximation: we will show that there is a sequence of volume-preserving diffeomorphisms $\Phi_q$ which converges to a volume-preserving H\"older homeomorphism~$\Phi$, and such that the (smooth) transformed fields $v_q:=(\Phi_q)_*v_0$ also converge to~$v$. Therefore, if $X^0_t$ denotes the flow of~$v_0$, one can then compose it with the homeomorphism~$\Phi$ to obtain a flow of~$v$,
\begin{equation}\label{E.Xt}
	X_t:=\Phi\circ X^0_t\circ \Phi^{-1}\,.
\end{equation}
This does not rule out the existence of other trajectories passing through this point. However, we will show that $X_t$ is the only flow of~$v$ in a certain H\"older class, so we can refer to $X_t$ as {\em the}\/ most regular flow of~$v$.

For each $\alpha\in(0,1)$, let us respectively denote by $\SDiff(\TT^3)$ and $\SHom^\alpha(\TT^3)$ the groups of smooth diffeomorphisms and $C^\alpha$~homeomorphisms of~$\TT^3$ which preserve volume. Our main result can then be stated as follows:

\begin{theorem}\label{T.main}
	Consider a solenoidal vector field $v_0\in C^\infty(\TT^3,\RR^3)$, whose flow we denote by~$X_t^0$, and fix constants $\tau\in(\sqrt{2/3},1)$ and~$\epsilon>0$.
	{For some $\alpha>0$,} there exist a weak steady Euler flow $v\in C^\alpha(\TT^3,\RR^3)$, a sequence of volume-preserving diffeomorphisms $\Phi_q\in \SDiff(\TT^3)$, and a volume-preserving H\"older homeomorphism $\Phi\in \SHom^\tau(\TT^3)$ such that:
	\begin{enumerate}
		\item The diffeomorphisms $\Phi_q$ converge to~$\Phi$ in~$C^{\tau}(\TT^3,\TT^3)$ as $q\to\infty$.
		\item The sequence of smooth pushed forward vector fields $v_q:=(\Phi_q)_*v_0$ converges to~$v$ in~$C^\alpha(\TT^3,\RR^3)$.
		\item $\|v-v_0\|_{H^{-1}(\TT^3)}+ \|\Phi-\id\|_{C^0(\TT^3)}<\epsilon$		\item $X_t:= \Phi\circ X^0_t\circ \Phi^{-1}$ is the only flow of~$v$ of class $C^0_{\rm loc}(\RR,C^{\tau^2}(\TT^3,\TT^3))\cap C^1_{\rm loc}(\RR,C^{0}(\TT^3,\TT^3))$.
		\item The zero sets $\{v_0=0\}$ and $\{v=0\}$ coincide.
	\end{enumerate}
	Here $\alpha$ depends on~$\tau$ but not on~$v_0$.
\end{theorem}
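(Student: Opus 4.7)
My plan is to set up a Nash-type iteration producing a sequence of smooth Euler--Reynolds subsolutions
\[\Div(v_q\otimes v_q)+\nabla p_q=\Div R_q,\qquad \Div v_q=0,\]
in which each velocity field has the special form $v_q=(\Phi_q)_*v_0$ for a smooth volume-preserving diffeomorphism $\Phi_q\in\SDiff(\TT^3)$, and the Reynolds stress $R_q$ decays to zero. The novelty is that the passage from $v_q$ to $v_{q+1}$ is implemented multiplicatively, by prescribing $\Phi_{q+1}=\Psi_{q+1}\circ\Phi_q$ for a near-identity volume-preserving diffeomorphism $\Psi_{q+1}$, so that $v_{q+1}=(\Psi_{q+1})_*v_q$. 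In this way every subsolution is tautologically diffeomorphic to $v_0$, and in particular retains its zero set and orbit structure.

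The central algebraic step is to choose $\Psi_{q+1}=\id+\xi_{q+1}$ with $\xi_{q+1}$ highly oscillatory at frequency $\lambda_{q+1}$ and amplitude $\delta_{q+1}^{1/2}/\lambda_{q+1}$, so that the change in the quadratic tensor, to leading order,
\[v_{q+1}\otimes v_{q+1}-v_q\otimes v_q \;\sim\; (D\xi_{q+1}\,v_q)\otimes(D\xi_{q+1}\,v_q),\]
matches $-R_q$ modulo a pressure gradient and a small error. A Mikado-type ansatz $\xi_{q+1}=\lambda_{q+1}^{-1}\sum_k a_k(x)\,A_k\,\cos(\lambda_{q+1}\,k\cdot x)$ with $k\cdot A_k=0$ produces the low-frequency quadratic average $\tfrac12\sum_k a_k(x)^2(k\cdot v_q)^2\,A_k\otimes A_k$; choosing finitely many pairs $(k,A_k)$ whose symmetric products span the space of symmetric $3\times 3$ tensors and modulating the amplitudes $a_k(x)$ via a partition of unity, one can match any prescribed $R_q$ in the region where $|v_q|$ is bounded below. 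Near $\{v_q=0\}$ the stress is kept small by an inductive assumption tying its pointwise size to a power of $|v_q|$, exploiting the crucial fact that the zero set is transported rigidly from step to step.

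The iteration parameters follow the standard H\"older convex-integration hierarchy $\lambda_{q+1}=\lambda_q^{\,b}$, $\delta_{q+1}=\lambda_{q+1}^{-2\beta}$, with $\beta$ chosen so that $\sum \delta_{q+1}^{1/2}\lambda_{q+1}^{\tau-1}$ converges. Interpolating the estimates $\|\Psi_{q+1}-\id\|_{C^0}\lesssim\delta_{q+1}^{1/2}\lambda_{q+1}^{-1}$ and $\|\Psi_{q+1}-\id\|_{C^1}\lesssim\delta_{q+1}^{1/2}$, the $C^\tau$ norms of the increments are summable, yielding $\Phi_q\to\Phi$ in $C^\tau$ with $\Phi\in\SHom^\tau(\TT^3)$; analogous $C^\alpha$ bounds on $v_q$ with $\alpha$ slightly smaller than $\tau$ give a limit $v\in C^\alpha$ that is a weak steady Euler flow because $\|R_q\|\to 0$. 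Conclusion (iii) holds by taking the initial perturbation small; (v) is automatic since $\{v_q=0\}=\Phi_q(\{v_0=0\})$ at every stage; and (iv) follows because $\Phi\circ X^0_t\circ\Phi^{-1}$ is the composition of two $C^\tau$ maps with a smooth flow (hence $C^{\tau^2}$), while uniqueness of flows in that regularity class is obtained via a DiPerna--Lions-style argument after conjugating by $\Phi^{-1}$ to reduce to the smooth flow of $v_0$.

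The main technical difficulty I anticipate lies in the design of the perturbation: the quadratic correction $(D\xi_{q+1}\,v_q)\otimes(D\xi_{q+1}\,v_q)$ degenerates as $|v_q|\to 0$, so $R_q$ must be carefully cut off near the zero set and the remainder decomposed into rank-one pieces adapted to $v_q$. Preserving the inductive smallness of $R_q$ near $\{v_q=0\}$ while simultaneously guaranteeing that $\Phi_{q+1}$ remains a global diffeomorphism---no self-crossings induced by the high-frequency displacement---requires a delicate gluing step in which the stress is first localized on small blocks disjoint from the zero set, and the conjugation is applied block by block. The threshold $\tau>\sqrt{2/3}$ enters through the balance between the amplitude loss caused by the $|v_q|$ degeneracy and the frequency gain needed for $C^\tau$ summability, and is expected to be essentially sharp within this scheme.
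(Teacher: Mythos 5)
Your high-level plan—realize $v_q=(\Phi_q)_*v_0$ with $\Phi_{q+1}=\Psi_{q+1}\circ\Phi_q$, cancel $R_q$ via the low-frequency part of $w\otimes w$ with $w$ oscillating along a direction $k$ orthogonal to the amplitude direction, and cut off near $\{v_0=0\}$—is the same as the paper's. But there are two genuine errors in the quantitative part, and they are precisely where the real difficulty lives.

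\textbf{The diffeomorphism estimates are wrong, and the gap is not cosmetic.} You assert $\norm{\Psi_{q+1}-\id}_{C^0}\lesssim\delta_{q+1}^{1/2}\lambda_{q+1}^{-1}$ and $\norm{\Psi_{q+1}-\id}_{C^1}\lesssim\delta_{q+1}^{1/2}$, interpolate, and declare $C^\tau$-summability for any $\tau<1$. Trace this through your own ansatz: with $\xi=\lambda^{-1}a_k A_k\cos(\lambda k\cdot x)$ the induced velocity perturbation is $w\approx -a_k(k\cdot v_q)A_k\sin(\lambda k\cdot x)$, so $a_k\sim\delta_{q+1}^{1/2}/(k\cdot v_q)$. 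On the other hand, the transport term $v_q\cdot\nabla w\approx -a_k(k\cdot v_q)^2\lambda A_k\cos$ contributes, after the anti-divergence, a term of size $a_k(k\cdot v_q)^2=\delta_{q+1}^{1/2}(k\cdot v_q)$ to $R_{q+1}$. Forcing this below $\delta_{q+2}$ requires $k\cdot v_q\lesssim\delta_{q+2}\delta_{q+1}^{-1/2}$, whence $a_k\gtrsim\delta_{q+1}/\delta_{q+2}\gg1$. So $\norm{\Psi_{q+1}-\id}_{C^1}\sim a_k$ is \emph{large}, not $\delta_{q+1}^{1/2}$—in the paper the corresponding bounds are of the form $\delta_{q+1}^{-10}\lambda_q^{-1}$ and $\delta_{q+1}^{-1280}$, with negative powers of $\delta$. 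Establishing $C^\tau$ convergence of $\Phi_q$ despite these blowing-up derivatives, while simultaneously keeping $v_{q+1}=(\Psi_{q+1})_*v_q$ uniformly bounded, is the core of the construction; your scheme as written does not confront it, and would in fact conclude (falsely) that $\Phi$ and $v$ are much more regular than they can be.

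\textbf{You misattribute the threshold $\tau>\sqrt{2/3}$.} With the corrected diffeomorphism bounds one still obtains $\Phi_q\to\Phi$ in $C^\tau$ for \emph{every} $\tau<1$, by taking $\alpha$ small enough that $\lambda_q^{-(1-\tau)}$ dominates the $\delta_{q+1}^{-c}$ losses. The constraint $\tau^2>2/3$ comes from an entirely different place: the uniqueness of the flow in the class $C^0_{\rm loc}(\RR,C^{\tau^2})\cap C^1_{\rm loc}(\RR,C^0)$. Your sketch ("DiPerna--Lions-style argument after conjugating by $\Phi^{-1}$") glosses over the real obstacle. Conjugation must be done by the smooth $\Phi_q^{-1}$, not the H\"older limit, and it produces an error term $[D\Phi_q^{-1}(v-v_q)]\circ Y_t$ whose pointwise size is \emph{huge} (again because $D\Phi_q^{-1}$ blows up). Killing it requires an $H^{-1}$-type representation of $v-v_q$ plus a counting argument: one integrates over a ball $B(x_0,r)$, uses that $Y_s$ is volume-preserving to move to the rough domain $Y_s^{-1}(B(x_0,r))$, and controls the boundary contribution by bounding the number of dyadic cubes of side $\lambda_q^{-1}$ that a $C^{\tau^2}$-image of a sphere can meet. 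That bound is $\lesssim\lambda_q^{2\tau^{-2}}$, and for it to beat the factor $\lambda_q^{-3}\cdot\delta_{q+1}^{-O(1)}$ from the crude pointwise estimate one needs $2\tau^{-2}-3<0$, i.e.\ $\tau^2>2/3$. Nothing in your balance between ``amplitude loss from the $|v_q|$ degeneracy'' and ``$C^\tau$ summability'' produces this exponent.

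Two smaller points. Your claim that (v) is ``automatic since $\{v_q=0\}=\Phi_q(\{v_0=0\})$'' needs the fact that $\Phi_q=\id$ on a neighbourhood of $\{v_0=0\}$ (so the zero set is literally fixed, not just transported) and that $v=v_0$ there, which follows from the support restriction on the perturbations, not from a generic limit argument. And a single constant direction $k$ per building block forces you to choose between $|k\cdot v_q|$ small (to tame transport) and $|k\cdot v_q|$ not too small (so the amplitude is bounded); the paper resolves this by giving the phase \emph{two} components, a large one along $k\perp v_q$ and a small one of size $\eta$ along $v_q$, which is what actually makes the denominator controllable.
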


Several remarks are in order. First, note that property~{(v)} ensures that the supports of~$v$ and~$v_0$ coincide. Therefore, this theorem immediately yields the existence of H\"older continuous steady Euler flows with compact support on~$\RR^3$ that are topologically equivalent (in the sense described above) to any solenoidal field $v_0\in C^\infty_c(\RR^3,\RR^3)$. The study of compactly supported steady Euler flows is a vibrant research topic which has attracted significant recent attention~\cite{Gavrilov,CLV,EDP,EFR,EFRS,JGS,gomez,Hamel2}.

Second, since steady Euler flows and magnetohydrostatic (MHS) equilibria are described by the same equations, this theorem turns out to have a fairly immediate interpretation in the language of plasma physics. It is related to the concept of \emph{topological accessibility} introduced by Moffatt~\cite{Mo85} (see also~\cite{Ko22}). Roughly speaking, our main theorem ensures that H\"older continuous MHS equilibria are ``topologically accessible'' from any smooth solenoidal field on the torus, with the caveat that the $L^2$ norms of the sequence of smooth fields~$v_q$ we use to obtain~$v$ are not decreasing as demanded in~\cite{Mo85}. We will soon say more about this sequence, and about the iteration scheme we use to construct it.

To conclude, an important observation is that our approach remains valid if instead of~$\TT^3$, we consider a toroidal\footnote{That is, $\partial\Omega$ is diffeomorphic to~$\TT^2$.} domain $\Omega\subset\RR^3$, provided that~$v_0$ does not vanish on~$\partial\Omega$. This has a particularly interesting application to the mathematical theory of plasmas. There, an important problem is to identify MHS equilibria (which, as mentioned above, are described by the steady Euler equations~\eqref{E.Euler} in~$\Omega$) that can effectively confine ions during a nuclear fusion reaction. To first approximation, ions move along the integral
curves of~$v$, which play the role of magnetic lines. Therefore, the most basic requirement used for confinement is that the domain~$\Omega$ should be foliated by compact surfaces that are invariant under the flow of~$v$. In physics, this foliation is typically described by the level sets of a ``good flux function'', which is a first integral of~$v$.

When~$\partial\Omega$ is a revolution torus, it is not hard to construct families of axisymmetric smooth steady Euler flows that are tangent to foliations of~$\Omega$ by nested toroidal surfaces, and each foliation becomes singular along a closed embedded circle~$\mathcal C\subset\Omega$. In 1967, Grad famously conjectured~\cite{Grad} that this should be the only possibility: on a toroidal domain, there should be no families of non-axisymmetric smooth steady solutions to~\eqref{E.Euler}, each being tangent to a (singular) foliation by closed surfaces.

By letting $v_0$ range over a family of not topologically equivalent solenoidal fields tangent to a foliation of a toroidal domain~$\Omega$ by tori, a modification of Theorem~\ref{T.main} readily gives nontrivial families of H\"older continuous Euler flows satisfying the requirements of Grad's conjecture in a weak sense. Somewhat informally, what one can prove is the following (for a precise, more general statement, see Theorem~\ref{T.dominios} in the main text):

\begin{theorem}\label{T.Grad}
	Let $\Omega\subset\RR^3$ be a smooth toroidal domain. There exist some $\alpha>0$ and families of (non-axisymmetric) weak steady Euler flows~$v\in C^\alpha(\overline{\Omega},\RR^3)$ on~$\Omega$ such that:
	\begin{enumerate}
		\item Each $v$ admits a unique most regular flow~$X_t$, as in Theorem~\ref{T.main}.
		\item For each~$v$, there exists a foliation of $\Omega$ by tori, singular along a closed embedded circle~$\mathcal C\subset\Omega$ (a periodic orbit of $v$), whose leaves are invariant under the flow~$X_t$.
	\end{enumerate}
\end{theorem}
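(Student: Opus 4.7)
The plan is to reduce Theorem~\ref{T.Grad} to the toroidal-domain analogue of Theorem~\ref{T.main}, whose existence is asserted in the paragraph preceding the statement. First, for the given smooth toroidal domain $\Omega$, I would construct a smooth solenoidal model field $v_0$ tangent to an explicit foliation of $\Omega$ by tori that degenerates along a closed embedded core circle $\mathcal{C}\subset\Omega$, and nonvanishing on $\partial\Omega$. Concretely, I pick a smooth flux function $\psi:\overline\Omega\to[0,1]$ with $\psi^{-1}(0)=\mathcal{C}$ (a Morse--Bott minimum), $\psi^{-1}(1)=\partial\Omega$, and smooth torus level sets $T_s:=\psi^{-1}(s)$ for $s\in(0,1]$; then, using angle coordinates on each $T_s$ and two smooth profiles $\omega_1,\omega_2:[0,1]\to\RR$, I build a divergence-free $v_0$ whose restriction to each $T_s$ is a linear torus flow with rotation vector $(\omega_1(s),\omega_2(s))$. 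By construction $\mathcal{C}$ is a periodic orbit, $v_0$ is tangent to every leaf $T_s$, and $v_0$ does not vanish on $\partial\Omega$.

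Next, I apply the toroidal-domain version of Theorem~\ref{T.main} to this $v_0$. This yields, for some $\alpha>0$ independent of $v_0$, a weak steady Euler flow $v\in C^\alpha(\overline\Omega,\RR^3)$, a sequence $\Phi_q\in\SDiff(\overline\Omega)$ converging in $C^\tau$ to a volume-preserving H\"older homeomorphism $\Phi\in\SHom^\tau(\overline\Omega)$, and a unique most regular flow $X_t=\Phi\circ X_t^0\circ\Phi^{-1}$ of $v$, where $X_t^0$ denotes the flow of~$v_0$. The images $\Sigma_s:=\Phi(T_s)$ are then topological tori for $s\in(0,1]$ and $\mathcal{C}':=\Phi(\mathcal{C})$ is an embedded topological circle. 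Since $X_t^0$ preserves each $T_s$ and maps $\mathcal{C}$ to itself, $X_t$ preserves each $\Sigma_s$ and~$\mathcal{C}'$; moreover $\mathcal{C}'$ remains a periodic orbit because topological conjugation sends periodic orbits to periodic orbits. The collection $\{\Sigma_s\}_{s\in[0,1]}$ is then a continuous foliation of $\overline\Omega$ singular along~$\mathcal{C}'$, because $\Phi$ is a homeomorphism of~$\overline\Omega$. This establishes items~(i) and~(ii) for a single~$v$.

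To produce the promised \emph{family} and ensure non-axisymmetry, I let the rotation-vector profile $(\omega_1,\omega_2)$ range over an infinite-dimensional set of smooth functions on~$[0,1]$, obtaining models $v_0^\omega$ and corresponding steady Euler flows $v^\omega$, each equipped with its own invariant foliation. Non-axisymmetry is easiest to guarantee by taking $\Omega$ to be a non-axisymmetric (e.g.\ knotted) toroidal domain, so that no~$v^\omega$ on~$\Omega$ can admit a rotational $S^1$-symmetry; if instead one insists on an axisymmetric~$\Omega$, one picks $v_0^\omega$ not invariant under the $S^1$-action and uses uniqueness of the most regular flow, together with the fact that conjugation by~$\Phi^\omega$ preserves topological invariants of the orbit structure, to conclude that~$v^\omega$ inherits the symmetry breaking. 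The main obstacle is not in this final topological transport step, which is essentially formal once $\Phi$ is in hand, but in the toroidal-domain adaptation of the convex integration scheme of Theorem~\ref{T.main}: one must ensure that each iterate $\Phi_q$ preserves~$\overline\Omega$, that the pushed-forward fields remain tangent to~$\partial\Omega$ (so that they are genuine divergence-free fields on~$\Omega$ satisfying the weak Euler equation), and that the limit $\Phi$ is still a volume-preserving homeomorphism of~$\overline\Omega$. Once this domain version of Theorem~\ref{T.main} is available, Theorem~\ref{T.Grad} follows at once from the transport of the model foliation by~$\Phi^\omega$.
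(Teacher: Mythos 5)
Your proposal follows the same route as the paper: construct a smooth solenoidal model field $v_0$ tangent to a singular foliation of $\Omega$ by tori, nonvanishing on $\partial\Omega$, apply a toroidal-domain analogue of Theorem~\ref{T.main}, and transport the invariant foliation through the limiting homeomorphism $\Phi$. The high-level structure is correct, but two steps would need to be sharpened to close the argument.

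First, the claim that the resulting family of steady states is genuinely non-trivial cannot rest on the vague statement that ``conjugation by $\Phi^\omega$ preserves topological invariants of the orbit structure''; you must exhibit a specific conjugacy invariant that separates the family. The paper does this concretely: writing the model field in tubular coordinates as $u_0=\rho^{-1}\bigl[h_1(r^2)\,\partial_\beta+h_2(r^2)\,\partial_\theta\bigr]$ with $h_2>0$, the Poincar\'e return map of $u_0$ on the section $\{\theta=0\}$ is $(r,\beta)\mapsto(r,\beta+h_1(r)/h_2(r))$, whose rotation number $h_1(r)/h_2(r)$ on each invariant circle is preserved under topological conjugation. Hence profiles with distinct ratios $h_1/h_2$ produce pairwise non-conjugate model fields, and the uniqueness of the most regular flow transfers this to the $v$'s. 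Your flux-function/rotation-profile construction is compatible with this, but the invariant itself has to be named and verified.

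Second, you correctly identify the toroidal-domain adaptation as the main obstacle, and you list the right compatibility requirements ($\Phi_q$ preserves $\overline\Omega$, pushed-forward fields stay tangent to $\partial\Omega$, limit $\Phi$ is a homeomorphism of $\overline\Omega$). However, these do not come for free from the torus scheme, and your proposal contains no mechanism to enforce them. The paper's key device is to construct an initial subsolution whose Reynolds stress $R_0$ vanishes on $\partial\Omega$ and is small near it: Proposition~\ref{divergencia nula con valor de frontera} shows that the tangential matrix $v_0\otimes v_0|_{\partial\Omega}$ extends to a divergence-free symmetric matrix field on $\overline\Omega$, which is then used to kill the boundary contribution of $R_0$. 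As a consequence every perturbation in the iteration is compactly supported at a positive distance from $\partial\Omega$, so $\Phi_q=\Id$ near the boundary for all $q$, and the three boundary issues you raise all resolve at once. Without this (or an equivalent) ingredient, the reduction to Theorem~\ref{T.main} remains formal.
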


To put it differently, Theorem~\ref{T.Grad} shows that Grad's conjecture is not true in the class of H\"older solutions, and that the weak steady states that one obtains from this theorem are not isolated. Grad  had already discussed that the low-regularity case should be very different. In particular, he made comments on the case of  stepped-pressure solutions, a class of bounded yet discontinuous weak steady solutions that was rigorously analyzed in~\cite{BL,Alex}. However, Theorem~\ref{T.Grad} is the first result in this context where one can actually control the topology of unforced steady plasmas.

\subsection{Strategy of the proof and overview of existing results}

The proof relies on convex integration, a technique introduced by Nash~\cite{Nash} in the study of the $C^1$ isometric embedding problem. His idea was vastly extended in Gromov's work on the h-principle~\cite{Gromov}, which has had a major impact on a variety of geometric contexts. Some years later, ideas of a similar flavour found striking applications in the calculus of variations and in elliptic regularity theory~\cite{MullerSverak}.  Since the groundbreaking work of De Lellis and Sz\'ekelyhidi~\cite{Bounded,Continuous}, convex integration has made a major impact in the study of nonuniqueness and energy dissipation for non-smooth solutions of the incompressible Euler equations, culminating in the proof of Onsager's conjecture~\cite{Isett,Onsager_final}. For detailed expositions of these and other results on various models in fluid mechanics, we refer the reader to the surveys~\cite{ VicolBAMS, LellisBAMS17} and the papers~\cite{ Colombo, VicolBook,   SQG, CCF, CheskiShy14, Cheskidov23, ChesShv, Faraco, Faraco2,NovackVicol, ShvJAMS}.

To apply convex integration methods to the problem at hand, there are two main challenges one needs to overcome:
\begin{enumerate}
	\item In the known convex integration schemes, the topology of the vector field is not preserved.
	\item {H\"older continuous volume preserving fields do not necessarily have a unique most regular flow.}
	\item The stationary Euler flows that have been constructed using convex integration so far~\cite{Choffrut} are just in~$L^\infty$.
\end{enumerate}
Among these difficulties, the {third} one is mostly technical, while the first {two are}  fundamental. Indeed, the basic strategy in a convex integration scheme for the Euler equations is to obtain the stationary state $v$ as the limit of a sequence of vector fields~$\{v_q\}_{q=0}^\infty$, constructed so that $v_{q+1}$ is obtained from~$v_q$ through the {\em addition}\/ of suitably chosen rapidly oscillating corrections:
\[
v_{q+1}:= v_q+ \text{(highly oscillatory terms)}\,.
\]
These corrections are small in~$C^0$ but large in~$C^1$, so $v_{q+1}$ will generally not be diffeomorphic to~$v_q$ even if~$v_q$ is strongly structurally stable.

Instead, the key ingredient in our scheme is a sequence of volume-preserving diffeomorphisms~$\{\phi_q\}_{q=0}^\infty$, which are indeed constructed essentially as
\[
\phi_{q+1}:= \phi_q+ \text{(highly oscillatory terms)}\,,
\]
with $\phi_0:=\id$. What makes this approach nontrivial is that the corresponding sequence of velocities is 
\[
v_{q+1}(x):=(\phi_{q+1})_*v_q(x)= (D\phi_{q+1}v_q)( \phi_{q+1}^{-1}x)\,.
\]
Thus the transformation~$\phi_q$ also appears in the argument of the oscillatory terms, so we need to analyze the {\em composition}\/ of rapidly oscillating functions. Note that the diffeomorphism which appears in the statement of the theorem can then be written as
\[
\Phi_q:=\phi_q\circ \phi_{q-1}\circ\cdots\circ\phi_1\,.
\]

As mentioned above, a second, less obvious source of complications is that we want to ensure that the weak Euler flow~$v$, which we eventually obtain as the limit of~$v_q$ as $q\to\infty$, has a unique flow in the H\"older regularity class $C^0_{\rm loc}(\RR,C^{\tau^2}(\TT^3,\TT^3))\cap C^1_{\rm loc}(\RR,C^{0}(\TT^3,\TT^3))$. Since individual trajectories can a priori be non-unique, this uniqueness statement hinges on showing that the real $\tau\in(0,1)$ can be chosen sufficiently close to~1. This is not obvious, and even less so because the H\"older exponent of the weak steady state~$v$ will be considerably lower. Achieving this high level of Hölder regularity for the flow requires controlling several additional error terms, all while ensuring that the terms arising from compositions do not cause problems.

Designing a scheme that can absorb all the resulting error terms requires several nonstandard cancellations and the consideration of multiple scales. These cancellations differ significantly from the typical ones. A simple way to see this is that, as the scheme must preserve the orbit structure of the field~$v_0$ in some weak sense, it inherently depends on the structure of the zero set~$\{v_0=0\}$. This is why the zero set of the steady Euler flow~$v$ we eventually construct coincides with that of~$v_0$.

It is interesting to look at this result on the existence of rough steady Euler flows diffeomorphic to~$v_0$ in the light of the traditional approaches to smooth solutions with prescribed topology. By a classical variational formulation for steady states (see e.g.~\cite[Section~2.A]{AK}), a necessary and sufficient condition for the existence of a smooth steady Euler flow diffeomorphic to~$v_0$ is that the restriction of the energy functional $E(v):=\int_{\TT^3}|v|^2\, dx$ to the set
\[
\mathcal O_{v_0}:=\big\{\phi_*v_0: \phi\in\SDiff(\TT^3)\big\}
\]
has a critical point. Note that $\mathcal{O}_{v_0}$ is simply the adjoint orbit of the group of volume-preserving diffeomorphisms passing through~$v_0$. While this characterization remains valid with less stringent regularity assumptions, proving the existence of (smooth or non-smooth) critical points is a very subtle question because there is a serious lack of compactness, and very few results are available. Theorem~\ref{T.main} asserts that, although for most~$v_0$ there cannot be any steady states on the orbit~$\mathcal{O}_{v_0}$ by the Cieliebak--Volkov obstruction, its closure in~$C^\alpha$ always contains infinitely many steady states (in fact, they can be taken arbitrarily close to any given field in~$\mathcal O_{v_0}$  in the $H^{-1}$ norm).

The paper is organized as follows. In~\cref{S.overview} we present the convex integration scheme that our results are based on, and state our induction hypotheses. The proof of these hypotheses, which takes most of the paper, starts in~\cref{S.construction} with the construction of the diffeomorphisms and subsolutions used in the proof. Sections~\ref{section estimates diffeo}--\ref{section prueba prop steps} are respectively devoted to estimates for the diffeomorphism, for the velocity and for the Reynolds stress. With these estimates in hand, we proceed to the proof of Theorem~\ref{T.main}, which is presented in~\cref{S.proof}. The case of toroidal domains requires  some modifications that are discussed in~\cref{S.proofGrad}. The paper concludes with an Appendix where we recall some auxiliary results that are used throughout the article.

\section{Overview of our convex integration scheme}
\label{S.overview}
In this subsection we will discuss the basic ideas of our construction. Our goal is to make this explanation as illustrative and understandable as possible, so our exposition will sometimes be imprecise or overly simplified. All of the details will be discussed in later sections. 

The proof of \cref{T.main} is achieved through an iteration scheme. At each step $q\in \NN$ we will construct a volume-preserving diffeomorphism $\phi_{q+1}$ and we will obtain a new velocity field $v_{q+1}$ as the pushforward of the previous velocity $v_q$ through this diffeomorphism, that is,
\begin{equation}
	v_{q+1}\coloneqq (D\phi_{q+1}v_q)\circ \phi_{q+1}^{-1}.
	\label{pushforward intro}
\end{equation}
Therefore, the velocity $v_q$ is the pushforward of $v_0$ through the diffeomorphism $\Phi_q\coloneqq \phi_q\circ\cdots\circ \phi_1$. Hence, the flow $X^q_t(x)$ of $v_q$ is smoothly conjugate to the flow of the initial field, that is,
\[X^q_t=\Phi_q\circ X^0_t\circ \Phi_q^{-1}.\]

The goal of this process is to make the new velocity $v_{q+1}$ closer to being a solution of the Euler equations. This is quantified by the following standard concept:
\begin{definition}
	A subsolution is a smooth triplet $(v,p,R)$ consisting of a vector field $v$, a scalar $p$ and a symmetric matrix $R$, which is usually called \textit{Reynolds stress}, satisfying
	\begin{equation}
		\label{def subsolution}
		\begin{cases}
			\Div(v\otimes v)+\nabla p=\Div R, \\
			\Div v=0.
		\end{cases}
	\end{equation}
\end{definition}
Thus, our objective is to construct a sequence of subsolutions $\{(v_q,p_q,R_q)\}_{q=0}^\infty$ that converges uniformly to certain triplet $(v,p,0)$. We see that $v$ will then be a weak solution of the Euler equations.

In traditional convex integration schemes, the new velocity is constructed through the addition of a highly oscillatory perturbation $w_{q+1}$, that is,
\[v_{q+1}=v_q+w_{q+1}.\]
Inserting this ansazt in \cref{def subsolution}, we see that the new Reynolds stress must satisfy
\begin{equation}
	\Div R_{q+1}=w_{q+1}\cdot\nabla v_q+v_q\cdot\nabla w_{q+1}+\Div\left[R_q+(p_{q+1}-p_q)\Id+w_{q+1}\otimes w_{q+1}\right].
	\label{ec nueva R}
\end{equation}
As we will see later, we can expect a smoothing effect when solving for $R_{q+1}$, so highly oscillatory terms are attenuated. Hence, even if a term is large, it will have a small contribution to the Reynolds stress as long as its size is small compared to the frequency at which it oscillates.

Since $w_{q+1}$ oscillates much faster than $v_q$, the first term in (\ref{ec nueva R}) is harmless. However, the second term is potentially problematic. We will need some cancellations to ensure that its size is much smaller than the frequency of oscillation of $w_{q+1}$. The third term in (\ref{ec nueva R}) is quadratic in $w_{q+1}$, so it will contain high-frequency terms (for which we will need some cancellations) and low-frequency terms. The correction $w_{q+1}$ is chosen so that the low-frequency modes of $w_{q+1}\otimes w_{q+1}$ partially cancel $R_q$, up to a multiple of the identity. The key is to use a decomposition of the form
\begin{equation}
	\Id-sR_q(x)=\sum_{j=1}^6 \gamma_j(x)^2\zeta_j\otimes \zeta_j
	\label{descomposición R intro}
\end{equation}
for a sufficiently small number $s>0$. See \cref{geometric lemma}.

In its simplest form, the perturbation $w_{q+1}$ will (locally) look roughly like this:
\begin{equation}
	w_{q+1}(x)\approx b(x)\,\zeta \cos(\lambda \hspace{0.5pt}k\cdot x),
	\label{approx wq+1}
\end{equation}
where $b$ is a slowly-varying amplitude, $\lambda$ is a very large number and $\zeta$ and $k$ are unitary vectors. This field actually needs a small correction to make it divergence-free. Since we want this correction to be fairly small, we need the previous expression to be almost divergence-free already. Therefore, we must impose
\begin{equation}
	k\cdot \zeta=0.
	\label{prod k zeta}
\end{equation}
It can be seen that this also provides the necessary cancellations in the quadratic term in (\ref{ec nueva R}). As one would guess, $\zeta$ is one of the vectors that appear in decomposition (\ref{descomposición R intro}) and the amplitude $b$ is chosen accordingly. This serves to cancel the lower-frequency terms. Meanwhile, to control the second term in (\ref{ec nueva R}), we also require
\begin{equation}
	\label{k almost perpendicular}
	k\cdot \frac{v_q}{\abs{v_q}}\ll 1.
\end{equation}

These are the basic ingredients of a traditional convex integration scheme adapted to the steady Euler equations. However, we want to achieve this perturbation through a diffeomorphism, as in (\ref{pushforward intro}). If we forget about the composition with $\phi_{q+1}^{-1}$ for a moment, to obtain a perturbation like (\ref{approx wq+1}) we could try the following ansatz:
\begin{equation}
	\label{approx phiq+1}
	\phi_{q+1}(x)\approx x+\frac{b(x)}{\lambda v_q(x)\cdot k}\,\zeta \sin(\lambda\hspace{0.5pt}k\cdot x).
\end{equation}
There are many difficulties with this. The most visible problem is probably the denominator. By (\ref{k almost perpendicular}), we require $k$ to be almost perpendicular to $v_q$, but it is clear that it cannot be completely so. We must ensure that
\begin{equation}
	\label{ángulo no demasiado pequeño}
	k\cdot \frac{v_q}{\abs{v_q}}\gg \frac{1}{\lambda}.
\end{equation}
Of course, controlling the angle is not enough, the zeros $\mathcal{Z}$ of $v_q$ are problematic, as well. At each step $q$ we will not perturb the velocity in a neighborhood of its zeros, that is, $\phi_{q+1}=\Id$ in that region. We will progressively perturb closer to $\mathcal{Z}$, but always avoiding it by some distance. Since we are not perturbing close to $\mathcal{Z}$, we are not correcting the Reynolds stress, so it must be very small in that region to begin with. The size of the $w_{q+1}$ and $R_{q+1}$ as we approach the zeros of the field must be handled carefully.

Another major difficulty is that $\phi_{q+1}$ oscillates very quickly. Hence, in the pushforward $v_{q+1}$ we compose two rapidly-oscillating functions, which can be quite difficult to deal with. Our approach is to use building blocks that are as simple as possible, as in (\ref{approx wq+1}) and (\ref{approx phiq+1}). We will see that, in this case, the argument of the trigonometric function is almost unchanged by the composition with $\phi_{q+1}^{-1}$. Since this is the most critical part of $D\phi_{q+1}$, this property greatly simplifies controlling the composition with $\phi_{q+1}^{-1}$. The price that we have to pay is that it is highly nontrivial to obtain the final perturbation that we need using these too simple building blocks. In contrast, this part is usually straightforward in traditional convex integration schemes.

As the reader may have noticed, with a perturbation like (\ref{approx wq+1}) we only cancel one of the terms in (\ref{descomposición R intro}). Therefore, the step to construct $v_{q+1}$ is actually divided into smaller stages in which we successively cancel each of the terms in (\ref{descomposición R intro}). The problem is that the perturbation may be much larger than the field $v_q$ at certain points, as we will see. As a result, even if the vectors $\zeta_j$ that appear in (\ref{descomposición R intro}) are chosen to make certain angle with $v_q$, they may align with the perturbed velocity in the intermediate stages. Thus, the analogues of (\ref{prod k zeta}) and (\ref{ángulo no demasiado pequeño}) would be in contradiction. Solving this, as well as other difficulties of geometrical nature, is the main challenge posed by using such simple building blocks. 

As a final consideration, notice that $D\phi_{q+1}$ is expected to be quite large because of (\ref{k almost perpendicular}). However, it remains controlled in the direction $v_q$. This explains why the pushforward $v_{q+1}$ will converge uniformly as $q\to \infty$, even though $D\phi_{q+1}$ will blow up.

\subsection{Inductive hypotheses} \label{subsect inductive hyp}
In this subsection we will describe in detail the iterative process. The notation of the different norms we use is presented in the Appendix. We begin by constructing the initial subsolution. Given the initial divergence-free field $v_0$, we define
\begin{align}
	p_0&\coloneqq -\abs{v_0}^2, \label{def p0}\\
	R_0&\coloneqq v_0\otimes v_0-\abs{v_0}^2\Id. \label{def R0}
\end{align}
It is clear that $(v_0,p_0,R_0)$ is a subsolution and it satisfies the additional property $R_0v_0=0$. Without loss of generality, after multiplying by a constant we may assume that the $C^1$~norm of these functions is bounded as
\begin{equation}
	\label{assumption tamaño v0}
	\norm{v_0}_1+\norm{R_0}_1\leq 1.
\end{equation}

The frequency of the oscillations will be controlled by a parameter $\lambda_q$ and their size will be controlled by a parameter $\delta_q$, which are given by
\begin{align}
	\lambda_{q}&\coloneqq a^{b^{q}-1}, \label{def lambdaq}\\
	\delta_q&\coloneqq\lambda_q^{-4\alpha}, \label{def deltaq}
\end{align}
where $b:=\frac{3}{2}$ and $a>1$ is a very large parameter that will be chosen later. Meanwhile, $\alpha>0$ is the parameter in the statement of \cref{T.main} and it is assumed to be sufficiently small.  In addition, it will be convenient to set
\[\beta\coloneqq 2^{-11}\]
to make our expressions more compact.

In order to have better control close to the zeros of the field $v_0$, we define the sets
\begin{align}
	\Sigma_{q}&\coloneqq \left\{x\in \TT^3:\;\abs{v_0(x)}=\frac{1}{2}\delta_q^{1/2}\right\}, \label{def Sigmaq}\\ 
	\Omega_{q}&\coloneqq \left\{x\in \TT^3:\;\abs{v_0(x)}>\frac{1}{2}\delta_q^{1/2}\right\}. \label{def Omegaq}
\end{align}
Given $b>1$, the sets $\Sigma_q$ will be smooth surfaces for almost any $a>1$, by Sard's theorem. We will later see that the specific value of $a$ is not relevant in our construction, it just needs to be sufficiently large. Since the countable intersection of full measure sets still has full measure, we may assume that $\Sigma_q$ is a smooth surface for any $q\geq 0$, although this plays no role in our scheme. In contrast, this bound will be very useful:
\begin{equation}
	\label{crecimiento R0}
	\abs{R_0(x)}\leq\frac{1}{4}\delta_q \qquad \forall x\in \TT^3\backslash \Omega_q,
\end{equation}
which follows from (\ref{def R0}) and (\ref{def Omegaq}). Next, we choose $x\in \Sigma_q$ and $y\in \Sigma_{q+1}$ and compute
\[\frac{1}{2}\delta_q^{1/2}-\frac{1}{2}\delta_{q+1}^{1/2}=\abs{v_0(x)}-\abs{v_0(y)}\leq \abs{v_0(x)-v_0(y)}\leq \norm{v_0}_1\abs{x-y}\leq \abs{x-y}.\]
Note that, if $a>1$ is sufficiently large (depending on $\alpha>0$), we have
\begin{equation}
	\label{cociente deltas}
	\frac{\delta_{q+1}^{1/2}}{\delta_q^{1/2}}=a^{-2\alpha b^q(b-1)}\leq a^{-\alpha}\leq \frac{1}{2}.
\end{equation}
Taking the infimum in $x\in \Sigma_q$ and $y\in \Sigma_{q+1}$, we conclude
\begin{equation}
	\label{distance sigmas}
	\dist(\Sigma_q,\Sigma_{q+1})\geq \frac{1}{4}\delta_q^{1/2}.
\end{equation}

Once we have introduced these preliminary definitions, we are ready to state the inductive hypotheses. The volume-preserving diffeomorphism $\Phi_q$ will satisfy the following inductive hypotheses for $q\geq 0$:
\begin{align}
	\Phi_q&= \Id \quad \text{in }\TT^3\backslash\Omega_{q+1}, \label{inductive support perturbation diffeo}\\
	\norm{\Phi_q-\Id}_0+\|\Phi_q^{-1}-\Id\|_0&\leq a^{-\beta}(1-2^{-64q}), \label{diffeo-Id}\\
	\norm{\Phi_q}_1+\|\Phi_q^{-1}\|_1&\leq \delta_{q+1}^{-1280}, \label{derivative Phiq} \\ \norm{\Phi_q}_2+\|\Phi_q^{-1}\|_2&\leq \delta_{q+1}^{-2560}\lambda_q. \label{second derivative Phiq}
\end{align}
These conditions trivially hold for $\Phi_0\coloneqq \Id$. Meanwhile, the subsolution $(v_q,p_q,R_q)$ will satisfy the following hypotheses:
\begin{align}
	(v_q,p_q,R_q)&=(v_0,p_0,R_0) \hspace{20pt} \text{in }\TT^3\backslash\Omega_{q+1},\label{inductive support perturbation}\\
	v_q\circ \Phi_q&=D\Phi_q\,v_0,  \label{inductive pushforward}\\ 
	\abs{v_q(x)}&\geq 2^{-2}\delta_{q+1}^{1/2} \qquad \forall x\in\Omega_{q+1}, \label{inductive growth} \\
	\norm{v_q}_0&\leq 1+2^{13}(1-2^{-q}), \label{uniform bound vq}\\
	\norm{v_{q}}_1&\leq 2^6\delta_{q-1}^{1/2}\lambda_{q}, \label{inductive derivative}\\ 
	\norm{R_{q}}_0&\leq \delta_{q}, \label{inductive Rq C0}\\
	\norm{R_{q}}_1&\leq \delta_{q}\lambda_{q}.  \label{inductive Rq C1}
\end{align}
If we set $\delta_{-1}\coloneqq 1$, these conditions hold for $q=0$, by (\ref{assumption tamaño v0}) and (\ref{def Omegaq}). 

We further assume that for $q>0$ we may decompose $R_q$ as
\begin{equation}
	\label{descomposición Rq}
	R_q=\rho_q R_0+S_q,
\end{equation}
where $\rho_q\in C^\infty(\TT^3,[0,1])$ is a cutoff function that vanishes in a neighborhood of $\overline{\Omega}_q$ and equals 1 in a neighborhood of $\TT^3\backslash\Omega_{q+1}$. We may assume that $\sqrt{\rho_q}$ is smooth and
\begin{equation}
	\norm{\sqrt{\rho_q}}_N\leq C_N \delta_{q}^{-N/2} \qquad \forall N\geq 0,
	\label{cotas rhoq}
\end{equation}
where the constants $C_N$ depend only on $N$. Meanwhile, the support of $S_q$ is contained in $\Omega_{q+1}$ and it satisfies
\begin{equation}
	\label{inductive decomposition error}
	\norm{S_q}_0+\lambda_q^{-1}\norm{S_q}_1\leq \delta_{q+3}.
\end{equation}
For convenience, we set $\rho_0\equiv 1$ and $S_0\equiv 0$, so that (\ref{descomposición Rq}) holds for $q=0$, too. This decomposition and the definition (\ref{def R0}) imply that, for any $q\geq 0$, we have
\begin{equation}
	\norm{R_qv_0}_0\leq \delta_{q+3}.
\end{equation}
As a result, the perturbations will be mostly perpendicular to $v_0$, which will simplify the construction. 

The following proposition (which is proved in Section~\ref{section prueba prop steps}) proves the induction hypotheses:

\begin{proposition}
	\label{prop steps}
	Let $v_0\in C^\infty(\TT^3,\RR^3)$ be a divergence-free vector field satisfying (\ref{assumption tamaño v0}). Let $\Phi_0:=\Id$ and let $p_0,R_0$ be given by (\ref{def p0}) and (\ref{def R0}). If $\alpha>0$ is sufficiently small and $a>1$ is sufficiently large (depending on $\alpha$), the following holds:
	
	Let $q\geq 0$. Suppose that for all $q'\leq q$ there is a volume-preserving diffeomorphism $\Phi_{q'}$ and a subsolution $(v_{q'},p_{q'},R_{q'})$ satisfying (\ref{inductive support perturbation diffeo})-(\ref{inductive decomposition error}). Then, there exist a volume-preserving diffeomorphism $\Phi_{q+1}$ and a subsolution $(v_{q+1},p_{q+1},R_{q+1})$ satisfying (\ref{inductive support perturbation diffeo})-(\ref{inductive decomposition error}) with $q$ replaced by $q+1$. Furthermore, we have
	\begin{align}
		\norm{v_{q+1}-v_q}_{H^{-1}}&\leq 2^6\delta_{q+1}^{3000}, \label{cota H-1 q} \\ 
		\norm{v_{q+1}-v_q}_0+\lambda_{q+1}^{-1}\norm{v_{q+1}-v_q}_1&\leq 2^{12}\delta_{q}^{1/2}, \label{conclusión 1}\\
		\norm{\Phi_{q+1}-\Phi_q}_0+\norm{\Phi_{q+1}^{-1}-\Phi_q^{-1}}_0&\leq 2^6\delta_{q+1}^{-10}\lambda_q^{-1}, \label{conclusión 2} \\
		\norm{\Phi_{q+1}-\Phi_q}_1+\norm{\Phi_{q+1}^{-1}-\Phi_q^{-1}}_1&\leq 2^6\delta_{q+1}^{-1280}. \label{conclusión 3}
	\end{align}
\end{proposition}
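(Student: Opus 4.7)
The plan is to construct $\Phi_{q+1} := \phi_{q+1}\circ\Phi_q$ and $v_{q+1} := (\phi_{q+1})_*v_q$, where the oscillatory volume-preserving diffeomorphism $\phi_{q+1}$ is itself a composition $\phi_{q+1} = \phi^{(6)}\circ\cdots\circ\phi^{(1)}$ of six simpler stages, one for each summand of a pointwise geometric decomposition $\Id - s\widetilde{R}_q = \sum_{j=1}^6 \gamma_j^2\,\zeta_j\otimes\zeta_j$ applied to a mollification $\widetilde{R}_q$ of $R_q$ at a scale $\ell$ with $\ell\lambda_q\ll 1$ (to control derivatives of the amplitudes on the support of $\rho_q$) and $\ell\lambda_{q+1}\gg 1$ (so that the mollification error is negligible at the target size $\delta_{q+4}$ for $S_{q+1}$). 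After multiplying each $\gamma_j$ by $\sqrt{\rho_q}$ and by a further cutoff supported in $\Omega_{q+2}$ that vanishes near $\Sigma_{q+1}$, one obtains the amplitude $b_j$ at stage $j$, and the ansatz for $\phi^{(j)}$ is
\[
\phi^{(j)}(x) \;\approx\; x + \frac{b_j(x)}{\lambda_{q+1}\,k_j\cdot v^{(j-1)}(x)}\,\zeta_j\sin(\lambda_{q+1}\, k_j\cdot x),
\]
up to a divergence-balancing correction making $\phi^{(j)}$ exactly volume preserving. Here $v^{(j-1)}$ denotes the pushforward of $v_q$ through $\phi^{(j-1)}\circ\cdots\circ\phi^{(1)}$ and $k_j$ is drawn from a finite family of rational directions, so that $\phi^{(j)}$ is periodic on $\TT^3$.

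The main obstacle, flagged in the overview, is the simultaneous choice of $\zeta_j$ and $k_j$. Because at intermediate stages the perturbations $W^{(1)},\dots,W^{(j-1)}$ can dominate $v_q$ in norm, the direction of $v^{(j-1)}$ may rotate substantially away from $v_q/|v_q|$, so neither the near-orthogonality (\ref{prod k zeta}) with $\zeta_j$ nor the transversality (\ref{ángulo no demasiado pequeño}) with $v^{(j-1)}$ can be enforced by a single global tilt. I would resolve this by covering the support of $b_j$ by finitely many patches on which the trajectory of intermediate directions is confined to a narrow cone, picking $k_j$ locally from the finite rational family so that both conditions hold uniformly on each patch, and gluing via a partition of unity with slowly varying amplitudes so as not to spoil the oscillation frequency. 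The geometric loss in the denominator $k_j\cdot v^{(j-1)}$ is absorbed into the factors $\delta_{q+1}^{-1280}$ and $\delta_{q+1}^{-2560}\lambda_q$ tolerated by (\ref{derivative Phiq})-(\ref{second derivative Phiq}).

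With the $\phi^{(j)}$ in hand, I would write $v_{q+1} = v_q + w_{q+1}$, where $w_{q+1}$ collects all six oscillatory contributions together with the remainders coming from composition with $\phi_{q+1}^{-1}$. Inserting this into the subsolution equation yields, up to a gradient absorbed into $p_{q+1}$,
\[
\Div R_{q+1} = \Div(R_q + w_{q+1}\otimes w_{q+1}) + v_q\cdot\nabla w_{q+1} + w_{q+1}\cdot\nabla v_q + \text{(composition errors)}.
\]
By design of the amplitudes, the low-frequency part of $w_{q+1}\otimes w_{q+1}$ cancels $\rho_q R_0$ plus the low-frequency piece of $S_q$, so only high-frequency terms and transport terms remain; these are recovered by an inverse-divergence operator which, via stationary phase, gains the decisive factor $\lambda_{q+1}^{-1}$. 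That gain, combined with the smallness of $\alpha$ and the largeness of $a$, closes (\ref{inductive Rq C0})-(\ref{inductive decomposition error}). The bounds (\ref{conclusión 1})-(\ref{conclusión 3}) follow from the explicit form of $\phi_{q+1}$ and the $C^0$ size $\norm{w_{q+1}}_0\lesssim \delta_q^{1/2}$, while (\ref{cota H-1 q}) follows from the $\lambda_{q+1}^{-1}$ gain obtained by testing the oscillatory $w_{q+1}$ against an $H^1$ function.

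The remaining properties fall out of the construction: (\ref{inductive support perturbation diffeo}) and (\ref{inductive support perturbation}) because every cutoff vanishes outside $\Omega_{q+2}$; (\ref{inductive pushforward}) from the chain rule applied to $\Phi_{q+1}=\phi_{q+1}\circ\Phi_q$ together with the inductive identity for $\Phi_q$; (\ref{inductive growth}) because $\phi_{q+1}=\Id$ in a neighborhood of $\Sigma_{q+1}$, using the separation (\ref{distance sigmas}); and the new decomposition (\ref{descomposición Rq}) by taking $\rho_{q+1}$ to equal $1$ on $\TT^3\setminus\Omega_{q+2}$ and to vanish on $\overline{\Omega}_{q+1}$, with $S_{q+1}$ collecting the high-frequency inverse-divergence remainders. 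The crucial smallness of $S_{q+1}$ at the next level hinges on the $\lambda_{q+1}^{-1}$ gain from stationary phase beating the $\delta_{q+1}^{-O(1)}$ losses coming from derivatives of $\Phi_q$, which is the ultimate reason for taking $\alpha$ sufficiently small at the outset.
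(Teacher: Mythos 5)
Your proposal correctly identifies the central difficulty — that at intermediate stages the perturbed field $v^{(j-1)}$ can dominate $v_q$ and therefore align with a later $\zeta_j$, at which point $k_j\cdot\zeta_j=0$ and $k_j\cdot v^{(j-1)}\not\approx 0$ become incompatible — but the resolution you sketch does not actually work, and it is here that your approach genuinely diverges from the paper's. You propose covering the support of $b_j$ by patches on which the intermediate directions are "confined to a narrow cone" and then choosing $k_j$ patch by patch. There is, however, nothing forcing such confinement: if you apply the geometric lemma directly to $\Id - s\widetilde R_q$ with $\widetilde R_q$ of order $\delta_q$, the amplitudes $\gamma_j$ are of order $\delta_q^{1/2}$, which on $\Omega_{q+2}$ can be far larger than $|v_q|\gtrsim \delta_{q+2}^{1/2}$. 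The intermediate field therefore rotates through essentially arbitrary directions (and does so at frequency $\lambda_{q+1}$, so no subdivision into slowly-varying patches helps), and the moment $v^{(j-1)}\parallel\zeta_j$ occurs the two conditions are in contradiction regardless of $k_j$. The patching idea addresses spatial variability of $v_q$ (which the paper also handles, via the $\sigma_m$ cutoffs and the $l$-index), but it does not address the alignment problem, which is of a different nature.

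The paper avoids this conflict by splitting the Reynolds stress according to \eqref{descomposición Rq} and \eqref{descomposición Rtilde} rather than mollifying and applying the geometric lemma to all of it. The dominant piece $\rho_q(1-\rho_{q+1})R_0$, of size $\delta_q$, satisfies $R_0 v_0=0$, so it can be decomposed using only two families of directions $\zeta_{0,m},\zeta_{1,m}$ chosen \emph{exactly perpendicular} to $v_q(\ell m)$; the inductive hypothesis \eqref{perturbation almost perpendicular} then guarantees that the component of $v_J$ along $v_q$ stays close to $1$, so $|v_J\times\zeta_{i,m}|$ remains bounded below (Lemma~\ref{lema producto con zeta}) even though $v_J$ itself swings wildly. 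Only the tiny remainder $S_q$, of size $\delta_{q+3}$ by \eqref{inductive decomposition error}, is fed to the general geometric lemma whose $\zeta_j$ make a $45^\circ$ angle with $v_{J_2}$; the perturbations it generates have amplitude $\sim\delta_{q+3}^{1/2}\ll|v_q|$, so these \emph{do} stay in a narrow cone, but this is a consequence of the smallness of $S_q$, not something one can impose by patching. In short: the gap is that your proof does not exploit $R_0 v_0=0$, and without this (or some substitute) the angle conditions cannot be made compatible through the iteration. A secondary, related omission is that you do not track anything like \eqref{perturbation almost perpendicular}, which is precisely the inductive quantity the paper uses to control the geometry across the $64$ sub-stages.
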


Each of these steps will be achieved in a series of stages. To simplify the notation, we introduce the following convention:

\begin{definition}
	\label{indices convention}
	We define the space of indices
	\[\mathcal{J}\coloneqq \NN\times \{0, \dots, 7\}\times \{0, \dots, 7\},\]
	where we consider $0\in\NN$. We equip $\mathcal{J}$ with the lexicographic order and we denote by $J+1$ the element that follows $J\in\mathcal{J}$ in this order. Given $J=(q,j,l)\in\mathcal{J}$, we define
	\begin{align}
		q^\ast(J)&\coloneqq \begin{cases}
			q-1 \qquad \text{if }j=l=0, \\ q \hspace{37pt} \text{otherwise,} 	\end{cases} \label{def q*}\\
		\abs{J}&\coloneqq q+\frac{1}{8}j+\frac{1}{64}l,
		\label{def abs J} \\ s(J)&\coloneqq 1280+80j+10l, \label{def sJ} \\
		M(J)&\coloneqq 1+2^{13}(1-2^{-q})+(8j+l)2^{6-q}. \label{def M(J)}
	\end{align}
	We also denote by $q(J), j(J), l(J)$ the projection onto the first, second and third coordinates, respectively.
\end{definition}

To prove \cref{prop steps}, we will go through a series of intermediate stages, starting with $(q,0,0)$ and reaching $(q+1,0,0)$. In each stage $J\in\mathcal{J}$, the perturbation will oscillate with frequency
\begin{equation}
	\lambda_{J}\coloneqq a^{b^{\abs{J}}-1}. \label{def lambdaJ}
\end{equation}
The volume-preserving diffeomorphism $\Phi_{J}$ will satisfy the following inductive hypotheses:
\begin{align}
	\Phi_J&= \Id \quad \text{in }\TT^3\backslash\Omega_{q(J)+2}, \label{support perturbation diffeo J}\\
	\norm{\Phi_{J}-\Id}_0+\|\Phi_{J}^{-1}-\Id\|_0&\leq a^{-\beta}\left(1-2^{-64\abs{J}}\right),  \label{inductive Phiqj C0}\\
	\norm{\Phi_{J}}_1+\|\Phi_{J}^{-1}\|_1&\leq \delta_{q(J)+1}^{-s(J)}, \label{inductive Phiqj C1} \\
	\norm{\Phi_{J}}_2+\|\Phi_{J}^{-1}\|_2&\leq \delta_{q(J)+1}^{-2s(J)}\lambda_{J+1}. \label{inductive Phiqj C2}
\end{align}
Meanwhile, the subsolution $(v_{J},p_{J},R_{J})$ will satisfy the following hypotheses:
\begin{align}
	\label{inductive J igual fuera de omegaq+2}
	(v_{J},p_{J},R_{J})&=(v_0,p_0,R_0) \quad  \text{in }\TT^3\backslash\Omega_{q^\ast(J)+2}, \\
	v_{J}\circ \Phi_{J}&=D\Phi_{J}\,v_0, \label{inductive j pushforward} \\
	\abs{v_J(x)}&\geq 2^{-2}\delta_{q^\ast(J)+2}^{1/2} \qquad \forall x\in \Omega_{q^\ast(J)+2},
	\label{crecimiento vJ}	\\
	\norm{v_J}_0&\leq M(J), \label{inductive j cota C0}\\
	\norm{v_{J}}_1&\leq 2^6\delta_{q^\ast(J)}^{1/2}\lambda_{J}. \label{derivative vqj}
\end{align}
Furthermore, in these intermediate stages the perturbation will be almost perpendicular to $v_{(q(J),0,0)}\equiv v_{q(J)}$:
\begin{equation}
	\norm{[v_J-v_{q(J)}]\cdot \frac{v_{q(J)}}{\abs{v_{q(J)}}}}_0\leq 16[8j(J)+l(J)]\delta_{q(J)+3}^{1/2}.
	\label{perturbation almost perpendicular}
\end{equation}

Each intermediate stage is achieved combining a finite number of simpler building blocks, which are constructed in the following auxiliary proposition (whose proof is contained in Sections~\ref{S.construction}-\ref{section estimates R}): 

\begin{proposition}
	\label{prop stages}  
	Let $v_0\in C^\infty(\TT^3,\RR^3)$ be a divergence-free vector field satisfying (\ref{assumption tamaño v0}). Let $\Phi_0=\Id$ and let $p_0,R_0$ be given by (\ref{def p0}) and (\ref{def R0}). If $\alpha>0$ is sufficiently small and $a>1$ is sufficiently large (depending on $\alpha$), the following holds:
	
	Let $J\in \mathcal{J}$. Suppose that for all $J'\leq J$ there exists a volume-preserving diffeomorphism $\Phi_{J'}$ and a subsolution $(v_{J'},p_{J'},R_{J'})$ satisfying (\ref{support perturbation diffeo J})-(\ref{perturbation almost perpendicular}). Let 
	\begin{equation}
		\label{stages tamaño ell}
		1\gg\ell\geq \frac{\delta_{q(J)+4}}{2^4\delta_{q^\ast(J)}^{1/2}\lambda_J}.
	\end{equation}
	Let $Q$ be a cube of side $3\ell/2$ and let $U$ be a convex open set with smooth boundary and such that $Q\subset U\subset \Omega_{q(J)+2}$. In addition, suppose that
	\begin{equation}
		\dist(Q,\partial U)\geq \ell/8.
		\label{distance Q,U}
	\end{equation}
	Let $\zeta\in \RR^3$ be a unitary vector and let $\gamma\in C^\infty_c(Q)$ such that
	\begin{align}
		\norm{\gamma}_0&\leq \delta_{q(J)}^{1/2}, \label{cota gamma stages C0}\\
		\norm{\gamma}_N&\leq C_N\delta_{q(J)}^{1/2}\ell^{-N} \qquad \forall N\geq 0, \label{cota gamma stages CN} \\ \label{perpendicularidad o tamaño} \abs{\gamma(x)\,\zeta\cdot\frac{v_{q(J)}(x)}{\abs{v_{q(J)}(x)}}}&\leq \delta_{q(J)+3}^{1/2} \qquad \forall x\in U,\\ \label{angulo vJ enunciado prop}
		\abs{v_J(x)\times \zeta}&\geq 2^{-3}\delta_{q(J)+2}^{1/2} \qquad \forall x\in U
	\end{align}
	for some universal constants $C_N$. Then, there exist a volume-preserving diffeomorphism $\Phi_{J+1}$ and a subsolution $(v_{J+1},p_{J+1},R_{J+1})$ satisfying (\ref{support perturbation diffeo J})-(\ref{perturbation almost perpendicular}) and such that
	\begin{equation}
		\label{resultado igual fuera de U}
		(\Phi_{J+1}, v_{J+1},p_{J+1},R_{J+1})=(\Phi_{J}, v_{J},p_{J},R_{J})
	\end{equation}
	in a neighborhood of $\TT^3\backslash U$. Furthermore, we have
	\begin{align} 
		\norm{v_{J+1}-v_{J}}_0+\lambda_{J+1}^{-1}\norm{v_{J+1}-v_{J}}_1&\leq 2^{6}\delta_{q(J)}^{1/2},
		\label{cambio iteración j} \\
		\norm{\Phi_{J+1}-\Phi_J}_0+\norm{\Phi_{J+1}^{-1}-\Phi_J^{-1}}_0&\leq \delta_{q(J)+1}^{-10}\lambda_{J+1}^{-1}, \label{conclusión stages 2} \\
		\norm{\Phi_{J+1}-\Phi_J}_1+\norm{\Phi_{J+1}^{-1}-\Phi_J^{-1}}_1&\leq \delta_{q(J)+1}^{-s(J+1)}\label{conclusión stages 3}
	\end{align}
	and the error $E_{J+1}\coloneqq R_{J+1}-(R_{J}+\gamma^2\zeta\otimes\zeta)$ satisfies
	\begin{equation}
		\norm{E_{J+1}}_0+\lambda_{J+1}^{-1}\norm{E_{J+1}}_1\leq 2^{-8}\delta_{q(J)+4}.
		\label{error J+1}
	\end{equation}
	In addition, there exist $A\in C^\infty(\TT^3,\RR^3)$ and $B\in C^\infty(\TT^3,\RR^{3\times 3})$ with
	\begin{equation}
		\norm{A}_0+\lambda_{J+1}\norm{B}_0\leq \delta_{q(J)+1}^{3000}.
		\label{cotas A y B}
	\end{equation}
	and such that
	\begin{equation}
		\int_{\TT^3}f\cdot(v_{J+1}-v_J)=\int_{\TT^3}(f\cdot A+Df:B)
		\label{integral norma H-1}
	\end{equation}
	for any test-function $f\in C^\infty(\TT^3,\RR^3)$. In particular,
	\begin{equation}
		\norm{v_{J+1}-v_{J}}_{H^{-1}}\leq \delta_{q(J)+1}^{3000}. \label{cota H-1 J}
	\end{equation}
\end{proposition}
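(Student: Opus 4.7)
The plan is to construct a volume-preserving local diffeomorphism $\phi_{J+1}$ supported in $U$, and set $\Phi_{J+1} := \phi_{J+1}\circ\Phi_J$ and $v_{J+1} := (\phi_{J+1})_* v_J$. The pushforward identity (\ref{inductive j pushforward}) is then automatic from the induction hypothesis on $\Phi_J$, and (\ref{resultado igual fuera de U}) follows from the support of $\phi_{J+1}$. First I would choose a unit vector $k\in\zeta^\perp$ with $|k\cdot v_J|\sim\delta_{q(J)+3}^{1/2}$ on $U$, which is possible because (\ref{angulo vJ enunciado prop}) forces the projection of $v_J$ onto $\zeta^\perp$ to have magnitude $\gtrsim\delta_{q(J)+2}^{1/2}$, so one can pick $k$ in this plane making the appropriate small angle with the perpendicular of this projection. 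The condition $k\perp\zeta$ plays two roles: it renders the heuristic displacement in the $\zeta$ direction divergence-free to leading order, and it makes $k\cdot[\phi_{J+1}^{-1}(y)-y]$ only $O(\lambda_{J+1}^{-2})$, so the phase $k\cdot y$ is essentially invariant under composition with $\phi_{J+1}^{-1}$---which is what tames the composition of rapidly oscillating functions.

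I would take $\phi_{J+1}$ to be the time-$1$ flow of a vector field of the form
\[
W(x) = \frac{\gamma(x)}{\lambda_{J+1}\,k\cdot v_J(x)}\sin(\lambda_{J+1} k\cdot x)\,\zeta \;+\; W_1(x),
\]
with $W_1$ a corrector of size $O(\lambda_{J+1}^{-2})$ chosen so that $\Div W=0$ (the divergence of the leading piece is already $O(\lambda_{J+1}^{-1})$ because $k\cdot\zeta=0$). Liouville's theorem gives $\phi_{J+1}\in\SDiff(\TT^3)$ automatically, and the support of $W$ inside $Q$ propagates to $\phi_{J+1}=\Id$ outside a neighborhood of $Q\subset U$. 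Differentiating the pushforward and Taylor-expanding in the small displacement, the velocity perturbation satisfies
\[
w(y):=v_{J+1}(y)-v_J(y) = \gamma(y)\cos(\lambda_{J+1} k\cdot y)\,\zeta + O_{C^0}(\lambda_{J+1}^{-1}),
\]
which combined with (\ref{cota gamma stages C0}) and (\ref{perpendicularidad o tamaño}) yields (\ref{cambio iteración j}) and (\ref{perturbation almost perpendicular}). The diffeomorphism bounds (\ref{conclusión stages 2})--(\ref{conclusión stages 3}) follow by direct differentiation, each derivative costing either a factor $\lambda_{J+1}$ (on the phase) or $(k\cdot v_J)^{-1}\sim\delta_{q(J)+3}^{-1/2}$ (on the amplitude); the budget $s(J+1)=s(J)+10$ is tailored to absorb this.

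For the Reynolds stress, subtracting the subsolution equations produces
\[
\Div(R_{J+1}-R_J-w\otimes w)+\nabla(p_{J+1}-p_J)=\Div(v_J\otimes w+w\otimes v_J).
\]
The double-angle formula splits $w\otimes w$ into a low-frequency part $\tfrac12\gamma^2\zeta\otimes\zeta$ and a high-frequency remainder; absorbing the appropriate scalar multiple of $\Id$ into $p_{J+1}$ and matching the low-frequency piece to the target $\gamma^2\zeta\otimes\zeta$ (rescaling $\gamma$ if needed), what remains (the high-frequency part of $w\otimes w$, the Nash error $v_J\otimes w+w\otimes v_J$, and the Taylor residuals) all oscillates at frequency $\lambda_{J+1}$, so the antidivergence operator recorded in the Appendix gains a factor $\lambda_{J+1}^{-1}$ upon inversion. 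Hypothesis (\ref{perpendicularidad o tamaño}) is essential here: it makes the Nash error $v_J\cdot w$ of amplitude $\delta_{q(J)+3}^{1/2}$ rather than $\delta_{q(J)}^{1/2}$, so that together with the $\lambda_{J+1}^{-1}$ gain one obtains the sharp bound $\|E_{J+1}\|_0\leq 2^{-8}\delta_{q(J)+4}$. The identity (\ref{integral norma H-1}) is produced by writing $w$ as $\lambda_{J+1}^{-1}$ times an explicit derivative plus amplitude-derivative terms and integrating by parts against the test function.

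The hardest part I anticipate is maintaining the lower bound (\ref{crecimiento vJ}) and the near-perpendicularity (\ref{perturbation almost perpendicular}) through the composition, since $\gamma$ may reach $\delta_{q(J)}^{1/2}$ while $|v_J|$ can be as small as $\delta_{q^\ast(J)+2}^{1/2}$: the perturbation $w$ can dominate $v_J$ in size, so no triangle inequality suffices. Instead one must exploit the geometry, namely that $w$ points essentially along $\zeta$, which by (\ref{angulo vJ enunciado prop}) is transverse to $v_{q(J)}$, so the $v_{q(J)}$-component of $v_{J+1}$ is preserved up to the $\delta_{q(J)+3}^{1/2}$ error furnished by (\ref{perpendicularidad o tamaño}). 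Propagating this inductively through the lexicographically ordered stages $(q,j,l)$---keeping track of the accumulated $v_{q(J)}$-component, which is precisely what is bounded by $16(8j+l)\delta_{q(J)+3}^{1/2}$ in (\ref{perturbation almost perpendicular})---is the most delicate bookkeeping in the scheme, and it is what forces the geometric-lemma decomposition (\ref{descomposición R intro}) to be carried out one $\zeta_j$ at a time rather than simultaneously.
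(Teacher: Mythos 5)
Your proposal reproduces the heuristic that the paper itself spells out in Section~\ref{S.overview} around~(\ref{approx phiq+1}), namely a displacement along~$\zeta$ of amplitude proportional to $\gamma/(\lambda_{J+1}\,k\cdot v_J)$. The paper explicitly warns that this ansatz is only a starting point, and indeed the concrete construction in Section~\ref{S.construction} deviates from it in a way your sketch does not, precisely because the naive version cannot close the Reynolds-stress estimate.

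The gap is in the treatment of the Nash error $v_J\cdot\nabla w_0$. With a single globally chosen $k\in\zeta^{\perp}$ satisfying $|k\cdot v_J|\sim\delta_{q+3}^{1/2}$ on~$U$, the dominant oscillatory piece of $(v_J\cdot\nabla)w_0$ has amplitude $\gamma\,\lambda_{J+1}\,|k\cdot v_J|\sim\delta_{q}^{1/2}\lambda_{J+1}\delta_{q+3}^{1/2}$; the antidivergence operator of~\cref{invertir divergencia matrices} gains a factor of order $\lambda_{J+1}^{-1+\alpha}$, leaving a contribution to $E_{J+1}$ of order $\delta_{q}^{1/2}\delta_{q+3}^{1/2}\lambda_{J+1}^{\alpha}$. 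But
\[
\log_a\frac{\delta_q^{1/2}\delta_{q+3}^{1/2}}{\delta_{q+4}}=2\alpha\, b^{q}\bigl(2b^{4}-b^{3}-1\bigr)=2\alpha\, b^{q}\cdot\tfrac{23}{4}\;>\;0\,,
\]
so $\delta_q^{1/2}\delta_{q+3}^{1/2}\gg\delta_{q+4}$ and the target~(\ref{error J+1}) fails by an unbounded factor. The hypothesis~(\ref{perpendicularidad o tamaño}) you invoke controls $\gamma\,\zeta\cdot v_{q(J)}$, which is a different quantity; it is used to propagate~(\ref{perturbation almost perpendicular}), not to beat the $v_J\cdot\nabla w_0$ term.

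The paper resolves this by introducing an additional microscale. It fixes a partition of unity $\{\chi_m\}$ at scale $\mu^{-1}\ll\ell$ with $\mu=\lambda_{J+1}^{1-4\beta}$, and on each patch chooses the wave vector $k_{J,m}$ to be \emph{exactly} perpendicular to the frozen velocity $u_{J,m}:=v_J(\mu^{-1}m)$; the small nonvanishing $v_J$-projection needed in the denominator is encoded instead through a separate, slow tilt $\eta\,\xi_{J,m}\cdot x$ added to the phase~(\ref{def thetam}), so the amplitude denominator is $\eta$ rather than $\lambda_{J+1}k\cdot v_J$. Then, in the Nash error, one splits $v_J=u_{J,m}+(v_J-u_{J,m})$: the $u_{J,m}$ piece pairs with $\nabla\theta_m$ without any $\lambda_{J+1}$ factor (because $u_{J,m}\cdot k_{J,m}=0$), giving~(\ref{def rho2}), while the remainder~(\ref{def M3}) is small precisely because $\|v_J\|_1\mu^{-1}$ is tiny by~(\ref{aux rel parámetros}). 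Both tricks are essential; a single $k$ on~$U$ cannot be exactly perpendicular to the varying~$v_J$, and without the separate tilt parameter~$\eta$ the perpendicularity and the bounded amplitude are in conflict, as the paper stresses at the end of Section~\ref{S.overview}. (The $l_m$-labels that prevent destructive interference between overlapping cutoffs, and the explicit construction of $\phi_0=\psi^{-1}$ via Banach fixed point plus the Dacorogna--Moser corrector $\phi_c$ of~\cref{lema dacorogna} instead of a flow of a divergence-free~$W$, are further differences, though those are more about convenience of estimates than a gap.)
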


\begin{remark}
	\label{remark igual si varias disjuntas}
	To avoid introducing even more indices, we have presented this result for a single cube $Q$, an open set $U$, a coefficient $\gamma$ and a vector $\zeta$. However, the same result holds if we have a collection $\{Q_m,U_m,\gamma_m,\zeta_m\}$, provided that the open sets $U_m$ are pairwise disjoint. Indeed, by (\ref{resultado igual fuera de U}) we can simply glue the tuples $(\Phi_{J+1}, v_{J+1},p_{J+1},R_{J+1})_m$ to obtain a new tuple $(\Phi_{J+1}, v_{J+1},p_{J+1},R_{J+1})$ satisfying the inductive hypotheses (\ref{support perturbation diffeo J})-(\ref{inductive J igual fuera de omegaq+2}) and (\ref{cambio iteración j})-(\ref{error J+1}).
\end{remark}

\section{Construction of the diffeomorphism and the new subsolution}
\label{S.construction}
We begin the proof of \cref{prop stages} by defining the diffeomorphism $\Phi_{J+1}$ and the new subsolution $(v_{J+1},p_{J+1},R_{J+1})$. The new diffeomorphism $\Phi_{J+1}$ will be of the form $\Phi_{J+1}=\phi_{J+1}\circ\Phi_J$ for certain diffeomorphism $\phi_{J+1}$ that we will construct. The new velocity $v_{J+1}$ will be the pushforward of $v_J$ by $\phi_{J+1}$. In this section we will identify the main term $w_0$ of the perturbation $w_{J+1}\coloneqq v_{J+1}-v_J$ and in \cref{section estimates v} we will show that it is, indeed, the most important term. At the end of this section we will explain how to construct $p_{J+1}$ and a new Reynolds stress $R_{J+1}$ to obtain a subsolution $(v_{J+1},p_{J+1},R_{J+1})$.

Since we will be working with a fixed $J\in \mathcal{J}$, we will write $q$ and $q^\ast$ instead of $q(J)$ and $q^\ast(J)$ to simplify the notation. Keep this in mind when comparing the expressions with the ones in \cref{prop stages}.

%Some of the tools that we will need rely on Calderón-Zygmund operators, which are not bounded on $C^N$ spaces, only on Hölder spaces. To avoid this issue, we simply fix a small parameter $\beta>0$ and we work in $C^{N+\beta}$. The value of $\beta$ is not important, it just needs to be sufficiently small. It will be chosen later.

We will use $A\lesssim B$ to denote $A\leq C\hspace{0.5pt}B$ for some constant $C>0$ that is independent of the iteration index $J$ or the parameter $a$. However, the implicit constants are allowed to depend on $\alpha$. In addition, when estimating a $C^N$-norm, they may also depend on $N$. This is not an issue because we will only use a finite number of derivatives.

The goal of this notation is to focus on the dependence on the iteration parameters $\lambda_J$ and $\delta_q$. The implicit constants will be irrelevant in most inequalities because we will have an extra factor that can be made arbitrarily small by increasing $a$. Therefore, once we have chosen a suitable value of $\alpha$, choosing $a$ sufficiently large will compensate the implicit constants, provided that they do not depend on $a$ and that there is a finite number of them. This is ensured by excluding dependence on $J$ and by using only a finite number of derivatives.

\subsection{Definition of the diffeomorphism}
The diffeomorphism $\phi_{J+1}$ will be of the form
\[\phi_{J+1} \coloneqq  \phi_c\circ \phi_0,\]
where $\phi_0$ is the main correction term, which will yield the main perturbation $w_0$, while $\phi_c$ is very close to the identity and its only role is to ensure that $\phi_{J+1}$ is volume-preserving.

We will not define the diffeomorphism $\phi_0$ directly. Instead, it will be convenient to define an auxiliary diffeomorphism $\psi$ and then let $\phi_0\coloneqq \psi^{-1}$. The reason is that having an explicit formula for $\phi_0^{-1}$ will simplify some estimates, especially when constructing $\phi_c$. Nevertheless, both $\phi_0$ and $\phi_0^{-1}$ appear in the expression of the pushforward, so we will need certain control on both of them.

The corrections $\phi_0$ and $w_0$ will oscillate with frequency $\lambda_{J+1}$ in a direction that must be perpendicular to $\zeta$ and $v_J$ to first order. The projection of this direction onto $v_J$ must be small but nonzero; we will control it with a parameter $\eta\ll\lambda_{J+1}$. In addition, we introduce a parameter $\mu^{-1}$ that will control the size of certain cutoffs that we will use. The hierarchy of parameters is as follows:
\begin{equation}
	\mu\ll \eta \ll \lambda_{J+1}.
	\label{relación tamaño entre parámetros}
\end{equation}
They are given by
\begin{align}
	\eta&\coloneqq \delta_{q+4}\lambda_{J+1}^{1-2\alpha}, \label{def eta}\\
	\mu &\coloneqq \lambda_{J+1}^{1-4\beta}, \label{def mu}
\end{align}
where the small parameter $\beta$ was defined as $\beta=2^{-11}$. The reason behind these definitions will become clear as we progress in the proof, but the motivation is the following: we want the ratio $\lambda_{J+1}\eta^{-1}$ to be as small as possible so that $D\Phi_{q+1}$ is bounded by some power of $\delta_{q+1}$. On the other hand, we want $\lambda_{J+1}\mu^{-1}$ to be greater than some power of $\lambda_{J+1}$ (independent of $\alpha$) so that (\ref{cotas A y B}) and (\ref{integral norma H-1}) hold. This will be essential to prove uniqueness of the flow in a suitable regularity class. 

In view of these definitions, it is not evident that $\mu\ll \eta$. We will check this in \cref{lema técnico relaciones parámetros}. We will also see that we may assume that
\begin{equation}
	\ell^{-1}+\norm{v_J}_1\leq \mu,
	\label{relación mu ell}
\end{equation}
where $\ell$ is the parameter in the statement of \cref{prop stages}.

Once we have defined the various parameters that we will need, we fix a smooth cutoff function $\chi\in C^\infty_c\left(\left(-\frac{3}{4},\frac{3}{4}\right)\right)$ such that 
\[\sum_{m\in \ZZ^3}\chi(x-m)^2=1.\]
For $m\in \ZZ^3$ we define 
\begin{equation}
	\chi_m(x)\coloneqq \chi(\mu x-m).
	\label{def chim}
\end{equation}
Note that at most $8$ of the cutoffs are nonzero at any given point. Hence, we can fix coefficients 
\[l_m\in \left\{1,2^{-1}, \dots, 2^{-7}\right\}\]
so that for any two distinct $\chi_{m_1}$ and $\chi_{m_2}$ whose supports are not disjoint, we have $l_{m_1}\neq l_{m_2}$. This will later help us avoid unwanted destructive interference. Taking into account that the size of the support of these cutoffs is comparable to $\mu^{-1}\ll \ell$, it follows from (\ref{distance Q,U}) that we can find a subset $\Lambda\subset \ZZ^3$ such that
\[\begin{cases}
	\sum_{m\in\Lambda}\chi_m^2=1 \qquad\text{on }Q, \\ \supp\chi_m\subset U \hspace{26pt} \forall m\in \Lambda.
\end{cases}\]
We choose $\Lambda$ as the minimal set satisfying this property (which clearly exists). Since the support of $\gamma$ is contained in $Q$, we only need to use the cubes in $\Lambda$ and we can work in $U\subset \Omega_{q+2}$. 

%Sometimes it will be convenient to work locally and forget about periodicity, considering that we are working in an open subset of $\RR^3$. For instance, we will use Calderón-Zygmund operators in $\RR^3$ instead of their periodic counterparts. We may be a bit ambiguous, but it does not matter because we are working in a very small region $U\subset \TT^3$.

Next, for each $m\in \Lambda$ we define an orthogonal reference frame
\begin{align*}
	u_{J,m}&\coloneqq v_J(\mu^{-1}m), \\
	k_{J,m}&\coloneqq \frac{u_{J,m}\times\zeta}{\abs{u_{J,m}\times\zeta}},\\
	\xi_{J,m}&\coloneqq \frac{\zeta\times k_{J,m}}{\abs{\zeta\times k_{J,m}}},
\end{align*}
which are well-defined because of (\ref{angulo vJ enunciado prop}). With all these ingredients, we can define the auxiliary diffeomorphism
\begin{equation}
	\psi(x)\coloneqq  x-\sum_{m\in\Lambda} \frac{a_m(x)}{l_m\eta}\zeta\sin[\theta_m(x)],
\end{equation}
where
\begin{align}
	\theta_m(x)&\coloneqq l_m(\eta\,\xi_{J,m}\cdot x+\lambda_{J+1}\,k_{J,m}\cdot x), \label{def thetam} \\[1pt]
	a_m(x)&\coloneqq \frac{\sqrt{2}\;\chi_m(x)}{u_{J,m}\cdot \xi_{J,m}} \gamma(x). \label{def am}
\end{align}
To ensure that this is well-defined, we compute the denominator.
Fix $m\in\Lambda$. Since $u_{J,m}, \zeta$ and $\xi_{J,m}$ are contained in the plane perpendicular to $k_{J,m}$, we have
\begin{equation}
	\label{tamaño denominador}
	\abs{u_{J,m}\cdot \xi_{J,m}}=\abs{u_{J,m}\times\zeta} \overset{(\ref{angulo vJ enunciado prop})}{\geq} 2^{-3}\delta_{q+2}^{1/2}.
\end{equation}
In particular, the denominator in (\ref{def am}) is nonzero, so $\psi$ is well-defined. Since the support of $\gamma$ is contained in $Q$, we see that $\psi=\Id$ in a neighborhood of $\TT^3\backslash U$.

\begin{remark}
	\label{fase invariante}
	For any $m\in \Lambda$, the phase $\theta_m$ satisfies $\theta_m\circ \psi=\theta_m$ because $\zeta$ is orthogonal to $\xi_{J,m}$ and $k_{J,m}$. This will greatly simplify the construction because we avoid the difficulties of composing two rapidly oscillating functions.
\end{remark} 

We will see that the Jacobian of $\psi$ is very close to 1, which is quite remarkable, since $D\psi$ is very large. Again, orthogonality is at play here. In particular, since $\det(D\psi)$ does not vanish, we deduce that $\psi$ is a local diffeomorphism. To prove that it is a global diffeomorphism, as we claim, we will apply Banach's fixed point theorem to the map
\[T:C^0(\TT^3,\TT^3)\to C^0(\TT^3,\TT^3), \quad f\mapsto \Id+\sum_{m\in\Lambda}\frac{a_m\circ f}{l_m\eta}\zeta\sin(\theta_m).\]
The fixed point $\varphi\in C^0(\TT^3,\TT^3)$ will turn out to be the inverse of $\psi$, which makes it automatically smooth. Once we known that $\psi$ is invertible, we define
\[\phi_0\coloneqq \psi^{-1}.\]
It is not possible to obtain an explicit formula for $\phi_0$, but the identity $T\phi_0=\phi_0$ will be enough to derive all the estimates that we will need. Note that $\phi_0=\Id$ in a neighborhood of $\TT^3\backslash U$ because so does $\psi$. 

We will see that $\phi_0$ is not volume-preserving, so we need to introduce a correction $\phi_c$ in order to obtain the final diffeomorphism $\phi_{J+1}$. Let 
\begin{equation}
	\label{def fc}
	f_c\coloneqq \det(D\psi)-1.
\end{equation}
Since $\psi=\Id$ in a neighborhood of $\TT^3\backslash U$, we see that the support of $f_c$ is contained in $U$. It is easy to check that
\[\det(D \phi_c)=1+f_c \quad \Rightarrow \quad \det[D(\phi_c\circ\phi_0)]=1.\]
Note that if we tried to express $f_c$ in terms of $\phi_0$ instead of $\psi$, we would obtain a more complicated expression. This is another advantage of having an explict formula for $\phi_0^{-1}$ instead of $\phi_0$. 

To construct a diffeomorphism $\phi_c$ such that $\det(D \phi_c)=1+f_c$, we will use the following lemma, which is just a simplified version of the results in \cite{DM}: 
\begin{proposition}
	\label{lema dacorogna}
	Let $U\subset \RR^3$ be a bounded domain with smooth boundary and let $f\in C^\infty_c(U,\RR^3)$ such that
	$\int_Uf=0$. In addition, let us assume that
	\begin{align*}
		&\norm{f}_0\leq 1/2, \\
		&\norm{f}_\beta+\norm{f}_{B^{-1+\beta}_{\infty,\infty}}\norm{f}_1 \leq 1.
	\end{align*}
	Then, there exists a diffeomorphism $\phi\in C^\infty(\RR^3,\RR^3)$ such that $\phi\equiv\Id$ in a neighborhood of $\RR^3\backslash U$ and such that \[\det(D \phi)=1+f.\] Furthermore, $\phi$ may be chosen so that 
	\begin{align}
		\norm{\phi-\Id}_0&\lesssim \norm{f}_{B^{-1+\beta}_{\infty,\infty}}, \label{estimate phi dacorogna C0}\\
		\norm{\phi-\Id}_1&\lesssim \norm{f}_\beta+\norm{f}_{B^{-1+\beta}_{\infty,\infty}}\norm{f}_1,  \label{estimate phi dacorogna C1} \\
		\norm{\phi}_2&\lesssim \norm{f}_{1+\beta}+\norm{f}_\beta\norm{f}_1+\norm{f}_{B^{-1+\beta}_{\infty,\infty}}\norm{f}_2+\norm{f}_{B^{-1+\beta}_{\infty,\infty}}\norm{f}_1^2, \label{estimate phi dacorogna C2}
	\end{align}
	where the implicit constants depend on $\beta$ and on $U$.
\end{proposition}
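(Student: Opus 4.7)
The plan is to follow the Dacorogna--Moser strategy for the prescribed Jacobian equation, implemented via a fixed-point argument built around the Bogovskii right-inverse of the divergence. Writing $\phi = \Id + u$ with $\supp u \subset U$, the equation $\det(D\phi) = 1+f$ expands to $\Div u = f - \mathcal{Q}(Du)$, where $\mathcal{Q}(Du)$ collects the quadratic and cubic terms in the expansion of $\det(\Id + Du)$. The smallness hypothesis $\norm{f}_0 \leq 1/2$ keeps $\det(D\phi)$ uniformly positive throughout the iteration, which is what guarantees that the resulting map is a diffeomorphism.

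First I would invoke a Bogovskii-type operator $\mathcal{B}$ on the domain $U$ that, for each $g \in C^\infty_c(U)$ with $\int_U g = 0$, produces a smooth vector field $\mathcal{B}g$ supported in $U$ with $\Div \mathcal{B}g = g$, and which satisfies the mapping bounds $\norm{\mathcal{B}g}_{s+1} \lesssim \norm{g}_s$ and $\norm{\mathcal{B}g}_{B^{s+1}_{\infty,\infty}} \lesssim \norm{g}_{B^{s}_{\infty,\infty}}$, since it is essentially a pseudodifferential operator of order $-1$ with appropriate support control. I would then run the fixed-point iteration $T: u \mapsto \mathcal{B}(f - \mathcal{Q}(Du))$ in a small ball of $C^{1+\beta}$ functions compactly supported in $U$; the assumption $\norm{f}_\beta + \norm{f}_{B^{-1+\beta}_{\infty,\infty}}\norm{f}_1 \leq 1$ is precisely what makes $T$ a contraction, via standard paraproduct/Hölder product estimates applied to the quadratic remainder.

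From the fixed-point identity $u = \mathcal{B}(f) - \mathcal{B}(\mathcal{Q}(Du))$, the three stated estimates on $\phi - \Id$ follow by carefully distributing derivatives across the polynomial $\mathcal{Q}$. The $C^0$ bound (\ref{estimate phi dacorogna C0}) comes from $\mathcal{B}: B^{-1+\beta}_{\infty,\infty} \to B^{\beta}_{\infty,\infty} \hookrightarrow C^0$ together with the fact that $\mathcal{Q}(Du) = O(|Du|^2)$ is formally of higher order and can therefore be absorbed. The $C^1$ bound (\ref{estimate phi dacorogna C1}) requires estimating one factor of $Du$ in $C^0$ and the other in $C^\beta$, which is where the product $\norm{f}_{B^{-1+\beta}_{\infty,\infty}}\norm{f}_1$ arises. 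The $C^2$ bound (\ref{estimate phi dacorogna C2}) is analogous but with an extra derivative to distribute among the quadratic/cubic factors, producing the several product terms in the statement.

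The main technical obstacle is controlling $\mathcal{Q}(Du)$ in Besov spaces of negative smoothness: a naive approach loses too many derivatives. The structural observation that saves the argument is that $\mathcal{Q}$ has no linear part, so in each bilinear product one factor can always be absorbed by $\mathcal{B}$ (gaining a derivative) while the other is estimated in $C^\beta$ or better; this is the mechanism responsible for the mixed-norm form of both hypotheses and conclusions. Since the proposition is explicitly framed as a simplified version of results in \cite{DM}, one alternative would be to match the quantitative bounds there and cite them directly, which is in fact what I would do in practice given how technical the Besov-space product estimates become.
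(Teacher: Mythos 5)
Your proposal takes a genuinely different route from the paper. You work perturbatively: write $\phi=\Id+u$, expand $\det(\Id+Du)=1+\Div u+\mathcal{Q}(Du)$, and solve via a Banach fixed point $u=\mathcal{B}(f-\mathcal{Q}(Du))$ using a Bogovskii right-inverse $\mathcal{B}$ of the divergence. The paper instead uses the Moser flow method: it first produces $z\in C^\infty_c(U,\RR^3)$ with $\Div z=f$ and $\norm{z}_N\lesssim\norm{f}_{B^{N-1+\beta}_{\infty,\infty}}$ (their Lemma~A.6 plays exactly the role of your $\mathcal{B}$), then integrates the time-dependent ODE $\partial_t\Phi(x,t)=z(\Phi)/[1+(1-t)f(\Phi)]$, $\Phi(\cdot,0)=\Id$, and sets $\phi:=\Phi(\cdot,1)$. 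The Jacobian identity $\det(D\phi)=1+f$ then follows from an algebraic identity for the flow, cited from Dacorogna--Moser, and the quantitative bounds come from differentiating the ODE once and twice and applying Gr\"onwall. Both routes are standard and live in~\cite{DM}; the choice between them affects the bookkeeping.

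What each approach buys: the flow method handles the nonlinearity of the determinant without any smallness condition for \emph{existence}; the only role of $\norm{f}_0\le\tfrac12$ is to keep the denominator $1+(1-t)f$ bounded below, and the Gr\"onwall step $e^C-1\lesssim C$ needs only that $C:=\norm{z}_1+\norm{z}_0\norm{f}_1$ be \emph{bounded}, not small. Your fixed-point route, in contrast, needs the iterate $T:u\mapsto\mathcal{B}(f-\mathcal{Q}(Du))$ to be a genuine contraction, which requires $\norm{Du}$ to be \emph{small} in the working norm, not merely $O(1)$. Under the stated hypothesis $\norm{f}_\beta+\norm{f}_{B^{-1+\beta}_{\infty,\infty}}\norm{f}_1\le1$, the Bogovskii estimate only gives $\norm{u}_{C^{1,\beta}}\le C(\beta,U)$ with an implicit constant that may well exceed $1$, so Banach's theorem does not apply as written. (You do flag $\norm{f}_0\le\tfrac12$ as what ``keeps $\det(D\phi)$ uniformly positive throughout the iteration,'' but that controls $f$, not $Du$, and it is $Du$ that must be small for both the contraction and for $\Id+u$ to be a diffeomorphism at each step.) This is a fixable bookkeeping issue---one can rescale the hypothesis, absorb constants, or appeal directly to the quantitative statements in \cite{DM} as you suggest at the end---but it is a real gap in the argument as stated, whereas the paper's flow-based proof sidesteps it entirely. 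A smaller point: you should also verify $\int_U\mathcal{Q}(Du)\,dx=0$ at each step of the iteration so that $\mathcal{B}$ applies; this holds because $\mathcal{Q}(Du)$ is a sum of $2\times2$ and $3\times3$ minors of $Du$, hence a divergence of a compactly supported field, but it is worth saying.
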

While the construction in the previous lemma is standard, we also need to derive the appropriate bounds for the diffeomorphism. We delay the proof until \cref{section estimates diffeo}. Note that the previous lemma is stated in $\RR^3$, but we can simply work locally and forget about periodicity because our set $U$ covers a very small region of~$\TT^3$. 

The final diffeomorphism will then be $\phi_{J+1}=\phi_c\circ\phi_0$. The correction $\phi_c$ is essential to ensure that $v_{J+1}$ is divergence-free, but we will see that its effects are negligible regarding the Reynolds stress. Therefore, the most significant part of the perturbation to the velocity is determined by $\phi_0$, which we constructed to yield the necessary correction $w_0$.

The diffeomorphism linking to the original state is
\begin{equation}
	\Phi_{J+1}\coloneqq \phi_{J+1}\circ\Phi_{J}.
	\label{def Phiqj}
\end{equation}
Since $\phi_0=\phi_c=\Id$ in a neighborhood of $\TT^3\backslash U$, we see that $\Phi_{J+1}=\Phi_J$ in a neighborhood of $\TT^3\backslash U$. In particular, the inductive hypothesis (\ref{support perturbation diffeo J}) then implies that $\Phi_{J+1}=\Id$ in $\TT^3\backslash \Omega_{q+2}$.

\subsection{The new velocity field}
We define the new velocity field as the pushforward of $v_{J}$ by the diffeomorphism $\phi_{J+1}$, that is,
\begin{equation}
	v_{J+1}\coloneqq (D\phi_{J+1}\,v_{J})\circ \phi_{J+1}^{-1}.
	\label{def vqj}
\end{equation}
Note that $v_{J+1}$ is divergence-free because it is the pushforward of a divergence-free field by a volume preserving diffeomorphism. Since $\phi_{J+1}=\Id$ in a neighborhood of $\TT^3\backslash U$, we see that $v_{J+1}=v_J$ in a neighborhood of $\TT^3\backslash U$. In particular, the inductive hypothesis (\ref{inductive J igual fuera de omegaq+2}) then implies that $v_{J+1}=v_0$ in $\TT^3\backslash \Omega_{q(J)+2}$. In addition, it follows from the inductive hypothesis (\ref{inductive j pushforward}) for $v_{J}$ and the definition (\ref{def Phiqj}) that $v_{J+1}$ is the pushforward of $v_0$ by the diffeomorphism $\Phi_{J+1}$, so (\ref{inductive j pushforward}) also holds for~$J+1$.

It will be convenient to decompose the perturbation $w_{J+1}\coloneqq v_{J+1}-v_J$ as a sum $w_{J+1}=w_0+w_c$, where $w_0$ is the main correction term, while $w_c$ is much smaller. We define
\begin{equation}
	w_0(x)\coloneqq \sum_{m\in \Lambda}b_m(x)\,\zeta\cos[\theta_m(x)],
	\label{def w0}
\end{equation}
where
\begin{equation}
	b_m(x)=\sqrt{2}\chi_m(x)\gamma(x)
	\label{def bm}
\end{equation}
and $\theta_m$ was defined in (\ref{def thetam}). The remaining term is just
\begin{equation}
	w_c\coloneqq v_{J+1}-v_{J}-w_0,
\end{equation}
which will be shown to be much smaller than $w_0$. In fact, $w_c$ will turn out to be negligible regarding the Reynolds stress, as we will see.

\subsection{Definition of the new Reynolds stress}
Since we want to construct a new subsolution, we still have to define a new pressure $p_{J+1}$ and a new Reynolds stress $R_{J+1}$. We set $p_{J+1}\coloneqq p_{J}$, so we need to find a symmetric matrix~$R_{J+1}$ such that
\begin{align}
	\begin{split}
		\label{eq new Reynolds}
		\Div R_{J+1}&=\Div(v_{J+1}\otimes v_{J+1}+p_{J+1}\Id) \\
		&=\Div(R_J+w_{J+1}\otimes v_J+v_J\otimes w_{J+1}+w_{J+1}\otimes w_{J+1}).
	\end{split}
\end{align}

Let us manipulate this expression a bit. Using a well-known trigonometric formula and the fact that $\sum b_m^2=2\gamma^2$, we see that we may write
\begin{align*}
	w_0\otimes w_0=&\gamma^2\zeta\otimes \zeta+ \sum_{m\in \Lambda}\frac{1}{2}b_m^2\zeta\otimes \zeta\cos(2\theta_m)\\&+ \sum_{m\neq m'}\frac{1}{2}b_mb_{m'}\zeta\otimes \zeta[\cos(\theta_m-\theta_{m'})+\cos(\theta_m+\theta_{m'})].
\end{align*}
Let
\begin{align}
	\begin{split}
		\rho_1\coloneqq&\Div\left(\sum_{m\neq m'}\frac{1}{2}b_mb_{m'}\zeta\otimes \zeta[\cos(\theta_m-\theta_{m'})+\cos(\theta_m+\theta_{m'})]\right) \\ &+\Div\left(\sum_{m\in \Lambda}\frac{1}{2}b_m^2\zeta\otimes \zeta\cos(2\theta_m)\right).
		\label{def rho1}
	\end{split}
\end{align}

In \cref{section estimates R} we will construct a symmetric matrix $M_1$ such that $\Div M_1=\rho_1$. Hence, 
\[\Div(w_0\otimes w_0)=\Div(\gamma^2\zeta\otimes \zeta+M_1).\]
In \cref{section estimates R} we will also construct a symmetric matrix $M_2$ such that $\Div M_2=\rho_2$, where
\begin{equation}
	\label{def rho2}
	\rho_2=\Div\left(\sum_{m\in \Lambda}b_m[\zeta\otimes u_{J,m}+u_{J,m}\otimes \zeta]\cos(\theta_m)\right).
\end{equation}
Defining
\begin{equation}
	\label{def M3}
	M_3\coloneqq \sum_{m\in \Lambda}b_m[\zeta\otimes (v_J-u_{J,m})+(v_J-u_{J,m})\otimes \zeta]\cos(\theta_m),
\end{equation}
we see that
\[\Div(v_J\otimes w_0+w_0\otimes v_J)=\Div(M_2+M_3).\]

Therefore, if we define
\begin{equation}
	E_{J+1}\coloneqq w_c\otimes v_{J+1}+v_{J+1}\otimes w_c-w_c\otimes w_c+M_1+M_2+M_3,
	\label{def EJ+1}
\end{equation}
it is easy to check that the smooth symmetric matrix
\begin{equation}
	R_{J+1}\coloneqq R_J+\gamma^2\zeta\otimes\zeta+E_{J+1}
	\label{def RJ+1}
\end{equation}
satisfies (\ref{eq new Reynolds}). If the support of the matrices $M_1$ and $M_2$ is contained in $U$, we see that (\ref{resultado igual fuera de U}) will hold, because the support of $\gamma$ is contained in $U$, as well. Combining this with (\ref{inductive J igual fuera de omegaq+2}), we see that (\ref{inductive J igual fuera de omegaq+2}) will hold for $J+1$, too. 

In conclusion, we must construct symmetric matrices $M_1, M_2$ whose divergence equals $\rho_1, \rho_2$ and whose support is contained in $U$. Then, our definition of $R_{J+1}$ will ensure that $(v_{J+1},p_{J+1},R_{J+1})$ is a subsolution satisfying (\ref{inductive J igual fuera de omegaq+2}) and (\ref{resultado igual fuera de U}). After constructing $E_{J+1}$, it is crucial to guarantee that it is sufficiently small, namely that (\ref{error J+1}) holds. All this will be addressed in \cref{section estimates R}.

\section{Estimates on the diffeomorphism} \label{section estimates diffeo}
We begin by studying the relationship between the different parameters. This will allow us to simplify many expressions later on.
\begin{lemma}
	\label{lema técnico relaciones parámetros}
	If $\alpha>0$ is sufficiently small, we have
	\begin{align}
		\lambda_{J+1}^{-\beta}\leq  \delta_{q+1}^{3000}&\leq \delta_{q+4}\lambda_{J+1}^{-\alpha}, \label{aux lambda negativa}\\
		\lambda_J&\leq \delta_{q+4}\mu\lambda_{J+1}^{-\beta}, \label{aux rel parámetros} \\
		\label{relación mu eta}
		\left(\frac{\delta_{q}}{\delta_{q+2}}\right)^{1/2}\frac{\mu}{\eta}&\leq \lambda_{J+1}^{-3\beta}, \\ 
		\label{cociente de parámetros}
		\left(\frac{\delta_{q}\delta_{q^\ast}}{\delta_{q+2}}\right)^{1/2}\frac{\lambda_{J}\lambda_{J+1}}{\mu\eta}&\leq \lambda_{J+1}^{-2\beta}. 
	\end{align}
	In particular, if $a>1$ is sufficiently large, we have
	\begin{align}
		\lambda_J&\leq \delta_{q}^{1/2}\lambda_{J+1}^{1-2\beta}, \label{otra relación entre las lambdas} \\
		\norm{v_J}_1+\ell^{-1}&\leq \mu. \label{comparación ell mu}
	\end{align}
\end{lemma}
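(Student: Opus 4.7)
The plan is to take base-$a$ logarithms of each inequality, which converts every expression into an affine form in $b^{q}$, $b^{|J|}$ and $b^{|J+1|}$ with coefficients depending only on $\alpha$ and $\beta = 2^{-11}$. A direct inspection of Definition~\ref{indices convention} shows that $|J+1| = |J| + \tfrac{1}{64}$ in all three subcases ($l<7$, $l=7$ with $j<7$, and $j=l=7$), while $q(J) \leq |J| \leq q(J)+\tfrac{63}{64}$ and $q^{\ast}(J) \leq q(J)$. Consequently the ratios $b^{|J+1|}/b^{|J|}$, $b^{|J|}/b^{q}$ and $b^{q+k}/b^{|J|}$ are all pinched between absolute constants. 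I would first fix $\alpha$ small enough with respect to $\beta$ and to explicit powers of $b = 3/2$ so that the dominant terms on each side have the correct sign, and then take $a$ large enough that the lower-order $-1$ corrections (coming from $\lambda_q = a^{b^q-1}$ rather than $a^{b^q}$) are absorbed.

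For \eqref{aux lambda negativa}, after logarithms the two bounds reduce at leading order to $\beta\, b^{|J+1|} \geq 12000\alpha\, b^{q+1}$ and $12000\alpha\, b^{q+1} \geq 4\alpha\, b^{q+4} + \alpha\, b^{|J+1|}$; the first becomes a smallness condition $\alpha \lesssim \beta$ (since $b^{|J+1|-q-1} \geq b^{-63/64}$), and the second is the universal arithmetic bound $12000 \geq 4b^{3} + 1$. For \eqref{aux rel parámetros}, using $|J+1|=|J|+1/64$ and grouping terms reduces the task to
\[
(1 - 5\beta)\, b^{1/64} > 1 + O(\alpha),
\]
and numerically $b^{1/64} \approx 1.00635$ exceeds $(1-5\beta)^{-1} \approx 1.00244$, so the inequality closes for $\alpha$ small. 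Inequalities \eqref{relación mu eta} and \eqref{cociente de parámetros} are analogous log-linear manipulations; the tightest is \eqref{cociente de parámetros}, which after accounting for the contribution of $\lambda_{J}\lambda_{J+1}/(\mu\eta)$ reduces to the numerical condition $(1-6\beta)\, b^{1/64} > 1$, again valid with a positive margin.

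The corollaries \eqref{otra relación entre las lambdas} and \eqref{comparación ell mu} follow quickly. For \eqref{otra relación entre las lambdas}, chain \eqref{aux rel parámetros} with the trivial estimate $\delta_{q+4}/\delta_q^{1/2} \leq 1$ and absorb the change from $\lambda_{J+1}^{-\beta}$ to $\lambda_{J+1}^{1-2\beta}$ via a factor $\lambda_{J+1}^{3\beta}$ after $a$ is taken large. For \eqref{comparación ell mu}, substitute the inductive estimate $\|v_J\|_1 \leq 2^6\, \delta_{q^{\ast}}^{1/2}\lambda_J$ from \eqref{derivative vqj} together with the hypothesis $\ell^{-1} \leq 2^4\, \delta_{q^{\ast}}^{1/2}\lambda_J/\delta_{q+4}$ from \eqref{stages tamaño ell}, then apply \eqref{aux rel parámetros} to conclude $\|v_J\|_1 + \ell^{-1} \lesssim \delta_{q^{\ast}}^{1/2}\mu\,\lambda_{J+1}^{-\beta}$, which is absorbed into $\mu$ for $a$ large.

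The main (and essentially only) obstacle is numerical: one must check that the constants hidden in the definitions of $\mu$, $\eta$, $\lambda_J$ and $\delta_q$ leave a strictly positive margin in every inequality simultaneously. The crucial nontrivial fact driving the whole scheme is that $b^{1/64}(1-6\beta) > 1$ for the specific choices $b=3/2$ and $\beta=2^{-11}$, which guarantees that a single $\alpha>0$ can be selected (independently of $q$ and $J$) small enough to make all four bounds close. Once this margin is identified, the rest is a mechanical bookkeeping exercise.
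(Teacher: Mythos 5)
Your approach---taking $\log_a$ of each inequality, exploiting $|J+1|-|J|=1/64$ and the pinching $q\leq|J|\leq|J+1|\leq q+1$, and reducing everything to explicit numerical margins---is essentially the paper's own proof; the paper merely organizes the same bookkeeping via the uniform bound $a\leq\lambda_{J+1}^{64/\ln b}$, the constants $c_n$ with $\delta_{q+n}^{-1}\leq\lambda_{J+1}^{c_n\alpha}$, and the single clean inequality $\lambda_J\leq\lambda_{J+1}^{1-7\beta}$, which plays the same role as your critical margins $(1-5\beta)b^{1/64}>1$ and $(1-6\beta)b^{1/64}>1$. One small conceptual correction to your preamble: the $-1$ offsets from $\lambda_q=a^{b^q-1}$ are constants sitting \emph{inside} the exponent of $a$, so for \eqref{aux lambda negativa}--\eqref{cociente de parámetros} (which are inequalities between pure powers of $a$) taking $a$ large cannot rescue a wrong-sign exponent---those offsets, like everything else in the exponent, must be dominated by the choice of $\alpha$ small, exactly as the paper does by folding them into the $c_n$. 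The ``$a$ large'' degree of freedom is only needed for \eqref{comparación ell mu}, where it is used to absorb the numerical prefactors coming from \eqref{derivative vqj} and \eqref{stages tamaño ell}; your per-inequality reductions are otherwise correct.
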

\begin{proof}
	Taking into account that $\abs{J+1}\geq 2^{-6}$, we have
	\[\lambda_{J+1}=a^{\abs{J+1}-1}\geq a^{b^{1/64}-1}\geq a^{\ln(b)/64} \qquad \Rightarrow \qquad a\leq \lambda_{J+1}^{64/\ln(b)}.\]
	Therefore, for any $n\geq 0$ we have
	\begin{align}
		\begin{split}
			\delta_{q+n}^{-1}&=a^{4\alpha b^n(b^q-1)+4\alpha(b^n-1)}\leq a^{4\alpha b^n(b^{\abs{J+1}}-1)+4\alpha(b^n-1)}\\ &\leq \lambda_{J+1}^{4\alpha b^n+2^8\alpha (b^n-1)/\ln(b)}\equiv \lambda_{J+1}^{c_n\alpha}.
			\label{cota delta en función de lambdaJ+1}
		\end{split}
	\end{align}
	On the other hand, since $\abs{J+1}=\abs{J}+2^{-6}$, it follows from (\ref{def lambdaJ}) that
	\[\frac{\lambda_{J+1}}{\lambda_J}=a^{b^{\abs{J+1}}-b^{\abs{J}}}\geq a^{2^{-6}\ln(b)b^{\abs{J}}}>\lambda_J^{2^{-6}\ln(b)}.\]
	Taking into account that $b=\frac{3}{2}$ and $\beta=2^{-10}$,  numerical evaluation shows
	\[
	[1+2^{-6}\ln(b)]^{-1}>1-7\beta\,.
	\]
	Hence,
	\[\lambda_J\leq \lambda_{J+1}^{[1+2^{-6}\ln(b)]^{-1}}\leq \lambda_{J+1}^{1-7\beta}. \]
	
	Using these estimates along with the definition of the various parameters, we may write
	\begin{align}
		\frac{\lambda_{J+1}^{-\beta}}{\delta_{q+1}^{3000}}&\leq \lambda_{J+1}^{-\beta+3000c_1\alpha}, \label{aux param 1} \\
		\frac{\lambda_J}{\delta_{q+4}(\mu\lambda_{J+1}^{-\beta})}&\leq \lambda_{J+1}^{-2\beta+c_4\alpha}, \label{aux param 2} \\
		\frac{\delta_q^{1/2}\mu}{\delta_{q+2}^{1/2}\eta\lambda_{J+1}^{-3\beta}}&\leq \lambda_{J+1}^{-\beta+\alpha(2+\frac{1}{2}c_2+c_4)}, \label{aux param 3} \\
		\left(\frac{\delta_{q}\delta_{q^\ast}}{\delta_{q+2}}\right)^{1/2}\frac{\lambda_{J}\lambda_{J+1}}{\mu\eta\lambda_{J+1}^{-2\beta}}&\leq \lambda_{J+1}^{-\beta+\alpha(1+\frac{1}{2}c_2+c_4)}. \label{aux param 4}
	\end{align}
	If $\alpha>0$ is sufficiently small, the exponent on the right-hand side of the inequalities will be negative. Hence, (\ref{aux param 1})-(\ref{aux param 4}) imply (\ref{aux lambda negativa})-(\ref{cociente de parámetros}). Regarding the second inequality in (\ref{aux lambda negativa}), one can check that
	\[\log_a\left(\lambda_{q+1}^9\lambda_{q+4}^{-1}\right)=9(b^{q+1}-1)-(b^{q+4}-1)>0\]
	for all $q\geq 0$, so $\lambda_{q+1}^9\geq \lambda_{q+4}$. The second inequality in (\ref{aux lambda negativa}) then follows easily.
	
	On the other hand, it is clear that (\ref{otra relación entre las lambdas}) follows from (\ref{aux rel parámetros}). Furthermore, (\ref{comparación ell mu}) also follows from (\ref{aux param 3}) for sufficiently large $a>1$ due to (\ref{derivative vqj}) and (\ref{stages tamaño ell}).
\end{proof}
From now on, we will assume that $\widetilde{\alpha}_0\leq \widetilde{\alpha}_1$, so that $\alpha\leq \widetilde{\alpha}_0\leq \widetilde{\alpha}_1$ and the previous lemma holds. Let us move on to estimating the diffeomorphism. The first step is controlling the coefficients $a_m$:
\begin{lemma}
	For any $m\in \Lambda$ and any $N\geq 0$ we have
	\begin{equation}
		\label{tamaño am CN}
		\norm{a_m}_N\lesssim \left(\frac{\delta_{q}}{\delta_{q+2}}\right)^{1/2}\mu^N.
	\end{equation}
\end{lemma}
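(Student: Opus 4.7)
The plan is to unfold the definition \eqref{def am} of $a_m$ and estimate each factor separately, then combine them via the Leibniz rule. The denominator $u_{J,m}\cdot\xi_{J,m}$ does not depend on $x$, so it contributes only a constant factor; by \eqref{tamaño denominador} we have $|u_{J,m}\cdot\xi_{J,m}|\geq 2^{-3}\delta_{q+2}^{1/2}$, hence $|u_{J,m}\cdot\xi_{J,m}|^{-1}\lesssim \delta_{q+2}^{-1/2}$.

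For the numerator, first I would bound the cutoff: since $\chi_m(x)=\chi(\mu x-m)$ with a fixed smooth $\chi$, a direct computation gives $\|\chi_m\|_N \lesssim \mu^N$ for every $N\geq 0$, with implicit constant depending only on $N$ and $\chi$. For $\gamma$, the hypothesis \eqref{cota gamma stages CN} yields $\|\gamma\|_N\lesssim \delta_q^{1/2}\ell^{-N}$. Here is where \cref{lema técnico relaciones parámetros} enters: by \eqref{comparación ell mu} we have $\ell^{-1}\leq \mu$, so
\[
\|\gamma\|_N \lesssim \delta_q^{1/2}\mu^N.
\]

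Applying the Leibniz rule for $C^N$ norms to the product $\chi_m\gamma$,
\[
\|\chi_m\gamma\|_N \lesssim \sum_{k=0}^{N}\|\chi_m\|_k\|\gamma\|_{N-k} \lesssim \sum_{k=0}^N \mu^k\,\delta_q^{1/2}\mu^{N-k}\lesssim \delta_q^{1/2}\mu^N,
\]
and multiplying by the constant $\sqrt{2}/(u_{J,m}\cdot\xi_{J,m})$ yields
\[
\|a_m\|_N \lesssim \Bigl(\tfrac{\delta_q}{\delta_{q+2}}\Bigr)^{1/2}\mu^N,
\]
which is exactly \eqref{tamaño am CN}. There is no real obstacle; the only subtle point is that the natural length scale for $\gamma$ is $\ell$ while the natural scale for $\chi_m$ is $\mu^{-1}$, and one must invoke \eqref{comparación ell mu} to replace one by the other so that the two factors are estimated on the same footing.
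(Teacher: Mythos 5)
Your proof is correct and follows the same route as the paper: bound the constant denominator via \eqref{tamaño denominador}, bound $\|\gamma\|_N$ and $\|\chi_m\|_N$ via \eqref{cota gamma stages CN} and \eqref{def chim}, and use $\ell^{-1}\leq\mu$ from \eqref{comparación ell mu} so that the cutoff's scale $\mu$ controls the product. The paper's proof is just a terser version of the same Leibniz-rule computation.
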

\begin{proof}
	The $C^0$ bound is a direct consequence of (\ref{cota gamma stages C0}) and (\ref{tamaño denominador}). To derive the bounds for $N>0$, we use (\ref{cota gamma stages CN}), (\ref{def chim}) and also the fact that $\mu \geq \ell^{-1}$, which means that the derivatives of the cutoff $\chi_m$ dominate.
\end{proof}
These bounds on the coefficients $a_m$ directly translate into bounds for the diffeomorphism:
\begin{lemma}
	For any $N\geq1$, the map $\psi$ satisfies
	\begin{equation}
		\norm{\psi-\Id}_0 +\lambda_{J+1}^{-N}\norm{\psi}_N \lesssim \left(\frac{\delta_{q}}{\delta_{q+2}}\right)^{1/2}\frac{1}{\eta}. \label{estimaciones psi}
	\end{equation}
\end{lemma}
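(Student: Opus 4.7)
The plan is to estimate $\psi - \Id$ and its derivatives term by term, using the explicit form of the sum defining $\psi$ together with the bounds on $a_m$ from the previous lemma and the locally finite overlap of the cutoffs $\chi_m$.

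For the $C^0$ bound, I would simply note that by construction of $\Lambda$ and the cutoff property that at most $8$ of the $\chi_m$ (and hence the $a_m$) are nonzero at any given point, so
\[
\norm{\psi - \Id}_0 \;\leq\; \frac{1}{\eta}\sum_{m\in\Lambda}\frac{\norm{a_m}_0}{l_m}\;\lesssim\;\frac{1}{\eta}\Bigl(\frac{\delta_q}{\delta_{q+2}}\Bigr)^{1/2},
\]
using $l_m \geq 2^{-7}$, the $N=0$ case of \eqref{tamaño am CN}, and the finite-overlap property.

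For the higher-order bounds, fix $N\geq 1$ and apply the Leibniz rule to each summand $\eta^{-1}l_m^{-1}a_m \zeta\sin\theta_m$. The key observation is that $\theta_m$ is \emph{linear} in $x$, so $D^j\theta_m = 0$ for $j\geq 2$ and $|D\theta_m|\lesssim \eta + \lambda_{J+1}\lesssim \lambda_{J+1}$ (since $\eta\ll \lambda_{J+1}$ by \eqref{relación tamaño entre parámetros}). Thus by the Faà di Bruno formula $\norm{D^j\sin\theta_m}_0 \lesssim \lambda_{J+1}^j$ for all $j\geq 0$, and Leibniz yields
\[
\norm{D^N(a_m\sin\theta_m)}_0 \;\lesssim\; \sum_{k=0}^N \norm{a_m}_k \,\lambda_{J+1}^{N-k} \;\lesssim\; \Bigl(\frac{\delta_q}{\delta_{q+2}}\Bigr)^{1/2}\sum_{k=0}^{N}\mu^k\lambda_{J+1}^{N-k}.
\]
Since $\mu \leq \lambda_{J+1}$ (again by \eqref{relación tamaño entre parámetros}), the sum is bounded by $\lambda_{J+1}^N$ up to a constant depending on $N$. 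Summing over the finitely many overlapping $m$'s and dividing by $\eta$ gives $\lambda_{J+1}^{-N}\norm{\psi}_N \lesssim (\delta_q/\delta_{q+2})^{1/2}\eta^{-1}$, as desired.

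There is no real obstacle here: the estimate is essentially a bookkeeping exercise once one has \eqref{tamaño am CN} and the fact that $\theta_m$ is linear with gradient of size $\lesssim \lambda_{J+1}$. The only mildly delicate point is ensuring that the term $\mu^N$ coming from pure derivatives of $a_m$ does not beat $\lambda_{J+1}^N$, but this is exactly the content of the hierarchy $\mu \ll \lambda_{J+1}$ established in \cref{lema técnico relaciones parámetros}. Dependence on $N$ is absorbed into the implicit constants, which is allowed by our convention since only finitely many derivatives will ever be used.
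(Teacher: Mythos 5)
Your proof is correct and follows essentially the same approach as the paper: a direct $C^0$ estimate from the $N=0$ case of \eqref{tamaño am CN}, and for higher $N$ a Leibniz argument showing that the $\lambda_{J+1}$-factors from differentiating the trigonometric term dominate the $\mu$-factors from differentiating $a_m$, precisely because $\mu \ll \lambda_{J+1}$. You merely write out the Leibniz bookkeeping more explicitly than the paper does.
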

\begin{proof}
	The bound for $\norm{\psi-\Id}_0$ is a direct consequence of (\ref{tamaño am CN}) with $N=0$. Concerning the first derivative, we compute
	\begin{equation} 
		\label{nabla psi}
		D\psi=\Id\, -\;\zeta\otimes\sum_{m\in\Lambda}\left[\frac{1}{l_m\eta} \nabla a_m\sin\theta_m+a_m \left(\frac{\lambda_{J+1}}{\eta}k_{J,m}+\xi_{J,m}\right)\cos\theta_m\right].
	\end{equation}
	Since $\mu\ll \lambda_{J+1}$, the claimed bound for $D\psi$ then follows from (\ref{tamaño am CN}). More generally, we infer from (\ref{tamaño am CN}) that when we compute successive derivatives, the derivative of the trigonometric function dominates, yielding an extra $\lambda_{J+1}$ factor each time, so (\ref{estimaciones psi}) holds.
\end{proof}

Next, we will study the Jacobian of $\psi$. As we have seen, $D\psi$ can be quite large. Fortunately, $\det(D\psi)$ will turn out to be very close to 1 due to orthogonality. This is extremely important because, otherwise, the correction $\phi_c$ needed to obtain a volume-preserving map would not be small. This would mean that the perturbation to the velocity would not be what we want and we would not be able to reduce the Reynolds stress.
\begin{lemma}
	\label{lema fc}
	The function $f_c\coloneqq \det(D\psi)-1$ satisfies the estimates
	\begin{align} 
		\norm{f_c}_N&\lesssim \lambda_{J+1}^{N-3\beta} \qquad \forall N\geq 0, \label{estimates fc CN}\\
		\norm{f_c}_{B_{\infty,\infty}^{-1+\beta}}&\lesssim \lambda_{J+1}^{-1-2\beta}. \label{estimates fc Besov}
	\end{align}
	In addition, we have
	\begin{equation}
		\int_Uf_c=0. 
		\label{mean fc}
	\end{equation}
\end{lemma}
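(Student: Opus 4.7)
The plan is to exploit the rank-one structure of $D\psi - \mathrm{Id}$ together with the orthogonality relations built into the definitions of $k_{J,m}$ and $\xi_{J,m}$. From formula (\ref{nabla psi}) we have $D\psi = \mathrm{Id} - \zeta \otimes w$ where
\[
w := \sum_{m \in \Lambda}\left[\frac{1}{l_m\eta}\nabla a_m\,\sin\theta_m + a_m\left(\frac{\lambda_{J+1}}{\eta}k_{J,m}+\xi_{J,m}\right)\cos\theta_m\right].
\]
Applying the elementary identity $\det(\mathrm{Id} - \zeta\otimes w)=1-\zeta\cdot w$ gives an \emph{exact} formula for $f_c$. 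The crucial observation is that the dangerously large terms, namely those with the factors $\lambda_{J+1}/\eta$ and $\xi_{J,m}$, drop out: by construction $k_{J,m}\propto u_{J,m}\times\zeta$ is orthogonal to $\zeta$, and $\xi_{J,m}\propto\zeta\times k_{J,m}$ is likewise orthogonal to $\zeta$. Thus the whole cosine term disappears and we are left with the clean expression
\[
f_c(x) = -\sum_{m\in\Lambda} \frac{1}{l_m\eta}\bigl(\zeta\cdot \nabla a_m(x)\bigr)\sin\theta_m(x).
\]
The support of $f_c$ is contained in $U$ since each $a_m$ is supported in $\mathrm{supp}\,\chi_m\subset U$.

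For the pointwise derivative bounds (\ref{estimates fc CN}), I would differentiate $N$ times. By (\ref{tamaño am CN}) applied to $\nabla a_m$, the coefficient $\frac{1}{l_m\eta}\zeta\cdot\nabla a_m$ has $C^N$ norm $\lesssim \eta^{-1}(\delta_q/\delta_{q+2})^{1/2}\mu^{N+1}$, while each derivative of $\sin\theta_m$ costs at most one factor of $\lambda_{J+1}$. Since $\mu\ll \lambda_{J+1}$ the dominant term is that in which all derivatives hit the sine, giving
\[
\|f_c\|_N \lesssim \frac{1}{\eta}\left(\frac{\delta_q}{\delta_{q+2}}\right)^{1/2}\mu\,\lambda_{J+1}^N \lesssim \lambda_{J+1}^{N-3\beta},
\]
where in the last step I use (\ref{relación mu eta}). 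The finite overlap of the cutoffs $\chi_m$ (at most $8$ active at any point) makes the sum over $m$ harmless.

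For the mean-zero property (\ref{mean fc}), I would integrate by parts term by term. Since $\zeta$ is constant, $\zeta\cdot\nabla a_m = \mathrm{div}(a_m\zeta)$, and since $a_m\in C^\infty_c(U)$ no boundary contribution appears:
\[
\int_U \frac{1}{l_m\eta}(\zeta\cdot\nabla a_m)\sin\theta_m\,dx = -\int_U \frac{a_m}{l_m\eta}(\zeta\cdot\nabla\theta_m)\cos\theta_m\,dx.
\]
But $\nabla\theta_m = l_m(\eta\,\xi_{J,m}+\lambda_{J+1}\,k_{J,m})$, whence $\zeta\cdot\nabla\theta_m\equiv 0$ by the same orthogonality as before. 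Every term integrates to zero.

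The part I expect to require the most care is the negative-Besov estimate (\ref{estimates fc Besov}). I would invoke a standard oscillatory Besov estimate (available as an auxiliary result in the appendix) of the form
\[
\|g\,e^{i\lambda k\cdot x}\|_{B^{-1+\beta}_{\infty,\infty}} \lesssim \lambda^{-1+\beta}\|g\|_0,
\]
valid whenever $\lambda$ dominates the scale of variation of $g$; this is the standard payoff of one integration by parts at the level of a Littlewood--Paley block. Since in our case $g_m := \frac{1}{l_m\eta}\zeta\cdot\nabla a_m$ satisfies $\|g_m\|_0 \lesssim \eta^{-1}(\delta_q/\delta_{q+2})^{1/2}\mu$ and is frequency-localized at scale $\mu\ll \lambda_{J+1}$, applying the above to each summand and using (\ref{relación mu eta}) once more gives
\[
\|f_c\|_{B^{-1+\beta}_{\infty,\infty}} \lesssim \lambda_{J+1}^{-1+\beta}\cdot\frac{1}{\eta}\left(\frac{\delta_q}{\delta_{q+2}}\right)^{1/2}\mu \lesssim \lambda_{J+1}^{-1-2\beta},
\]
as required. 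The main subtlety here is choosing $\beta$ small enough that the Besov smoothing survives the loss incurred when differentiating $a_m$ once, which is precisely the role played by the $\lambda_{J+1}^{-3\beta}$ margin produced by (\ref{relación mu eta}).
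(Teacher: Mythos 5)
Your proposal is correct and, for the estimates, follows essentially the same route as the paper. The matrix-determinant-lemma identity $\det(\mathrm{Id}-\zeta\otimes w)=1-\zeta\cdot w$ is exactly the observation the paper makes by writing $D\psi$ in an orthonormal basis whose first vector is $\zeta$ (where it becomes upper-triangular with unit diagonal except for the $(1,1)$ entry), and the cancellation of the dangerous $\lambda_{J+1}/\eta$ terms via $\zeta\perp k_{J,m}$, $\zeta\perp\xi_{J,m}$ is the whole point. Your $C^N$ bound then matches the paper's verbatim. For the Besov bound, what you call a ``standard oscillatory Besov estimate'' is the appendix's stationary-phase lemma; be aware that the precise statement produces an extra error term $\lambda_{J+1}^{-m}\mathcal C_{m}$ that must be absorbed, and the paper closes this by choosing $m=(4\beta)^{-1}$ so that the term in parentheses becomes $O(1)$ (this is a choice of the number of integrations by parts in the lemma, not a constraint on $\beta$, which is fixed equal to $2^{-11}$). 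Your heuristic ``frequency-localized at scale $\mu\ll\lambda_{J+1}$'' captures the right idea, but the actual proof needs this small piece of bookkeeping. The one place you genuinely depart from the paper is the mean-zero property \eqref{mean fc}: you integrate by parts term by term and use $\zeta\cdot\nabla\theta_m=0$, whereas the paper simply invokes the change-of-variables formula $\int_U 1=\int_U\det(D\psi)$ for the diffeomorphism $\psi|_U$. Both are correct; yours is more elementary and exhibits the precise cancellation, the paper's is shorter and never needs the explicit formula for $f_c$. (Incidentally, the paper's display $\det(D\psi)=1+f$ has an immaterial sign slip; your $f_c=-\zeta\cdot w$ is the correct sign, but absolute values are all that matter.)
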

\begin{proof}
	We fix unitary vectors $u_2, u_3\in \RR^3$ so that $\{\zeta,u_2,u_3\}$ is an orthonormal basis of $\RR^3$. Taking into account that $\xi_{J,m}$ and $k_{J,m}$ are orthogonal to $\zeta$ for any $m\in\Lambda$, it follows from (\ref{nabla psi}) that the expression of $D\psi$ in this basis is of the form
	\[D\psi=\begin{pmatrix}
		1+f & \ast & \ast \\ 0 & 1 & 0 \\ 0 & 0 & 1
	\end{pmatrix},\]
	where
	\[f:=\sum_{m\in \Lambda}\frac{1}{l_m\eta}\zeta\cdot\nabla a_m\,\sin\theta_m\]
	and where we denote by $*$ a function whose precise expression we will not need.
	
	Since $\{\zeta,u_2,u_3\}$ is an orthonormal basis, we see that $\det(D\psi)=1+f$, so we have obtained an expression for $f_c$. Using (\ref{tamaño am CN}), we conclude that for any $N\geq 0$ we have
	\[\norm{f_c}_N\lesssim \left(\frac{\delta_{q}}{\delta_{q+2}}\right)^{1/2}\frac{\mu\lambda_{J+1}^{N}}{\eta}=\lambda_{J+1}^{N-3\beta},\]
	where we have used (\ref{relación mu eta}). In particular, by taking $a>1$ sufficiently large we can ensure that $\norm{f_c}_0\leq 1/2$. On the other hand, by (\ref{tamaño am CN}) and (\ref{stationary phase lemma}) we have
	\begin{equation}
		\norm{f_c}_{B_{\infty,\infty}^{-1+\beta}}\lesssim \left(\frac{\delta_{q}}{\delta_{q+2}}\right)^{1/2}\frac{\mu\lambda_{J+1}^{-1+\beta}}{\eta}\left(1+\frac{\mu^N}{\lambda_{J+1}^{N-1}}\right)
		\label{fc aux cota Besov}
	\end{equation}
	for any $N\in \NN$. By (\ref{def mu}), the term in parenthesis equals 2 for $N=(4\beta)^{-1}$. Hence, substituting (\ref{relación mu eta}) into (\ref{fc aux cota Besov}), we obtain (\ref{estimates fc Besov}). Finally, we prove (\ref{mean fc}). Since $\psi=\Id$ in a neighborhood of $\RR^3\backslash U$, the restriction $\psi|_U$ is a diffeomorhphism of $U$. By the change of variables formula, we have 	
	\[\int_U1=\int_U|\det(D\psi)|=\int_U(1+f_c),\]
	where we have used that $\norm{f_c}_0\leq 1/2$ for sufficiently large $a$. We conclude (\ref{mean fc}).
\end{proof}
As a consequence of the previous lemma and the inverse function theorem, each point has a sufficiently small neighborhood where $\psi$ has a smooth inverse, that is, $\psi$ is a local diffeomorphism. We will now check that it is, in fact, a global diffeomorphism:
\begin{proposition}
	The map $\psi$ is invertible and its inverse $\phi_0\in C^\infty(\TT^3,\TT^3)$ satisfies the identity
	\begin{equation}
		\phi_0=\Id+\sum_{m\in\Lambda}\frac{a_m\circ \phi_0}{l_m\eta}\zeta\sin(\theta_m).
		\label{identity phi0}
	\end{equation}
\end{proposition}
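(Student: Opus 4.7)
The plan is to obtain $\phi_0$ by applying Banach's fixed point theorem to the map $T$ on the complete metric space $C^0(\TT^3,\TT^3)$, and then to upgrade the resulting fixed point $\varphi$ to a smooth global inverse of $\psi$ by a short covering-space argument. The fixed-point equation for $T$ is exactly the identity \eqref{identity phi0}, so no additional work will be needed for that part of the statement.

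First I would verify that $T$ is a contraction. Since the phases $\theta_m(x)$ depend only on $x$, not on $f$, one has
\[\|T(f)-T(g)\|_0 \;\leq\; \frac{1}{\eta}\sum_m \frac{\|a_m\|_1}{l_m}\,\|f-g\|_0.\]
Using the bound $\|a_m\|_1\lesssim (\delta_q/\delta_{q+2})^{1/2}\mu$ from \eqref{tamaño am CN}, the finite-overlap property of the cutoffs $\chi_m$, and the parameter estimate \eqref{relación mu eta}, the Lipschitz constant is at most $C\lambda_{J+1}^{-3\beta}$, which can be made $\leq 1/2$ by taking $a$ sufficiently large. Banach then yields a unique fixed point $\varphi\in C^0(\TT^3,\TT^3)$.

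Next I would show that $\psi\circ\varphi=\Id$. The key ingredient is the same orthogonality $\zeta\perp\xi_{J,m}$ and $\zeta\perp k_{J,m}$ that underlies \cref{fase invariante}: dotting the fixed-point relation with $\xi_{J,m}$ and $k_{J,m}$ gives $\xi_{J,m}\cdot\varphi(x)=\xi_{J,m}\cdot x$ and $k_{J,m}\cdot\varphi(x)=k_{J,m}\cdot x$, hence $\theta_m(\varphi(x))=\theta_m(x)$. Substituting into the definition of $\psi$ yields immediately
\[\psi(\varphi(x))=\varphi(x)-\sum_m \frac{a_m(\varphi(x))}{l_m\eta}\,\zeta\sin\theta_m(\varphi(x)) = \varphi(x) - \sum_m \frac{a_m(\varphi(x))}{l_m\eta}\,\zeta\sin\theta_m(x) = x.\]

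Finally, \cref{lema fc} gives $\det D\psi = 1+f_c$ with $\|f_c\|_0\leq 1/2$, so $\psi$ is a smooth local diffeomorphism, and hence a covering map between the compact connected manifolds $\TT^3$. A covering that admits a continuous global section must be one-sheeted, because the image $\varphi(\TT^3)$ is clopen in the connected total space $\TT^3$ and therefore equals it; consequently $\psi$ is a smooth bijection, the inverse function theorem gives $\psi^{-1}\in C^\infty(\TT^3,\TT^3)$, and setting $\phi_0\coloneqq\psi^{-1}$ forces $\phi_0=\varphi$. The only genuinely delicate point in this plan is the contraction estimate, which hinges on the non-obvious parameter hierarchy $\mu\ll\eta$ proved in \cref{lema técnico relaciones parámetros}; everything else is structural.
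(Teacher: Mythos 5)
Your proof is correct, and it follows the paper's overall structure (Banach fixed point for $T$, with the contraction estimate driven by $\mu/\eta\lesssim\lambda_{J+1}^{-3\beta}$) but deviates in the final invertibility step in a way worth noting. The paper proves $\varphi\circ\psi=\Id$ by a second contraction-type estimate: it writes $\varphi\circ\psi-\Id=\sum_m\frac{1}{l_m\eta}\bigl[a_m\circ(\varphi\circ\psi)-a_m\bigr]\zeta\sin\theta_m$ using phase-invariance under $\psi$, bounds the right-hand side by $\lambda_{J+1}^{-3\beta}\norm{\varphi\circ\psi-\Id}_0$, and concludes it vanishes; it then asserts $\psi\circ\varphi=\Id$ by ``repeating the argument with $\psi$ and $\varphi$ exchanged.'' You instead verify $\psi\circ\varphi=\Id$ directly — which, as you note, is a pure algebraic identity once one knows $\theta_m\circ\varphi=\theta_m$, since $\psi$ has an explicit formula and the fixed-point relation for $\varphi$ cancels exactly — and then invoke the fact that a proper local diffeomorphism of the compact connected manifold $\TT^3$ is a covering map, which becomes one-sheeted the moment it admits a continuous section. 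Your derivation of $\theta_m\circ\varphi=\theta_m$ by dotting the fixed-point identity with $\xi_{J,m}$ and $k_{J,m}$ is precisely the content of the paper's Remark~\ref{fase invariante} transposed to $\varphi$, so that part is equivalent. What your route buys is that the harder direction $\varphi\circ\psi=\Id$ drops out of topology rather than requiring a second quantitative estimate; what it costs is an appeal to the covering-space dichotomy and to Lemma~\ref{lema fc} (for $\det D\psi>0$), whereas the paper's version is self-contained in the analytic framework. Both are valid; yours is arguably the more economical once Lemma~\ref{lema fc} is in hand. One minor point of presentation: the displayed Lipschitz bound $\norm{Tf-Tg}_0\leq\frac{1}{\eta}\sum_m\frac{\norm{a_m}_1}{l_m}\norm{f-g}_0$ should really carry a $\max_m$ (or an explicit factor from the finite-overlap count) rather than an unqualified sum over $\Lambda$, since $|\Lambda|$ grows like $(\ell\mu)^3$; you do flag the finite-overlap property in the next sentence, so the intent is clear, but the displayed inequality as written is not literally the bound you use.
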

\begin{proof}
	Let us consider the map
	\[T:C^0(\TT^3,\TT^3)\to C^0(\TT^3,\TT^3), \quad f\mapsto \Id+\sum_{m\in\Lambda}\frac{a_m\circ f}{l_m\eta}\zeta\sin(\theta_m).\]
	By (\ref{tamaño am CN}) with $N=1$, for any $f,g\in C^0(\TT^3)$ we have
	\begin{align*}
		\norm{Tf-Tg}_0\lesssim \frac{1}{\eta} \max_{m\in\Lambda}\norm{a_m\circ f-a_m\circ g}_0&\lesssim \left(\frac{\delta_{q}}{\delta_{q+2}}\right)^{1/2}\frac{\mu}{\eta}\norm{f-g}_0 \\ &\lesssim \lambda_{J+1}^{-3\beta}\norm{f-g}_0,
	\end{align*}
	where we have also used (\ref{relación mu eta}). Therefore, $\norm{Tf-Tg}_0\leq \frac{1}{2}\norm{f-g}_0$ for sufficiently large $a$. By Banach's fixed point theorem, we conclude that there exists a unique $\varphi\in C^0(\TT^3,\TT^3)$ such that $T\varphi=\varphi$. We will now check that $\varphi$ is the inverse of $\psi$. We compute
	\[\varphi\circ \psi-\Id=\sum_{m\in \Lambda}\frac{1}{l_m\eta}[a_m\circ(\varphi\circ\psi)-a_m]\sin\theta_m,\]
	where we have used \cref{fase invariante}. Again, by (\ref{tamaño am CN}) we have
	\begin{align*}
		\norm{\varphi\circ \psi-\Id}_0\lesssim \frac{1}{\eta} \max_{m\in\Lambda}\norm{a_m\circ (\varphi\circ \psi)-a_m}_0&\lesssim \left(\frac{\delta_{q}}{\delta_{q+2}}\right)^{1/2}\frac{\mu}{\eta}\norm{\varphi\circ \psi-\Id}_0 \\
		&\lesssim \delta_{q+4}\norm{\varphi\circ \psi-\Id}_0.
	\end{align*}
	If $a>1$ is sufficiently large, we obtain $\norm{\varphi\circ \psi-\Id}_0\leq \frac{1}{2}\norm{\varphi\circ \psi-\Id}_0$. Therefore, $\varphi\circ \psi=\Id$. 
	
	Reasoning as in \cref{fase invariante}, we see that $\theta_m\circ\varphi=\theta_m$ for any $m\in \Lambda$. Hence, repeating the previous argument with $\psi$ and $\varphi$ exchanged leads to $\psi\circ \varphi=\Id$. We conclude that $\psi$ is invertible and $\varphi$ is its inverse, which we rename to $\phi_0$. Since the local inverses were smooth, $\phi_0$ is automatically smooth.
\end{proof}
\begin{corollary}
	The diffeomorphism $\phi_0$ satisfies
	\begin{equation}
		\label{estimates phi0}
		\norm{\phi_0-\Id}_0+\lambda_{J+1}^{-1}\norm{\phi_0}_1+\lambda_{J+1}^{-2}\norm{\phi_0}_2\lesssim \left(\frac{\delta_{q}}{\delta_{q+2}}\right)^{1/2}\frac{1}{\eta}.
	\end{equation}
\end{corollary}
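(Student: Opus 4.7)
The $C^0$ piece is immediate from the fixed-point identity (\ref{identity phi0}) and the amplitude bound (\ref{tamaño am CN}): taking $C^0$ norms in (\ref{identity phi0}) yields
\[
\|\phi_0-\Id\|_0 \lesssim \eta^{-1}\max_{m\in\Lambda}\|a_m\|_0 \lesssim \left(\frac{\delta_q}{\delta_{q+2}}\right)^{1/2}\eta^{-1}.
\]
For the $C^1$ piece, I would use that $\phi_0 = \psi^{-1}$ gives $D\phi_0 = (D\psi)^{-1}\circ\phi_0$, together with the observation, read off from (\ref{nabla psi}), that pointwise $D\psi$ is a rank-one perturbation $\Id - \zeta\otimes v$, with $v$ the vector field in square brackets of (\ref{nabla psi}). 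The estimate $\|v\|_0 \lesssim (\delta_q/\delta_{q+2})^{1/2}\lambda_{J+1}/\eta$ follows directly from (\ref{tamaño am CN}). Since $\det(D\psi) = 1 - \zeta\cdot v = 1 + f_c$ (the computation carried out in the proof of Lemma~\ref{lema fc}) and $\|f_c\|_0 \leq 1/2$ for $a$ large, the Sherman--Morrison formula provides the explicit expression
\[
D\phi_0 = \Id + \frac{\zeta\otimes (v\circ\phi_0)}{1 + f_c\circ\phi_0},
\]
from which $\|D\phi_0\|_0 \lesssim (\delta_q/\delta_{q+2})^{1/2}\lambda_{J+1}/\eta$ follows at once.

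The $C^2$ estimate is the main obstacle, since the brute-force chain rule applied to $D\phi_0 = (D\psi)^{-1}\circ\phi_0$ produces $\|D^2\phi_0\|_0 \lesssim \|D(D\psi)^{-1}\|_0 \|D\phi_0\|_0$, which exceeds the desired bound by a full factor $\lambda_{J+1}/\eta$. To close this gap, I would differentiate the explicit Sherman--Morrison expression above and exploit the orthogonality $\zeta\cdot\nabla\theta_m = 0$ already used in Remark~\ref{fase invariante}. The point is that $D\phi_0 - \Id$ is a rank-one tensor whose range is spanned by $\zeta$, so schematically
\[
D(v\circ\phi_0) = (Dv)\circ\phi_0 + \big((\zeta\cdot\nabla)v\big)\circ\phi_0 \otimes\tilde v,\qquad \tilde v:=\frac{v\circ\phi_0}{1+f_c\circ\phi_0},
\]
and analogously with $f_c$ in place of $v$; the only combination of $Dv$ that couples back through $D\phi_0$ is thus the directional derivative $(\zeta\cdot\nabla)v$. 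The orthogonality $\zeta\cdot\nabla\theta_m = 0$ kills all $\lambda_{J+1}$-contributions coming from the trigonometric factors in $v$ and $f_c$, so $(\zeta\cdot\nabla)v$ and $\zeta\cdot\nabla f_c$ pick up only derivatives of the slowly varying amplitudes $a_m$, giving
\[
\|(\zeta\cdot\nabla)v\|_0 \lesssim \left(\frac{\delta_q}{\delta_{q+2}}\right)^{1/2}\frac{\mu\lambda_{J+1}}{\eta}, \qquad \|\zeta\cdot\nabla f_c\|_0 \lesssim \left(\frac{\delta_q}{\delta_{q+2}}\right)^{1/2}\frac{\mu^2}{\eta},
\]
each an entire power $\mu/\lambda_{J+1} = \lambda_{J+1}^{-4\beta}$ smaller than the naive estimates for $\|Dv\|_0$ and $\|\nabla f_c\|_0$. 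Substituting these improvements in the full expansion of $D^2\phi_0$ and absorbing the residual factor $(\delta_q/\delta_{q+2})^{1/2}\mu/\eta \leq \lambda_{J+1}^{-3\beta}$ via (\ref{relación mu eta}), the orthogonality-driven gain more than compensates the chain-rule loss, and I conclude $\|D^2\phi_0\|_0 \lesssim (\delta_q/\delta_{q+2})^{1/2}\lambda_{J+1}^2/\eta$, completing the proof.
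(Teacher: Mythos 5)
Your proof is correct, and it takes a genuinely different route than the paper for the $C^1$ and $C^2$ pieces. The paper differentiates the fixed-point identity (\ref{identity phi0}) directly, obtaining (\ref{nabla phi0}) which still contains $D\phi_0$ on the right-hand side, and then closes via an absorption argument using (\ref{relación mu eta}) (for $C^1$) and a second absorption after differentiating once more (for $C^2$). You instead observe that $D\psi$ is a rank-one perturbation $\Id-\zeta\otimes v$, invert it in closed form via Sherman--Morrison (the denominator $1+f_c\circ\phi_0$ being bounded away from zero by \cref{lema fc}), and then differentiate this \emph{explicit} expression, avoiding any absorption. The crucial cancellation is the same in both arguments: the $\lambda_{J+1}$-blowup coming from $\nabla\theta_m$ only appears contracted against $\zeta$ on the feedback side, and $\zeta\cdot\nabla\theta_m=0$ by \cref{fase invariante}. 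In the paper this cancellation manifests as the fact that the coefficient multiplying $\|D\phi_0\|_0$ (resp.\ $\|D\phi_0\|_1$) involves only $\|\nabla a_m\|_0\lesssim\mu$ rather than $\lambda_{J+1}$; in your argument it manifests as the improved bounds on $(\zeta\cdot\nabla)v$ and $\zeta\cdot\nabla f_c$. Your approach is arguably cleaner as a standalone estimate for $\phi_0$, since it replaces the implicit absorption by a closed formula; the paper's version has the incidental advantage that (\ref{nabla phi0}) is reused verbatim when estimating $D\phi_0\,v_J$ in Section 5.

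One small bookkeeping point: the conclusion is stated as a bound on $\|\phi_0\|_2$, i.e.\ you should note that $\|\phi_0\|_0\lesssim 1\lesssim(\delta_q/\delta_{q+2})^{1/2}\lambda_{J+1}/\eta$ (since $\eta\ll\lambda_{J+1}$ and $\delta_q\geq\delta_{q+2}$) so that the lower-order norms are also absorbed; this is implicit in the paper too.
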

\begin{proof}
	The bound for $\norm{\phi_0-\Id}_0$ is a direct consequence of (\ref{tamaño am CN}) with $N=0$. Concerning the first derivative, using (\ref{identity phi0}) we compute
	\begin{align}
		\label{nabla phi0}
		D\phi_0=&\Id+\sum_{m\in \Lambda}\frac{1}{l_m\eta}\zeta\otimes[(\nabla a_m\circ\phi_0)D\phi_0]\sin(\theta_m) \\&+\sum_{m\in \Lambda}a_m\circ\phi_0\;\zeta\otimes\left(\frac{\lambda_{J+1}}{\eta}k_{J,m}+\xi_{J,m}\right)\cos(\theta_m). \nonumber
	\end{align}
	By (\ref{tamaño am CN}), we have
	\[\norm{D \phi_0}_0\lesssim 1+\left(\frac{\delta_{q}}{\delta_{q+2}}\right)^{1/2}\frac{\mu}{\eta}\norm{D\phi_0}_0+\left(\frac{\delta_{q}}{\delta_{q+2}}\right)^{1/2}\frac{\lambda_{J+1}}{\eta}.\]
	By (\ref{relación mu eta}), the coefficient multiplying $\norm{D\phi_0}_0$ on the right-hand side will be smaller than $1/2$ for $a>1$ sufficiently large, so 
	\[\norm{D \phi_0}_0\lesssim\left(\frac{\delta_{q}}{\delta_{q+2}}\right)^{1/2}\frac{\lambda_{J+1}}{\eta}.\]
	Regarding the remaining estimate, differentiating (\ref{nabla phi0}) yields:
	\begin{align*}
		\norm{D\phi_0}_1&\lesssim \frac{1}{\eta}\max_{m\in\Lambda}\left(\norm{a_m}_2\norm{D\phi_0}_0^2+\norm{a_m}_1\norm{D\phi_0}_1+\norm{a_m}_1\norm{D\phi_0}_0\lambda_{J+1}+\norm{a_m}_0\lambda_{J+1}^2\right) \\
		&\lesssim \left(\frac{\delta_{q}}{\delta_{q+2}}\right)^{3/2}\frac{\mu^2\lambda_{J+1}}{\eta^2}+\left(\frac{\delta_{q}}{\delta_{q+2}}\right)^{1/2}\frac{\mu}{\eta}\norm{D\phi_0}_1\\ &\hspace{105pt}+\frac{\delta_{q}}{\delta_{q+2}}\,\frac{\mu\lambda_{J+2}}{\eta^2}+\left(\frac{\delta_{q}}{\delta_{q+2}}\right)^{1/2}\frac{\lambda_{J+1}^2}{\eta}.
	\end{align*}
	Again, by (\ref{relación mu eta}) the coefficient multiplying $\norm{D\phi_0}_1$ on the right-hand side will be smaller than $1/2$ for sufficiently large $a$. The first and third term are smaller than the last term because
	\[\left(\frac{\delta_{q}}{\delta_{q+2}}\,\frac{\mu^2}{\eta\lambda_{J+1}}\right)^{1/2}\leq\left(\frac{\delta_{q}}{\delta_{q+2}}\right)^{1/2}\frac{\mu}{\eta}\lesssim \lambda_{J+1}^{-3\beta}\ll 1\]
	by (\ref{relación mu eta}). We conclude (\ref{estimates phi0}).
\end{proof}

Next, we construct and estimate the correction $\phi_c$: 
\begin{proof}[Proof of \cref{lema dacorogna}]
	By \cref{invertir divergencia vectores} there exists $z\in C^\infty_c(U,\RR^3)$ such that $\Div z=f$ and satisfies the estimate
	\begin{equation}
		\label{estimates z}
		\norm{z}_{N}\lesssim \norm{f}_{B^{N-1+\beta}_{\infty,\infty}}.
	\end{equation}
	Let us define a map $\Phi\in C^\infty(\RR^3\times[0,1],\RR^3)$ as the unique solution of the following initial value problem:
	\[\begin{cases}
		\frac{\partial}{\partial t}\Phi(x,t)=\frac{z(\Phi(x,t))}{1+(1-t)f(\Phi(x,t))}, \\ \Phi(x,0)=x.
	\end{cases}\]
	Since $\norm{f}_0\leq 1/2$, by assumption, the right-hand side of the ODE is bounded by $2\norm{z}_0$ for all $t\in[0,1]$. Hence, the solution for a fixed $x\in \RR^3$ is, indeed, defined in the considered time interval. In addition, it can be proved (see \cite[proof of Lemma 3]{DM}) that 
	\[\Div_x \Phi(x,1)=f(x),\]
	so it suffices to define $\phi(x)=\Phi(x,1)$. Note that $\phi=\Id$ outside of the support of $z$ because
	\begin{equation}
		\label{edo diferencia}
		\begin{cases}
			\frac{\partial}{\partial t}(\Phi-\Id)=\frac{z\circ \Phi}{1+(1-t)f\circ \Phi}, \\ \Phi(x,0)-x=0.
		\end{cases}
	\end{equation}
	
	Therefore, all we have to do is to prove the bounds for $\phi$. Estimate (\ref{estimate phi dacorogna C0}) follows at once from (\ref{estimates z}) and (\ref{edo diferencia}). Concerning the first derivative:
	\[\frac{\partial}{\partial t}\partial_i\Phi=\sum_k\frac{(\partial_k z\circ\Phi)\partial_i\Phi_k}{1+(1-t)f\circ\Phi}-\frac{(z\circ\Phi)(\partial_k f\circ \Phi)\partial_i\Phi_k}{[1+(1-t)f\circ\Phi]^2}.\]
	Hence,
	\begin{align*}
		\frac{d}{dt}\norm{\Phi(\cdot,t)-\Id}_1&\lesssim \norm{z}_0+(\norm{z}_1+\norm{z}_0\norm{f}_1)\norm{\Phi(\cdot,t)}_1 \\
		&\lesssim (\norm{z}_1+\norm{z}_0\norm{f}_1)+(\norm{z}_1+\norm{z}_0\norm{f}_1)\norm{\Phi(\cdot,t)-\Id}_1.
	\end{align*}
	Applying Grönwall's inequality, we conclude
	\[\norm{\phi-\Id}_1=\norm{\Phi(\cdot,1)-\Id}_1\lesssim \exp(\norm{z}_1+\norm{z}_0\norm{f}_1)-1\lesssim \norm{z}_1+\norm{z}_0\norm{f}_1,\]
	where we have used that $\norm{z}_1+\norm{z}_0\norm{f}_1\leq 1$. Substituting (\ref{estimates z}) yields (\ref{estimate phi dacorogna C1}). An analogous argument allows us to estimate the norm $\norm{\Phi(\cdot,t)}_1$ to obtain
	\begin{equation}\label{eq.otra}
		\norm{\Phi(\cdot,t)}_1 \lesssim 1\,.  
	\end{equation}
	for all $t\in[0,1]$.
	
	Concerning the second derivative, we compute
	\begin{align*}
		\frac{\partial}{\partial t}\partial_{ij}\Phi=\sum_{k,l}&\frac{(\partial_{kl}z\circ\Phi)\partial_i\Phi_k\partial_j\Phi_l}{1+(1-t)f\circ\Phi}-2\frac{(\partial_kz\circ\Phi)(\partial_lf\circ \Phi)\partial_i\Phi_k\partial_j\Phi_l}{[1+(1-t)f\circ\Phi]^2}\\ &+\frac{(\partial_{k}z\circ\Phi)\partial_{ij}\Phi_k}{1+(1-t)f\circ\Phi}-\frac{(z\circ\Phi)(\partial_{kl}f\circ \Phi)\partial_i\Phi_k\partial_j\Phi_l}{[1+(1-t)f\circ\Phi]^2} \\ &-\frac{(z\circ\Phi)(\partial_{k}f\circ \Phi)\partial_{ij}\Phi_k}{[1+(1-t)f\circ\Phi]^2}+2\frac{(z\circ\Phi)(\partial_{k}f\circ \Phi)(\partial_{l}f\circ \Phi)\partial_i\Phi_k\partial_j\Phi_l}{[1+(1-t)f\circ\Phi]^3}.
	\end{align*}
	Thus, using~\eqref{eq.otra} we have
	\begin{align*}
		\frac{d}{dt}\norm{\Phi(\cdot,t)}_2\lesssim &\left(\norm{z}_2+\norm{z}_1\norm{f}_1+\norm{z}_0\norm{f}_2+\norm{z}_0\norm{f}_1^2\right)\\&+(\norm{z}_1+\norm{z}_0\norm{f}_1)\norm{\Phi(\cdot,t)}_2.
	\end{align*}
	We conclude, again from Grönwall's inequality
	\begin{align*}
		\norm{\Phi(\cdot,1)}_2&\lesssim \left(\norm{z}_2+\norm{z}_1\norm{f}_1+\norm{z}_0\norm{f}_2+\norm{z}_0\norm{f}_1^2\right)\frac{e^{\norm{z}_1+\norm{z}_0\norm{f}_1}-1}{\norm{z}_1+\norm{z}_0\norm{f}_1}\\&\lesssim \norm{z}_2+\norm{z}_1\norm{f}_1+\norm{z}_0\norm{f}_2+\norm{z}_0\norm{f}_1^2,
	\end{align*}
	where we have used that $\norm{z}_1+\norm{z}_0\norm{f}_1\leq 1$. Substituting (\ref{estimates z}) yields (\ref{estimate phi dacorogna C2}).
\end{proof}

Constructing the correction $\phi_c$ with the desired properties is a simple consequence of these results:
\begin{corollary}
	There exists a diffeomorphism $\phi_c\in C^\infty(\TT^3,\TT^3)$ such that $\phi_c\equiv\Id$ in a neighborhood of $\TT^3\backslash U$, with \[\det[D(\phi_c\circ\phi_0)]=1\]
	and satisfying the following estimates for $N=0,1,2$:
	\begin{equation}
		\norm{\phi_c-\Id}_N\lesssim \lambda_{J+1}^{N-1-2\beta}. \label{estimates phic}
	\end{equation}
	In addition, its inverse satisfies
	\begin{equation}
		\norm{\phi_c^{-1}-\Id}_N\lesssim \lambda_{J+1}^{N-1-2\beta} \qquad N=0,1,2. \label{estimates phic-1}
	\end{equation}
\end{corollary}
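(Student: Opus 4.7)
The plan is to apply the lemma \textit{lema dacorogna} (the immediately preceding \cref{lema dacorogna}) with $f = f_c$, and then extract the inverse bounds from the closeness of $\phi_c$ to the identity.

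First I would verify that $f_c$ satisfies the hypotheses of \cref{lema dacorogna}. The vanishing integral $\int_U f_c = 0$ was established in (\ref{mean fc}), and the $C^0$ bound $\|f_c\|_0 \leq 1/2$ follows from (\ref{estimates fc CN}) with $N=0$, since this gives $\|f_c\|_0 \lesssim \lambda_{J+1}^{-3\beta}$, which is smaller than $1/2$ provided $a>1$ is taken large enough. For the third hypothesis, I would interpolate between the $C^0$ and $C^1$ bounds in (\ref{estimates fc CN}) to obtain $\|f_c\|_\beta \lesssim \lambda_{J+1}^{-2\beta}$, and combine (\ref{estimates fc CN}) with (\ref{estimates fc Besov}) to get $\|f_c\|_{B^{-1+\beta}_{\infty,\infty}}\|f_c\|_1 \lesssim \lambda_{J+1}^{-1-2\beta}\cdot \lambda_{J+1}^{1-3\beta} = \lambda_{J+1}^{-5\beta} \ll 1$.

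Applying \cref{lema dacorogna} then produces a diffeomorphism $\phi_c$ with $\phi_c\equiv\Id$ near $\TT^3\setminus U$ (since $\supp f_c\subset U$) and $\det(D\phi_c) = 1+f_c = \det(D\psi)$ by (\ref{def fc}). To see that $\phi_c\circ\phi_0$ is volume-preserving, I would use the chain rule together with $\psi\circ\phi_0=\Id$, which gives $\det(D\phi_0)=1/\det(D\psi)\circ\phi_0 = 1/(1+f_c\circ\phi_0)$, so that
\[
\det[D(\phi_c\circ \phi_0)] = \det(D\phi_c)\circ\phi_0\cdot \det(D\phi_0) = (1+f_c\circ\phi_0)\cdot \frac{1}{1+f_c\circ\phi_0} = 1.
\]
The three estimates (\ref{estimate phi dacorogna C0})--(\ref{estimate phi dacorogna C2}) from \cref{lema dacorogna}, combined with (\ref{estimates fc CN}), (\ref{estimates fc Besov}) and the interpolation inequalities $\|f_c\|_\beta \lesssim \lambda_{J+1}^{-2\beta}$ and $\|f_c\|_{1+\beta}\lesssim \lambda_{J+1}^{1-2\beta}$, yield at each level $N=0,1,2$ the dominant contribution $\lambda_{J+1}^{N-1-2\beta}$ (one can check directly that the quadratic cross terms are always of order $\lambda_{J+1}^{N-1-k\beta}$ with $k\geq 5$, hence absorbed).

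The remaining task is the bound on $\phi_c^{-1}$. Since $\|\phi_c-\Id\|_1\lesssim \lambda_{J+1}^{-2\beta}\ll 1$, for $a$ large enough $\phi_c$ is a diffeomorphism, and the Neumann series gives $\|(D\phi_c)^{-1}-\Id\|_0\lesssim \|D\phi_c-\Id\|_0\lesssim \lambda_{J+1}^{-2\beta}$. The $C^0$ bound follows from $\phi_c^{-1}-\Id = -(\phi_c-\Id)\circ \phi_c^{-1}$, yielding $\|\phi_c^{-1}-\Id\|_0 = \|\phi_c-\Id\|_0\lesssim \lambda_{J+1}^{-1-2\beta}$. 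For $C^1$, writing $D\phi_c^{-1} = (D\phi_c)^{-1}\circ\phi_c^{-1}$ gives $\|D\phi_c^{-1}-\Id\|_0\lesssim \|D\phi_c-\Id\|_0\lesssim \lambda_{J+1}^{-2\beta}$. For $C^2$ I would differentiate the identity $\phi_c\circ\phi_c^{-1} = \Id$ twice and solve for $D^2\phi_c^{-1}$, producing $\|\phi_c^{-1}\|_2\lesssim \|\phi_c\|_2\cdot\|D\phi_c^{-1}\|_0^2\lesssim \lambda_{J+1}^{1-2\beta}$, which matches the required bound at $N=2$. The main (though mild) obstacle is bookkeeping all the cross terms in (\ref{estimate phi dacorogna C2}) and confirming none of them exceed $\lambda_{J+1}^{1-2\beta}$; after that, the corollary is immediate.
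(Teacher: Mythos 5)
Your proposal is correct and follows essentially the same route as the paper's proof: verify the hypotheses of \cref{lema dacorogna} for $f_c$ (using \cref{lema fc} and interpolation), apply it to obtain $\phi_c$, deduce $\det[D(\phi_c\circ\phi_0)]=1$ from $\det(D\phi_c)=1+f_c=\det(D\psi)$, substitute the $f_c$ bounds into the Dacorogna estimates, and propagate to $\phi_c^{-1}$ via the identities $\phi_c^{-1}-\Id = -(\phi_c-\Id)\circ\phi_c^{-1}$, $D\phi_c^{-1}=(D\phi_c)^{-1}\circ\phi_c^{-1}$, and twice-differentiating $\phi_c\circ\phi_c^{-1}=\Id$. (Minor slip: the $C^2$ inverse bound comes with a factor $\|D\phi_c^{-1}\|_0^3$ rather than $\|D\phi_c^{-1}\|_0^2$, but since this factor is $\lesssim 1$ the conclusion is unaffected.)
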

\begin{proof}
	Since $\psi=\Id$ in a neighborhood of $\TT^3\backslash U$, the support of $f_c$ is contained in~$U$. In addition, we have $\int_Uf_c=0$ by \cref{lema fc}. We can then apply \cref{lema dacorogna} to $f_c$,  obtaining a diffeomorphism $\phi_c\in C^\infty(\TT^3,\TT^3)$ such that $\phi_c=\Id$ in a neighborhood of $\TT^3\backslash U$ and
	\[\det(D\phi_c)=1+f_c=\det(D\psi)=\det[D(\phi_0^{-1})],\]
	which leads to $\det[D(\phi_c\circ \phi_0)]=1$. Regarding the estimates, we have
	\[\norm{f_c}_{N+\beta}\lesssim \lambda_{J+1}^{N-2\beta}\]
	by interpolation of (\ref{estimates fc CN}). Substituting our estimates for $f_c$ in (\ref{estimate phi dacorogna C0}), (\ref{estimate phi dacorogna C1}) and (\ref{estimate phi dacorogna C2}) yield (\ref{estimates phic}). 
	
	We will now estimate $\phi_c^{-1}$. The bound for $\norm{\phi_c^{-1}-\Id}_0$ follows immediately from the bound for $\phi_c$. Regarding the $C^1$-norm, we have \[D(\phi_c^{-1})=(D\phi_c)^{-1}\circ\phi_c^{-1},\]
	so
	\begin{align*}
		\norm{D(\phi_c^{-1})-\Id}_0=\norm{(D\phi_c)^{-1}-\Id}_0&\lesssim \norm{(D\phi_c)^{-1}}_0\norm{\Id-D\phi_c}_0 \\ &\lesssim \norm{\Id-D\phi_c}_0\norm{(D\phi_c)^{-1}-\Id}_0+\norm{\Id-D\phi_c}_0.
	\end{align*}
	Since $\norm{\Id-D\phi_c}_0\ll 1$, we readily obtain
	\[\norm{D(\phi_c^{-1})-\Id}_0=\norm{(D\phi_c)^{-1}-\Id}_0\lesssim\norm{\Id-D\phi_c}_0,\]
	so we deduce the bound for $\phi_c^{-1}$ from the bound for $\phi_c$. Concerning the $C^2$-norm, if we differentiate twice the identity $\phi_c\circ \phi_c^{-1}=\Id$, we obtain
	\[\partial_{lm}(\phi_c)_i\circ\phi_c^{-1}\,\partial_j(\phi_c^{-1})_l\,\partial_k(\phi_c^{-1})_m+\partial_l(\phi_c)_i\circ\phi_c^{-1}\,\partial_{jk}(\phi_c^{-1})_l=0.\]
	Multiplying by $\partial_i(\phi_c^{-1})_n$ and summing over $i$ leads to
	\[\partial_{jk}(\phi_c^{-1})_n=-\partial_{lm}(\phi_c)_i\circ\phi_c^{-1}\,\partial_j(\phi_c^{-1})_l\,\partial_k(\phi_c^{-1})_m\,\partial_i(\phi_c^{-1})_n.\]
	Therefore,
	\[\norm{D^2(\phi_c^{-1})}_0\lesssim \norm{D^2(\phi_c)}_0\norm{D(\phi_c^{-1})}_0^3\lesssim \norm{D^2(\phi_c)}_0.\]
	Hence, we deduce the bound for $\phi_c^{-1}$ from the bound for $\phi_c$.
\end{proof}

\begin{remark}
	The implicit constants in (\ref{estimates phic}) depend on $U$ through the use of \cref{invertir divergencia vectores}. We must ensure that they remain uniformly bounded throughout the iterative process, with a bound independent of the parameter $a$. This means that we are safe to dismiss them. In \cref{section prueba prop steps} we will see that one can take $U=s U_1+p$, where $U_1$ is a fixed smooth domain, $s\in (0,1)$ and $p\in\RR^3$. Therefore,  \cref{invertir divergencia vectores} ensures that the constants remain uniformly bounded as we shrink the domain in subsequent steps, by letting $s$ become small.
\end{remark}

Once that we have estimated $\phi_0$ and $\phi_c$, we will check that $\Phi_{J+1}$ satisfies the required inductive hypotheses:
\begin{lemma}
	\label{lemma induct hip diffeo C0}
	If $\alpha>0$ is sufficiently small and $a>1$ is sufficiently large (depending on $\alpha$), the new diffeomorphism $\Phi_{J+1}$ satisfies (\ref{inductive Phiqj C0}) and (\ref{conclusión stages 2}).
\end{lemma}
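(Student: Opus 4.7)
My plan is to exploit the decomposition $\Phi_{J+1} = \phi_{J+1} \circ \Phi_J$ with $\phi_{J+1} = \phi_c \circ \phi_0$, reducing everything to the already-established estimates on $\phi_0$, $\phi_c$ and their inverses, together with the inductive hypotheses for $\Phi_J$.

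For the direct bound, the triangle inequality gives
\[
\|\Phi_{J+1}-\Id\|_0 \leq \|(\phi_{J+1}-\Id)\circ\Phi_J\|_0 + \|\Phi_J-\Id\|_0 = \|\phi_{J+1}-\Id\|_0 + \|\Phi_J-\Id\|_0,
\]
and the inductive hypothesis \eqref{inductive Phiqj C0} handles the second term. I write
\[
\phi_{J+1}-\Id = (\phi_c-\Id)\circ \phi_0 + (\phi_0-\Id),
\]
so $\|\phi_{J+1}-\Id\|_0 \leq \|\phi_c-\Id\|_0 + \|\phi_0-\Id\|_0$. Combining \eqref{estimates phi0} and \eqref{estimates phic} with \cref{lema técnico relaciones parámetros} (specifically \eqref{relación mu eta} together with the definition of $\mu$) yields $\|\phi_{J+1}-\Id\|_0 \lesssim \lambda_{J+1}^{-1+\beta}$. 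To conclude \eqref{inductive Phiqj C0}, the quantity we need to absorb is $a^{-\beta}(2^{-64|J|}-2^{-64|J+1|}) = a^{-\beta}\,2^{-64|J|-1}$, and since $|J+1|\geq 1/64$ ensures $(1-\beta)(b^{|J+1|}-1) > \beta$, the bound $\lambda_{J+1}^{-1+\beta}$ is much smaller than $a^{-\beta}\,2^{-64|J|-1}$ once $a$ is large enough.

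For the inverse, I use $\Phi_{J+1}^{-1} = \Phi_J^{-1}\circ \phi_{J+1}^{-1}$ together with the mean-value estimate
\[
\|\Phi_{J+1}^{-1}-\Phi_J^{-1}\|_0 \leq \|\Phi_J^{-1}\|_1\,\|\phi_{J+1}^{-1}-\Id\|_0.
\]
Writing $\phi_{J+1}^{-1} = \psi\circ\phi_c^{-1}$ and using \eqref{estimaciones psi} and \eqref{estimates phic-1} gives $\|\phi_{J+1}^{-1}-\Id\|_0 \lesssim \lambda_{J+1}^{-1+\beta}$ as well, and \eqref{inductive Phiqj C1} controls $\|\Phi_J^{-1}\|_1$ by $\delta_{q+1}^{-s(J)}$. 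The remaining $C^0$ bound on $\Phi_{J+1}^{-1}$ then follows from triangle inequality together with the inductive bound for $\|\Phi_J^{-1}-\Id\|_0$. For \eqref{conclusión stages 2}, the same two estimates directly yield the required bound, provided one verifies that $\delta_{q+1}^{-s(J)}\lambda_{J+1}^{-1+\beta}\leq \delta_{q+1}^{-10}\lambda_{J+1}^{-1}$; this last step is where the smallness of $\alpha$ is essential.

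The main obstacle is precisely this last inequality: the derivative bound $\|\Phi_J^{-1}\|_1\leq \delta_{q+1}^{-s(J)}$ carries the large exponent $s(J)\geq 1280$, while the right-hand side of \eqref{conclusión stages 2} carries only $\delta_{q+1}^{-10}$. Closing the gap forces us to extract a strong gain of a power of $\lambda_{J+1}^{\beta}$ from $\|\phi_{J+1}^{-1}-\Id\|_0$, which is exactly why the parameters were chosen so that $\mu=\lambda_{J+1}^{1-4\beta}$ and the key relation \eqref{relación mu eta} yields $(\delta_q/\delta_{q+2})^{1/2}/\eta\leq \lambda_{J+1}^{-1+\beta}$. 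Combining this gain with \eqref{aux lambda negativa}, which allows us to trade small powers of $\lambda_{J+1}$ against powers of $\delta_{q+1}$, I expect to close the estimate provided $\alpha$ is taken sufficiently small depending on the (finite, uniformly bounded) range of values taken by $s(J)$.
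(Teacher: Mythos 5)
Your treatment of the forward difference and of \eqref{inductive Phiqj C0} tracks the paper's proof: the paper also reduces $\|\Phi_{J+1}-\Phi_J\|_0$ to $\|\phi_{J+1}-\Id\|_0$ via right-composition invariance of the $C^0$-norm and then absorbs the result into $a^{-\beta}2^{-64|J|-1}$ by taking $a$ large. That part is fine, up to the fact that the paper tracks the sharper $\delta$-power bound $\|\phi_{J+1}-\Id\|_0\lesssim \delta_q^{1/2}\delta_{q+1}^{-1/2}\delta_{q+2}^{-1/2}\delta_{q+4}^{-1}\lambda_{J+1}^{-1}\leq a^{-\alpha/4}\delta_{q+1}^{-10}\lambda_{J+1}^{-1}$ rather than the lossier $\lambda_{J+1}^{-1+\beta}$ you derive; with only the latter you could still close \eqref{inductive Phiqj C0} but not \eqref{conclusión stages 2}, so this loss matters.

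The genuine gap is in your treatment of the inverse. You write $\Phi_{J+1}^{-1}=\Phi_J^{-1}\circ\phi_{J+1}^{-1}$ and apply the mean-value estimate, picking up the factor $\|\Phi_J^{-1}\|_1\leq\delta_{q+1}^{-s(J)}$, which leaves you needing
\[
\delta_{q+1}^{-s(J)}\,\lambda_{J+1}^{-1+\beta}\;\leq\;\delta_{q+1}^{-10}\,\lambda_{J+1}^{-1},\qquad\text{i.e.}\qquad \delta_{q+1}^{-(s(J)-10)}\,\lambda_{J+1}^{\beta}\;\leq\;1.
\]
This is impossible: both factors on the left exceed $1$ (since $s(J)\geq 1280>10$ and $\lambda_{J+1}>1$), regardless of how small $\alpha$ is or how large $a$ is. You flag this as ``the main obstacle,'' but the fix you propose via \eqref{aux lambda negativa} runs in exactly the wrong direction. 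That estimate reads $\lambda_{J+1}^{-\beta}\leq\delta_{q+1}^{3000}$, i.e. $\lambda_{J+1}^{\beta}\geq\delta_{q+1}^{-3000}$: it tells you that a $\beta$-power of $\lambda_{J+1}$ \emph{dominates} an enormous negative power of $\delta_{q+1}$. Using it to bound $\lambda_{J+1}^{\beta}$ from above by a power of $\delta_{q+1}^{-1}$ is not possible, and inserting it into your inequality only makes the left side larger. The problem is also not cured by replacing $\lambda_{J+1}^{-1+\beta}$ by the sharper bound $a^{-\alpha/4}\delta_{q+1}^{-10}\lambda_{J+1}^{-1}$: the product is then $\delta_{q+1}^{-s(J)-10}\,a^{-\alpha/4}\,\lambda_{J+1}^{-1}$, which still exceeds $\delta_{q+1}^{-10}\lambda_{J+1}^{-1}$ by the factor $\delta_{q+1}^{-s(J)}a^{-\alpha/4}=a^{4\alpha s(J)(b^{q+1}-1)-\alpha/4}\gg 1$. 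In short, any argument that lets the Lipschitz constant $\|\Phi_J^{-1}\|_1$ multiply into the inverse estimate cannot close \eqref{conclusión stages 2} as stated. The paper's own proof does not go through a mean-value estimate with $\|\Phi_J^{-1}\|_1$: for the forward direction it uses the exact identity $\|\Phi_{J+1}-\Phi_J\|_0=\|\phi_{J+1}-\Id\|_0$, and for the inverse it simply asserts an ``analogous bound'' based on $\phi_0^{-1}$ and $\phi_c^{-1}$ satisfying the same estimates as $\phi_0$ and $\phi_c$ -- that is, it treats $\|\Phi_{J+1}^{-1}-\Phi_J^{-1}\|_0$ as if it too reduced to $\|\phi_{J+1}^{-1}-\Id\|_0$ without a Lipschitz penalty. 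Whatever one thinks of the brevity of that step, your proposal does not reproduce it; your route introduces a factor that destroys the estimate, and the proposed mechanism to absorb it does not exist.
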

\begin{proof}
	Using (\ref{estimates phi0}) and (\ref{estimates phic}) we obtain
	\begin{align*}
		\norm{\Phi_{J+1}-\Phi_J}_0&=\norm{\phi_{J+1}-\Id}_0\leq \norm{\phi_c\circ \phi_0-\phi_0}_0+\norm{\phi_0-\Id}_0\\&\lesssim \lambda_{J+1}^{-1-2\beta}+ \delta_{q}^{1/2}\delta_{q+2}^{-1/2}\eta^{-1}\lesssim \delta_q\delta_{q+2}^{-1/2}\delta_{q+4}^{-1}\lambda_{J+1}^{-1+2\alpha} \\ &\lesssim \delta_q^{1/2}\delta_{q+1}^{-1/2}\delta_{q+2}^{-1/2}\delta_{q+4}^{-1}\lambda_{J+1}^{-1}
	\end{align*}
	where we have substituted (\ref{def eta}). One can check that the following inequality holds for any $q\geq 0$:
	\begin{align*}
		\log_a\left(\frac{\delta_q^{1/2}}{\delta_{q+1}^{1/2}\delta_{q+2}^{1/2}\delta_{q+4}}\right)&=-2\alpha(b^q-1)+2\alpha(b^{q+1}-1)+2\alpha(b^{q+2}-1)+4\alpha(b^{q+4}-1) \\ &=2\alpha[b^q(2b^4+b^2+b-1)-3]\leq 2\alpha[ 20(b^{q+1}-1)-2^{-3}],
	\end{align*}
	where $b=\frac{3}{2}$. Since $a>1$, we deduce
	\begin{equation}
		\frac{\delta_q^{1/2}}{\delta_{q+1}^{1/2}\delta_{q+2}^{1/2}\delta_{q+4}}\leq a^{-\alpha/4}\delta_{q+1}^{-10}.
		\label{aux deltas diffeo}
	\end{equation}
	Hence, taking $a>1$ sufficiently large (depending on $\alpha$) so as to compensate the numerical constants, we finally obtain
	\[\norm{\Phi_{J+1}-\Phi_J}_0\leq \frac{1}{4}\delta_{q+1}^{-10}\lambda_{J+1}^{-1}.\]
	Since $\phi_0^{-1}$ and $\phi_c^{-1}$ satisfy the same estimates as $\phi_0$ and $\phi_c$, an analogous bound holds for $\Phi_{J+1}^{-1}$. Taking into account that $\delta_{q(J+1)+1}\leq \delta_{q(J)+1}$, we conclude (\ref{conclusión stages 2}). 
	
	On the other hand, by further reducing $\alpha$, if necessary, we also have
	\begin{align*}
		\norm{\Phi_{J+1}-\Phi_J}_0+\norm{\Phi_{J+1}^{-1}-\Phi_J^{-1}}_0&\stackrel{(\ref{cota delta en función de lambdaJ+1})}{\leq} \frac{1}{2}\lambda_{J+1}^{-1/2}=\frac{1}{2}a^{-(b^{1/64}-1)/2}a^{-(b^{\abs{J+1}}-b^{1/64})/2}\\&\leq \frac{1}{2}a^{-\ln(b)/128}a^{-b^{1/64}\ln(b)\abs{J}/2}\leq \frac{1}{2}a^{-\beta}\hspace{0.5pt}2^{-64\abs{J}}
	\end{align*}
	where we have also taken $a>1$ to be sufficiently large. Combining this with (\ref{inductive Phiqj C0}) yields
	\begin{align*}
		\norm{\Phi_{J+1}-\Id}_0+\|\Phi_{J+1}^{-1}-\Id\|_0&\leq \frac{1}{2}a^{-\beta}\hspace{0.5pt}2^{-64\abs{J}}+ a^{-\beta}\left(1-2^{-64\abs{J}}\right)\\ &=a^{-\beta}\left(1-2^{-(64\abs{J}+1)}\right)=a^{-\beta}\left(1-2^{-64\abs{J+1}}\right),
	\end{align*}
	so (\ref{inductive Phiqj C0}) holds for $J+1$, too.
\end{proof}

Finally, we study the higher-order norms of the new diffeomorphism: 
\begin{lemma}
	If $a>1$ is sufficiently large (depending on $\alpha$), the diffeomorphism $\Phi_{J+1}$ satisfies (\ref{inductive Phiqj C1}), (\ref{inductive Phiqj C2}) and (\ref{conclusión stages 3}).
\end{lemma}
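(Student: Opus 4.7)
The plan is to apply the chain rule to $\Phi_{J+1}=\phi_c\circ\phi_0\circ\Phi_J$ and combine the bounds already available: (\ref{estimates phi0}) for $\phi_0$, (\ref{estimaciones psi}) for $\psi=\phi_0^{-1}$, (\ref{estimates phic})--(\ref{estimates phic-1}) for $\phi_c$ and $\phi_c^{-1}$, and the inductive hypothesis on $\Phi_J$. The estimates for $\phi_c$ are uniformly dominated by those for $\phi_0$ (indeed $\|\phi_c\|_k\lesssim \lambda_{J+1}^{k-1-2\beta}$ is negligible against $\|\phi_0\|_k\lesssim (\delta_q/\delta_{q+2})^{1/2}\lambda_{J+1}^k/\eta$ for $k=1,2$ in view of (\ref{aux lambda negativa}) and (\ref{relación mu eta})), so the presence of $\phi_c$ only inserts a harmless $1+O(\lambda_{J+1}^{-2\beta})$ factor.

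For (\ref{inductive Phiqj C1}) I write $D\Phi_{J+1}=(D\phi_{J+1}\circ\Phi_J)\,D\Phi_J$, multiply norms, and obtain
\[
\|\Phi_{J+1}\|_1\lesssim \|\phi_0\|_1\,\|\Phi_J\|_1\lesssim \Bigl(\tfrac{\delta_q}{\delta_{q+2}}\Bigr)^{1/2}\frac{\lambda_{J+1}}{\eta}\,\delta_{q+1}^{-s(J)}.
\]
Substituting $\eta=\delta_{q+4}\lambda_{J+1}^{1-2\alpha}$ from (\ref{def eta}) replaces $\lambda_{J+1}/\eta$ by $\delta_{q+4}^{-1}\lambda_{J+1}^{2\alpha}$, which by (\ref{cota delta en función de lambdaJ+1}) is bounded by a fixed positive power $\lambda_{J+1}^{C\alpha}$. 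Taking $\alpha$ small and $a$ large in the spirit of the argument leading to (\ref{aux deltas diffeo}), this power is absorbed by the increment $\delta_{q(J+1)+1}^{-(s(J+1)-s(J))}$ (with the appropriate sign flip at the boundary transition), producing (\ref{inductive Phiqj C1}) for $J+1$. Estimate (\ref{conclusión stages 3}) is an immediate corollary: from $D\Phi_{J+1}-D\Phi_J=[(D\phi_{J+1}-\Id)\circ\Phi_J]\,D\Phi_J$ the difference satisfies essentially the same bound as $D\Phi_{J+1}$, and $s(J+1)=s(J)+10$ in the non-boundary case (resp.\ the base change $\delta_{q+1}\to\delta_{q+1}$ in (\ref{conclusión stages 3}) at the boundary) provides the required slack. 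The inverse is handled identically by writing $\Phi_{J+1}^{-1}=\Phi_J^{-1}\circ\psi\circ\phi_c^{-1}$ and using (\ref{estimaciones psi}) for $\|\psi\|_1$.

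For (\ref{inductive Phiqj C2}) I apply the second-order chain rule,
\[
D^2\Phi_{J+1}=(D^2\phi_{J+1}\circ\Phi_J)\,(D\Phi_J)^{\otimes 2}+(D\phi_{J+1}\circ\Phi_J)\,D^2\Phi_J,
\]
and bound each summand factor by factor. Using $\|\phi_{J+1}\|_2\lesssim (\delta_q/\delta_{q+2})^{1/2}\lambda_{J+1}^2/\eta$ and the $C^1,C^2$ inductive hypotheses on $\Phi_J$, the two summands are comparable and their sum is of order $(\delta_q/\delta_{q+2})^{1/2}\delta_{q+4}^{-1}\lambda_{J+1}^{1+2\alpha}\delta_{q+1}^{-2s(J)}$. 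Compared with the target $\delta_{q(J+1)+1}^{-2s(J+1)}\lambda_{J+2}$, the discrepancy consists of the prefactor $\lambda_{J+1}^{O(\alpha)}$ (absorbed by $\lambda_{J+2}/\lambda_{J+1}\gtrsim \lambda_{J+1}^{\ln(b)/64}$, as in the proof of Lemma~\ref{lema técnico relaciones parámetros}) and a power of $\delta_{q+1}^{-1}$ (absorbed by the $\delta$-margin between $-2s(J)$ and $-2s(J+1)$). The estimate for $\Phi_{J+1}^{-1}$ is analogous.

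The main technical obstacle is the exponent bookkeeping at the boundary transition $(q,7,7)\to(q+1,0,0)$, where $s$ drops from $1910$ back to $1280$ while the base changes from $\delta_{q+1}$ to $\delta_{q+2}$. One must verify that the difference between $\delta_{q+1}^{-1910}$ and $\delta_{q+2}^{-1280}$, together with the explicit margin in $s$, dominates the combined loss $(\delta_q/\delta_{q+2})^{1/2}\delta_{q+4}^{-1}\lambda_{J+1}^{O(\alpha)}$ produced by the chain rule. This is exactly the kind of computation carried out in (\ref{aux deltas diffeo}) and is the step that dictates the specific numerical choices for $s(J)$ in Definition~\ref{indices convention} as well as the $\alpha$-small, $a$-large regime.
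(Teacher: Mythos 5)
Your approach is essentially the same as the paper's: apply the chain rule to $\Phi_{J+1}=\phi_c\circ\phi_0\circ\Phi_J$, observe that $\phi_c$ contributes only a negligible $1+O(\lambda_{J+1}^{-2\beta})$ factor (this is correct and matches the paper's bookkeeping via (\ref{estimates phic})--(\ref{estimates phic-1})), substitute $\eta$ and absorb the residual $\lambda_{J+1}^{O(\alpha)}$ factor using the computation of (\ref{aux deltas diffeo}) into the margin $s(J+1)-s(J)=10$, and do a case analysis at the boundary transition. One small stylistic difference: for (\ref{conclusión stages 3}) you use the identity $D\Phi_{J+1}-D\Phi_J=[(D\phi_{J+1}-\Id)\circ\Phi_J]\,D\Phi_J$, whereas the paper simply applies the triangle inequality, having first verified $\norm{\Phi_J}_1+\norm{\Phi_J^{-1}}_1\leq\frac12\delta_{q(J+1)+1}^{-s(J+1)}$; both give the same order of bound. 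You flag the boundary-transition exponent check $(q,7,7)\to(q+1,0,0)$ as the main delicate point without actually verifying it — this is precisely the arithmetic in (\ref{deltas e índices}) and the surrounding computation ($\log_a(\delta_{q(J+1)}^{3/2}/\delta_{q(J+1)+1})=2\alpha>0$), so you have the right idea but should carry it out, as a proof should not leave the one step ``that dictates the specific numerical choices'' undone.
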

\begin{proof}
	Using (\ref{estimates phi0}), we have 
	\[\norm{\phi_0}_1\lesssim \left(\frac{\delta_{q}}{\delta_{q+2}}\right)^{1/2}\frac{\lambda_{J+1}}{\eta} = \frac{\delta_q^{1/2}\lambda_{J+1}^{2\alpha}}{\delta_{q+2}^{1/2}\delta_{q+4}}\leq \frac{\delta_q^{1/2}}{\delta_{q+1}^{1/2}\delta_{q+2}^{1/2}\delta_{q+4}} \stackrel{(\ref{aux deltas diffeo})}{\leq} a^{-\alpha/4}\delta_{q+1}^{-10}.\]
	Meanwhile, by (\ref{estimates phic}) we have $\norm{\phi_c}_1\lesssim 1$. Taking $a>1$ sufficiently large (depending on $\alpha$) so as to compensate the numerical constants, we obtain
	\begin{equation}
		\norm{\phi_{J+1}}_1\leq \norm{\phi_c}_1\norm{\phi_0}_1\leq \frac{1}{4} \delta_{q+1}^{-10}. \label{aux norma 1 phi_J+1}
	\end{equation}
	Using (\ref{inductive Phiqj C1}), we have
	\[\norm{\Phi_{J+1}}_1\leq \norm{\phi_{J+1}}_1\norm{\Phi_J}_1\leq \frac{1}{4}\delta_{q(J)+1}^{-[10+s(J)]}.\]
	Unless $j(J)=l(J)=7$, we have $q(J+1)=q(J)$ and $s(J+1)=s(J)+10$. Thus, in that case we deduce
	\begin{equation}
		\delta_{q(J)+1}^{-[10+s(J)]}=\delta_{q(J+1)+1}^{-s(J+1)}.
		\label{deltas e índices}
	\end{equation}
	On the other hand, if $j(J)=l(J)=7$, we have
	\[\begin{cases}
		q(J)+1=q(J+1), \\ s(J)+10=1920, \\ s(J+1)=1280.
	\end{cases}\]
	Hence,
	\[\delta_{q(J)+1}^{-[10+s(J)]}=\delta_{q(J+1)}^{-1920}\leq \delta_{q(J+1)+1}^{-1280}=\delta_{q(J+1)+1}^{-s(J+1)}\]
	because
	\begin{align*}
		\log_a\left(\frac{\delta_{q(J+1)}^{3/2}}{\delta_{q(J+1)+1}}\right)&=4\alpha(b^{q(J+1)+1}-1)-6\alpha(b^{q(J+1)}-1)=2\alpha+4\alpha b^{q(J+1)}\left(b-\frac{3}{2}\right)\\&=2\alpha>0.
	\end{align*}
	In either case, (\ref{deltas e índices}) holds and we conclude
	\[\norm{\Phi_{J+1}}_1\leq\frac{1}{4}\delta_{q(J+1)+1}^{-s(J+1)}.\]
	
	Since $\phi_0^{-1}$ and $\phi_c^{-1}$ satisfy the same estimates as $\phi_0$ and $\phi_c$, an analogous bound holds for $\Phi_{J+1}^{-1}$. Therefore, we conclude 
	\[\norm{\Phi_{J+1}}_1+\norm{\Phi_{J+1}^{-1}}_1\leq\frac{1}{2}\delta_{q(J+1)+1}^{-s(J+1)},\]
	so (\ref{inductive Phiqj C1}) holds for $J+1$. On the other hand, if we separate in cases and argue as before, we deduce from (\ref{inductive Phiqj C1}) for $\Phi_J$  that
	\[\norm{\Phi_{J}}_1+\norm{\Phi_{J}^{-1}}_1\leq\delta_{q(J)+1}^{-s(J)}\leq \frac{1}{2}\delta_{q(J+1)+1}^{-s(J+1)}\]
	if $a>1$ is sufficiently large. Applying the triangle inequality yields (\ref{conclusión stages 3}). Concerning the $C^2$-norm, we have
	\[D\phi_{J+1}=D\phi_c\circ\phi_0\,D\phi_0,\]
	so
	\begin{align*}
		\norm{D\phi_{J+1}}_1\lesssim \norm{D\phi_c}_1\norm{D\phi_0}_0^2+\norm{D\phi_c}_0\norm{D\phi_0}_1&\lesssim \left(\frac{\delta_{q}}{\delta_{q+2}}\right)^{1/2}\frac{\lambda_{J+1}^2}{\eta}\\&\stackrel{(\ref{aux deltas diffeo})}{\leq} a^{-\alpha/4}\delta_{q+1}^{-10}\lambda_{J+1},
	\end{align*}
	where we have used (\ref{estimates phi0}) and (\ref{estimates phic}). Using this estimate along with (\ref{aux norma 1 phi_J+1}) and the inductive hypotheses (\ref{inductive Phiqj C1}) and (\ref{inductive Phiqj C2}) yields
	\begin{align*}
		\norm{D\Phi_{J+1}}_1&\lesssim \norm{D\phi_{J+1}}_1\norm{D\Phi_J}_0^2+\norm{D\phi_{J+1}}_0\norm{D\phi_J}_1\\&\lesssim a^{-\alpha/4}\delta_{q(J)+1}^{-[10+2s(J)]}\lambda_{J+1}+\delta_{q(J)+1}^{-[10+2s(J)]}\lambda_J\\&\stackrel{(\ref{otra relación entre las lambdas})}{\lesssim} (a^{-\alpha/4}+\lambda_{J+1}^{-2\beta})\delta_{q(J)+1}^{-2[10+s(J)]}\lambda_{J+1} \stackrel{(\ref{deltas e índices})}{\lesssim} (a^{-\alpha/4}+\lambda_{J+1}^{-2\beta})\delta_{q(J+1)+1}^{-2s(J+1)}\lambda_{J+1}.
	\end{align*}
	Choosing $a>1$ sufficiently large (depending on $\alpha$) so as to compensate the numerical constants, we obtain
	\[\norm{\Phi_{J+1}}_2\leq \frac{1}{2}\delta_{q(J+1)+1}^{-2s(J+1)}\lambda_{J+1}.\]
	Since $\phi_0^{-1}$, $\phi_c^{-1}$ and $\Phi_J^{-1}$ satisfy the same estimates as $\phi_0$, $\phi_c$ and $\Phi_J$, an analogous bound holds for $\Phi_{J+1}^{-1}$. We conclude (\ref{inductive Phiqj C2}).
\end{proof}

\section{Estimates on the velocity} \label{section estimates v}
We begin with some basic estimates:
\begin{lemma}
	For any $m\in \Lambda$ and any $N\geq 0$ we have
	\begin{align}
		\norm{b_m}_0&\leq 2\delta_q^{1/2}, \label{cota bm C0}\\
		\norm{b_m}_N&\lesssim \delta_q^{1/2}\mu^N. \label{cota bm CN}
	\end{align}
\end{lemma}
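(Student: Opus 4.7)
The proof is a direct computation from the definition $b_m = \sqrt{2}\chi_m \gamma$, where $\chi_m(x) = \chi(\mu x - m)$ with $\chi$ a fixed cutoff in $C_c^\infty((-3/4,3/4))$. Since the partition-of-unity relation $\sum_{m\in\ZZ^3}\chi(x-m)^2 = 1$ forces $\|\chi\|_0 \leq 1$, the $C^0$-bound follows immediately:
\[
\norm{b_m}_0 \leq \sqrt{2}\,\norm{\chi_m}_0 \norm{\gamma}_0 \leq \sqrt{2}\,\delta_q^{1/2} \leq 2\delta_q^{1/2},
\]
using the hypothesis (\ref{cota gamma stages C0}).

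For the higher-order bound, I would apply the Leibniz rule
\[
\norm{b_m}_N \lesssim \sum_{k=0}^N \norm{\chi_m}_k \norm{\gamma}_{N-k}.
\]
The chain rule applied to $\chi_m(x) = \chi(\mu x - m)$ gives $\norm{\chi_m}_k \lesssim \mu^k$ (with implicit constant depending only on $\chi$ and $k$), while hypothesis (\ref{cota gamma stages CN}) yields $\norm{\gamma}_{N-k} \lesssim \delta_q^{1/2}\ell^{-(N-k)}$. Therefore
\[
\norm{b_m}_N \lesssim \delta_q^{1/2}\sum_{k=0}^N \mu^k \ell^{-(N-k)}.
\]
By the comparison $\ell^{-1} \leq \mu$ established in (\ref{comparación ell mu}) (a consequence of \cref{lema técnico relaciones parámetros}), every term in the sum is dominated by $\mu^N$, and the bound (\ref{cota bm CN}) follows.

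There is no real obstacle here: the lemma is a routine application of Leibniz combined with the key parameter inequality $\mu \geq \ell^{-1}$, which was precisely the reason for setting up the parameter hierarchy in (\ref{relación mu ell}). The same reasoning will presumably be reused in subsequent lemmas to estimate $w_0$ and its derivatives, where the frequency $\lambda_{J+1}$ coming from the trigonometric factor $\cos\theta_m$ will dominate over both $\mu$ and $\ell^{-1}$.
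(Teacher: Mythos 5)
Your proof is correct and follows exactly the same route as the paper's: the $C^0$ bound is immediate from \eqref{cota gamma stages C0} and $\|\chi\|_0\leq1$, and the $C^N$ bound follows from Leibniz, the chain rule for $\chi_m(x)=\chi(\mu x-m)$, and the parameter inequality $\ell^{-1}\leq\mu$ from \eqref{comparación ell mu}, which makes the cutoff's derivatives dominate. Nothing to add.
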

\begin{proof}
	The bound (\ref{cota bm C0}) is a direct consequence of (\ref{cota gamma stages C0}). The bounds for the derivatives follow from (\ref{cota gamma stages CN}) and (\ref{def chim}). Remember that $\mu \geq \ell^{-1}$, so the derivatives of the cutoff $\chi_m$ dominate.
\end{proof}
This allows us to estimate $w_0$:
\begin{lemma}
	The correction $w_0$ satisfies
	\begin{equation}
		\norm{w_0}_0+\lambda_{J+1}^{-1}\norm{w_0}_1\leq  48\delta_{q}^{1/2}. \label{cota w0 exacta}
	\end{equation}
	In addition, for any $N\geq0$ we have
	\begin{equation}
		\norm{w_0}_N\lesssim \delta_{q}^{1/2}\lambda_{J+1}^N.
		\label{cota w0 con constantes}
	\end{equation}
\end{lemma}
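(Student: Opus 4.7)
The proof is essentially a direct computation from the definition $w_0(x)=\sum_{m\in\Lambda} b_m(x)\,\zeta\cos\theta_m(x)$ together with the bounds on the amplitudes $b_m$ just proved and the explicit form of the phase. Three facts do the work: (i) at every point $x\in\TT^3$ at most $8$ of the cutoffs $\chi_m$ (and hence at most $8$ of the $b_m$) are nonzero, by construction of $\chi$; (ii) the amplitude bounds $\norm{b_m}_0\leq 2\delta_q^{1/2}$ and $\norm{b_m}_N\lesssim\delta_q^{1/2}\mu^N$; and (iii) $\nabla\theta_m=l_m(\eta\,\xi_{J,m}+\lambda_{J+1}\,k_{J,m})$, so $|\nabla^N\theta_m|\lesssim\lambda_{J+1}^N$ for $N\geq 1$, and in fact $|\nabla\theta_m|\leq\eta+\lambda_{J+1}$.

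For the $C^0$ estimate, I would simply bound pointwise
\[
|w_0(x)|\leq \sum_{m\in\Lambda,\,x\in\supp\chi_m}|b_m(x)|\leq 8\cdot 2\delta_q^{1/2}=16\delta_q^{1/2}.
\]
For $\norm{w_0}_1$ I would differentiate to get
\[
\nabla w_0=\sum_m\bigl(\nabla b_m\,\zeta\cos\theta_m-b_m\,\zeta\otimes\nabla\theta_m\,\sin\theta_m\bigr),
\]
and using $\norm{\nabla b_m}_0\lesssim\delta_q^{1/2}\mu$ and $|\nabla\theta_m|\leq\eta+\lambda_{J+1}$ obtain
\[
\norm{w_0}_1\leq 8\bigl(C\delta_q^{1/2}\mu+2\delta_q^{1/2}(\eta+\lambda_{J+1})\bigr).
\]
Dividing by $\lambda_{J+1}$ and using $\mu\ll\lambda_{J+1}$ and $\eta\ll\lambda_{J+1}$ (which hold for $a$ large by \cref{lema técnico relaciones parámetros}), I can make the terms involving $\mu/\lambda_{J+1}$ and $\eta/\lambda_{J+1}$ arbitrarily small, so this contribution is at most $32\delta_q^{1/2}$, giving the stated bound $48\delta_q^{1/2}$ with room to spare.

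For the general $C^N$ estimate I would expand $\nabla^N(b_m\cos\theta_m)$ by the Leibniz rule, picking up factors of at most $\mu$ from each derivative hitting $b_m$ and factors of at most $\lambda_{J+1}$ (up to a constant absorbing $\eta$) from each derivative hitting $\theta_m$ (and, for higher-order derivatives of $\theta_m$, nothing, since $\theta_m$ is linear). Since $\mu\leq\lambda_{J+1}$, the term with all derivatives falling on the trigonometric factor dominates and yields $\lesssim\delta_q^{1/2}\lambda_{J+1}^N$; summing over the at most $8$ nonzero indices preserves the bound up to a constant absorbed by the $\lesssim$.

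There is no real obstacle: the only thing to be careful about is tracking the explicit constant $48$ in the first inequality, since the second is stated only up to $\lesssim$. This forces me to split the $C^1$ computation into the ``main'' part ($b_m\nabla\theta_m$, of size $\leq 2\delta_q^{1/2}\lambda_{J+1}(1+\eta/\lambda_{J+1})$) and an easily absorbed error ($\nabla b_m$, of size $\lesssim\delta_q^{1/2}\mu$), and to use that by \cref{lema técnico relaciones parámetros} both $\eta/\lambda_{J+1}$ and $\mu/\lambda_{J+1}$ can be made as small as desired by choosing $a$ large; this is exactly what lets the numerical constants close.
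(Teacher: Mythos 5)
Your proof is correct and follows the same route as the paper: bound $\|w_0\|_0$ by the pointwise observation that at most $8$ of the $\chi_m$ overlap plus $\|b_m\|_0\leq 2\delta_q^{1/2}$, then note that in $\nabla w_0$ the term $b_m\,\nabla\theta_m\sin\theta_m$ dominates because $\mu\ll\lambda_{J+1}$ and $\eta\ll\lambda_{J+1}$ once $a$ is large, which both closes the explicit constant in \eqref{cota w0 exacta} and gives \eqref{cota w0 con constantes} for general $N$ since $\theta_m$ is affine.
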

\begin{proof}
	Since at most 8 of the cutoffs $\chi_m$ are nonzero at any given point, it follows from (\ref{cota bm C0}) that $\norm{w_0}_0\leq 16\delta_{q}^{1/2}$. Regarding the higher norms, it follows from (\ref{cota bm CN}) that the derivatives of $\cos(\theta_m)$ dominate because $\lambda_{J+1}\gg \mu$. Thus, we deduce (\ref{cota w0 con constantes}). Furthermore, it follows from the definition of the parameters that $\mu^{-1}\lambda_{J+1}$ can be made arbitrarily small by taking $a>1$ sufficiently large. Hence, it can compensate the implicit constant that appears in \cref{cota bm CN} for $N=1$. Therefore, the term that comes from differentiating $\cos(\theta_m)$ dominates and we conclude (\ref{cota w0 exacta}).
\end{proof}
Combining this with (\ref{otra relación entre las lambdas}), we see that the $C^1$-norm of the perturbation $w_0$ is much larger than the $C^1$-norm of $v_J$. In particular, we have
\begin{equation}
	\norm{v_J+w_0}_1\lesssim \delta_q^{1/2}\lambda_{J+1}.
	\label{cota C1 suma}
\end{equation}

So far, we have estimated the \textit{ad hoc} field $w_0$. However, to have the complete field $v_{J+1}$ under control, we must check that the correction $w_c$ is, indeed, very small. For this, we need to understand how the diffeomorphism $\phi_{J+1}$ acts on $v_J$. 
\begin{lemma}
	The correction $w_c$ satisfies
	\begin{equation}
		\label{estimates wc}
		\norm{w_c}_0+\lambda_{J+1}^{-1}\norm{w_c}_1\leq \lambda_{J+1}^{-\beta}.
	\end{equation}
\end{lemma}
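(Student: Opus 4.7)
The plan is to unwind the pushforward formula, isolate the leading-order term, verify it exactly matches $w_0$, and show all remaining pieces are $O(\lambda_{J+1}^{-\beta})$ using the parameter relations in Lemma~\ref{lema técnico relaciones parámetros}.

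First I would reduce to the case where $\phi_{J+1}$ is replaced by $\phi_0$. Write $v_{J+1} = (D\phi_c \, v^*) \circ \phi_c^{-1}$ where $v^* := (D\phi_0\, v_J)\circ \phi_0^{-1}$, and use $\norm{\phi_c - \Id}_N + \norm{\phi_c^{-1} - \Id}_N \lesssim \lambda_{J+1}^{N-1-2\beta}$ from \eqref{estimates phic}--\eqref{estimates phic-1}. Together with the already established bound $\norm{v^*}_0 \lesssim \delta_q^{1/2}$ and $\norm{v^*}_1 \lesssim \delta_q^{1/2} \lambda_{J+1}$ (which follows from \eqref{cota C1 suma} once the main estimate for $v^* - v_J - w_0$ is in place, so this will be a bootstrap), a routine chain-rule argument shows that replacing $v_{J+1}$ by $v^*$ introduces errors controlled by $\lambda_{J+1}^{-\beta}$ in the norm on the left-hand side.

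The heart of the proof is therefore to show that $v^* - v_J - w_0$ satisfies the desired bound. I would rewrite this using $\phi_0 = \psi^{-1}$ as
\[
v^*(y) = D\phi_0(\psi(y))\, v_J(\psi(y)) = [D\psi(y)]^{-1}\, v_J(\psi(y)).
\]
The matrix $D\psi$ has the rank-one structure $D\psi = \Id - \zeta \otimes A$ with
\[
A := \sum_{m\in \Lambda}\!\left[\frac{\nabla a_m}{l_m\eta}\sin\theta_m + a_m\!\left(\tfrac{\lambda_{J+1}}{\eta}k_{J,m}+\xi_{J,m}\right)\!\cos\theta_m\right],
\]
so by Sherman--Morrison $[D\psi]^{-1} = \Id + (1-f_c)^{-1}\,\zeta\otimes A$, where $f_c = \zeta\cdot A$ has the smallness from Lemma~\ref{lema fc}. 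Thus
\[
v^*(y) - v_J(y) = \bigl[v_J(\psi(y)) - v_J(y)\bigr] + \frac{A(y)\cdot v_J(\psi(y))}{1-f_c(y)}\,\zeta.
\]
The first bracket is controlled by $\norm{v_J}_1\norm{\psi-\Id}_0 \lesssim \mu\cdot \delta_q^{1/2}\delta_{q+2}^{-1/2}\eta^{-1} \lesssim \lambda_{J+1}^{-3\beta}$ via \eqref{relación mu eta}. For the second term, the $\frac{\lambda_{J+1}}{\eta}k_{J,m}$ contribution is the geometrically critical one: since $k_{J,m}\perp u_{J,m}$ by construction, $k_{J,m}\cdot v_J(y)$ is of order $\mu^{-1}\norm{v_J}_1$ on $\supp\chi_m$, which combined with $a_m\sim \delta_q^{1/2}\delta_{q+2}^{-1/2}$ and the $\lambda_{J+1}/\eta$ prefactor yields smallness via \eqref{cociente de parámetros}. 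The $\xi_{J,m}\cos\theta_m$ contribution produces
\[
\sum_{m\in\Lambda} a_m(y)\bigl(\xi_{J,m}\cdot v_J(y)\bigr)\zeta\cos\theta_m(y),
\]
and by the definition \eqref{def am} of $a_m$ together with $\xi_{J,m}\cdot u_{J,m}$ appearing in the denominator, this equals $w_0(y)$ up to the error $a_m\,\xi_{J,m}\cdot(v_J(y)-u_{J,m})$, again bounded via $\mu^{-1}\norm{v_J}_1\delta_q^{1/2}\delta_{q+2}^{-1/2}$ and \eqref{cociente de parámetros}. The $\nabla a_m\sin\theta_m$ contribution is smaller still, and the factor $(1-f_c)^{-1}-1$ is absorbed using \eqref{estimates fc CN}.

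The $C^1$ bound is obtained by the same decomposition, the only new feature being that each derivative of a trigonometric factor costs a $\lambda_{J+1}$, which is precisely the scaling built into the statement. The main obstacle is bookkeeping: one must verify that each of the several error types (Taylor remainders of $v_J$ at scale $\eta^{-1}$ and $\mu^{-1}$, non-orthogonality errors from $k_{J,m}\cdot v_J$, the $f_c$ denominator correction, the $\phi_c$ correction) individually absorbs its implicit constants and powers of $\delta_q$'s into a negative power of $\lambda_{J+1}$ via the relations \eqref{aux lambda negativa}--\eqref{cociente de parámetros}. Once this is done, taking $a$ large yields the clean bound $\lambda_{J+1}^{-\beta}$.
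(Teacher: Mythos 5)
Your proposal is correct in substance, and it takes a genuinely different route from the paper. The paper never inverts $D\psi$ explicitly; instead it exploits the fixed-point identity $\phi_0 = \Id+\sum_m\frac{a_m\circ\phi_0}{l_m\eta}\zeta\sin\theta_m$ from~\eqref{identity phi0}, differentiates it (which produces a self-consistent equation for $D\phi_0$ because of the chain rule on $a_m\circ\phi_0$), then computes $D\phi_0\,v_J$ directly and uses $\theta_m\circ\phi_0=\theta_m$ to compose with $\phi_0^{-1}$. Your route avoids the self-referential equation entirely: since $v^*(y)=D\phi_0(\psi(y))\,v_J(\psi(y))=[D\psi(y)]^{-1}v_J(\psi(y))$ and $D\psi=\Id-\zeta\otimes A$ is a rank-one perturbation of the identity, Sherman--Morrison gives the closed form $[D\psi]^{-1}=\Id+(1-A\cdot\zeta)^{-1}\zeta\otimes A$ at once, with $A\cdot\zeta$ coinciding (up to a sign) with the paper's $f_c$. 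This is arguably cleaner conceptually, because it makes the rank-one structure that governs the whole cancellation completely explicit, and it makes visible at a glance that the prefactor of $\zeta$ is a scalar divided by $1-f_c$. What the paper's route buys is that it stays in terms of $\phi_0$ itself, which is the object that also appears in the pushforward of the velocity and whose bounds (\ref{estimates phi0}) are used elsewhere, so there is less back-and-forth between $\psi$ and $\phi_0$.

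A few small points to tidy up if you were to write this out in full. You write $v_J(y)$ inside the $\xi_{J,m}$-contribution where the exact Sherman--Morrison identity gives $v_J(\psi(y))$; this is fine because replacing $v_J(\psi(y))$ by $v_J(y)$ costs another $\norm{v_J}_1\norm{\psi-\Id}_0\lesssim\lambda_{J+1}^{-3\beta}$ which you are already controlling, but it should be said. Also, the bootstrap-reduction paragraph needs $\norm{v^*}_0\lesssim 1$ (not $\delta_q^{1/2}$; the latter is the size of $v^*-v_J$), which is what the paper records in~\eqref{tamaño widetildevJ+1 C0}. Finally, for the $C^1$ bound, the derivative of $\frac{A\cdot v_J(\psi)}{1-f_c}$ produces, besides the expected $\lambda_{J+1}$-from-trigonometry terms, a quadratic-in-$A$ term of size $|A|^2\norm{v_J}_1\sim\delta_q\delta_{q+2}^{-1}(\lambda_{J+1}/\eta)^2\delta_{q^\ast}^{1/2}\lambda_J$ coming from $(D\psi)^T$ hitting $Dv_J$. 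This is still $\lesssim\lambda_{J+1}^{1-\beta}$ for $\alpha$ small via $\lambda_J\lesssim\lambda_{J+1}^{1-7\beta}$, but it does not come purely from "a trigonometric derivative costing $\lambda_{J+1}$", so it deserves an explicit line. None of these are gaps in the method; they are just bookkeeping one would need to make rigorous.
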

\begin{proof}
	Let $\widetilde{v}_{J+1}$ be the pushforward of $v_J$ by $\phi_0$, that is,
	\[\widetilde{v}_{J+1}\coloneqq (D\phi_0\,v_J)\circ \phi_0^{-1}.\]
	Since $\phi_c$ is very close to the identity, we expect this to be the dominating term in $v_{J+1}$. It follows from (\ref{identity phi0}) that
	\begin{align*}
		D\phi_0\,v_J=v_J&+\sum_{m\in\Lambda}\left[(b_m\circ \phi_0)\,\zeta\cos(\theta_m)+\frac{a_m\circ\phi_0}{l_m\eta}[(v_J-u_m)\cdot\nabla\theta_m]\,\zeta \cos(\theta_m)\right]\\&+\sum_{m\in\Lambda}\frac{1}{l_m\eta}[v_J\cdot\nabla(a_m\circ \phi_0)]\,\zeta\sin(\theta_m).
	\end{align*}
	Taking into account \cref{fase invariante}, we obtain
	\begin{align*}
		\widetilde{v}_{J+1}=v_J\circ\phi_0^{-1}+w_0&+\sum_{m\in \Lambda}\frac{a_m}{l_m\eta}[(v_J\circ\phi_0^{-1}-u_m)\cdot\nabla\theta_m]\,\zeta \cos(\theta_m)\\&+\sum_{m\in\Lambda}\frac{1}{l_m\eta}(\nabla a_m\cdot \widetilde{v}_{J+1})\,\zeta\sin(\theta_m).
	\end{align*}
	
	We will prove the desired estimates using the following decomposition:
	\begin{equation}
		\norm{w_c}_N=\norm{v_{J+1}-(v_0+w_0)}_N\leq \norm{v_{J+1}-\widetilde{v}_{J+1}}_N+\norm{\widetilde{v}_{J+1}-(v_J+w_0)}_N. \label{decomposition wc}
	\end{equation}
	We begin by studying the difference
	\begin{align}
		\widetilde{v}_{J+1}-(v_J+w_0)=(v_J\circ\phi_0^{-1}-v_J)&+\sum_{m\in \Lambda}\frac{a_m}{l_m\eta}[(v_J\circ\phi_0^{-1}-u_m)\cdot\nabla\theta_m]\,\zeta \cos(\theta_m) \nonumber\\&+\sum_{m\in\Lambda}\frac{1}{l_m\eta}(\nabla a_m\cdot \widetilde{v}_{J+1})\,\zeta\sin(\theta_m). \label{diferencia widetildevJ+1}
	\end{align}
	By (\ref{derivative vqj}) and (\ref{estimates phi0}), for any $x\in \supp a_m$ we have
	\begin{align}
		\begin{split}
			\label{nabla theta aprovechando ortogonalidad}
			\abs{(v_J(\phi_0^{-1}(x))-u_m)\cdot\nabla\theta_m}&\lesssim \lambda_{J+1}\abs{v_J(\phi_0^{-1}(x))-v_J(\mu^{-1}m)}\\&\lesssim \delta_{q^\ast}^{1/2}\lambda_J\lambda_{J+1}\left(\abs{\phi_0^{-1}(x)-x}+\abs{x-\mu^{-1}m}\right) \\ &\lesssim \delta_{q^\ast}^{1/2}\lambda_J\lambda_{J+1}\left(\delta_{q}^{1/2}\delta_{q+2}^{-1/2}\eta^{-1}+\mu^{-1}\right) \\
			&\lesssim \delta_{q^\ast}^{1/2}\lambda_J\lambda_{J+1}\mu^{-1}.
		\end{split}
	\end{align}
	Substituting this estimate in (\ref{diferencia widetildevJ+1}), along with (\ref{derivative vqj}), (\ref{estimates phi0}) and (\ref{tamaño am CN}), yields
	\begin{align*}
		\norm{\widetilde{v}_{J+1}-(v_J+w_0)}_0\lesssim \frac{\delta_{q}}{\delta_{q+2}^{1/2}}\frac{\lambda_J}{\eta}+\left(\frac{\delta_{q}\delta_{q^\ast}}{\delta_{q+2}}\right)^{1/2}\frac{\lambda_J\lambda_{J+1}}{\mu\eta}+\left(\frac{\delta_{q}}{\delta_{q+2}}\right)^{1/2}\frac{\mu}{\eta}\norm{\widetilde{v}_{J+1}}_0.
	\end{align*}
	Since $\delta_{q^\ast}\geq \delta_q$ and $\lambda_{J+1}\gg\eta$, the second term is larger than the first. By (\ref{relación mu eta}), the factor multiplying $\norm{\widetilde{v}_{J+1}}_0$ can be made arbitrarily small by choosing $a>1$ sufficiently large. Therefore, it follows from (\ref{inductive j cota C0}) and (\ref{cota w0 exacta}) that
	\begin{equation}
		\norm{\widetilde{v}_{J+1}-(v_J+w_0)}_0\lesssim\left(\frac{\delta_{q}\delta_{q^\ast}}{\delta_{q+2}}\right)^{1/2}\frac{\lambda_J\lambda_{J+1}}{\mu\eta}\stackrel{(\ref{cociente de parámetros})}{\lesssim}\lambda_{J+1}^{-2\beta}.
		\label{difference with widetildev C0}
	\end{equation}
	In particular, by (\ref{inductive j cota C0}) and (\ref{cota w0 exacta}), this means that
	\begin{equation}
		\norm{\widetilde{v}_{J+1}}_0\lesssim 1.
		\label{tamaño widetildevJ+1 C0}
	\end{equation}
	
	Next, the $C^1$-norm of the first term on the right-hand side of (\ref{diferencia widetildevJ+1}) is bounded by
	\[\norm{v_J}_1\norm{\phi_0^{-1}}_1\lesssim \left(\frac{\delta_{q}\delta_{q^\ast}}{\delta_{q+2}}\right)^{1/2}\frac{\lambda_J\lambda_{J+1}}{\eta}\stackrel{(\ref{cociente de parámetros})}{\lesssim}\lambda_{J+1}^{1-2\beta}.\]
	Up to a multiplicative constant, the $C^1$-norm of the second term on the right-hand side of (\ref{diferencia widetildevJ+1}) can be estimated by
	\begin{align*}
		&\frac{1}{\eta}\max_{m\in \Lambda}\left[\left(\norm{a_m}_1+\lambda_{J+1}\norm{a_m}_0\right)\frac{\delta_{q^\ast}^{1/2}\lambda_J\lambda_{J+1}}{\mu}+\norm{a_m}_0\norm{v_J}_1\norm{\phi_0^{-1}}_1\lambda_{J+1}\right]\lesssim \\[5pt]&\hspace{\linewidth-200pt}\lesssim \left(\frac{\delta_{q}\delta_{q^\ast}}{\delta_{q+2}}\right)^{1/2}\frac{\lambda_J\lambda_{J+1}^2}{\mu\eta}\stackrel{(\ref{cociente de parámetros})}{\lesssim}\lambda_{J+1}^{1-2\beta},
	\end{align*}
	where we have used (\ref{nabla theta aprovechando ortogonalidad}). We have also used (\ref{relación mu eta}) to simplify the parameters and combine everything into a single term. Up to a multiplicative constant, the $C^1$-norm of the last term in (\ref{diferencia widetildevJ+1}) can be estimated by
	\begin{align*}
		\frac{1}{\eta}&\max_{m\in \Lambda}\left(\norm{a_m}_2\norm{\widetilde{v}_{J+1}}_0+\norm{a_m}_1\norm{\widetilde{v}_{J+1}}_0\lambda_{J+1}+\norm{a_m}_1\norm{\widetilde{v}_{J+1}}_1\right)\stackrel{(\ref{tamaño widetildevJ+1 C0})}{\lesssim} \\ &\lesssim\left(\frac{\delta_q}{\delta_{q+2}}\right)^{1/2}\frac{\mu\lambda_{J+1}}{\eta}+\left(\frac{\delta_{q}}{\delta_{q+2}}\right)^{1/2}\frac{\mu}{\eta}(\norm{v_J+w_0}_1+\norm{\widetilde{v}_{J+1}-(v_J+w_0)}_1)\\ &\stackrel{(\ref{cota C1 suma})}{\lesssim} \left(\frac{\delta_q}{\delta_{q+2}}\right)^{1/2}\frac{\mu\lambda_{J+1}}{\eta}+\left(\frac{\delta_{q}}{\delta_{q+2}}\right)^{1/2}\frac{\mu}{\eta}\norm{\widetilde{v}_{J+1}-(v_J+w_0)}_1 \\ &\stackrel{(\ref{relación mu eta})}{\lesssim} \lambda_{J+1}^{1-3\beta}+\lambda_{J+1}^{-3\beta}\norm{\widetilde{v}_{J+1}-(v_J+w_0)}_1.
	\end{align*}
	Since $\lambda_{J+1}^{-3\beta}$ can be made arbitrarily small by choosing $a>1$ sufficiently large, after substituting the estimate for each term in (\ref{diferencia widetildevJ+1}), we conclude
	\begin{equation}
		\norm{\widetilde{v}_{J+1}-(v_J+w_0)}_1\lesssim\lambda_{J+1}^{1-2\beta}.
		\label{difference with widetildev C1}
	\end{equation}
	In particular, combining this with (\ref{cota C1 suma}) we have
	\[\norm{\widetilde{v}_{J+1}}_1\lesssim \delta_q^{1/2}\lambda_{J+1}.\]
	
	It will be also useful to estimate
	\begin{align*}
		\norm{D\widetilde{v}_{J+1}\circ\phi_c^{-1} -D\widetilde{v}_{J+1}}_0&\leq 2\norm{\widetilde{v}_{J+1}-v_J-w_0}_1+2\norm{v_J}_1+\norm{Dw_0\circ\phi_c^{-1}-Dw_0}_0 \\&\lesssim  \lambda_{J+1}^{1-2\beta}+\delta_{q^\ast}^{1/2}\lambda_J+(\delta_q^{1/2}\lambda_{J+1}^2)\lambda_{J+1}^{-1-2\beta} \stackrel{(\ref{otra relación entre las lambdas})}{\lesssim}\lambda_{J+1}^{1-2\beta},
	\end{align*}
	where we have used (\ref{estimates phic}) with $N=0$ and (\ref{cota w0 con constantes}) with $N=2$.
	
	We will now focus on estimating the difference $v_{J+1}-\widetilde{v}_{J+1}$. Since $\phi_{J+1}=\phi_c\circ\phi_0$, we see that $v_{J+1}$ is the pushforward of $\widetilde{v}_{J+1}$ by the diffeomorphism $\phi_c$, that is,
	\begin{equation}
		v_{J+1}=(D\phi_c\,\widetilde{v}_{J+1})\circ \phi_c^{-1}.
		\label{expresión vJ+1 en función de widetilde}
	\end{equation}
	Hence
	\begin{align*}
		\norm{v_{J+1}-\widetilde{v}_{J+1}}_0&\leq \norm{[(D\phi_c-\Id)\widetilde{v}_{J+1}]\circ\phi_c^{-1}}_0+\norm{\widetilde{v}_{J+1}\circ\phi_c^{-1}-\widetilde{v}_{J+1}}_0 \\ &\lesssim \norm{\phi_c-\Id}_1+\norm{v_J+w_0}_1\norm{\phi_c-\Id}_0+\norm{\widetilde{v}_{J+1}-v_J-w_0}_0 \\&\lesssim \lambda_{J+1}^{-2\beta}+(\delta_q^{1/2}\lambda_{J+1})\lambda_{J+1}^{-1-2\beta}+\lambda_{J+1}^{-2\beta}\lesssim \lambda_{J+1}^{-2\beta},
	\end{align*}
	where we have used (\ref{estimates phic}), (\ref{cota C1 suma}) and (\ref{difference with widetildev C0}). Substituting this bound and (\ref{difference with widetildev C0}) into (\ref{decomposition wc}), we obtain
	\begin{equation}
		\norm{w_c}_0\lesssim \lambda_{J+1}^{-2\beta}.
		\label{estimate wc 2beta}
	\end{equation}
	Choosing $a>1$ sufficiently large so as to compensate the numerical constants, we obtain \[\norm{w_c}_0\leq \frac{1}{2}\lambda_{J+1}^{-\beta}.\]
	
	Concerning the $C^1$-norm, differentiating (\ref{expresión vJ+1 en función de widetilde}) yields
	\begin{align*}
		Dv_{J+1}-D\widetilde{v}_{J+1}=&[(D^2\phi_c\,\widetilde{v}_{J+1})\circ\phi_c^{-1}]\,D(\phi_c^{-1})\\&+[((D\phi_c-\Id)D\widetilde{v}_{J+1})\circ\phi_c^{-1}+D\widetilde{v}_{J+1}\circ\phi_c^{-1}-D\widetilde{v}_{J+1}]\,D(\phi_c^{-1})\\&+D\widetilde{v}_{J+1}[D(\phi_c^{-1})-\Id].
	\end{align*}
	Using (\ref{estimates phic-1}), we have
	\begin{align*}
		\norm{Dv_{J+1}-D\widetilde{v}_{J+1}}_0&\lesssim \norm{\phi_c}_2+\norm{\phi_c-\Id}_1\norm{\widetilde{v}_{J+1}}_1+\norm{D\widetilde{v}_{J+1}\circ\phi_c^{-1}-D\widetilde{v}_{J+1}}_0\\&\hspace{10pt}+\norm{\widetilde{v}_{J+1}}_1\norm{\phi_c^{-1}-\Id}_1 \\ &\lesssim \lambda_{J+1}^{1-2\beta}+\lambda_{J+1}^{-2\beta}(\delta_q^{1/2}\lambda_{J+1})+\lambda_{J+1}^{1-2\beta}+(\delta_q^{1/2}\lambda_{J+1})\lambda_{J+1}^{-2\beta}\\ &\lesssim \lambda_{J+1}^{1-2\beta}
	\end{align*}
	by (\ref{relación mu eta}) and (\ref{cociente de parámetros}). Substituting this and (\ref{difference with widetildev C1}) in (\ref{decomposition wc}) leads to
	\[\norm{w_c}_1\lesssim \lambda_{J+1}^{1-2\beta}.\]
	Again, taking $a>1$ large leads to the desired bound for $w_c$.
\end{proof}

Once we have estimated $w_0$ and $w_c$, it is easy to check that the new velocity $v_{J+1}$ satisfies all the desired properties. We begin with some simple estimates:
\begin{lemma}
	If $\alpha>0$ is sufficiently small and $a>1$ is sufficiently large (depending on $\alpha$), the new velocity $v_{J+1}$ satisfies the estimates (\ref{inductive j cota C0}) and (\ref{derivative vqj}) for $J+1$. In addition, (\ref{cambio iteración j}) holds.
\end{lemma}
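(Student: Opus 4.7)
The plan is to deduce all three estimates directly from the decomposition $v_{J+1}-v_J = w_0 + w_c$ together with the bounds (\ref{cota w0 exacta}) on $w_0$ and (\ref{estimates wc}) on $w_c$ that we already established, and the parameter inequalities of \cref{lema técnico relaciones parámetros}. No new constructions are required; this lemma is just the bookkeeping step that turns the detailed estimates on the perturbation into the inductive hypotheses at step $J+1$.

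First I would prove (\ref{cambio iteración j}). By the triangle inequality and the two cited estimates,
\[
\norm{v_{J+1}-v_J}_0 + \lambda_{J+1}^{-1}\norm{v_{J+1}-v_J}_1 \le 48\,\delta_q^{1/2} + \lambda_{J+1}^{-\beta}.
\]
By (\ref{aux lambda negativa}) and the definition $\delta_q = \lambda_q^{-4\alpha}$, choosing $a$ large enough gives $\lambda_{J+1}^{-\beta}\le \delta_{q+1}^{3000}\le \delta_q^{1/2}$, so the right-hand side is bounded by $49\,\delta_q^{1/2}\le 2^6\delta_q^{1/2}$, which is (\ref{cambio iteración j}).

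Next I would turn to the $C^0$ bound (\ref{inductive j cota C0}) at $J+1$. The key computation is that, for each of the three possible transitions $J\mapsto J+1$ (incrementing $l$, incrementing $j$ when $l=7$, or incrementing $q$ when $j=l=7$), one checks directly from (\ref{def M(J)}) that
\[
M(J+1)-M(J) = 2^{6-q(J)}.
\]
Indeed, when only $l$ increases this is obvious; when $l=7\mapsto 0$ and $j\mapsto j+1$ we get $(8(j+1)-8j-7)\,2^{6-q}=2^{6-q}$; and when $j=l=7$ and $q\mapsto q+1$, the jump in $2^{13}(1-2^{-q})$ equals $2^{12-q}$ while the drop in $(8j+l)2^{6-q}$ is $63\cdot 2^{6-q}$, whose difference is again $2^{6-q}$. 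Combined with the previous triangle inequality, it therefore suffices to verify that
\[
48\,\delta_q^{1/2}+\lambda_{J+1}^{-\beta}\le 2^{6-q(J)},
\]
which holds for $q=0$ because $\delta_0=1$ and the left-hand side is at most $49$, and for $q\ge 1$ because $\delta_q^{1/2} = a^{-2\alpha(b^q-1)}$ can be made smaller than $2^{-q}$ uniformly in $q\ge 1$ by taking $a$ large enough (the function $q/(b^q-1)$ being bounded for $q\ge 1$), while $\lambda_{J+1}^{-\beta}$ is negligible by (\ref{aux lambda negativa}).

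Finally, for (\ref{derivative vqj}) at $J+1$, I would observe that $q^\ast(J+1)=q(J)$ in all three transition cases, so the target estimate reads $\norm{v_{J+1}}_1\le 2^6\delta_q^{1/2}\lambda_{J+1}$. The inductive hypothesis (\ref{derivative vqj}) at $J$ together with (\ref{otra relación entre las lambdas}) gives
\[
\norm{v_J}_1\le 2^6\delta_{q^\ast(J)}^{1/2}\lambda_J \le 2^6\lambda_J\le 2^6\delta_q^{1/2}\lambda_{J+1}^{1-2\beta},
\]
while (\ref{cota w0 exacta}) and (\ref{estimates wc}) yield
$\norm{w_{J+1}}_1\le 48\,\delta_q^{1/2}\lambda_{J+1}+\lambda_{J+1}^{1-\beta}$. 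Adding the two and absorbing the $\lambda_{J+1}^{1-\beta}$ terms into $16\,\delta_q^{1/2}\lambda_{J+1}$ via (\ref{aux lambda negativa}) (taking $a$ sufficiently large) produces the clean bound $\norm{v_{J+1}}_1\le 2^6\delta_q^{1/2}\lambda_{J+1}$.

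The main (very mild) obstacle is the case analysis for $M(J+1)-M(J)$ and the verification that $48\,\delta_q^{1/2}\le 2^{6-q}$ uniformly in $q$; both are purely arithmetic and are resolved by the choice $b=3/2$ and by taking $a$ large depending on $\alpha$, so there is no genuine conceptual difficulty here. All other steps are direct applications of the triangle inequality and the previously proved estimates.
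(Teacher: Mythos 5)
Your proof is correct and follows essentially the same route as the paper's: decompose $v_{J+1}-v_J=w_0+w_c$, bound each piece with the already-established estimates, absorb the $\lambda_{J+1}^{-\beta}$ tail using~\eqref{aux lambda negativa}, and then check the increments of $M(\cdot)$ and the identity $q^\ast(J+1)=q(J)$ case by case. The only difference is cosmetic: you spell out the verification that $M(J+1)-M(J)=2^{6-q(J)}$ in all three transition cases, which the paper leaves implicit.
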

\begin{proof}
	By (\ref{aux lambda negativa}) and (\ref{estimates wc}), we have
	\[\norm{w_c}_0+\lambda_{J+1}^{-1}\norm{w_c}_1\leq \lambda_{J+1}^{-\beta}\leq \delta_{q+4}\leq \delta_q^{1/2},\]
	which, combined with (\ref{cota w0 exacta}), yields
	\[\norm{v_{J+1}-v_J}_0+\lambda_{J+1}^{-1}\norm{v_{J+1}-v_J}_1\leq 50\delta_q^{1/2},\] 
	so (\ref{cambio iteración j}) holds. Using this, along with (\ref{inductive j cota C0}), leads to
	\[\norm{v_{J+1}}_0\leq \norm{v_J}_0+\norm{v_{J+1}-v_{J}}_0\leq M(J)+2^6\delta_{q}^{1/2}.\]
	Note that
	\[\delta_{q}^{1/2}=\lambda_{q}^{-2\alpha}=a^{-2\alpha(b^{q}-1)}\leq a^{-2\alpha\ln(b)\,q},\]
	which can be made smaller than $2^{-q}$ by choosing $a>1$ sufficiently large. Hence,
	\begin{equation}
		\norm{v_{J+1}}_0\leq M(J)+2^6\delta_{q}^{1/2}\leq M(J)+2^{6-q}=M(J+1)
		\label{cota C0 vJ+1}
	\end{equation}
	by definition (\ref{def M(J)}). Hence, (\ref{inductive j cota C0}) holds for $J+1$, too. Next, we estimate the $C^1$-norm:
	\begin{align*}
		\norm{v_{J+1}}_1&\leq \norm{v_J}_1+\norm{v_{J+1}-v_J}_1\leq 2^6\delta_{q^\ast}^{1/2}\lambda_J+50\delta_{q}^{1/2}\lambda_{J+1}\\&\stackrel{(\ref{otra relación entre las lambdas})}{\leq} 2^6\delta_{q}^{1/2}\lambda_{J+1}^{1-2\beta}+50\delta_q^{1/2}\lambda_{J+1}\leq 2^6\delta_q^{1/2}\lambda_{J+1}
	\end{align*}
	for sufficiently large $a>1$. It follows from (\ref{def q*}) that $q^\ast(J+1)=q(J)$, so (\ref{derivative vqj}) holds for $J+1$. 
\end{proof}
\begin{remark}
	If $\gamma$ is smaller, we obtain better bounds:
	\begin{equation}
		\norm{v_{J+1}-v_J}_0\leq 8\max_{m\in\Lambda}\norm{b_m}_0+\norm{w_c}_0\leq16\norm{\gamma}_0+\lambda_{J+1}^{-\beta}\stackrel{(\ref{aux lambda negativa})}{\leq} 16\norm{\gamma}_0+\delta_{q+4}.
		\label{cota mejor si gamma pequeña}
	\end{equation}
\end{remark} 

Next, we check two conditions that are deeply related to the geometry of the construction:
\begin{lemma}
	If $\alpha>0$ is sufficiently small and $a>1$ is sufficiently large (depending on $\alpha$), the new velocity $v_{J+1}$ satisfies (\ref{crecimiento vJ}) and (\ref{perturbation almost perpendicular}).
\end{lemma}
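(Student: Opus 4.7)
The plan is to first prove (\ref{perturbation almost perpendicular}) for $J+1$, since this bound is the main tool for deducing (\ref{crecimiento vJ}). The reason for this ordering is that the direct pointwise inequality $|v_{J+1}|\geq |v_J\times\zeta|-|w_c|$ (which comes from the fact that $w_0$ is parallel to $\zeta$) combined with the hypothesis (\ref{angulo vJ enunciado prop}) only yields $|v_{J+1}|\gtrsim 2^{-3}\delta_{q+2}^{1/2}$, which falls short of the required $2^{-2}\delta_{q+2}^{1/2}$ by a factor of two. Instead one must exploit that $v_q\equiv v_{q(J)}$ itself is comfortably above $\tfrac12\delta_{q+2}^{1/2}$ on $\Omega_{q+2}$, together with the near-alignment of $v_{J+1}$ with $\hat v_q$ inherited from (\ref{perpendicularidad o tamaño}).

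For (\ref{perturbation almost perpendicular}), if $(j(J),l(J))=(7,7)$ then $J+1=(q+1,0,0)$, and by the indexing convention $v_{(q+1,0,0)}\equiv v_{q+1}=v_{q(J+1)}$, so the left-hand side vanishes. Otherwise $q(J+1)=q$ and, by inspection of the three possible transitions $J\to J+1$, the required increment in the right-hand side is exactly $16\delta_{q+3}^{1/2}$. Writing
\[
(v_{J+1}-v_q)\cdot\hat v_q = (v_J-v_q)\cdot\hat v_q + w_0\cdot\hat v_q + w_c\cdot\hat v_q,
\]
the first summand is controlled by the inductive hypothesis for $J$. For the second, (\ref{def w0})--(\ref{def bm}) together with (\ref{perpendicularidad o tamaño}) yield
\[
|w_0\cdot\hat v_q| = \Bigl|\sum_m \sqrt{2}\,\chi_m\,\gamma(\zeta\cdot\hat v_q)\cos\theta_m\Bigr|\leq \sqrt{2}\,\delta_{q+3}^{1/2}\sum_m\chi_m\leq 8\sqrt{2}\,\delta_{q+3}^{1/2},
\]
using that at most eight cutoffs $\chi_m$ overlap at any point. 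The third summand is bounded by $|w_c|\leq\lambda_{J+1}^{-\beta}\leq\delta_{q+4}$ thanks to (\ref{estimates wc}) and (\ref{aux lambda negativa}), and is therefore absorbed into the $16\delta_{q+3}^{1/2}$ budget for $a$ large.

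For (\ref{crecimiento vJ}), outside $U$ we have $v_{J+1}=v_J$, and combining the inductive hypothesis for $v_J$ with $v_J=v_0$ on $\Omega_{q+2}\setminus\Omega_{q^*(J)+2}$ and the defining inequality $|v_0|>\tfrac12\delta_{q+2}^{1/2}$ on $\Omega_{q+2}$ yields $|v_J|\geq 2^{-2}\delta_{q+2}^{1/2}$ there. The same reasoning applied to $v_q=v_{(q,0,0)}$ gives $|v_q|\geq\tfrac12\delta_{q+2}^{1/2}$ on $\Omega_{q+2}$ for $a$ large, since the bound $2^{-2}\delta_{q+1}^{1/2}$ on $\Omega_{q+1}$ dominates $\tfrac12\delta_{q+2}^{1/2}$ as soon as $\delta_{q+1}\geq 4\delta_{q+2}$. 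On $U\subset\Omega_{q+2}$, the bound from the previous step applied with $8j(J+1)+l(J+1)\leq 63$ gives
\[
|v_{J+1}|\geq |v_{J+1}\cdot\hat v_q|\geq |v_q|-|(v_{J+1}-v_q)\cdot\hat v_q|\geq \tfrac12\delta_{q+2}^{1/2}-2^{10}\delta_{q+3}^{1/2}\geq 2^{-2}\delta_{q+2}^{1/2}
\]
for $a$ sufficiently large, since $\delta_{q+3}^{1/2}/\delta_{q+2}^{1/2}\to 0$ as $a\to\infty$. The main obstacle is thus precisely the sequencing of the two estimates: the bookkeeping is arranged so that the single-stage increment from $w_0\cdot\hat v_q$ matches the single-unit increase of $8j+l$, enabling the cleaner bound on the $\hat v_q$-component to be leveraged in place of the cruder angle hypothesis (\ref{angulo vJ enunciado prop}).
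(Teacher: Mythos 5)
Your proof is correct and follows essentially the same route as the paper: first establish \eqref{perturbation almost perpendicular} for $J+1$ by splitting off the $w_0$ and $w_c$ contributions and using \eqref{perpendicularidad o tamaño} and \eqref{estimates wc}, then use this near-perpendicularity bound together with $|v_{q(J)}|\geq\tfrac12\delta_{q+2}^{1/2}$ on $\Omega_{q+2}$ to deduce \eqref{crecimiento vJ}. The index bookkeeping (the case $(j,l)=(7,7)$ being trivial and the single-unit increment in $8j+l$ matching the $16\delta_{q+3}^{1/2}$ budget) is exactly what the paper does.
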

\begin{proof}
	Let us write:
	\[\norm{[v_{J+1}-v_{q(J)}]\cdot \frac{v_{q(J)}}{\abs{v_{q(J)}}}}_0\leq \norm{w_0\cdot \frac{v_{q(J)}}{\abs{v_{q(J)}}}}_0+\norm{w_c}_0+\norm{[v_J-v_{q(J)}]\cdot \frac{v_{q(J)}}{\abs{v_{q(J)}}}}_0.\]
	Due to (\ref{perpendicularidad o tamaño}), we have
	\[\norm{w_0\cdot \frac{v_{q(J)}}{\abs{v_{q(J)}}}}_0\leq 8\max_{m\in\Lambda}\norm{\sqrt{2}\chi_m\gamma\,\zeta\cdot\frac{v_{q}}{\abs{v_{q}}}}_0\leq 2^{7/2}\delta_{q+3}^{1/2}.\]
	Combining this with (\ref{estimates wc}) and the inductive hypothesis (\ref{perturbation almost perpendicular}) for $v_J$ leads to
	\begin{align*}
		\norm{[v_{J+1}-v_{q(J)}]\cdot \frac{v_{q(J)}}{\abs{v_{q(J)}}}}_0&\leq 2^{7/2}\delta_{q+3}^{1/2}+\lambda_{J+1}^{-\beta}+16[8j(J)+l(J)]\delta_{q+3}^{1/2}\\&\stackrel{(\ref{aux lambda negativa})}{\leq} 16[8j(J)+l(J)+1]\delta_{q+3}^{1/2}\leq 2^{10}\delta_{q+3}^{1/2}.
	\end{align*}
	If $(j(J),l(J))\neq (7,7)$, then $q(J+1)=q(J)$, so the previous inequality implies (\ref{perturbation almost perpendicular}) for $J+1$. If $j(J)=l(J)=7$, then $v_{q(J+1)}=v_{J+1}$ (in $U$, see \cref{remark igual si varias disjuntas}), so (\ref{perturbation almost perpendicular}) holds trivially. 
	
	Next, we prove (\ref{crecimiento vJ}). If $a>1$ is sufficiently large, in $\Omega_{q+2}$ we have
	\begin{align*}
		\abs{v_{J+1}}&\geq \abs{v_{J+1}\cdot\frac{v_{q(J)}}{\abs{v_{q(J)}}}} \geq \abs{v_{q(J)}}- \norm{[v_{J+1}-v_{q(J)}]\cdot \frac{v_{q(J)}}{\abs{v_{q(J)}}}}_0\\&\geq 2^{-1}\delta_{q+2}^{1/2}-2^{10}\delta_{q+3}^{1/2} 
	\end{align*}
	by (\ref{inductive J igual fuera de omegaq+2}) and (\ref{crecimiento vJ}). Note that
	\[\frac{\delta_{q+2}}{\delta_{q+3}}=\left(\frac{\lambda_{q+3}}{\lambda_{q+2}}\right)^{4\alpha}=a^{4\alpha(b^{q+3}-b^{q+2})}\geq a^{4\alpha(b-1)},\]
	which can be made arbitrarily small by taking $a>1$ sufficiently large (not depending on $q$). Hence, we see that (\ref{crecimiento vJ}) holds for $J+1$.
\end{proof}

We finish studying the $H^{-1}$-norm of the perturbation:
\begin{lemma}
	If $\alpha>0$ is sufficiently small and $a>1$ is sufficiently large (depending on $\alpha$), there exist $A\in C^\infty(\TT^3,\RR^3)$ and $B\in C^\infty(\TT^3,\RR^{3\times 3})$ such that (\ref{cotas A y B}), (\ref{integral norma H-1}) and (\ref{cota H-1 J}) hold.
\end{lemma}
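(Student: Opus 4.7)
The plan is to decompose $v_{J+1}-v_J = w_0 + w_c$ according to the construction and to treat the two pieces separately. For the small corrector $w_c$, the pointwise estimate (\ref{estimates wc}) together with (\ref{aux lambda negativa}) gives $\|w_c\|_0 \leq \lambda_{J+1}^{-\beta} \leq \delta_{q+1}^{3000}$, so $w_c$ can be entirely absorbed into the $A$-component with no contribution to $B$. It remains to construct $A,B$ for the main oscillatory term $w_0 = \sum_{m\in\Lambda} b_m\zeta\cos\theta_m$.

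For $w_0$, the key structural observation is that the phase $\theta_m$ in (\ref{def thetam}) is linear, so the vector $v_m := \nabla\theta_m/|\nabla\theta_m|^2$ is constant in $x$, satisfies $v_m\cdot\nabla\theta_m = 1$, and has $|v_m|\sim 1/\lambda_{J+1}$. The identities $\cos\theta_m = v_m\cdot\nabla\sin\theta_m$ and $\sin\theta_m = -v_m\cdot\nabla\cos\theta_m$, applied to $\int f\cdot(b_m\zeta\cos\theta_m)\,dx$ and integrated by parts, yield after one step
\[
\int_{\TT^3} f\cdot(b_m\zeta\cos\theta_m)\,dx = \int_{\TT^3} Df:B_1^{(m)}\,dx + \int_{\TT^3} f\cdot A_1^{(m)}\,dx,
\]
with $B_1^{(m)} = -b_m\,\zeta\otimes v_m\sin\theta_m$ of size $\lesssim \delta_q^{1/2}/\lambda_{J+1}$ and residual $A_1^{(m)} = -(v_m\cdot\nabla b_m)\zeta\sin\theta_m$. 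Since $b_m$ varies on the slow scale $\mu$ by (\ref{cota bm CN}), each iteration of this IBP (alternating the roles of $\sin\theta_m$ and $\cos\theta_m$ on the residual $A$-branch) improves the residual amplitude by a factor $\mu/\lambda_{J+1} = \lambda_{J+1}^{-4\beta}$.

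Iterating $N$ times and collecting terms yields $\int f\cdot w_0 = \int Df:B_N + \int f\cdot A_N$, with residual bounded by $\|A_N\|_0 \lesssim \delta_q^{1/2}\lambda_{J+1}^{-4N\beta}$. Taking $N$ sufficiently large (depending only on $\beta$), (\ref{aux lambda negativa}) ensures $\|A_N\|_0 \leq \delta_{q+1}^{3000}$. Setting $A := A_N + w_c$ and $B := B_N$, the identity (\ref{integral norma H-1}) holds by construction, and (\ref{cota H-1 J}) follows from Cauchy--Schwarz in (\ref{integral norma H-1}) against test fields of unit $H^1$ norm, using $\|A\|_{L^2} + \|B\|_{L^2} \leq \|A\|_0 + \|B\|_0 \leq 2\delta_{q+1}^{3000}$.

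The most delicate point is the sharper bound $\lambda_{J+1}\|B\|_0 \leq \delta_{q+1}^{3000}$ required by (\ref{cotas A y B}), since the naive first-order IBP only gives $\|B_1^{(m)}\|_0 \sim \delta_q^{1/2}/\lambda_{J+1}$. The resolution is that each successive $B_k$ in the iteration contributes with size $\delta_q^{1/2}\mu^{k-1}/\lambda_{J+1}^k$, so further IBP on the $B$-branch (treating $Df$ as having an available derivative to spare) combined with the strict parameter balance $\mu/\lambda_{J+1} \leq \lambda_{J+1}^{-4\beta}$ from \cref{lema técnico relaciones parámetros} suppresses the accumulated $B$ below $\delta_{q+1}^{3000}/\lambda_{J+1}$. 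This is the main technical hurdle and requires carefully tracking the cancellations between the oscillatory factors $\sin\theta_m, \cos\theta_m$ across iterations, exploiting the phase invariance $\theta_m\circ\phi_0^{-1} = \theta_m$ from \cref{fase invariante} to avoid composition errors.
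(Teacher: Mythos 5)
Your first step reproduces the paper's construction exactly: the paper's $A$ is $w_c-\sum_m(v_m\cdot\nabla b_m)\zeta\sin\theta_m$ and its $B$ is $-\sum_m b_m(\zeta\otimes v_m)\sin\theta_m$, i.e.\ precisely your $w_c+\sum_m A_1^{(m)}$ and $\sum_m B_1^{(m)}$. The iteration of the integration by parts is where your argument departs from the paper and contains a genuine gap. For the $A$-estimate it is simply unnecessary: the single-step bound $\|A_1^{(m)}\|_0\lesssim\delta_q^{1/2}\mu/\lambda_{J+1}=\delta_q^{1/2}\lambda_{J+1}^{-4\beta}$ is, by (\ref{aux lambda negativa}), already $\leq\delta_{q+1}^{3000}\lambda_{J+1}^{-3\beta}$, with ample $\lambda_{J+1}^{-\beta}$-margin for absorbing implicit constants. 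For the $B$-estimate the iteration cannot help at all: the accumulated $B=B_1+B_2+\cdots$ is dominated by the leading term $B_1\sim\delta_q^{1/2}/\lambda_{J+1}$, since by your own computation each $B_k$ is smaller by a factor $(\mu/\lambda_{J+1})^{k-1}$, and adding ever smaller terms never shrinks $B_1$. Moreover, the ``further IBP on the $B$-branch'' you invoke would integrate $\int Df:B_1$ by parts, producing $D^2f$, which falls outside the target form $\int f\cdot A+Df:B$; there is no derivative of $f$ to spare.

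The actual source of the smallness required by (\ref{cotas A y B}) is the doubly-exponential frequency hierarchy, not cancellations across iterated integrations. The paper pairs the crude one-step bound $\|B\|_0\lesssim\lambda_{J+1}^{-1}$ with the relation $\lambda_{J+1}^{-1}\leq\lambda_J^{-1}\lambda_{J+1}^{-2\beta}$ from (\ref{otra relación entre las lambdas}), and then uses (\ref{aux lambda negativa}) to convert a factor $\lambda_{J+1}^{-\beta}$ into $\delta_{q+1}^{3000}$. It is the already-vast gap between consecutive frequencies---so that $\lambda_J/\lambda_{J+1}$ is itself of order $\lambda_{J+1}^{-2\beta}$---that closes the estimate, and this is the ingredient your argument never invokes. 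As a minor point, the phase invariance $\theta_m\circ\psi=\theta_m$ from \cref{fase invariante} plays no role in this lemma: there are no compositions with $\phi_0^{-1}$ in the IBP identity, so citing it here is a red herring.
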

\begin{proof}
	Since $\nabla\theta_m$ is a constant vector for any $m\in \Lambda$, we can write
	\[w_0=\sum_{m\in \Lambda}\left\{\Div\left[\frac{b_m}{\abs{\theta_m}^2}(\zeta\otimes\nabla\theta_m)\sin(\theta_m)\right]-\frac{\nabla\theta_m\cdot\nabla b_m}{\abs{\nabla\theta_m}^2}\,\zeta \sin(\theta_m)\right\}.\]
	Defining
	\begin{align*}
		A&\coloneqq w_c-\sum_{m\in\Lambda}\frac{\nabla\theta_m\cdot\nabla b_m}{\abs{\nabla\theta_m}^2}\,\zeta \sin(\theta_m), \\ B&\coloneqq -\sum_{m\in\Lambda}\frac{b_m}{\abs{\theta_m}^2}(\zeta\otimes\nabla\theta_m)\sin(\theta_m),
	\end{align*}
	we have
	\[\int_{\TT^3}f\cdot(v_{J+1}-v_J)=\int_{\TT^3}f\cdot(w_0+w_c)=\int_{\TT^3}f\cdot[A-\Div B]=\int_{\TT^3}(f\cdot A+Df:B),\]
	so (\ref{integral norma H-1}) holds. Let us check (\ref{cotas A y B}). By (\ref{cota bm CN}) and (\ref{estimate wc 2beta}), we have
	\[\norm{A}_0\lesssim \norm{w_c}_0+\max_{m\in \Lambda} \lambda_{J+1}^{-1}\norm{b_m}_1\lesssim \lambda_{J+1}^{-2\beta}+\lambda_{J+1}^{-1}\delta_q^{1/2}\mu\lesssim \lambda_{J+1}^{-2\beta}\stackrel{(\ref{aux lambda negativa})}{\leq} \delta_{q+1}^{3000}\lambda_{J+1}^{-\beta}.\]
	Regarding $B$, by (\ref{cota bm CN}) we have
	\[\norm{B}_0\lesssim \max_{m\in \Lambda} \lambda_{J+1}^{-1}\norm{b_m}_0\lesssim \lambda_{J+1}^{-1}\stackrel{(\ref{otra relación entre las lambdas})}{\leq}\lambda_{J}^{-1}\lambda_{J+1}^{-2\beta}\stackrel{(\ref{aux lambda negativa})}{\leq}\delta_{q+1}^{3000}\lambda_q^{-1}\lambda_{J+1}^{-\beta}.\]
	Choosing $a>1$ sufficiently large to compensate the numerical constants, we conclude $(\ref{cotas A y B})$. Finally, to deduce (\ref{cota H-1 J}), it suffices to use the following characterization of the $H^{-1}$-norm:
	\[\norm{u}_{H^{-1}(\TT^3)}=\inf\left\{\norm{A}_{L^2(\TT^3)}+\norm{B}_{L^2(\TT^3)}: \langle u, f \rangle = \int_{\TT^3}(f\cdot A+Df:B) \quad \forall f\in H^1(\TT^3) \right\},\]
	where $\langle \cdot, \cdot\rangle$ is the $H^{-1}$-pairing. A proof (of the scalar-valued version) of this characterization can be found in \cite[Chapter 5]{Evans}. The proof is for bounded domains, but it works on the torus, as well.
\end{proof}

In conclusion, the new velocity $v_{J+1}$ has all the desired properties. 

\section{Estimates on the Reynolds stress} \label{section estimates R}
We begin by constructing and estimating $M_1$ and $M_2$. First of all, taking into account that $\zeta\cdot\nabla\theta_m=0$ for any $m\in \Lambda$, we may write the function $\rho_1$ defined in (\ref{def rho1}) as
\begin{align*}
	\rho_1=&\sum_{m\neq m'}b_{m'}(\zeta\cdot \nabla b_m)\, \zeta\,[\cos(\theta_m-\theta_{m'})+\cos(\theta_m+\theta_{m'})] \\ &+\sum_{m\in \Lambda}b_m(\zeta\cdot\nabla b_m)\,\zeta\cos(2\theta_m).
\end{align*}
Since we also have $k_{J,m}\cdot u_{J,m}$ for any $m\in \Lambda$, we may write the function $\rho_2$ defined in (\ref{def rho2}) as
\[\rho_2=\sum_{m\in \Lambda}[(\zeta\cdot\nabla b_m)u_{J,m}+(u_{J,m}\cdot\nabla b_m)\zeta]\cos(\theta_m)-\sum_{m\in \Lambda}l_m\eta\, b_m(\zeta\cdot\xi_{J,m})\zeta \sin(\theta_m).\]
It follows from (\ref{cota bm CN}) that
\[\norm{\rho_1}_0+\lambda_{J+1}^{-1}\norm{\rho_1}_1+\norm{\rho_2}_0+\lambda_{J+1}^{-1}\norm{\rho_2}_1\lesssim \delta_{q}^{1/2}\eta.\]
By interpolation, we have
\begin{equation}
	\norm{\rho_1}_\alpha+\norm{\rho_2}_\alpha\lesssim \delta_{q}^{1/2}\eta\lambda_{J+1}^{\alpha}\stackrel{(\ref{def eta})}{=}\delta_{q}^{1/2}\delta_{q+4}\lambda_{J+1}^{1-\alpha}\leq \delta_{q+4}\lambda_{J+1}^{1-\alpha}.
	\label{norma beta rhos}
\end{equation}

On the other hand, by \cref{stationary phase lemma} we have
\[\norm{\rho_1}_{B^{-1+\alpha}_{\infty,\infty}}+\norm{\rho_1}_{B^{-1+\alpha}_{\infty,\infty}}\lesssim \frac{\delta_{q}^{1/2}\eta}{\lambda_{J+1}^{1-\alpha}}\left(1+\frac{\mu^N}{\lambda_{J+1}^{N-1}}\right)\]
for any $N\in \NN$. Choosing $N=(4\beta)^{-1}$ so that the term in parenthesis equals $2$ and using (\ref{def eta}) yields
\begin{equation}
	\norm{\rho_1}_{B^{-1+\beta}_{\infty,\infty}}+\norm{\rho_1}_{B^{-1+\beta}_{\infty,\infty}}\lesssim\delta_{q}^{1/2}\delta_{q+4}\lambda_{J+1}^{-\alpha}\leq \delta_{q+4}\lambda_{J+1}^{-\alpha}.
	\label{norma besov rhos}
\end{equation}
Since $f_1$ and $f_2$ are the divergence of a smooth symmetric matrix whose support is contained in $U$, it follows from (\ref{identidad de Green matrices}) that
\[\int_U f_i\cdot \xi=0 \qquad \forall \xi\in \ker D^s\]
for $i=1,2$. Therefore, by \cref{invertir divergencia matrices} there exist $M_1, M_2 \in C^\infty_c(U,\mathcal{S}^3)$ such that $\Div M_1=f_1$ and $\Div M_2=f_2$. We can then define $E_{J+1}$ as (\ref{def EJ+1}) and $R_{J+1}$ as (\ref{def RJ+1}). 

We have completed the definition of the new subsolution $(v_{J+1},p_{J+1},R_{J+1})$ and it only remains to estimate $E_{J+1}$. By \cref{invertir divergencia matrices}, $M_1$ and $M_2$ can be chosen so that
\begin{align}
	\norm{M_1}_0+\norm{M_2}_0&\lesssim \norm{\rho_1}_{B^{-1+\alpha}_{\infty\infty}}+\norm{\rho_2}_{B^{-1+\alpha}_{\infty\infty}}\lesssim \delta_{q+4}\lambda_{J+1}^{-\alpha},  \label{estimaciones R primera}\\
	\norm{M_1}_1+\norm{M_2}_1&\lesssim \norm{\rho_1}_\alpha+\norm{\rho_2}_\alpha\lesssim \delta_{q+4}\lambda_{J+1}^{1-\alpha},
\end{align}
where we have used (\ref{norma beta rhos}) and (\ref{norma besov rhos}). Note that the implicit constants depend on $U$. However, in \cref{section prueba prop steps} we will see that one can take $U=s U_1+p$, where $U_1$ is a fixed smooth domain, $s\in (0,1)$ and $p\in\RR^3$. Therefore,  \cref{invertir divergencia vectores} ensures that the constants remain uniformly bounded as we shrink the domain in subsequent steps, by letting $s$ become small. 

Next, we estimate the matrix $M_3$, which was defined in (\ref{def M3}). Since the support of each $\chi_m$ (and hence $b_m$) is contained in a cube of side $2\mu^{-1}$, we see that
\begin{equation}
	\norm{M_3}_0\lesssim \max_{m\in\Lambda}\norm{b_m}_0\norm{v_J}_1\mu^{-1}\lesssim \delta_q^{1/2}\delta_{q^\ast}^{1/2} \frac{\lambda_J}{\mu}\stackrel{(\ref{aux rel parámetros})}{\leq} \delta_{q+4}\lambda_{J+1}^{-\beta}.
\end{equation}
Regarding the $C^1$-norm, the dominating term is the derivative of the trigonometric function, as usual:
\begin{align}
	\begin{split}
		\norm{M_3}_1&\lesssim \max_{m\in\Lambda}\left(\norm{b_m}_1\norm{v_J}_1\mu^{-1}+\norm{b_m}_0\norm{v_J}_1+\norm{b_m}_0\norm{v_J}_1\mu^{-1}\lambda_{J+1}\right) \\ &\lesssim \delta_q^{1/2}\delta_{q^\ast}^{1/2} \lambda_J\mu^{-1}\lambda_{J+1}\stackrel{(\ref{aux rel parámetros})}{\leq} \delta_{q+4}\lambda_{J+1}^{1-\beta}.
		\label{cota M3 C1}
	\end{split}
\end{align}

Finally, it follows from (\ref{estimates wc}) and the inductive hypotheses (\ref{inductive j cota C0}) and (\ref{derivative vqj}), which we have already proved for $J+1$, that
\begin{align}
	\norm{w_c\otimes v_{J+1}+v_{J+1}\otimes w_c-w_c\otimes w_c}_0&\lesssim\lambda_{J+1}^{-\beta} \stackrel{(\ref{aux lambda negativa})}{\leq} \delta_{q+4}\lambda_{J+1}^{-\alpha}, \\
	\norm{w_c\otimes v_{J+1}+v_{J+1}\otimes w_c-w_c\otimes w_c}_1&\lesssim\lambda_{J+1}^{1-\beta}\stackrel{(\ref{aux lambda negativa})}{\leq} \delta_{q+4}\lambda_{J+1}^{1-\alpha}. \label{estimaciones R final}
\end{align}
Combining (\ref{estimaciones R primera})-(\ref{estimaciones R final}), we obtain
\[\norm{E_{J+1}}_0+\lambda_{J+1}^{-1}\norm{E_{J+1}}_1\lesssim \delta_{q+4}\lambda_{J+1}^{-\alpha}.\]
Taking $a>1$ sufficiently large to compensate the numerical constants leads to
\[\norm{E_{J+1}}_0+\lambda_{J+1}^{-1}\norm{E_{J+1}}_1\leq 2^{-8}\delta_{q+4}.\]
Therefore, the diffeomorphism $\Phi_{J+1}$ and the subsolution $(v_{J+1},p_{J+1},R_{J+1})$ satisfy all the desired properties. This concludes the proof of \cref{prop stages}.

\section{Proof of Proposition 2.2} \label{section prueba prop steps}
\subsection{Preliminaries}
Since at the present iteration we only wish to perturb the field in $\Omega_{q+2}$, we fix a cutoff function $\rho_{q+1}\in C^\infty(\TT^3,[0,1])$ that equals 1 on the set
\[\left(\TT^3\backslash \Omega_{q+2}\right)+\overline{B}\left(0,\,\delta_{q+1}^{1/2}/12\hspace{0.5pt}\right)\]
and whose support is contained in
\[\left(\TT^3\backslash \Omega_{q+2}\right)+B\left(0,\,\delta_{q+1}^{1/2}/6\hspace{0.5pt}\right).\]
Hence, the support of $(1-\rho_{q+1})$ is contained in $\Omega_{q+2}$ and the distance to $\Sigma_{q+2}$ is, at least, $\delta_{q+1}^{1/2}/12$. In addition, $\rho_{q+1}$ vanishes in a neighborhood of $\overline{\Omega}_{q+1}$ for sufficiently large $a>1$, by (\ref{distance sigmas}). On the other hand, by \cref{lemma cutoff} we may assume that
\begin{equation}
	\norm{\sqrt{\rho_{q+1}}}_N+\norm{\sqrt{1-\rho_{q+1}}\,}_N\leq C_N\delta_{q+1}^{-N/2} \qquad \forall N\geq 0
	\label{cotas rhoq+1}
\end{equation}
for some constants $C_N$ depending only on $N$.

To write the Reynolds stress in a form that we can correct, we will exploit the degree of freedom in the definition of subsolution. We define
\begin{align*}
	\widetilde{p}_q&\coloneqq p_q-2r\delta_{q+2}(1-\rho_{q+1}), \\
	\widetilde{R}_q&\coloneqq R_q-2r\delta_{q+2}(1-\rho_{q+1})\Id,
\end{align*}
where $r>0$ is the radius in \cref{geometric lemma}. We see that $(v_q,\widetilde{p}_q,\widetilde{R}_q)$ is also a subsolution. By (\ref{descomposición Rq}), we may write
\begin{equation}
	\label{descomposición Rtilde}
	\widetilde{R}_q=\rho_{q+1}R_0+\rho_q(1-\rho_{q+1}) R_0-2r\delta_{q+3}(1-\rho_{q+1})\left(\Id-\frac{S_q}{2r\delta_{q+3}}\right).
\end{equation}
Regarding each of the terms:
\begin{itemize}
	\item According to the inductive hypothesis (\ref{def Omegaq}), the first term can be ignored until the iteration $q+1$. Note that $\norm{\rho_{q+1}R_0}_0\leq \delta_{q+1}/4$ because of (\ref{crecimiento R0}) and the fact that $\rho_{q+1}$ vanishes on $\Omega_{q+1}$.
	\item The second term is large, of order $\delta_q$, but $v_0$ is in its kernel, by (\ref{def R0}). This will allow us to partially cancel this error while keeping control of the geometry.
	\item The third term does not have any orthogonality property, but it is very small, by (\ref{inductive decomposition error}). Thus, we will be able to partially cancel it while keeping control of the geometry.
\end{itemize}

\subsection{First stage} \label{first stage}
Let us focus on the second term in the decomposition (\ref{descomposición Rtilde}). To express $R_0$ in a suitable manner and to carry out the construction, we will need an orthogonal basis adapted to the field $v_q$. However, in general there does not exist a nonvanishing vector field $\zeta_0\in C^\infty(\Omega_{q+2},\RR^3)$ that is orthogonal to $v_q$, due to topological obstructions. Nevertheless, we can work locally.

We will restrict ourselves to small regions by means of cutoffs. We fix a smooth cutoff function $\sigma\in C^\infty_c\big(\left(-\frac{3}{4},\frac{3}{4}\right)^3\big)$ such that 
\[\sum_{m\in \ZZ^3}\sigma(x-m)^2=1.\]
For $m\in \ZZ^3$ we define $\sigma_m$ as the periodic extension of 
\[\sigma_m(x)\coloneqq \sigma(\ell^{-1}x-m),\]
where
\begin{equation}
	\ell\coloneqq \floor*{2^{4}\delta_{q+4}^{-1}\delta_{q-1}^{1/2} \lambda_q}^{-1}.
	\label{def ell}
\end{equation}
Note that $\delta_{q-1}^{1/2}\lambda_q\geq \delta_q^{1/2}\lambda_q\geq 1$. We also define the cubes $Q\coloneqq (-\frac34,\frac34)^3$ and
\[Q_m\coloneqq \ell m+\ell Q.\]
We choose a convex open set $Q\subset U_1\subset Q+B(0,\frac14)$ with smooth boundary and such that $\dist(Q,\partial U_1)\geq \frac18$ and then set
\[U_m\coloneqq \ell U_1+\ell m,\]
so that $Q_m\subset U_m\subset Q_m+ B(0,\ell/4)$.

By construction, $\supp \sigma_m\subset Q_m\subset U_m$. Since $\ell\leq \delta_{q+4}\ll \delta_{q+1}^{1/2}$ for sufficiently large $a$, we can find a subset $\Lambda\subset \ZZ^3$ such that
\[\sum_{m\in \Lambda}\sigma_m^2\equiv 1\qquad \text{on }\supp (1-\rho_{q+1})\]
and the closure of $U_m$ is contained in $\Omega_{q+2}$ for any $m\in \Lambda$. For each $m\in \Lambda$ we choose two unitary vectors $\zeta_{0,m}$ and $\zeta_{1,m}$ such that
\[\{v_q(\ell m)\,,\; \zeta_{0,m}\,,\; \zeta_{1,m}\}\]
is an orthogonal reference frame. We will check that we can control the angle at other points:
\begin{lemma}
	If $a>1$ is sufficiently large, for $i\in\{0,1\}$, any $m\in \Lambda$ and any $x\in U_m$ we have
	\begin{align}
		\abs{v_q(x)\times \zeta_{i,m}}&\geq 2^{-2}\delta_{q+2}^{1/2}, \label{angulo con zetaim} \\ \abs{\zeta_{i,m}\cdot \frac{v_q(x)}{\abs{v_q(x)}}}&\leq 2^4\delta_{q+4}\delta_{q+2}^{-1/2}. \label{perpendicularidad con zetaim}
	\end{align}
\end{lemma}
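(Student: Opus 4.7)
The plan is to reduce both inequalities to a single quantitative statement: since $\zeta_{i,m}$ is by construction orthogonal to $v_q(\ell m)$, everything is controlled by how much $v_q$ can oscillate over the tiny region $U_m$. The choice of $\ell$ in \eqref{def ell} was tuned so that this oscillation is of order $\delta_{q+4}$, which will be much smaller than the ambient size $\delta_{q+2}^{1/2}$ of $v_q$ on $\Omega_{q+2}$.

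First, I would estimate the diameter of $U_m$. Since $U_m=\ell U_1+\ell m$ with $U_1\subset Q+B(0,1/4)$ and $Q=(-\frac34,\frac34)^3$, any $x\in U_m$ satisfies $\abs{x-\ell m}\lesssim \ell$. Combining with the inductive bound \eqref{inductive derivative}, namely $\norm{v_q}_1\leq 2^6\delta_{q-1}^{1/2}\lambda_q$, this yields
\[
\abs{v_q(x)-v_q(\ell m)}\leq \norm{v_q}_1\abs{x-\ell m}\lesssim \ell\,\delta_{q-1}^{1/2}\lambda_q.
\]
By \eqref{def ell}, $\ell \leq 2^{-3}\delta_{q+4}\delta_{q-1}^{-1/2}\lambda_q^{-1}$ (using that $\delta_{q-1}^{1/2}\lambda_q\geq 1$, so the floor halves the argument at worst). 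Hence $\abs{v_q(x)-v_q(\ell m)}\leq C\delta_{q+4}$ for some explicit numerical constant~$C$.

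Next, I would establish the lower bound $\abs{v_q(\ell m)}\geq 2^{-1}\delta_{q+2}^{1/2}$ by splitting into cases. Since $\overline{U_m}\subset \Omega_{q+2}$, in particular $\ell m\in\Omega_{q+2}$. If $\ell m\in\Omega_{q+1}$, the inductive hypothesis \eqref{inductive growth} gives $\abs{v_q(\ell m)}\geq 2^{-2}\delta_{q+1}^{1/2}$, and \eqref{cociente deltas} (applied between $q+1$ and $q+2$) yields $\delta_{q+1}^{1/2}\geq 2\delta_{q+2}^{1/2}$. If instead $\ell m\in\Omega_{q+2}\setminus\Omega_{q+1}$, then by \eqref{inductive support perturbation} we have $v_q(\ell m)=v_0(\ell m)$, and the definition \eqref{def Omegaq} of $\Omega_{q+2}$ gives $\abs{v_0(\ell m)}>\tfrac12\delta_{q+2}^{1/2}$. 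Either way $\abs{v_q(\ell m)}\geq 2^{-1}\delta_{q+2}^{1/2}$.

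With these two estimates in hand, the rest is immediate. For \eqref{angulo con zetaim}, since $\zeta_{i,m}$ is a unit vector perpendicular to $v_q(\ell m)$,
\[
\abs{v_q(x)\times \zeta_{i,m}}\geq \abs{v_q(\ell m)\times \zeta_{i,m}}-\abs{(v_q(x)-v_q(\ell m))\times \zeta_{i,m}}\geq 2^{-1}\delta_{q+2}^{1/2}-C\delta_{q+4},
\]
which is at least $2^{-2}\delta_{q+2}^{1/2}$ once $a$ is so large that $C\delta_{q+4}\leq 2^{-2}\delta_{q+2}^{1/2}$ (valid since $\delta_{q+4}/\delta_{q+2}^{1/2}\to 0$ as $a\to\infty$). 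For \eqref{perpendicularidad con zetaim}, the same orthogonality gives $\abs{\zeta_{i,m}\cdot v_q(x)}=\abs{\zeta_{i,m}\cdot (v_q(x)-v_q(\ell m))}\leq C\delta_{q+4}$, and dividing by the lower bound $\abs{v_q(x)}\geq 2^{-2}\delta_{q+2}^{1/2}$ just obtained yields the stated bound after absorbing the remaining numerical constant into $2^4$ (again by taking $a$ sufficiently large). The only point requiring any care is tracking the numerical constants tightly enough to hit the claimed prefactors $2^{-2}$ and $2^4$, but this is not an obstacle: the ratio $\delta_{q+4}\delta_{q+2}^{-1/2}$ can be made arbitrarily small by enlarging $a$, leaving ample slack.
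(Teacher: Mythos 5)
Your approach is the same as the paper's: orthogonality of $\zeta_{i,m}$ to $v_q(\ell m)$ plus a Lipschitz estimate over $U_m$ tuned via the choice of $\ell$, together with a lower bound on $|v_q|$ on $\Omega_{q+2}$ that one can see by splitting into $\Omega_{q+1}$ (use \eqref{inductive growth} and \eqref{cociente deltas}) and $\Omega_{q+2}\setminus\Omega_{q+1}$ (use \eqref{inductive support perturbation} and \eqref{def Omegaq}).

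There is, however, one small but genuine slip. For \eqref{perpendicularidad con zetaim}, you cannot \emph{absorb} the leftover numerical constant by taking $a$ large: both sides of the target inequality are constants times $\delta_{q+4}\delta_{q+2}^{-1/2}$, so the $a$-dependence cancels and the prefactors must actually close. With the denominator $2^{-2}\delta_{q+2}^{1/2}$ you derive from \eqref{angulo con zetaim}, the numerical constant you get for \eqref{perpendicularidad con zetaim} exceeds $2^4$: concretely, tracking constants gives roughly $2^4\sqrt{3}$ rather than $2^4$. The fix is to note that your case-split argument for the lower bound at $\ell m$ works verbatim at any $x\in\Omega_{q+2}$, giving the stronger $|v_q(x)|\geq 2^{-1}\delta_{q+2}^{1/2}$ directly (this is what the paper records as its first step). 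Using this denominator, the constant becomes $\leq 2^3/(2^{-1})\cdot\delta_{q+4}\delta_{q+2}^{-1/2}=2^4\delta_{q+4}\delta_{q+2}^{-1/2}$ once the floor in \eqref{def ell} has settled (i.e.\ for $a$ large), which is exactly the claimed bound. So the argument is sound once you sharpen the lower bound on $|v_q|$ and discard the ``absorb the constant'' reasoning.
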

\begin{proof}
	First of all, it follows from (\ref{def Omegaq}) that 
	\[\abs{v_0(x)}\geq 2^{-1}\delta_{q+2}^{1/2} \qquad \forall x\in \Omega_{q+2}.\]
	Combining this with (\ref{inductive support perturbation}) and (\ref{inductive growth}) yields
	\begin{equation}
		\abs{v_q(x)}\geq 2^{-1}\delta_{q+2}^{1/2} \qquad \forall x\in \Omega_{q+2}
		\label{tamaño vq en Omegaq+2}
	\end{equation}
	for sufficiently large $a>1$, by (\ref{cociente deltas}). Hence, for any $m\in \Lambda$ and $i=0,1$ we have
	\[\abs{v_q(\ell m)\times \zeta_{i,m}}=\abs{v_q(\ell m)}\geq 2^{-1}\delta_{q+2}^{1/2}\]
	because $\ell m\in \Omega_{q+2}$. On the other hand, for $x\in U_m$ we have
	\[\abs{v_q(x)-v_q(\ell m)}\leq \norm{v_q}_1\,\sqrt{3}\hspace{1pt}\ell\leq 2^3\delta_{q+4} \leq 2^{-2}\delta_{q+2}^{1/2}\]
	for sufficiently large $a>1$. We conclude
	\[\abs{v_q(x)\times \zeta_{i,m}}\geq \abs{v_q(\ell m)\times \zeta_{i,m}}-\abs{v_q(x)-v_q(\ell m)}\geq 2^{-2}\delta_{q+2}^{1/2}\]
	for $i\in\{0,1\}$, any $m\in \Lambda$ and any $x\in U_m$. Also, since $\zeta_{i,m}$ is perpendicular to $v_q(\ell m)$, we have
	\[\abs{\zeta_{i,m}\cdot \frac{v_q(x)}{\abs{v_q(x)}}}\leq \abs{\frac{v_q(x)}{\abs{v_q(x)}}-\frac{v_q(\ell m)}{\abs{v_q(\ell m)}}} \leq \frac{\norm{v_q}_1\sqrt{3}\,\ell}{\inf_{x\in U_m}\abs{v_q(x)}}\leq 2^4\delta_{q+4}\delta_{q+2}^{-1/2}\]
	by (\ref{inductive derivative}), (\ref{tamaño vq en Omegaq+2}) and the definition (\ref{def ell}).
\end{proof}

Concerning the matrix $R_0$, given $m\in \Lambda$, we use the fact that $\{v_q(\ell m)\,,\; \zeta_{0,m}\,,\; \zeta_{1,m}\}$ is an orthogonal basis to write
\begin{equation}
	R_0(\ell m)=-\abs{v_0(\ell m)}^2\left(\zeta_{0,m}\otimes\zeta_{0,m}+\zeta_{1,m}\otimes\zeta_{1,m}\right).
	\label{R0 en l-1m}
\end{equation}
On the other hand, for $x\in U_m$ we have
\begin{equation}
	\label{diferencia R0 con valor cte}
	\abs{R_0(x)-R_0(\ell m)}\leq \norm{R_0}_1\,\sqrt{3}\ell\stackrel{(\ref{assumption tamaño v0})}{\leq} 2\ell\leq \frac{1}{8}\delta_{q+4}\,.
\end{equation}
Hence, to a good approximation, we may write $R_0$ using $\zeta_{0,m}$ and $\zeta_{1,m}$. 

In summary, the use of cutoffs allows us to define an adapted orthonormal basis in each open cube $U_m$ and we can use it to represent $R_0$. We have all the ingredients to start correcting the error. Unfortunately, we cannot perturb the field in all the cubes $U_m$ at once because the intersections would be problematic. What we will do is to decompose $\Lambda$ into 8 subsets $\Lambda_0, \dots, \Lambda_7$ such that for any $l=0, \dots, 7$ we have $U_m\cap U_{m'}=\varnothing$ for any two distinct $m,m'\in \Lambda_l$. 

Let us start correcting the field. For $m\in\Lambda$ we define 
\[\gamma_{m}\coloneqq \left[\rho_q(1-\rho_{q+1})\right]^{1/2}\sigma_m\abs{v_0(\ell m)}.\]
Using (\ref{cotas rhoq}), (\ref{cotas rhoq+1}), the fact that $\rho_q$ vanishes on $\Omega_q$ and the definition of $\sigma_m$, we have:
\begin{align*}
	\norm{\gamma_m}_0&\leq \delta_q^{1/2},  \\
	\norm{\gamma_m}_N&\leq C_N\delta_q^{1/2}\ell^{-N} \qquad \forall N\geq 0,
\end{align*}
where $C_N$ are universal constants. We fix the indices $J_0=(q,0,0)$ and we set $\Phi_{J_0}\equiv \Phi_q$ and $(v_{J_0},p_{J_0},R_{J_0})\equiv (v_q,\widetilde{p}_q, \widetilde{R}_q)$. Note that the inductive hypotheses (\ref{inductive support perturbation diffeo})-(\ref{inductive derivative}) for $q$ imply the inductive hypotheses (\ref{support perturbation diffeo J})-(\ref{derivative vqj}) for $J_0$, while (\ref{perturbation almost perpendicular}) is trivially satisfied. 

Starting with this field, we apply \cref{prop stages} iteratively, using the collections 
\[\{Q_m,\;U_m,\;\gamma_m,\; \zeta_{0,m}\,\}_{m\in\Lambda_{l(J)}}\]
as discussed in \cref{remark igual si varias disjuntas}. To do so, we must check that (\ref{stages tamaño ell})-(\ref{angulo vJ enunciado prop}) are satisfied. It follows from definition (\ref{def ell}) that condition (\ref{stages tamaño ell}) holds for $J_0$. Since the lower bound for $\ell$ decreases when we increase $J$, due to (\ref{aux rel parámetros}), we see that (\ref{stages tamaño ell}) is also satisfied for $J>J_0$. Then, (\ref{distance Q,U})-(\ref{cota gamma stages CN}) hold by construction of $Q_m,U_m$ and $\gamma_m$. In addition, (\ref{perpendicularidad o tamaño}) follows from (\ref{perpendicularidad con zetaim}) for sufficiently large $a>1$.

By (\ref{angulo con zetaim}), condition (\ref{angulo vJ enunciado prop}) is satisfied for $J_0$. We must check that it holds throughout the whole process:
\begin{lemma}
	\label{lema producto con zeta}
	Let $J\in\mathcal{J}$ such that $J_0\leq J\leq J_0+7$ and suppose that (\ref{angulo vJ enunciado prop}) holds for $J_0\leq I<J$ so that we can, indeed, construct $v_{J_0+1}, \dots , v_J$ satisfying (\ref{support perturbation diffeo J})-(\ref{perturbation almost perpendicular}). Then, if $a>1$ is sufficiently large, we have
	\begin{equation}
		\abs{v_J(x)\times \zeta_{i,m}}\geq 2^{-3}\delta_{q+2}^{1/2}
		\label{vJ times zetaim}
	\end{equation}
	for $i\in\{0,1\}$, any $m\in \Lambda$ and any $x\in U_m$.
\end{lemma}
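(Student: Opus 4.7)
The plan is to lower bound $|v_J(x)\times \zeta_{i,m}|$ by the projection of $v_J(x)$ onto the unit vector $\hat v:=v_q(\ell m)/|v_q(\ell m)|$, which by construction is orthogonal to both $\zeta_{0,m}$ and $\zeta_{1,m}$. Indeed, expanding $v_J$ in the orthonormal frame $\{\hat v,\zeta_{0,m},\zeta_{1,m}\}$ and using $|v_J\times \zeta_{i,m}|^2=|v_J|^2-(v_J\cdot \zeta_{i,m})^2$ yields
\begin{equation*}
|v_J(x)\times \zeta_{i,m}|^2=(v_J(x)\cdot\hat v)^2+(v_J(x)\cdot \zeta_{1-i,m})^2\ge (v_J(x)\cdot\hat v)^2,
\end{equation*}
so it is enough to prove the stronger bound $v_J(x)\cdot\hat v\ge 2^{-3}\delta_{q+2}^{1/2}$.

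The approach is to decompose
\begin{equation*}
v_J(x)\cdot\hat v=|v_q(\ell m)|+[v_q(x)-v_q(\ell m)]\cdot\hat v+[v_J(x)-v_q(x)]\cdot\hat v.
\end{equation*}
The first summand is at least $2^{-1}\delta_{q+2}^{1/2}$ by (\ref{tamaño vq en Omegaq+2}), since $\ell m\in U_m\subset\Omega_{q+2}$. For the second, (\ref{inductive derivative}), the definition (\ref{def ell}) of $\ell$ and $|x-\ell m|\lesssim\ell$ give $|v_q(x)-v_q(\ell m)|\lesssim \delta_{q+4}$. For the third and most delicate summand, one first invokes (\ref{perturbation almost perpendicular}) with $j(J)=0$ and $l(J)\le 7$ to obtain $|[v_J-v_q]\cdot \hat v_q(x)|\lesssim \delta_{q+3}^{1/2}$, and then moves the base point from $x$ to $\ell m$: telescoping (\ref{cambio iteración j}) over the at most eight sub-iterations yields $\|v_J-v_q\|_0\le 2^9\delta_q^{1/2}$, while $|\hat v_q(x)-\hat v|\lesssim \delta_{q+4}/\delta_{q+2}^{1/2}$ follows from the estimate on $|v_q(x)-v_q(\ell m)|$ together with the lower bound on $|v_q|$. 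Altogether,
\begin{equation*}
|[v_J(x)-v_q(x)]\cdot \hat v|\lesssim \delta_{q+3}^{1/2}+\frac{\delta_q^{1/2}\delta_{q+4}}{\delta_{q+2}^{1/2}}.
\end{equation*}

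A short check using $\delta_n=a^{-4\alpha(b^n-1)}$ and $b=\tfrac32$ shows that each of the three errors $\delta_{q+4}$, $\delta_{q+3}^{1/2}$, and $\delta_q^{1/2}\delta_{q+4}/\delta_{q+2}^{1/2}$ is a negative power of $a$ times $\delta_{q+2}^{1/2}$, so for $a$ sufficiently large they sum to at most $2^{-4}\delta_{q+2}^{1/2}$, and
\begin{equation*}
v_J(x)\cdot \hat v\ge (2^{-1}-2^{-4})\delta_{q+2}^{1/2}>2^{-3}\delta_{q+2}^{1/2},
\end{equation*}
which proves the lemma.

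The main obstacle is the third summand in the decomposition. Property (\ref{perturbation almost perpendicular}) only controls $v_J-v_q$ along $\hat v_q(x)$, not along $\hat v=\hat v_q(\ell m)$, and the naive error from swapping the base point is $\|v_J-v_q\|_0\cdot|\hat v_q(x)-\hat v|\sim \delta_q^{1/2}\delta_{q+4}/\delta_{q+2}^{1/2}$, which is not \emph{a priori} small compared to $\delta_{q+2}^{1/2}$ because $\delta_q^{1/2}\gg \delta_{q+2}^{1/2}$. The saving comes from the fine length scale $\ell\sim \delta_{q+4}/(\delta_{q-1}^{1/2}\lambda_q)$ baked into (\ref{def ell}), which keeps $|v_q(x)-v_q(\ell m)|$ (and hence $|\hat v_q(x)-\hat v|$) very small, and the verification reduces to a numerical inequality on the exponents of the $\delta_{q+k}$'s that is valid for $b=\tfrac32$.
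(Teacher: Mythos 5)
Your proof is correct, and it takes a genuinely different route from the paper's. The paper constructs smooth auxiliary vector fields
\[\widetilde\zeta_{0,m}(x)\coloneqq \frac{v_q(x)\times[\zeta_{0,m}\times v_q(x)]}{|v_q(x)\times[\zeta_{0,m}\times v_q(x)]|},\qquad \widetilde\zeta_{1,m}(x)\coloneqq \hat v_q(x)\times\widetilde\zeta_{0,m}(x)\]
on~$U_m$, estimates their $C^1$~norms, expands $v_J$ in the pointwise-orthogonal frame $\{v_q(x),\widetilde\zeta_{0,m}(x),\widetilde\zeta_{1,m}(x)\}$ to deduce (via~\eqref{perturbation almost perpendicular} applied to the coefficient $c_1=v_J\cdot v_q/|v_q|^2$) that $|v_J(x)\times\widetilde\zeta_{0,m}(x)|\geq 2^{-2}\delta_{q+2}^{1/2}$, and then pays a Lipschitz error $|\widetilde\zeta_{i,m}(x)-\zeta_{i,m}|\lesssim\delta_{q+2}^{-1/2}\delta_{q+4}$ to replace $\widetilde\zeta_{i,m}(x)$ by $\zeta_{i,m}$ at the end. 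You instead work throughout in the fixed orthonormal frame $\{\hat v_q(\ell m),\zeta_{0,m},\zeta_{1,m}\}$, reduce the cross-product bound to the scalar bound $v_J(x)\cdot\hat v_q(\ell m)\geq 2^{-3}\delta_{q+2}^{1/2}$ (a stronger statement that also handles $i=0$ and $i=1$ at once), and shift the base-point adjustment onto the direction vector: you transfer the near-perpendicularity estimate~\eqref{perturbation almost perpendicular} from $\hat v_q(x)$ to $\hat v_q(\ell m)$ at a cost $\|v_J-v_q\|_0\,|\hat v_q(x)-\hat v_q(\ell m)|\lesssim \delta_q^{1/2}\delta_{q+4}\delta_{q+2}^{-1/2}$. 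In both arguments the decisive input is the fine length scale $\ell$ from~\eqref{def ell}, which keeps $|v_q(x)-v_q(\ell m)|\lesssim\delta_{q+4}$; you then verify, exactly as one must, that $\delta_{q+4}$, $\delta_{q+3}^{1/2}$ and $\delta_q^{1/2}\delta_{q+4}\delta_{q+2}^{-1/2}$ are all $\ll\delta_{q+2}^{1/2}$ for $a$ large. Your version avoids the construction and differentiation of the auxiliary fields $\widetilde\zeta_{i,m}$ and is in that sense more elementary; the paper's version, by working with a frame attached to $v_q(x)$ rather than $v_q(\ell m)$, isolates the scalar $c_1(x)$ which is directly controlled by~\eqref{perturbation almost perpendicular} and may be reusable elsewhere. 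Both are sound; the numerical checks you sketch at the end (powers of $\delta_{q+k}$ with $b=\tfrac32$) are the same kind the paper carries out and do close.
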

\begin{proof}
	Fix $m\in\Lambda$. Consider the fields $\widetilde{\zeta}_{i,m}\in C^\infty(U_m)$ given by
	\[\widetilde{\zeta}_{0,m}(x)\coloneqq \frac{v_q(x)\times[\zeta_{0,m}\times v_q(x)]}{\abs{v_q(x)\times[\zeta_{0,m}\times v_q(x)]}}, \qquad \widetilde{\zeta}_{1,m}(x)\coloneqq \frac{v_q(x)}{\abs{v_q(x)}}\times \widetilde{\zeta}_{0,m}(x).\]
	We have
	\begin{align*}
		\|\widetilde{\zeta}_{0,m}\|_1&\lesssim\frac{\norm{v_q}_1}{\inf_{x\in U_m}\abs{v_q(x)\times \zeta_{i,m}}}\lesssim \delta_{q+2}^{-1/2}\delta_{q-1}^{1/2}\lambda_q, \\
		\|\widetilde{\zeta}_{1,m}\|_1&\leq \|\widetilde{\zeta}_{0,m}\|_1+\frac{\norm{v_q}_1}{\inf_{x\in U_m}\abs{v_q(x)}}\lesssim \delta_{q+2}^{-1/2}\delta_{q-1}^{1/2}\lambda_q.
	\end{align*}
	Since $\{v_q,\widetilde{\zeta}_{0,m},\widetilde{\zeta}_{1,m}\}$ is an orthogonal basis of $\RR^3$, we may write
	\[v_J(x)=c_1(x)v_q(x)+c_2(x) \widetilde{\zeta}_{0,m}+c_3(x)\widetilde{\zeta}_{1,m}\]
	for certain functions $c_1, c_2, c_3$. Note that
	\[v_J\times \widetilde{\zeta}_{0,m}=c_1 v_q\times \widetilde{\zeta}_{0,m}+c_3\,\widetilde{\zeta}_{1,m}\times \widetilde{\zeta}_{0,m}=c_1\abs{v_q}\widetilde{\zeta}_{1,m}-c_3\frac{v_q}{\abs{v_q}}.\]
	Since both terms are perpendicular, we have
	\[\abs{v_J(x)\times \widetilde{\zeta}_{0,m}(x)}\geq \abs{c_1(x)}\abs{v_q(x)}.\]
	It follows from (\ref{perturbation almost perpendicular}) that
	\[\abs{c_1(x)-1}\lesssim \delta_{q+3}^{1/2}.\]
	In particular, it can be made arbitrarily small by choosing $a>1$ sufficiently large. 
	
	Combining this with (\ref{tamaño vq en Omegaq+2}), we obtain
	\[\abs{v_J(x)\times \widetilde{\zeta}_{0,m}(x)}\geq 2^{-2}\delta_{q+2}^{1/2} \qquad \forall x\in U_m.\]
	On the other hand, for any $x\in U_m$ we have
	\[\abs{\widetilde{\zeta}_{i,m}(x)-\zeta_{i,m}}\leq \|\widetilde{\zeta}_{i,m}\|_1 2\ell \lesssim \delta_{q+2}^{-1/2}\delta_{q+4}\leq \delta_{q+3}^{1/2},\]
	because $\widetilde{\zeta}_{i,m}(\ell m)=\zeta_{i,m}$. Taking into account that
	\[\abs{v_J(x)\times \zeta_{0,m}}\geq \abs{v_J(x)\times \widetilde{\zeta}_{0,m}(x)}-\abs{v_J(x)}\abs{\widetilde{\zeta}_{0,m}(x)-\zeta_{0,m}},\]
	we conclude (\ref{vJ times zetaim}) for $i=0$ if $a>1$ is sufficiently large. The case $i=1$ is completely analogous.
\end{proof}

Therefore, the hypotheses of \cref{prop stages} will be satisfied. After applying it eight times, we obtain a subsolution $(v_{J_1},p_{J_1},R_{J_1})$ with $J_1=(q,1,0)$ and a volume-preserving diffeomorphism $\Phi_{J_1}$ such that (\ref{support perturbation diffeo J})-(\ref{perturbation almost perpendicular}) hold. 

\subsection{Second stage}
Starting with this field, we apply \cref{prop stages} iteratively, using the collections 
\[\{Q_m,\;U_m,\;\gamma_m,\; \zeta_{1,m}\,\}_{m\in\Lambda_{l(J)}}\]
as discussed in \cref{remark igual si varias disjuntas}. To do so, we must check that (\ref{stages tamaño ell})-(\ref{angulo vJ enunciado prop}) are satisfied througout the process. Conditions (\ref{stages tamaño ell})-(\ref{cota gamma stages CN}) hold by construction of $Q_m,U_m,\ell$, $\gamma_m$ and $\zeta_{0,m}$. Condition (\ref{perpendicularidad o tamaño}) follows from (\ref{perpendicularidad con zetaim}). Finally, arguing as in \cref{lema producto con zeta} yields (\ref{angulo vJ enunciado prop}).

Hence, the hypotheses of \cref{prop stages} will be satisfied. After applying it eight times, we obtain a subsolution $(v_{J_2},p_{J_2},R_{J_2})$ with $J_2=(q,2,0)$ and a volume-preserving diffeomorphism $\Phi_{J_2}$ such that (\ref{support perturbation diffeo J})-(\ref{perturbation almost perpendicular}) hold. By the definition of the matrices $E_{J+1}$ that appear in \cref{prop stages}, we have
\begin{align}
	\begin{split}
		\label{RJ2}
		R_{J_2}&=\widetilde{R}_q+\sum_{J=J_0}^{J_1-1}\left(E_{J+1}+\gamma_m^2\zeta_{0,m}\otimes\zeta_{0,m}\right)+\sum_{J=J_1}^{J_2-1}\left(E_{J+1}+\gamma_m^2\zeta_{1,m}\otimes\zeta_{1,m}\right) \\
		&= \widetilde{R}_q+\sum_{l=1}^7\sum_{m\in \Lambda_l}\gamma_m^2\left(\zeta_{0,m}\otimes\zeta_{0,m}+\zeta_{1,m}\otimes\zeta_{1,m}\right)+\sum_{J=J_0}^{J_2-1}E_{J+1} \\
		&= \widetilde{R}_q+\sum_{m\in\Lambda} \rho_q(1-\rho_{q+1})\sigma_m^2\abs{v_0(\ell m)}^2\left(\zeta_{0,m}\otimes\zeta_{0,m}+\zeta_{1,m}\otimes\zeta_{1,m}\right)+\sum_{J=J_0}^{J_2-1}E_{J+1} \\
		&= \rho_{q+1}R_0-2r\delta_{q+3}(1-\rho_{q+1})\left(\Id-\frac{S_q}{2r\delta_{q+3}}\right)\\&\hspace{10pt}+\rho_q(1-\rho_{q+1})\sum_{m\in\Lambda}\sigma_m^2\left[R_0-R_0(\ell m)\right]+\sum_{J=J_0}^{J_2-1}E_{J+1},
	\end{split}
\end{align}
where we have also used (\ref{R0 en l-1m}) and the fact that $\sum_{m\in \Lambda}\sigma_m^2=1$ on the support of the cutoff $(1-\rho_{q+1})$. By (\ref{diferencia R0 con valor cte}), the first term in the last line is of order $\delta_{q+4}$, which means that we have canceled the second term in the decomposition (\ref{descomposición Rtilde}). 

\subsection{Remaining stages}
We will now focus on the third term in the decomposition (\ref{descomposición Rtilde}). We will need an adapted orthonormal frame to decompose the Reynolds stress, so we proceed as before. We set
\begin{equation}
	\tilde{\ell}\coloneqq \floor*{\delta_q^{1/2}\delta_{q+4}^{-1}\lambda_{J_2}}^{-1}
	\label{def elltilde}
\end{equation}
and for $m\in \ZZ^3$ we define
\begin{align*}
	\tilde{\sigma}_m(x)&\coloneqq \sigma\big(\tilde{\ell}^{-1}x-m\big), \\ \widetilde{Q}_m&\coloneqq \tilde{\ell}m+\tilde{\ell}Q, \\
	\widetilde{U}_m&\coloneqq \tilde{\ell}m+\tilde{\ell}U_1,
\end{align*}
where $Q$ and $U_1$ were defined at the beginning of \cref{first stage}. By construction, $\widetilde{Q}_m$ is a cube and $\widetilde{U}_m$ is a convex open set such that
\[\supp \tilde{\sigma}_m\subset \widetilde{Q}_m\subset \widetilde{U}_m\subset \widetilde{Q}_m+B(0,\tilde{\ell}/4)\]
and $\dist(\widetilde{Q}_m,\partial \widetilde{U}_m)\geq \tilde{\ell}/8$.  Since $\tilde{\ell}\leq \delta_{q+4}\ll\delta_{q+1}^{1/2}$ for sufficiently large $a$, we can find a subset $\tilde{\Lambda}\subset \ZZ^3$ such that
\[\sum_{m\in \tilde{\Lambda}}\tilde{\sigma}_m^2\equiv 1\qquad \text{on }\supp (1-\rho_{q+1})\]
and the closure of $\widetilde{U}_m$ is contained in $\Omega_{q+2}$ for any $m\in \tilde{\Lambda}$. For each $m\in \tilde{\Lambda}$ we choose two unitary vectors $\zeta_{2,m}$ and $\zeta_{3,m}$ such that
\[\left\{v_{J_2}\big(\tilde{\ell}m\big)\,,\; \zeta_{2,m}\,,\; \zeta_{3,m}\right\}\]
is an orthogonal reference frame. 

By \cref{geometric lemma}, there exist unitary vectors $\zeta_{4,m}, \dots, \zeta_{7,m}$ and smooth functions $\Gamma_{4}, \dots, \Gamma_7:C^\infty(B(\Id,r))\to \RR$ such that
\begin{equation}
	\Id-\frac{S_q(x)}{2r\delta_{q+3}}=\sum_{j=3}^7\Gamma_j\left(\Id-\frac{S_q(x)}{2r\delta_{q+3}}\right)^2\zeta_{j,m}\otimes\zeta_{j,m}
	\label{descomposición Sq}
\end{equation}
for any $m\in \tilde{\Lambda}$ and $x\in \widetilde{U}_m$. As mentioned in \cref{geometric lemma}, the vectors $\zeta_{2,m}, \dots, \zeta_{7,m}$ may be assumed to make an angle greater or equal than $45^\circ$ with $v_{J_2}(\tilde{\ell}m)$. By (\ref{crecimiento vJ}) and (\ref{derivative vqj}) we have
\[\abs{\frac{v_{J_2}(x)-v_{J_2}(\tilde{\ell}m)}{v_{J_2}(x)}}\leq\frac{\norm{v_{J_2}}_1\sqrt{3}\,\tilde{\ell}}{\inf_{x\in U_m}\abs{v_{J_2}(x)}}\lesssim \frac{\delta_{q+4}}{\delta_{q+2}^{1/2}}\leq \delta_{q+4}^{1/2}, \] 
which can be made arbitrarily small by taking $a$ sufficiently large. Hence, we may assume that the vectors $\zeta_{2,m}, \dots, \zeta_{7,m}$ make an angle greater or equal than $40^\circ$ with $v_{J_2}(x)$ for $x\in U_m$. By (\ref{crecimiento vJ}), we see that (\ref{angulo vJ enunciado prop}) holds. 

Next, we define
\[\gamma_{j,m}\coloneqq [2r\delta_{q+3}(1-\rho_{q+1})]^{1/2}\,\tilde{\sigma}_m \;\Gamma_j\left(\Id-\frac{S_q(\tilde{\ell}m)}{2r\delta_{q+3}}\right),\]
which satisfies
\begin{equation}
	\norm{\gamma_{j,m}}_N\lesssim \delta_{q+3}^{1/2}\tilde{\ell}^{-N}
	\label{tamaño gamma pequeñas}
\end{equation}
for $N\geq 0$ and some implicit constants depending on $N$. 

Starting with the volume-preserving diffeomorphism $\Phi_{J_2}$ and the subsolution $(v_{J_2},p_{J_2},R_{J_2})$, we apply \cref{prop stages} iteratively for $J_2\leq J\leq J_2+47$. By our previous definitions, (\ref{stages tamaño ell})-(\ref{perpendicularidad o tamaño}) hold. As we have discussed, (\ref{angulo vJ enunciado prop}) is also satisfied for $J_2$. Concerning the following fields, it follows from (\ref{crecimiento vJ}), (\ref{cota mejor si gamma pequeña}), (\ref{tamaño gamma pequeñas}) and the fact that the perturbation is supported on $\Omega_{q+2}$ that
\[\abs{\frac{v_{J+1}-v_J}{v_J}}\lesssim \left(\frac{\delta_{q+3}}{\delta_{q+2}}\right)^{1/2}=a^{-\alpha b^{q+2}(b-1)}\leq a^{-\alpha(b-1)}.\] 
Since this can be made arbitraly small by taking $a>1$ sufficiently large, we can ensure that the angle between $v_J$ and the vectors $\zeta_{2,m}, \dots, \zeta_{7,m}$ remains greater than $30^\circ$ throughout the process.

Therefore, we can apply \cref{prop stages} iteratively for $J_2\leq J\leq J_2+47$. After 48 applications, we obtain a subsolution $(v_{J_3},p_{J_3},R_{J_3})$ with $J_3=(q+1,0,0)$ and a volume-preserving diffeomorphism $\Phi_{J_3}$ such that (\ref{support perturbation diffeo J})-(\ref{perturbation almost perpendicular}) hold. Making the identification $J_3\equiv q+1$, we see that they imply (\ref{inductive support perturbation diffeo})-(\ref{inductive derivative}). 

On the other hand, it is clear that (\ref{cota H-1 q}) follows from (\ref{cota H-1 J}). In addition, by (\ref{cambio iteración j}) we have
\begin{align*}
	\norm{v_{q+1}-v_q}_0+\lambda_q^{-1}\norm{v_{q+1}-v_q}_1&\leq \sum_{J=J_0}^{J_3-1}\left(\norm{v_{J+1}-v_J}_0+\lambda_{q+1}^{-1}\norm{v_{J+1}-v_J}_1\right)\\&\leq \sum_{J=J_0}^{J_3-1}\left(\norm{v_{J+1}-v_J}_0+\lambda_{J+1}^{-1}\norm{v_{J+1}-v_J}_1\right) \\&\leq 2^{6+6}\delta_q^{1/2}.
\end{align*}
We conclude (\ref{conclusión 1}). Similarly, bounds (\ref{conclusión 2}) and (\ref{conclusión 3}) are easily seen to be implied by (\ref{conclusión stages 2}) and (\ref{conclusión stages 3}). 

Regarding the Reynolds stress:
\begin{align*}
	R_{q+1}&=R_{J_2}+\sum_{J=J_2}^{J_3-1}\left(E_{J+1}+\sum_{m\in\tilde{\Lambda}_{l(J)}}\gamma_{l(J),m}^2\zeta_{l(J),m}\otimes\zeta_{l(J),m}\right) \\
	&\stackrel{(\ref{descomposición Sq})}{=}R_{J_2}+\sum_{m\in\tilde{\Lambda}}2r\delta_{q+3}(1-\rho_{q+1})\tilde{\sigma}_m^2\hspace{-4pt}\left[\sum_{j=2}^7\Gamma_j\hspace{-3pt}\left(\Id-\frac{S_q(\tilde{\ell}m)}{2r\delta_{q+3}}\right)^{\hspace{-3pt}2}\hspace{-3pt}(\zeta_{j,m}\otimes\zeta_{j,m})\right]+\sum_{J=J_2}^{J_3-1}E_{J+1}\\
	&\stackrel{(\ref{RJ2})}{=}\rho_{q+1}R_0+S_{q+1},
\end{align*}
where
\[S_{q+1}\coloneqq \rho_q(1-\rho_{q+1})\sum_{m\in\Lambda}\sigma_m^2\left[R_0-R_0(\ell m)\right]+(1-\rho_{q+1})\sum_{m\in\tilde{\Lambda}}\tilde{\sigma}_m^2\left[S_q-S_q(\tilde{\ell} m)\right]+\sum_{J=J_0}^{J_3-1}E_{J+1}.\]
By (\ref{inductive decomposition error}) and the definition (\ref{def elltilde}), for any $m\in\tilde{\Lambda}$ and any $x\in \widetilde{U}_m$, we have
\[\abs{S_q(x)-S_q(\tilde{\ell}m)}\leq \norm{S_q}_1\sqrt{3}\hspace{2pt}\tilde{\ell}\lesssim \delta_{q+3}\lambda_q \frac{\delta_{q+4}}{\delta_q^{1/2}\lambda_{J_2}}\leq \delta_{q+3}^{1/2}\delta_{q+4},\]
so it can be smaller than $\delta_{q+4}/8$ for sufficiently large $a$. Combining this with (\ref{error J+1}) and (\ref{diferencia R0 con valor cte}), we conclude that \[\norm{S_{q+1}}_0\leq \frac{1}{2}\delta_{q+4}.\]

Concerning the $C^1$-norm, it follows from (\ref{aux rel parámetros}),(\ref{relación mu eta}) and (\ref{stages tamaño ell}) that
\[\lambda_q+\ell^{-1}+\widetilde{\ell}^{-1}\leq \delta_{q+4}\lambda_{q+1}\]
for sufficiently large $a>1$. Combining this bound with (\ref{assumption tamaño v0}), (\ref{inductive decomposition error}) and (\ref{error J+1}) leads to
\[\norm{S_{q+1}}_1\leq \frac{1}{2}\delta_{q+4}\lambda_{q+1}.\]
Hence, (\ref{inductive decomposition error}) is satisfied for $q+1$. On the other hand, since $\rho_{q+1}$ is supported in $\Omega_{q+1}$, it follows from (\ref{assumption tamaño v0}), (\ref{crecimiento R0}) and (\ref{cotas rhoq+1}) that for sufficiently large $a>1$ we have
\[\norm{\rho_{q+1}R_0}_0+\lambda_{q+1}^{-1}\norm{\rho_{q+1}R_0}_1\leq \frac{1}{2}\delta_{q+1},\]
so (\ref{inductive Rq C0}) and (\ref{inductive Rq C1}) hold for $q+1$.

In conclusion, the diffeomorphism $\Phi_{q+1}$, the new subsolution $(v_{q+1},p_{q+1},R_{q+1})$, the cutoff $\rho_{q+1}$ and the matrix $S_{q+1}$ have all of the properties claimed in \cref{prop steps}.

\section{Proof of Theorem 1.1}
\label{S.proof}
We choose a parameter $\alpha>0$ sufficiently small so that \cref{prop steps} holds. Since $\tau\in (\sqrt{2/3},1)$, we can further reduce $\alpha$ so that
\begin{align}
	12800\alpha+2\tau^{-2}-3&\leq -\frac{1}{2}(3-2\tau^{-2}), \label{cota gamma por abajo} \\
	\frac{1}{2}(1-\tau)&\leq (1-\tau)-7680\alpha. \label{relación gamma alpha}
\end{align}

Let $\Phi_0\coloneqq \Id$ and define $p_0$ and $R_0$ as in (\ref{def p0}) and (\ref{def R0}). As discussed in \cref{subsect inductive hyp}, $(v_0,p_0,R_0)$ is a subsolution and it satisfies the inductive hypotheses (\ref{inductive support perturbation diffeo})-(\ref{inductive decomposition error}). Therefore, we can apply \cref{prop steps} iteratively, obtaining a sequence of volume-preserving diffeomorphisms $\{\Phi_q\}_{q=0}^\infty$ and a sequence of subsolutions $\{(v_q,p_q,R_q)\}_{q=0}^\infty$ satisfying (\ref{inductive support perturbation diffeo})-(\ref{inductive Rq C1}) and (\ref{cota H-1 q})-(\ref{conclusión 3}).

By interpolation of (\ref{conclusión 2}) and (\ref{conclusión 3}), we have
\begin{align*}
	\norm{\Phi_{q+1}-\Phi_q}_\tau&\lesssim \norm{\Phi_{q+1}-\Phi_q}_0^{1-\tau}\norm{\Phi_{q+1}-\Phi_q}_1^{\tau}\leq 2^6(\delta_{q+1}^{-10}\lambda_q^{-1})^{1-\tau}\delta_{q+1}^{-1280\tau}\\&\leq 2^6\delta_{q+1}^{-1280}\lambda_q^{-(1-\tau)}=2^6a^{5120\alpha(b^{q+1}-1)-(1-\tau)(b^q-1)} \\ &=2^6a^{-[(1-\tau)-7680\alpha]b^q+[(1-\tau)-5120\alpha]}\stackrel{(\ref{relación gamma alpha})}{\leq} 2^6a^{1-\frac{1}{2}(1-\tau)b^q},
\end{align*}
where we have used that $b=\frac{3}{2}$. Since the term on the right-hand side defines a convergent series, we conclude that $\Phi_q$ must converge in $C^\tau(\TT^3)$ to some map $\Phi:\TT^3\to\TT^3$. By an analogous argument, $\Phi_q^{-1}$ converges in $C^\tau(\TT^3)$ to some map $\Psi:\TT^3\to\TT^3$. Passing to the limit in the identities
\[\Phi_q\circ\Phi_q^{-1}=\Phi_q^{-1}\circ\Phi_q=\Id\]
yields $\Psi=\Phi^{-1}$. Hence, $\Phi$ is an homeomorphism of $\TT^3$. Since it is the uniform limit of volume-preserving diffeomorphism, it is volume preserving.

Meanwhile, the fields $v_q$ are the pushforward of $v_0$ by the corresponding diffeomorphism $\Phi_q$, by (\ref{inductive pushforward}). In addition, interpolation of (\ref{conclusión 1}) yields:
\begin{align*}
	\norm{v_{q+1}-v_q}_\alpha&\lesssim \norm{v_{q+1}-v_q}_0^{1-\alpha}\norm{v_{q+1}-v_q}_1^\alpha\leq 2^{12}\delta_q^{1/2}\lambda_{q+1}^\alpha \\ &=2^{12}a^{-2\alpha(b^q-1)+\alpha(b^{q+1}-1)}=2^{12}a^{\alpha-\frac{1}{2}b^q},
\end{align*}
where we have used that $b=\frac{3}{2}$. Since the term on the right-hand side defines a convergent series, we conclude that $v_q$ must converge in $C^\alpha(\TT^3)$ to some map $v\in C^\alpha(\TT^3)$. Since $v$ is the uniform limit of divergence-free fields, it is weakly divergence free. On the other hand, multiplying the definition of subsolution (\ref{def subsolution}) by a divergence-free test-function $w\in C^\infty(\TT^3,\RR^3)$ and applying the divergence theorem yields
\[\int_{\TT^3}(v_q)_i(v_q)_j\partial_iw_j=\int_{\TT^3}(R_q)_{ij}\partial_iw_j.\]
Since the Reynolds stress $R_q$ converges uniformly to~$0$, we see that $v$ is a weak steady Euler flow.

Next, if follows from (\ref{diffeo-Id}) that $\norm{\Phi-\Id}_0$ can be made arbitrarily small by choosing $a>1$ sufficiently large. Concerning the $H^{-1}$-norm of the perturbation of the velocity, by (\ref{cota H-1 q}) we have
\begin{align*}
	\norm{v-v_0}_{H^{-1}(\TT^3)}&\leq \sum_{q=0}^\infty \norm{v_{q+1}-v_q}_{H^{-1}(\TT^3)}\leq 2^6\sum_{q=0}^\infty\delta_{q+1}=2^6\sum_{q=0}^\infty a^{-4\alpha(b^{q+1}-1)}\\&\leq 2^6\sum_{q=0}^\infty a^{-4\alpha\ln(b)(q+1)}\leq a^{-4\alpha\ln(b)}\sum_{q=0}^\infty2^{6-q}=2^7a^{-4\alpha\ln(b)}
\end{align*}
if $a>1$ is sufficiently large. We conclude that we can also make $\norm{v-v_0}_{H^{-1}}$ arbitrarily small by further increasing $a$.

Let us study $X_t\coloneqq \Phi \circ X^0_t \circ \Phi^{-1}$, where $X^0_t$ is the (unique) flow of $v_0$. First of all, $X_t\in C^0(\RR,C^{\tau^2}(\TT^3,\TT^3))$ because $\Phi$ and $\Phi^{-1}$ are in $C^\tau(\TT^3,\TT^3)$. We consider the maps $X_t^q\coloneqq \Phi_q \circ X^0_t \circ \Phi^{-1}_q$. Since $v_q$ is the pushforward of $v_0$ by $\Phi_q$, we have
\begin{align*}
	\partial_t X_t^q&=D\phi_q\circ(X_t^0\circ \Phi_q^{-1})\,(v_0\circ X^0_t)\circ\Phi_q^{-1}=(D\Phi_q\,v_0)\circ (X^0_t\circ\Phi_q^{-1})\\&=(D\Phi_q\,v_0)\circ\Phi_q^{-1}\circ X^q_t=v_q\circ X^q_t.
\end{align*}
Therefore, $X^q_t$ converges uniformly to $X_t$ and $\partial_t X^q_t$ converges uniformly to $v\circ X_t$. Since $C^1(\RR,C^0(\TT^3,\TT^3))$ is closed, we conclude that $X_t\in C^1(\RR,C^0(\TT^3,\TT^3))$ and 
\[\begin{cases}
	\partial_t X_t=v\circ X_t, \\
	X_0=\Id.
\end{cases}\]
On the other hand, it is clear that $X_t$ satisfies the group property because so does $X^0_t$. Finally, $X_t$ is volume-preserving for any $t\in \RR$ because so are $\Phi$, $\Phi^{-1}$ and $X^0_t$. Hence, $X_t$ is a flow of $v$.

Let us check that $X_t$ is the only flow with regularity $C^0_{\rm loc}(\RR,C^{\tau^2}(\TT^3,\TT^3))\cap C^1_{\rm loc}(\RR,C^{0}(\TT^3,\TT^3))$. Let $Y_t$ be a flow of $v$ (as defined in \cref{def flujos}) with this regularity. We define the maps
\begin{align*}
	\widetilde{X}^q_t&\coloneqq X^0_t\circ \Phi_q^{-1}, \\
	\widetilde{Y}^q_t&\coloneqq \Phi_q^{-1}\circ Y_t,
\end{align*}
which converge uniformly to 
\begin{align*}
	\widetilde{X}_t&\coloneqq X^0_t\circ \Phi^{-1}, \\
	\widetilde{Y}_t&\coloneqq \Phi^{-1}\circ Y_t,
\end{align*}
respectively. 

Our goal is to show that $\widetilde{Y}_t=\widetilde{X}_t$, from which we deduce that $Y_t=X_t$. Note that they are equal at $t=0$ because $X_0=Y_0=\Id$. Since $v_0$ is the pushforward of $v_q$ by $\Phi_q^{-1}$, we have
\begin{align*}
	\partial_t\widetilde{Y}^q_t&=(D\Phi_q^{-1}\circ Y_t)\,\partial_t Y_t=(D\Phi_q^{-1}\circ Y_t)\,(v\circ Y_t)\\&=(D\Phi_q^{-1}\,v_q)\circ Y_t+[D\Phi_q^{-1}(v-v_q)]\circ Y_t \\ &=v_0\circ\Phi_q^{-1}\circ Y_t+[D\Phi_q^{-1}(v-v_q)]\circ Y_t \\&= v_0\circ \widetilde{Y}^q_t+[D\Phi_q^{-1}(v-v_q)]\circ Y_t.
\end{align*}
Note that we also have $\partial_t \widetilde{X}^q_t=v_0\circ \widetilde{X}^q_t$, so
\[\partial_t \widetilde{Y}^q_t-\partial_t \widetilde{X}^q_t=v_0\circ\widetilde{Y}^q_t-v_0\circ \widetilde{X}^q_t+[D\Phi_q^{-1}(v-v_q)]\circ Y_t.\]

If the last term were not present, we could apply Grönwall's inequality to the rest, obtaining $\widetilde{X}^q=\widetilde{Y}^q$. Unfortunately, the last term is present and it can be quite large, by (\ref{derivative Phiq}). We can, however, expect some cancellations when integrating. We fix a ball $B(x_0,r)\subset \TT^3$ and we integrate in space and time, obtaining
\begin{align}
	\nonumber
	\int_{B(x_0,r)}(\widetilde{Y}^q_t-\widetilde{X}^q_t)\,dx=&\int_0^t\int_{B(x_0,r)}(v_0\circ\widetilde{Y}^q_s-v_0\circ \widetilde{X}^q_s)\,dx\hspace{0.5pt}ds\\&+\int_0^t\int_{B(x_0,r)}[D\Phi_q^{-1}(v-v_q)]\circ Y_s\;dx\hspace{0.5pt}ds.
	\label{gronwall + integral}
\end{align}

We will show that the last term goes to $0$ as $q\to \infty$. First of all, since $Y_s$ is a volume-preserving homeomorphism, we have
\[\int_{B(x_0,r)}[D\Phi_q^{-1}(v-v_q)]\circ Y_s=\int_{Y_s^{-1}(B(x_0,r))} D\Phi_q^{-1}(v-v_q).\]
Indeed, we can approximate uniformly $Y_s$ by volume-preserving diffeomorphisms (see~\cite{Siko}). Passing to the limit in the change of variables formula for these diffeomorphisms yields the previous identity.

Instead of estimating an integral involving the composition with a rough map, we now have to estimate an integral in a rough domain. We will see that integrating in the interior of $Y_s^{-1}(B(x_0,r))$ is not an issue, while the region near the boundary is problematic. To simplify the notation, we define \[\Sigma_s\coloneqq Y_s^{-1}(\partial B(x_0,r)).\] 
Fix $q\geq 1$ sufficiently large so that $\lambda_q^{-1}<r^{\alpha}$. We fix a grid of size $\ceil{\lambda_q}^{-1}$ and we choose a differentiable partition of unity $\{\chi_m\}_{m=0}^\infty$ associated to this grid. Let us denote by $\Lambda_1$ the set of the indices of the cubes that intersect $\Sigma_s$ and by $\Lambda_2$ the set of the indices of the cubes that are contained in $Y_s^{-1}(B(x_0,r))$. We can then write the integral as
\begin{align}
	\begin{split}
		\int_{Y_s^{-1}(B(x_0,r))} D\Phi_q^{-1}(v-v_q)=&\sum_{m\in \Lambda_1}\int_{Y_s^{-1}(B(x_0,r))}\chi_m D\Phi_q^{-1}(v-v_q)\\&+\sum_{m\in \Lambda_2}\int_{\TT^3}\chi_m D\Phi_q^{-1}(v-v_q).
		\label{descomposición integral difeo}
	\end{split}
\end{align}
We can crudely estimate each term in the first sum by the supremum of the function times the volume of the corresponding cube, obtaining
\[\abs{\sum_{m\in \Lambda_1}\int_{Y_s^{-1}(B(x_0,r))}\chi_m D\Phi_q^{-1}(v-v_q)}\leq 2^{15}\delta_{q+1}^{-1280}\lambda_q^{-3}\abs{\Lambda_1},\]
where we have used (\ref{derivative Phiq}) and (\ref{uniform bound vq}). Since we are assuming that $q\geq 1$, we have
\[\lambda_{q+1}=a^{b^{q+1}-1}=a^{b(b^q-1)}a^{b-1}=\lambda_q^b\lambda_1\leq \lambda_q^{1+b}=\lambda_q^{5/2}.\]
Therefore, we can write
\[\abs{\sum_{m\in \Lambda_1}\int_{Y_s^{-1}(B(x_0,r))}\chi_m D\Phi_q^{-1}(v-v_q)}\leq 2^{15}\lambda_q^{12800\alpha-3}\abs{\Lambda_1}.\]

We need to estimate the number of cubes that intersect $\Sigma_s$. Let 
\[L\coloneqq 1+ \max_{s\in[0,t]}\norm{Y_s^{-1}}_{C^{\tau^2}(\TT^3)}=1+\max_{s\in[-t,0]}\norm{Y_s}_{C^{\tau^2}(\TT^3)}\]
because of the group property of $Y$. We then decompose $\partial B(x_0,r)$ as a union of sets $S_1, \dots, S_n$ such that their diameter is less than $(L\lambda_q)^{-\tau^{-2}}\leq r$. This can be done in such a way that \[n\leq C_r[(L\lambda_q)^{-\tau^{-2}})]^{-2}= C_rL^{2\tau^{-2}}\lambda_{q}^{2\tau^{-2}}\] for certain constant $C_r>0$ depending on $r$. We see that $\Sigma_s=\bigcup_{i=1}^nY_s^{-1}(S_i)$ and
\[\text{diam}(Y_s^{-1}(S_i))\leq L\,\text{diam}(S_i)^{\tau^2}\leq \lambda_q^{-1}.\]
Hence, each $Y_s^{-1}(S_i)$ intersects, at most, $8$ of the cubes of the grid. Therefore,
\[\abs{\Lambda_1}\leq 8n\leq 8C_rL^{2\tau^{-2}}\lambda_{q}^{2\tau^{-2}}.\]
We conclude that
\begin{align*}
	\abs{\sum_{m\in \Lambda_1}\int_{Y_s^{-1}(B(x_0,r))}\chi_m D\Phi_q^{-1}(v-v_q)}&\leq\left(2^{18}C_rL^{2\tau^{-2}}\right)\lambda_q^{12800\alpha+2\tau^{-2}-3} \\ &\stackrel{(\ref{cota gamma por abajo})}{\leq} \left(2^{18}C_rL^{2\tau^{-2}}\right)\lambda_q^{-\frac{1}{2}(3-2\tau^{-2})},
\end{align*}
which goes to $0$ as $q\to\infty$ because $\tau^2>2/3$. 

Concerning the second sum in (\ref{descomposición integral difeo}), we write
\[\sum_{m\in \Lambda_2}\int_{\TT^3}\chi_m D\Phi_q^{-1}(v-v_q)=\sum_{J=(q,0,0)}^\infty\sum_{m\in \Lambda_2}\int_{\TT^3}\chi_m D\Phi_q^{-1}(v_{J+1}-v_J).\]
For a fixed $J\geq (q,0,0)$, by (\ref{cotas A y B}) and (\ref{integral norma H-1}) we have
\begin{align*}
	\abs{\sum_{m\in \Lambda_2}\int_{\TT^3}\chi_m D\Phi_q^{-1}(v_{J+1}-v_J)}&\leq \lambda_q^{-3}\abs{\Lambda_2}\left(\delta_{q(J)+1}^{3000}\norm{\chi_mD\Phi_q}_0+\lambda_{J+1}^{-1}\delta_{q(J)+1}^{3000}\norm{\chi_mD\Phi_q}_1\right) \\&\lesssim \frac{\delta_{q(J)+1}}{\delta_{q+1}}\left(\delta_{q+1}^{3000}\norm{D\Phi_q}_0+\lambda_q^{-1}\delta_{q+1}^{3000}\norm{\chi_mD\Phi_q}_1\right).  
\end{align*}
Taking into account that the cutoffs $\chi_m$ can be chosen so that 
\[\norm{\chi_m}_N\leq C_N\lambda_q^{-N}\]
for some constants $C_N$ depending on $N\in \NN$, it follows from (\ref{derivative Phiq}) and (\ref{second derivative Phiq}) that
\[\delta_{q+1}^{3000}\norm{D\Phi_q}_0+\lambda_q^{-1}\delta_{q+1}^{3000}\norm{\chi_mD\Phi_q}_1\lesssim \delta_{q+1}^{400}.\]
On the other hand,
\[\frac{\delta_{q(J)+1}}{\delta_{q+1}}=a^{-4\alpha(b^{q(J)+1}-b^{q+1})}\leq a^{-4\alpha\ln(b)[q(J)-q]}\leq 2^{-[q(J)-q]}\]
for sufficiently large $a>1$. Since this defines a convergent series, we have
\[\abs{\sum_{m\in \Lambda_2}\int_{\TT^3}\chi_m D\Phi_q^{-1}(v-v_q)}\leq \sum_{J=(q,0,0)}^\infty 2^{-[q(J)-q]}\delta_{q+1}^{400}\lesssim \delta_{q+1}^{400},\]
where the implicit constant is independent of $q$. 

We conclude that both sums in (\ref{descomposición integral difeo}) converge to $0$ as $q\to\infty$. Therefore, passing to the limit $q\to\infty$ in (\ref{gronwall + integral}) leads to
\[\int_{B(x_0,r)}(\widetilde{Y}_t-\widetilde{X}_t)\,dx=\int_0^t\int_{B(x_0,r)}(v_0\circ\widetilde{Y}_s-v_0\circ \widetilde{X}_s)\,dx\hspace{0.5pt}ds.\]
Dividing by $\abs{B(x_0,r)}$ and taking the limit $r\to0$ yields
\[\widetilde{Y}_t(x_0)-\widetilde{X}_t(x_0)=\int_0^t[v_0(\widetilde{Y}_s(x_0))-v_0( \widetilde{X}_s(x_0))]\,ds.\]
Since $\norm{v_0}_1\leq 1$, we have
\[\abs{\widetilde{Y}_t(x_0)-\widetilde{X}_t(x_0)}\leq \int_0^t\abs{\widetilde{Y}_s(x_0)-\widetilde{X}_s(x_0)}\,ds.\]
Taking into account that $\widetilde{Y}_0=\widetilde{X}_0$ and applying Grönwall's inequality leads to
\[\widetilde{Y}_t(x_0)=\widetilde{X}_t(x_0).\]
Since $x_0\in \TT^3$ is arbitrary, we conclude $\widetilde{Y}=\widetilde{X}$, from which we deduce $Y=X$, as claimed. This completes the proof of Theorem~\ref{T.main}.

\section{Proof of Theorem 1.2}
\label{S.proofGrad}
Our approach is based on adapting \cref{T.main} to bounded domains. Applying this result to a solenoidal field $v_0$ with suitable geometric properties will yield the desired weak solution $v$.

To adapt \cref{T.main} to bounded domains, we will use an initial subsolution in which the Reynolds stress vanishes at $\partial\Omega$. This means that we can perturb only in the interior of the domain and essentially ignore the boundary throughout the iterative process. The following proposition is the key ingredient:
\begin{proposition}
	\label{divergencia nula con valor de frontera}
	Let $\Omega\subset \RR^3$ be a bounded domain with smooth boundary. Let $S_0\in C^\infty(\partial\Omega, \mathcal{S}^{ 3})$ be a matrix-valued field such that $S_0\nu=0$, where $\nu$ is the (outer) normal vector to $\partial\Omega$. Then, there exists $S\in C^\infty(\overline{\Omega},\mathcal{S}^{3})$ such that
	\[\begin{cases}
		\Div S=0, \\ S|_{\partial\Omega}=S_0.
	\end{cases}\]
\end{proposition}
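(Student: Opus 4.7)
The plan is to reduce to the compactly-supported inverse-divergence operator for symmetric matrices used throughout the paper (the tool whose compatibility condition is that the data annihilate the kernel of the symmetric gradient, namely the Killing fields $K(x) = a + Bx$ with $a \in \RR^3$ and $B$ antisymmetric).

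First I would construct a smooth extension $\tilde S \in C^\infty(\overline\Omega, \mathcal{S}^3)$ of $S_0$ such that $\Div \tilde S$ is compactly supported in $\Omega$. To do this, I work in a tubular neighborhood $V$ of $\partial\Omega$, parameterized by normal coordinates $(y, t)$ with $y \in \partial\Omega$ and $t = \dist(\cdot, \partial\Omega) \in [0,\epsilon)$, and consider the tangent-normal decomposition of a symmetric tensor with respect to the globally-defined unit normal $\nu$. I would prescribe the tangent-tangent part of the extension freely, e.g.\ by normal transport
\begin{equation*}
\bar S^{tt}(x) \,:=\, (I-\nu\otimes\nu)\, S_0(\pi(x))\, (I-\nu\otimes\nu),
\end{equation*}
where $\pi: V\to\partial\Omega$ is the nearest-point projection; since $S_0 \nu = 0$, this agrees with $S_0$ on $\partial\Omega$. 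The remaining tangent-normal and normal-normal components of $\bar S$ are then determined by the three scalar equations composing $\Div \bar S = 0$, which in normal coordinates form a triangular first-order linear ODE system in $t$ with smooth coefficients, integrable from the vanishing initial data supplied by $S_0\nu = 0$. Multiplying by a cutoff $\chi \in C^\infty(\overline\Omega; [0,1])$ that equals $1$ near $\partial\Omega$ and is supported in $V$ yields $\tilde S := \chi\bar S$ (extended by zero) with $\tilde S|_{\partial\Omega} = S_0$ and
\begin{equation*}
\Div \tilde S \,=\, \chi \Div \bar S \,+\, \bar S \nabla \chi \,=\, \bar S \nabla \chi,
\end{equation*}
which is compactly supported in $\Omega$.

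Next I would verify that $f := -\Div\tilde S$ satisfies the Killing-field compatibility: for any Killing field $K(x) = a + Bx$,
\begin{equation*}
\int_\Omega K \cdot f \,=\, \int_\Omega \nabla K : \tilde S \,-\, \int_{\partial\Omega} K \cdot (\tilde S\, \nu) \,=\, 0,
\end{equation*}
since $\tilde S\nu|_{\partial\Omega} = S_0\nu = 0$ and $\nabla K : \tilde S = B:\tilde S = 0$ because $B$ is antisymmetric while $\tilde S$ is symmetric. Applying the inverse-divergence operator for symmetric matrices (the one invoked from the Appendix throughout Section~\ref{section estimates R}) to $f$ then produces $S'' \in C^\infty_c(\Omega, \mathcal S^3)$ with $\Div S'' = f$. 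The sum $S := \tilde S + S''$ is a smooth symmetric tensor on $\overline\Omega$ satisfying $S|_{\partial\Omega} = S_0$ (since $S''$ vanishes on $\partial\Omega$) and $\Div S = -f + f = 0$.

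The main obstacle I anticipate is the global construction of $\bar S$ in the tubular neighborhood. What makes it work is the intrinsic nature of the tangent-normal decomposition with respect to the globally-defined unit normal $\nu$: after fixing $\bar S^{tt}$ by normal transport, the remaining components of $\bar S$ are determined by a triangular first-order linear ODE system in $t$ with globally-defined smooth coefficients on $\partial\Omega \times [0,\epsilon)$, so standard ODE theory yields the smooth global extension without any need for a partition of unity.
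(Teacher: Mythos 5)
Your proof is correct, but it takes a genuinely different route from the paper's. The paper constructs $S$ in one shot by an algebraic ``double-curl'' ansatz: after transporting $S_0$ to boundary-flattening coordinates, it builds a four-index tensor $A_m$ that is antisymmetric in $(i,k)$ and in $(j,l)$ and symmetric in $(i,j)$, and defines $S_{ij}:=\sum_m\partial_{kl}\bigl[\tfrac12\chi_m(A_m)^{ik}_{jl}\,f^2\bigr]$; the $(j,l)$-antisymmetry makes $\Div S\equiv 0$ identically, the $(i,j)$-symmetry makes $S$ symmetric, and the factor $f^2$ recovers $S_0$ on $\{f=0\}$ by a two-derivative Leibniz computation. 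Your route is softer: you integrate a triangular linear ODE system in the collar variable to manufacture a divergence-free extension $\bar S$ near $\partial\Omega$, then cut off and pass the compactly supported residual $-\bar S\,\nabla\chi$ to \cref{invertir divergencia matrices}, verifying the Killing-field compatibility via \eqref{identidad de Green matrices} exactly as you indicate. Your triangularity claim does hold: with $w:=\bar S\nu-(\nu\cdot\bar S\nu)\nu$ and $\sigma:=\nu\cdot\bar S\nu$, the tangential projection of $\Div\bar S=0$ reads $\partial_\nu w=F+Gw$ with $F,G$ known and no $\sigma$ and no tangential derivatives of the unknowns appearing, the normal projection then reads $\partial_\nu\sigma=H-(\Div\nu)\sigma$ with $H$ known once $w$ is, and $\partial_\nu(\nu\cdot w)=-(\Div\nu)(\nu\cdot w)$ shows $w$ stays tangential to the level sets. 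The trade-off: your proof leans on the Appendix's inverse-divergence operator, which the paper's argument for this proposition avoids, but it dispenses with the explicit tensor bookkeeping.
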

\begin{proof}
	By the collar neighborhood theorem, there exists a neighborhood $U$ of $\partial\Omega$ in $\overline{\Omega}$ and a diffeomorphism $$\varphi:U\to\partial\Omega\times[0,1), \;x\mapsto(p(x),f(x))$$such that $\partial\Omega=f^{-1}(0)$. We may also assume that $p|_{\partial\Omega}=\text{Id}$.  Let $\{\psi_m:V_m\to \mathbb{R}^2\}_m$ be an atlas of $\partial\Omega$ and let $U_m\coloneqq \varphi^{-1}(V_m\times[0,1))$. We can then define the diffeomorphisms $$\Phi_m:U_m\to\mathbb{R}^2\times[0,1), \; x\mapsto(\psi_m(p(x)), f(x)).$$Note that
	\begin{align}
		\nonumber f(\Phi_m^{-1}(y_1,y_2,y_3))=y_3\qquad &\Rightarrow \qquad (\nabla f\circ\Phi_m^{-1})D\Phi_m^{-1}=e_3\\ \nonumber &\Rightarrow \qquad \nabla f\,(D\Phi_m)^{-1}=e_3 \\ &\Rightarrow \qquad (\nabla f)^t=(D\Phi_m)^t\,e_3^t, \label{aux divergencia}
	\end{align}
	where we are considering vectors to be $1\times 3$ matrices. For convenience, let us denote $R_m\coloneqq D\Phi_m\circ p$. We define a matrix $\widetilde{S}_0^m\in C^\infty(U_m,\mathcal{S}^3)$ given by$$\widetilde{S}_0^m\coloneqq R_m (S_0\circ p)R_m^t.$$Since $\nabla f(x_0)$ is parallel to $\nu(x_0)$ for any $x_0\in \partial\Omega$, it follows from (\ref{aux divergencia}) and the tangency condition for $S_0$ that$$\widetilde{S}^m_0\,e_3^t=0.$$
	
	We define an auxiliary tensor $(\widetilde{A}_m)^{ik}_{jl}$ by $$(\widetilde{A}_m)^{i\hspace{1pt}3}_{j\hspace{1pt}3}=-(\widetilde{A}_m)^{3\hspace{1pt}i}_{j,3}=-(\widetilde{A}_m)^{i\hspace{1pt}3}_{3\hspace{1pt}j}=-(\widetilde{A}_m)^{3\hspace{1pt}i}_{3\hspace{1pt}j}=(\widetilde{S}^m_0)_{ij}$$and let all other entries equal $0$. Since $(\widetilde{S}^m_0)_{ij}=0$ whenever one of the indices is $3$, $\widetilde{A}_m$ is well-defined. By construction, it is antisymmetric in $(i,k)$ and in $(j,l)$. In addition, it is symmetric in $(i,j)$ because so is $\widetilde{S}^m_0$. We use this auxiliary tensor to define a tensor $(A_m)^{ik}_{jl}$ given by$$(A_m)^{ik}_{jl}\coloneqq (\widetilde{A}_m)^{pr}_{qs}\,(R_m^{-1})_{ip}\,(R_m^{-1})_{jq}\,(R_m^{-1})_{kr}\,(R_m^{-1})_{ls}\hspace{0.5pt},$$which has the same symmetry properties as $\widetilde{A}_m$.
	
	Next, we fix cutoff functions $\chi_m\in C^\infty_c(U_m)$ such that $\sum_m\chi_m=1$ on $\partial\Omega$. We define $S\in C^\infty(U,\mathcal{S}^3)$ given by$$S_{ij}\coloneqq\sum_m\partial_{kl}\left[\frac{1}{2}\chi_m(A_m)^{ik}_{jl}\,f^2\right].$$Since $(A_m)^{ik}_{jl}$ is symmetric in $(i,j)$, $S$ is symmetric, as claimed. In addition, it is divergence-free because $(A_m)^{ik}_{jl}$ is antisymmetric in $(j,l)$. We will now check that $S(x_0)=S_0(x_0)$ for any $x_0\in \partial\Omega$. In what follows, all the functions will be evaluated at $x_0$, but we omit it in the notation for the sake of clarity. Since $f$ vanishes at $\partial \Omega$, we have$$S_{ij}=\sum_m\chi_m(A_m)^{ik}_{jl}\,\partial_kf\,\partial_lf.$$Taking into account that $\nabla f\,R_m^{-1}=e_3$, we see that$$\begin{aligned}(A_m)^{ik}_{jl}\,\partial_kf\,\partial_l f&=(\widetilde{A}_m)^{pr}_{qs}\,(R_m^{-1})_{ip}\,(R_m^{-1})_{jq}\,[(R_m^{-1})_{kr}\,\partial_kf]\,[(R_m^{-1})_{ls}\,\partial_lf] \\ &=(\widetilde{A}_m)^{p\hspace{1pt}3}_{q\hspace{1pt}3}\,(R_m^{-1})_{ip}\,(R_m^{-1})_{jq} =(\widetilde{S}^m_0)_{pq}\,(R_m^{-1})_{ip}\,(R_m^{-1})_{jq} \\ &=[R_m^{-1} \,\widetilde{S}_0^m\,(R_m^{-1})^t]_{ij}=(S_0)_{ij}\end{aligned}$$by the definition of $\widetilde{A}_m$ and $\widetilde{S}_0^m$. Since $\sum_m\chi_m=1$ on $\partial\Omega$, we conclude that $S(x_0)=S_0(x_0)$. Since $x_0\in\partial\Omega$ is arbitrary, we have $S|_{\partial\Omega}=S_0$, as claimed.
\end{proof}

We will now adapt \cref{T.main} to bounded domains. For the sake of simplicity, we have imposed that the field $v_0$ is non-vanishing on the boundary of the domain, which makes the application of \cref{T.main} much more direct. This assumption can probably be removed with some extra work, but for the proof of \cref{T.Grad} that refinement is not needed.

\begin{theorem}
	\label{T.dominios}
	Let $\Omega\subset \RR^3$ be a smooth toroidal domain. Let $v_0\in C^\infty(\overline{\Omega})$ be a solenoidal field, whose flow we denote by $X^0_t$. Assume that $v_0$ is tangent to $\partial\Omega$ and that it is non-vanishing on $\partial\Omega$. Fix some $\tau\in (\sqrt{2/3},1)$ and some $\varepsilon>0$. 
	
	Then, {for some $\alpha>0$ depending on $\tau$ but not on~$v_0$,} there exists a weak steady Euler flow $v\in C^\alpha(\Omega)$, a sequence of volume-preserving diffeomorphisms $\Phi_q\in \SDiff(\Omega)$ and a volume-preserving Hölder homeomorphism $\Phi\in \SHom^{\tau}(\Omega)$ such that:
	\begin{enumerate}
		\item The diffeomorphisms $\Phi_q$ converge to $\Phi$ in $C^\tau(\Omega)$ as $q\to\infty$.
		\item The sequence of smooth pushed forward vields $v_q\coloneqq (\Phi_q)_\ast v_0$ converges to $v$ in $C^\alpha(\Omega)$.
		\item $\|v-v_0\|_{H^{-1}(\overline{\Omega})}+ \|\Phi-\id\|_{C^0(\overline{\Omega})}<\varepsilon$.
		\item $X_t:= \Phi\circ X^0_t\circ \Phi^{-1}$ is the only flow of~$v$ of class $C^0_{\rm loc}(\RR,C^{\tau^2}(\TT^3,\TT^3))\cap C^1_{\rm loc}(\RR,C^{0}(\TT^3,\TT^3))$.
	\end{enumerate}
\end{theorem}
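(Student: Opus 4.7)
The plan is to adapt the proof of \cref{T.main} to the toroidal-domain setting. The key modification is the use of an initial subsolution whose Reynolds stress vanishes on $\partial\Omega$, which allows every perturbation in the iteration to be localized strictly in the interior of $\Omega$; consequently each $\phi_{J+1}$ is the identity in a neighborhood of $\partial\Omega$, so $\Phi_q\in \SDiff(\Omega)$ and the composition converges to a homeomorphism of $\Omega$.

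To construct the initial subsolution I would invoke \cref{divergencia nula con valor de frontera}. Since $v_0$ is tangent to $\partial\Omega$, the boundary tensor $S_0:=(v_0\otimes v_0)|_{\partial\Omega}$ satisfies $S_0\nu=(v_0\cdot\nu)v_0=0$, so the proposition provides a divergence-free $S\in C^\infty(\overline\Omega,\mathcal{S}^{3})$ with $S|_{\partial\Omega}=S_0$. Setting $R_0:=v_0\otimes v_0-S$ and $p_0:=0$ yields a subsolution with $R_0|_{\partial\Omega}=0$; after rescaling $v_0$ to satisfy the analogue of (\ref{assumption tamaño v0}), the tensor $R_0$ is uniformly small in a collar neighborhood of $\partial\Omega$, which replaces the role of (\ref{crecimiento R0}) once the sets $\Omega_q$ are replaced throughout by $\Omega_q^\sharp:=\{x\in\Omega:|v_0(x)|>\tfrac{1}{2}\delta_q^{1/2}\}\cap\{x\in\Omega:\dist(x,\partial\Omega)>\tfrac{1}{2}\delta_q^{1/2}\}$.

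With this setup I would run the iterative scheme of \cref{prop steps} on $\Omega$, enforcing that every perturbation is supported in $\Omega_q^\sharp$. The constructions of \cref{S.construction}--\cref{section estimates R} are entirely local inside the cubes $U_m\subset \Omega_q^\sharp$, which never touch $\partial\Omega$, and hence transfer verbatim (periodicity of $\TT^3$ plays no role within these cubes, and the implicit constants in the inversion-of-divergence step and in \cref{lema dacorogna} are uniform because each $U_m$ is a rescaled translate of a single fixed reference convex set). With the induction in place, interpolation of (\ref{conclusión 1})--(\ref{conclusión 3}) yields $\Phi_q\to\Phi$ in $C^\tau(\overline\Omega)$ and $v_q\to v$ in $C^\alpha(\Omega)$, with $v$ a weak steady Euler flow on $\Omega$; the smallness in item (iii) comes from (\ref{diffeo-Id}) and (\ref{cota H-1 q}) by choosing $a$ large, and the Gr\"onwall-with-$H^{-1}$-decomposition argument from \cref{S.proof}, which uses only (\ref{integral norma H-1}), (\ref{cotas A y B}), (\ref{derivative Phiq}), and (\ref{second derivative Phiq}), proves that $X_t:=\Phi\circ X^0_t\circ \Phi^{-1}$ is the unique flow of $v$ in the claimed regularity class.

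The main obstacle lies in the first two stages of \cref{section prueba prop steps}, which exploit the identity $R_0 v_0=0$ enjoyed by the periodic choice $v_0\otimes v_0-|v_0|^2\Id$ to decompose $\rho_q(1-\rho_{q+1})R_0$ using only two families of perturbations aligned with the frame $\{\zeta_{0,m},\zeta_{1,m}\}$ orthogonal to $v_q(\ell m)$. For the new $R_0$ this identity no longer holds, and the cleanest fix is to treat $\rho_q(1-\rho_{q+1})R_0-2r\delta_{q+3}(1-\rho_{q+1})(\Id-S_q/(2r\delta_{q+3}))$ as a single block and apply the six-family geometric decomposition to the whole, at the cost of a bounded number of additional intermediate stages per iteration; the resulting size estimates on the coefficients and on $E_{J+1}$ are unchanged, so once this adaptation is in place the rest of the argument is a direct transcription of the periodic proof.
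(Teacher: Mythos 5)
Your choice of initial subsolution is where the argument breaks down, and the proposed fix does not repair it. You set $R_0 := v_0\otimes v_0 - S$ with $p_0:=0$, where $S$ is the divergence-free extension of $(v_0\otimes v_0)|_{\partial\Omega}$. This indeed gives a subsolution with $R_0|_{\partial\Omega}=0$, but it destroys the identity $R_0 v_0=0$ \emph{everywhere in the interior}, not just near the boundary, and you have no quantitative control on $|R_0|$ in the bulk (in particular at points where $|v_0|$ is small, $|R_0|$ can be much larger than $|v_0|^2$). The paper's construction is more careful precisely for this reason: with a collar cutoff $\varphi_\varepsilon$ it takes
\[
p_0 := -(1-\varphi_\varepsilon)|v_0|^2,\qquad
R_0 := (1-\varphi_\varepsilon)(v_0\otimes v_0-|v_0|^2\Id)+\varphi_\varepsilon(M_1-v_0\otimes v_0)-M_2,
\]
so that \emph{away from the boundary} $R_0$ coincides with the torus expression (so $R_0 v_0 = 0$ there, exactly what the first two stages of \cref{section prueba prop steps} need), while \emph{near the boundary} the new, non-orthogonal terms are small compared with $|v_0|^2$, namely $\sup_U |R_0|^{1/2}/|v_0|\lesssim \varepsilon^{1/4}$, because $v_0$ is bounded below on the collar and $M_2$ is $O(\varepsilon^{1/2})$.

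Your proposed repair, namely lumping $\rho_q(1-\rho_{q+1})R_0-2r\delta_{q+3}(1-\rho_{q+1})\bigl(\Id-S_q/(2r\delta_{q+3})\bigr)$ into one block and applying the six-family geometric decomposition of \cref{geometric lemma}, fails quantitatively. The block is of size $\delta_q$, so the amplitudes $\gamma_{j,m}$ needed to cancel it are of order $\delta_q^{1/2}$. But the six vectors $\zeta_{2,m},\dots,\zeta_{7,m}$ only make a $45^\circ$ angle with $v_q(\ell m)$, so $|\gamma_{j,m}\,\zeta_{j,m}\cdot v_q/|v_q||\sim\delta_q^{1/2}\gg\delta_{q+3}^{1/2}$, violating the hypothesis \eqref{perpendicularidad o tamaño} of \cref{prop stages} and, downstream, the inductive controls \eqref{perturbation almost perpendicular} and \eqref{crecimiento vJ} on the angle and the lower bound of $v_J$. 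Those controls are exactly what let the scheme preserve the topology and prevent the perturbation from killing the field, so losing them is fatal. The first two stages must be kept as a separate cancellation of a large-but-perpendicular piece, which is why the paper insists on $R_0 v_0=0$ away from $\partial\Omega$; only the small, boundary-localized remainder can afford to be cancelled with the six-family decomposition. Once this is fixed, the rest of your outline (running \cref{prop steps} with $\Omega_q$ kept away from $\partial\Omega$, interpolating \eqref{conclusión 1}--\eqref{conclusión 3}, and using the $H^{-1}$-decomposition and the Gr\"onwall argument from \cref{S.proof} localized to balls at positive distance from $\partial\Omega$) does track the paper's proof.
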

\begin{proof}[Sketch of the proof]
	The proof of this theorem is quite similar to the proof of \cref{T.main}, so we will merely point out the differences.
	
	By the collar neighborhood theorem, there exists a neighborhood $U$ of $\partial\Omega$ in $\overline{\Omega}$ and a diffeomorphism $$U\to\partial\Omega\times[0,1), \;x\mapsto(p(x),f(x))$$such that $\partial\Omega=f^{-1}(0)$. We may also assume that $p|_{\partial\Omega}=\text{Id}$. Since $v_0$ does not vanish on $\partial \Omega$, by further reducing $U$ we may assume that $v_0$ does not vanish on $\overline{U}$. We may also assume that $\partial U$ is smooth. Fix $\varphi\in C^\infty(\RR)$ that vanishes for $x\geq 2$ and equals $1$ for $x\leq 1$. For $\varepsilon>0$, we define the cutoff in $\overline{\Omega}$
	\[\varphi_\varepsilon(x)\coloneqq \varphi(\varepsilon^{-1}f(x)).\]
	Therefore, $\varphi_\varepsilon$ equals $1$ in a neighborhood of $\partial\Omega$ but it is supported in an arbitrarily small neighborhood of $\partial\Omega$ (by reducing $\varepsilon$). 
	
	By \cref{divergencia nula con valor de frontera}, there exists a divergence-free matrix $M_1 \in C^\infty(\overline{\Omega},\mathcal{S}^3)$ such that $M_1|_{\partial\Omega}=v_0\otimes v_0$. Let
	\[\rho_\varepsilon\coloneqq \Div(\varphi_\varepsilon M_1)=M_1\hspace{0.5pt}\nabla\varphi_\varepsilon\]
	because $M_1$ is divergence-free. Let $x\in \supp \rho_\varepsilon$. Since $M_1(p(x))=(v_0\otimes v_0)(p(x))$ and $\nabla f(p(x))$ is parallel to $\nu(p(x))$, which is perpendicular to $v_0(p(x))$, we have
	\begin{align*}
		\abs{\rho_\varepsilon(x)}&\leq \abs{[M_1(x)-M_1(p(x))]\varepsilon^{-1}\varphi'(\varepsilon^{-1}f(x))\nabla f(x)} \\&\hspace{10pt}+\abs{\varepsilon^{-1}\varphi'(\varepsilon^{-1}f(x))M_1(p(x))[\nabla f(x)-\nabla f(p(x))]} \\ &\leq (\norm{M_1}_1\norm{\varphi}_1\norm{f}_1+\norm{M_1}_0\norm{\varphi}_1\norm{f}_2)\,\varepsilon^{-1}\abs{x-p(x)}=O(1)
	\end{align*}
	because $\rho_\varepsilon$ is supported in a $(2\varepsilon)$-neighborghood of $\partial\Omega$. By \cref{frecuencias bajas de dominio pequeño}, we have
	\[\norm{\rho_\varepsilon}_{B^{-1/2}_{\infty,\infty}}\lesssim \varepsilon^{1/2},\]
	where the implicit constants are allowed to depend on anything but $\varepsilon$. On the other hand, since $M_1\hspace{0.5pt}\nu=0$, it follows from (\ref{identidad de Green matrices}) that
	\[\int_\Omega \rho_\varepsilon\cdot\xi=0 \qquad \forall \xi\in \ker D^s.\]
	By \cref{invertir divergencia matrices}, there exists $M_2\in C^\infty(\overline{\Omega},\mathcal{S}^3)$ such that $\Div M_2=\rho_\varepsilon$ and whose support is contained in the interior of $U$. Furthermore, we have
	\[\norm{M_2}_0\lesssim \varepsilon^{1/2}.\]
	
	We have all the ingredients that we need to define the initial subsolution. Let
	\begin{align*}
		p_0&\coloneqq -(1-\varphi_\varepsilon)\abs{v_0}^2, \\
		R_0&\coloneqq (1-\varphi_\varepsilon)(v_0\otimes v_0-\abs{v_0}^2\Id)+\varphi_\varepsilon(M_1-v_0\otimes v_0)-M_2.
	\end{align*}
	By construction of $M_1$ and $M_2$, we see that $(v_0,p_0,R_0)$ is a subsolution. Dividing by an appropriate constant, we may assume that
	\begin{equation}
		\norm{v_0}_1+\norm{R_0}_1\leq 1.
		\label{cota subs inicial dominios}
	\end{equation}
	Note that the second term in the definition of $R_0$ vanishes on $\partial\Omega$, while the other two terms vanish in a neighborhood of $\partial\Omega$. We define
	\[\Omega_q \coloneqq \left\{x\in\Omega:\abs{v_0(x)}>\frac{1}{2}\delta_q^{1/2}\;\&\;\dist(x,\partial\Omega)>\frac{1}{4}\delta_{q}\right\}\]
	and $\Sigma_q=\partial\Omega_q$. It follows from this definition and (\ref{cota subs inicial dominios}) that
	\[\abs{R_0(x)}\leq \frac{1}{4}\delta_q \qquad \forall x\in \overline{\Omega}\backslash \Omega_q.\]
	Furthermore, by taking $a>1$ sufficiently large we may assume that (\ref{inductive growth}) holds, since $v_0$ does not vanish on $U$.
	
	The inductive hypotheses for the sequence of subsolutions are the same as on the torus. In fact, the inductive process is exactly the same outside $U$ because $R_0$ has the familiar expression (\ref{def R0}). However, in $U$ the initial Reynolds stress does not have the convenient expression (\ref{def R0}), which means that it must be canceled in the stages $l\in\{2,\dots, 7\}$ instead of $l\in\{0,1\}$. This may seem worrying because $R_0$ does not satisfy estimates analogous to (\ref{inductive decomposition error}). However, the purpose of these estimates and various other considerations was to ensure that the perturbation was not too big. More precisely, we required that the angle between the velocity and several vectors remain above certain threshold. Since, $v_0$ does not vanish on $U$, we have
	\[\sup_{x\in U}\frac{\abs{R_0(x)}^{1/2}}{\abs{v_0(x)}}\lesssim \varepsilon^{1/4},\]
	so the ratio between the initial Reynolds stress and the initial velocity can be made arbitrarily small by reducing $\varepsilon$. By (\ref{cota mejor si gamma pequeña}), the perturbations will be arbitrarily small compared to $v_0$. While it is true that we will have to sum infinitely many of these terms, the size of $R_0$ outside $\Omega_q$ decreases at least exponentially with $q$. Hence, we can sum all the terms and the sum will still be arbitrarily small.
	
	In conclusion, the perturbations on $U$ can be made so small with respect to the initial velocity that they do not mess the geometry and we can carry out the iterative process exactly as on the torus. Note that at each step of the process, we do not perturb in a neighborhood of $\partial\Omega$, so the final field and homeomorphism satisfy $\Phi|_{\partial\Omega}=\Id$ and  $v|_{\partial\Omega}=v_0$. In particular, $v$ is tangent to $\partial\Omega$. 
	
	The rest of the proof is essentially the same as on $\TT^3$ except for the uniqueness of the flow, which requires some care. Note that it suffices to prove that $Y_t=X_t$ on $\Omega$, since equality on $\partial\Omega$ will follow by continuity. Given $x_0\in \Omega$, we will have $B(x_0,r)\subset \Omega$ if $r>0$ is sufficiently small. Since $Y^{-1}_s$ is a homeomorphism, we will also have $Y_s^{-1}(B(x_0,r))\subset \Omega$. Furthermore, for $s\in[0,t]$ it will remain at a distance $d>0$ from $\partial\Omega$. Choosing $q\geq 0$ sufficienty large so that $\sqrt{3}\hspace{0.5pt}\lambda_q^{-1}>d$, we can then proceed exactly as on the torus.
\end{proof}

Once we have \cref{T.dominios}, the proof of \cref{T.Grad} essentially reduces to finding an initial divergence-free field $v_0$ with the required geometric properties:
\begin{proof}[Proof of \cref{T.Grad}]
	Since $\Omega$ is toroidal, there exists a diffeomorphism $\Psi:\overline{\mathbb D}\times \mathbb S^1\to \overline{\Omega}$, where $\mathbb D\subset \RR^2$ is the unit disk centered at the origin. Let us endow $\mathbb D\times \mathbb S^1$ with coordinates $y=(y_1,y_2)\in \mathbb D$ and $\theta\in \mathbb S^1:=\mathbb R/\mathbb Z$. In these coordinates, the Euclidean volume reads as $\rho(y,\theta)dy\,d\theta$ for some positive factor $\rho$.
	%the revolution solid torus given by
	%\[T\coloneqq\left\{(x_1,x_2,x_3)\in \RR^3:\left(\sqrt{x_1^2+x_2^2}-R\right)^2+x_3^2\leq r^2\right\},\]
	%where $0<r<R$ are chosen so that $T$ and $\Omega$ have the same volume. Then, by \cite[Theorem 1']{DM} there exists a diffeomorphism $\psi_2:\overline{\Omega}\to\overline{\Omega}$ such that the diffeomorphism $\Psi\coloneq\psi_2\circ\psi_1:T\to\overline{\Omega}$ is volume preserving.
	
	Let us construct a vector field $u_0\in C^\infty(\mathbb D\times\mathbb S^1,\RR^3)$ with the desired geometric properties and then pushforward this field to $\Omega$ using $\Psi$. To this end, it is convenient to introduce polar coordinates $(y_1,y_2)=(r\cos\beta,r\sin\beta)$. Consider the vector field $u_0\in C^\infty(\mathbb D\times \mathbb S^1,\RR^3)$ given by
	\[u_0\coloneqq \frac{1}{\rho(r,\beta,\theta)}\Big[h_1(r^2)\partial_\beta+h_2(r^2)\partial_\theta\Big]\,,\]
	where $h_k\in C^\infty(\mathbb R)$ for $k=1,2$ and assume that $h_2>0$ on $\mathbb R$. It is easy to check that $u_0$ is divergence-free, tangent to $\partial(\mathbb D\times \mathbb S^1)$ and non-vanishing on $\overline {\mathbb D}\times \mathbb S^1$. Moreover, the function $$f:=y_1^2+y_2^2$$ is a first integral of $u_0$ whose level sets define a foliation by tori as in the statement of the theorem. Then, labeling by pairs $(h_1,h_2)$ as above we obtain a family of vector fields $u_0$ satisfying all the desired geometric properties. We want to emphasize that pairs $(h_1,h_2), (\tilde h_1,\tilde h_2)$ such that 
	\[
	\frac{h_1}{h_2}\neq \frac{\tilde h_1}{\tilde h_2}
	\]
	yield vector fields $u_0$ and $\tilde u_0$ in the family that are not topologically equivalent. The reason is that the Poincar\'e map defined by the flow of $u_0$ on the section $\{\theta=0\}$ is of the form 
	$$
	(r,\beta)\mapsto (r,\beta+h_1(r)/h_2(r))\,,
	$$
	and hence the function $h_1/h_2$ is the rotation number of the circle diffeomorphism $\beta+h_1(r)/h_2(r)$ for each fixed $r>0$. The claim follows using that the rotation number is invariant under topological conjugation of homeomorphisms.
	
	Next, consider the field $v_0\in C^\infty(\overline{\Omega},\RR^3)$ given by $v_0=\Psi_\ast\hspace{0.5pt} u_0$. By construction, $v_0$ is solenoidal. Furthermore, it is tangent to $\partial\Omega$ and non-vanishing on $\partial\Omega$. Defining $g\coloneqq f\circ \Psi^{-1}$, we obviously have that $g$ is a first integral of $v_0$, i.e.,
	\[v_0\cdot\nabla g=(D\Psi\,u_0)\circ\Psi^{-1}\cdot(\nabla f\circ\Psi^{-1})D\Psi=(D\Psi\,u_0\cdot\nabla f)\circ\Psi^{-1}\,D\Psi=0.\]
	Therefore, the flow $X^0_t$ of $v_0$ satisfies $g\circ X^0_t=g$. We apply \cref{T.dominios} to $v_0$, obtaining a homeomorphism $\Phi:\overline{\Omega}\to\overline{\Omega}$ and a weak steady Euler flow $v\in C^\alpha(\Omega)$ for some $\alpha>0$. The map $X_t\coloneqq \Phi\circ X^0_t\circ \Phi^{-1}$ is a flow of $v$ and it is unique within the regularity class $C^{\tau^2}$. Defining $\tilde{f}\coloneqq f\circ \Phi^{-1}$, we have
	\[\tilde{f}\circ X_t=(f\circ\Phi^{-1})\circ (\Phi\circ X^0_t\circ\Phi^{-1})=(f\circ X^0_t)\circ \Phi^{-1}=f\circ\Phi^{-1}=\tilde{f}.\]
	Hence, the level sets of $\tilde{f}$ are invariant under the flow $X_t$. Since they are homeomorphic to the level sets of $g$, they provide a foliation of $\Omega$ by tori, singular along a closed embedded circle~$\mathcal{C}\subset\Omega$. Moreover, since different fields in the family defined by $u_0$ (and hence $v_0$) are not topologically equivalent, the uniqueness of the flow map $X_t$ in its regularity class implies that we obtain a (nontrivial) family of different steady states $v$.  This concludes the proof. 
\end{proof}

\section*{Acknowledgements}
The authors are indebted to Massimo Sorella for valuable discussions. This work has received funding from the European Research Council (ERC) under the European Union's Horizon 2020 research and innovation programme through the grant agreement~862342 (A.E.). It is partially supported by the grants CEX2023-001347-S, RED2022-134301-T and PID2022-136795NB-I00 (A.E. and D.P.-S.) funded by MCIN/AEI/10.13039/501100011033, and Ayudas Fundaci\'on BBVA a Proyectos de Investigaci\'on Cient\'ifica 2021 (D.P.-S.). J.P.-T. was partially supported by an FPI grant CEX2019-000904-S-21-4 funded by MICIU/AEI/10.13039/501100011033 and by FSE+.

\appendix 
\section{Some auxiliary results}
We collect here some auxiliary results that are needed in our construction. First of all, we recall the definition of Hölder norms. Let us denote the supremum norm as \[\norm{f}_0\coloneqq \sup_{x\in\TT^3}\abs{f(x)},\]
where $\abs{\cdot}$ denotes the absolute value when working with real-valued maps, the modulus when working with vector-valued maps and the operator norm when working with matrix-valued maps, that is,
\[\abs{M}\coloneqq \sup_{\abs{v}\leq 1}\abs{Mv}.\]
This is the norm that we will use whenever we are working with matrices. It is not very important, however, since all norms on a finite-dimensional vector space are equivalent.

Given an integer $k\geq 0$ and $\alpha \in (0,1)$, we define the Hölder seminorms as
\begin{align*}
	[f]_{k}&\coloneqq \max_{\abs{\beta}=k}\norm{\partial^\beta f}_0, \\
	[f]_{k+\alpha}&\coloneqq \max_{\abs{\beta}=k}\sup_{x\neq y}\frac{\abs{\partial^\beta f(x)-\partial^\beta f(y)}}{\abs{x-y}^\alpha},
\end{align*}
where $\beta$ is a multi-index. The Hölder norms are then defined as
\begin{align*}
	\norm{f}_k&\coloneqq \sum_{j\leq k}[f]_j, \\
	\norm{f}_{k+\alpha}&\coloneqq \norm{f}_k+[f]_{k+\alpha}.
\end{align*}
They can, indeed, be shown to be seminorms and norms, respectively. The Hölder space $C^{k,\alpha}(\TT^3)$ is defined as the set of $C^k$ functions such that $[\cdot]_{k+\alpha}$ is finite. It is a Banach space when equipped with the norm $\norm{\cdot}_{k+\alpha}$. It is easy to check that the following inequality holds for any $0\leq r\leq 1$:
\[[fg]_r\leq C([f]_r\norm{g}_0+\norm{f}_0[g]_r).\]
In addition, we will often use the interpolation inequality
\[[f]_s\leq C\norm{f}_0^{1-\frac{s}{r}}[f]_r^{\frac{s}{r}},\]
where $r\geq s\geq 0$.

Let us also consider a Littlewood--Payley decomposition, e.g.\ as in~\cite[Section 2.2]{BCD}. For this, we take smooth radial functions $\chi,\varphi:\RR^3\to[0,1]$, whose supports are contained in the ball $B(0,\frac43)$ and in the annulus $\{\frac34 <|\xi|<\frac83\}$ respectively, with the property that
\[
\chi(\xi)+\sum_{N=0}^\infty\varphi(2^{-N}\xi)=1
\]
for all~$\xi\in\RR^3$. In terms of the Fourier multipliers $P_{<}:=\chi(D)$ and $P_N:=\varphi(2^{-N}D)$, the Besov norm $B^s_{\infty,\infty}$ (which is equivalent to the H\"older norm $C^s$ if $s\in\RR^+\backslash\NN$, and strictly weaker if $s\in\NN$) can be written as
\begin{equation}\label{E.BesovF}
	\|h\|_{B^{s}_{\infty,\infty}}:=\|P_{<}h\|_0+\sup_{N\geq0}2^{Ns}\|P_Nh\|_0\,.
\end{equation}
In the rest of the Appendix, we use the standard notation $D:=-i\nabla$.

The next result we will need, which is standard, is about the existence of cut-off functions with good bounds for their derivatives. For a proof, see e.g.~\cite[Lemma B.1]{Extension}.
\begin{lemma}
	\label{lemma cutoff}
	Let $A\subset \mathbb{R}^3$ be a measurable set and let $r>0$. There exists a cutoff function $\chi_r\in C^\infty(\mathbb{R}^3,[0,1])$ whose support is contained in $A+B(0,r)$ and such that $\chi_r\equiv 1$ in a neighborhood of $A$. Furthermore, for any $N\geq 0$ we have \[\left\|\chi_r\right\|_N\leq C_N\,r^{-N}\]
	for some universal constants $C_N$ depending on $N$ but not on $A$ or $r$. We may also assume that the square root of $\chi_r$ and $(1-\chi_r)$ is smooth and that it satisfies analogous estimates.
\end{lemma}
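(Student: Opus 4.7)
The plan is a two-step construction: first build a preliminary smooth cutoff by mollifying the indicator of a suitable thickening of $A$, and then compose it with a carefully chosen one-dimensional profile in order to force the smooth square root condition on $\chi_r$ and $1-\chi_r$.

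First I would fix once and for all a nonnegative bump $\eta\in C^\infty_c(B(0,1))$ with $\int\eta=1$, and set $\eta_s(x):=s^{-3}\eta(x/s)$. Defining $u_r:=\mathbf{1}_{A+B(0,r/2)}\ast\eta_{r/4}$ immediately yields $u_r\in C^\infty(\mathbb{R}^3,[0,1])$ with $u_r\equiv 1$ on the neighborhood $A+B(0,r/4)$ of $A$ and $\supp u_r\subset A+B(0,3r/4)\subset A+B(0,r)$. The scaling identity $\|D^\beta\eta_{r/4}\|_{L^1}\lesssim r^{-|\beta|}$ together with Young's inequality gives the bounds $\|u_r\|_N\leq C_N r^{-N}$, with constants depending only on $\eta$ and $N$, and in particular independent of $A$.

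The main technical point is the smoothness of $\sqrt{\chi_r}$ and $\sqrt{1-\chi_r}$, since a generic mollification does not admit smooth square roots near the transition set. I would handle this by composing $u_r$ with a profile $\psi\in C^\infty(\mathbb{R},[0,1])$ satisfying $\psi\equiv 0$ on $(-\infty,1/4]$, $\psi\equiv 1$ on $[3/4,\infty)$, and such that both $\sqrt{\psi}$ and $\sqrt{1-\psi}$ are smooth. Such a $\psi$ can be built from the classical non-analytic generator $\sigma(t):=e^{-1/t}$ for $t>0$ and $\sigma(t):=0$ for $t\le 0$, by setting
\[\psi(t):=\frac{\sigma(t-\tfrac14)}{\sigma(t-\tfrac14)+\sigma(\tfrac34-t)}.\]
The key observation is that $\sqrt{\sigma}(t)=e^{-1/(2t)}$ for $t>0$ (and $0$ otherwise) is itself smooth, and that the denominator stays strictly positive on all of $\mathbb{R}$, so both $\sqrt{\psi}$ and $\sqrt{1-\psi}$ are smooth as quotients of smooth functions with nonvanishing denominator. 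Setting $\chi_r:=\psi\circ u_r$ then produces a smooth cutoff with the required support and equal-to-one properties; the chain rule (Fa\`a di Bruno) together with the bounds on $u_r$ promote to $\|\chi_r\|_N\leq C_N r^{-N}$, and the identities $\sqrt{\chi_r}=(\sqrt{\psi})\circ u_r$ and $\sqrt{1-\chi_r}=(\sqrt{1-\psi})\circ u_r$ deliver the smooth square roots with analogous estimates.

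The only nonroutine ingredient is the profile $\psi$: one must arrange that both $\psi$ and $1-\psi$ admit smooth square roots, which rules out a generic mollified ramp but is easily achieved by the $\sigma/(\sigma+\tilde\sigma)$ trick above. Uniformity in the underlying set $A$ is automatic since all constants depend only on the fixed data $\eta$, $\psi$, and the order $N$.
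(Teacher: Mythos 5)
Your construction is correct, and it is the standard mollification argument: mollify the indicator of a half-thickened set to get $u_r$ with the right support, interiority, and $C^N$ scaling, then post-compose with a fixed transition profile $\psi$ chosen so that both $\sqrt{\psi}$ and $\sqrt{1-\psi}$ are smooth. The paper itself does not give a proof of this lemma (it cites Lemma~B.1 of \cite{Extension}), and what you propose is exactly the canonical route that reference takes; in particular, the one genuinely nonroutine point --- building $\psi$ as $\sigma(t-\tfrac14)/\bigl(\sigma(t-\tfrac14)+\sigma(\tfrac34-t)\bigr)$ with $\sigma(t)=e^{-1/t}\mathds{1}_{t>0}$ so that $\sqrt{\sigma}$ is itself smooth and the denominator never vanishes --- is the right observation and resolves the square-root requirement cleanly. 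The Fa\`a di Bruno step correctly preserves the $r^{-N}$ scaling since each derivative of $\psi$, $\sqrt{\psi}$, $\sqrt{1-\psi}$ is a fixed bounded function and the homogeneity $\sum n_i=N$ recovers $r^{-N}$ from the bounds on $u_r$.
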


Our next result is used to decompose the Reynolds stress into simpler matrices, which can be canceled in a single iteration of the perturbative process.
\begin{lemma} \label{geometric lemma}
	Let $\{\zeta_1,\zeta_2,v\}$ be an orthonormal basis of $\RR^3$. There exist $r>0$, unitary vectors $\zeta_3,\dots \zeta_6$ and smooth maps $\Gamma_1,\dots, \Gamma_6$ defined in the ball $B(\Id,r)\subset\mathcal{S}^3$ such that
	\[S=\sum_{j=1}^6\Gamma_j(S)^2\zeta_j\otimes\zeta_j \qquad \forall S\in B(\Id,c_0).\]
	In addition, the angle described by $v$ and any of the vectors $\zeta_j$ is between $45^{\circ}$ and $90^{\circ}$.
\end{lemma}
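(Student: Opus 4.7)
The proof reduces to an explicit construction followed by a linear-algebraic continuity argument. The space $\mathcal{S}^3$ of symmetric $3\times 3$ real matrices is $6$-dimensional, so once we pick six unit vectors for which $\{\zeta_j\otimes\zeta_j\}_{j=1}^6$ is a basis, the linear map $L:\RR^6\to\mathcal{S}^3$, $c\mapsto\sum_j c_j\,\zeta_j\otimes\zeta_j$, is an isomorphism and $c(S):=L^{-1}(S)$ depends linearly, hence smoothly, on $S$. If $c(\Id)$ has all six entries strictly positive, then by continuity of $L^{-1}$ the same holds on a small ball $B(\Id,r)$, and setting $\Gamma_j(S):=\sqrt{c_j(S)}$ gives the required smooth maps. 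The whole problem thus reduces to choosing $\zeta_3,\dots,\zeta_6$ so that $\Id$ sits in the \emph{interior} of the positive cone generated by the six rank-one matrices, subject to the angle constraint.

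My ansatz would be four vectors placed with four-fold symmetry about the $v$-axis: fix an angle $\theta\in[45^\circ,90^\circ)$, let $s:=\sin\theta/\sqrt 2$ and $c:=\cos\theta$, and define the four vectors $\zeta_{2+k}:=\epsilon\, s\zeta_1+\epsilon'\, s\zeta_2+cv$ as $(\epsilon,\epsilon')$ runs over $\{\pm 1\}^2$. Each is a unit vector making angle exactly $\theta$ with $v$, while $\zeta_1,\zeta_2$ sit at $90^\circ$, so the angle condition holds automatically. Writing each $M_j:=\zeta_j\otimes\zeta_j$ in the natural basis of $\mathcal{S}^3$,
$\{\zeta_1\otimes\zeta_1,\,\zeta_2\otimes\zeta_2,\,\zeta_1\otimes\zeta_2+\zeta_2\otimes\zeta_1,\,v\otimes v,\,\zeta_1\otimes v+v\otimes\zeta_1,\,\zeta_2\otimes v+v\otimes\zeta_2\}$,
one obtains six row-vectors whose $6\times 6$ determinant factors as a nonzero multiple of $s^4c^4$, so $\{M_j\}$ is indeed a basis.

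The linear system $\Id=\sum_j c_j M_j$ is then immediate: the three ``off-diagonal'' equations (for the three basis vectors involving mixed products) are alternating-sign conditions on $(c_3,c_4,c_5,c_6)$ that force $c_3=c_4=c_5=c_6=:c_*$, while the remaining three equations yield $c_*=1/(4\cos^2\theta)$ and $c_1=c_2=1-\tfrac12\tan^2\theta$. Both expressions are strictly positive provided $\tan^2\theta<2$, i.e.\ $\theta<\arctan\sqrt 2\approx 54.74^\circ$, which is compatible with $\theta\ge 45^\circ$. The delicate part of the argument is precisely this positivity check: spreading the $\zeta_j$'s ``too wide'' around $v$ makes $c_1,c_2$ negative, while breaking the four-fold symmetry typically forces one of the $c_j$ to vanish so that $\Id$ ends up on the boundary of the cone rather than in its interior. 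Choosing $\theta$ in the narrow window $[45^\circ,\arctan\sqrt 2)$ resolves both issues at once, and the continuity argument of the first paragraph then closes the proof.
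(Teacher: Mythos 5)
Your proof is correct and takes essentially the same route as the paper: the paper defines $\zeta_{3},\dots,\zeta_6 := \tfrac{1}{\sqrt2}v \pm \tfrac12(\zeta_1+\zeta_2)$, $\tfrac{1}{\sqrt2}v\pm\tfrac12(\zeta_1-\zeta_2)$, which is exactly your four-fold symmetric ansatz at $\theta=45^\circ$, computes $\Id=\sum_j\tfrac12\,\zeta_j\otimes\zeta_j$, and then invokes the same continuity-plus-square-root argument. Your write-up is slightly more general in that it keeps $\theta$ as a free parameter and identifies the admissible window $[45^\circ,\arctan\sqrt2)$, but the core construction and the mechanism are identical.
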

\begin{proof}
	We define
	\begin{align*}
		\zeta_3&\coloneqq \frac{1}{\sqrt{2}}v+\frac{1}{2}(\zeta_1+\zeta_2), \qquad \zeta_4\coloneqq \frac{1}{\sqrt{2}}v-\frac{1}{2}(\zeta_1+\zeta_2),\\
		\zeta_5&\coloneqq \frac{1}{\sqrt{2}}v+\frac{1}{2}(\zeta_1-\zeta_2),\qquad		\zeta_6\coloneqq \frac{1}{\sqrt{2}}v-\frac{1}{2}(\zeta_1-\zeta_2).
	\end{align*}
	It is clear that the vectors $\zeta_j, \;j=1,\dots 6$ are unitary and the angle described by $v$ and any of them is between $45^{\circ}$ and $90^{\circ}$. In addition, $\{\zeta_j\otimes\zeta_j:j=1,\dots, 6\}$ is a basis of $\mathcal{S}^3$. Thus, the map  $\RR^6\to\mathcal{S}^3, (x_1,\dots, x_6)\mapsto \sum_{j=1}^6x_j\zeta_j\otimes\zeta_j$ is a linear isomorphism between both vector spaces. Since
	\[\Id=\sum_{j=1}^6\frac{1}{2}\zeta_j\otimes\zeta_j,\]
	the coordinates of the inverse map must be positive in a sufficiently small neighborhood of $\Id$, so we can take the square root and the claim follows.
\end{proof}

In the following lemma, which is now standard, we obtain bounds for the Besov norms of functions involving rapidly oscillating exponentials. Since we want to apply it to a particular class of functions, for completeness we provide a proof of the estimate.

\begin{lemma}
	\label{stationary phase lemma}
	Let $s>-1$, $J\in \NN$ and $\lambda\geq 1$. For $j=1, \dots J$, let $c_j\in C^\infty_c(\TT^3)$ and $k_j\in \RR^3$ such that $C_0^{-1}\leq \abs{k_j}\leq C_0$ for some $C_0>0$. Suppose that the supports of at most $J_*$ of the functions $\{c_j\}_{j=1}^J$ have a nonempty intersection, so that the product of any $J_*+1$ of the functions~$c_j$ is identically~$0$. Let
	\[f(x)\coloneqq \sum_{j=1}^J c_j(x)e^{i\lambda k_j\cdot x}.\]
	Then, for any nonnegative integer $m$ we have
	\[\norm{f}_{B^{s}_{\infty,\infty}}\leq C_*\left(\lambda^s\mathcal C_0+\lambda^{-m}\mathcal C_{m+s_+}\right),\]
	where $s_+:=\max\{s,0\}$ and $\mathcal C_\sigma:=\max_{j\leq J}\|c_j\|_\sigma$. The constant~$C_*$ depends on $s,m, J_*$ and $C_0$ but not on~$J$.
\end{lemma}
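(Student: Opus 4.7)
The plan is to use the Littlewood--Paley characterization \eqref{E.BesovF} of $\|\cdot\|_{B^s_{\infty,\infty}}$, and estimate $\|P_<f\|_0$ and each $\|P_N f\|_0$ by treating the summands $g_j := c_j\,e^{i\lambda k_j\cdot\,\cdot}$ individually and combining via the overlap hypothesis. The core tool is the standard non-stationary phase identity $e^{i\lambda k_j\cdot y} = (i\lambda|k_j|^2)^{-1}(k_j\cdot\nabla_y)e^{i\lambda k_j\cdot y}$. Writing $P_Ng_j(x) = \int K_N(x-y)c_j(y)e^{i\lambda k_j\cdot y}\,dy$ and integrating by parts $M$ times, each $\lambda^{-1}$ gained is paid for by one derivative which, by Leibniz, either lands on $K_N$ (costing $2^N$, since $\|\nabla^r K_N\|_{L^1}\lesssim 2^{rN}$) or on $c_j$ (raising the H\"older index). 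Using $|k_j|\geq C_0^{-1}$ and the overlap condition, which pointwise gives $\sum_j |\nabla^k c_j(y)|\leq J_*\mathcal{C}_k$, one obtains the dyadic estimate
\[
\|P_N f\|_0 \lesssim J_*\,\lambda^{-M}\sum_{k=0}^{M}\binom{M}{k}2^{(M-k)N}\,\mathcal{C}_k,
\]
valid for any integer $M\geq 0$, and the analogous bound for $\|P_<f\|_0$ with $2^N$ replaced by $1$.

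Next, I would take $M = \lceil m+s_+\rceil$ and apply the interpolation $\mathcal{C}_k\lesssim \mathcal{C}_0^{1-k/M}\mathcal{C}_M^{k/M}$. After invoking the binomial theorem, the sum collapses and
\[
\|P_Nf\|_0 \lesssim J_*\!\left[\left(\tfrac{2^N}{\lambda}\right)^{M}\!\mathcal{C}_0 + \lambda^{-M}\mathcal{C}_M\right].
\]
Multiplying by $2^{Ns}$ and restricting to the regime $2^N\leq \lambda$, a case analysis for $s\geq 0$ and $-1<s<0$ shows that both terms are bounded by $\lambda^s\mathcal{C}_0 + \lambda^{-m}\mathcal{C}_{m+s_+}$; the hypothesis $s>-1$ is exactly what guarantees that the relevant exponents of $2^N$ are nonnegative, so that the maxima are attained at the endpoint $2^N=\lambda$. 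The same estimate at $N=0$ yields $\|P_<f\|_0 \lesssim \lambda^{-m}\mathcal{C}_{m+s_+}$.

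For the complementary regime $2^N>\lambda$, the oscillation of $e^{i\lambda k_j\cdot y}$ is no faster than the kernel's, so one must reverse the roles. Since $\varphi$ vanishes to infinite order at the origin, for any integer $\ell\geq 1$ one can factor $K_N = (-\Delta)^{\ell}\widetilde{K}_N$ with $\widetilde{K}_N$ Schwartz and $\|\widetilde{K}_N\|_{L^1}\lesssim 2^{-2N\ell}$. Transferring $\Delta^\ell$ onto $g_j$ in the convolution and using the pointwise bound $\|g_j\|_{C^{2\ell}} \lesssim \lambda^{2\ell}\mathcal{C}_0 + \mathcal{C}_{2\ell}$ (Leibniz plus interpolation) yields
\[
\|P_Nf\|_0\lesssim J_*\!\left[\left(\tfrac{\lambda}{2^N}\right)^{2\ell}\!\mathcal{C}_0 + 2^{-2N\ell}\mathcal{C}_{2\ell}\right].
\]
Choosing $2\ell$ an even integer with $2\ell\geq m+s_+$ and $2\ell>s$, the right-hand side is decreasing in $N$ in this regime, so multiplying by $2^{Ns}$ and evaluating at the worst point $2^N\sim\lambda$ gives $2^{Ns}\|P_Nf\|_0\lesssim \lambda^s\mathcal{C}_0 + \lambda^{-m}\mathcal{C}_{m+s_+}$. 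Taking the supremum over all $N\geq 0$ and combining with the $\|P_<f\|_0$ estimate proves the claim.

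The principal subtlety I expect is the careful use of the interpolation inequality $\mathcal{C}_k\lesssim \mathcal{C}_0^{1-k/M}\mathcal{C}_M^{k/M}$: without it one only gets $\lambda^s\mathcal{C}_M$ in the first term rather than the sharper $\lambda^s\mathcal{C}_0$, which would be qualitatively weaker. A secondary nuisance is that $m+s_+$ need not be an integer, forcing the use of ceilings in the choice of $M$ and $\ell$; this is absorbed into the universal constant by invoking the same interpolation. Finally, one must verify that the constants from the two regimes match at the transition $2^N\sim \lambda$, which follows once the exponents are chosen as above.
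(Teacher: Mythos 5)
Your overall architecture---a Littlewood--Paley decomposition, the non-stationary phase integration by parts for low frequencies, and the observation that the integrand remains supported in $\supp c_j$ so the overlap hypothesis applies---is sound and, for $s\in(-1,0)$, is essentially the paper's argument (the paper doesn't even need your $(-\Delta)^\ell$ factoring for $2^N\gtrsim\lambda$, since the trivial bound $\|P_Nf\|_0\lesssim J_*\mathcal C_0$ already decays after multiplying by $2^{Ns}$). However, there is a genuine gap in the step where you take $M=\lceil m+s_+\rceil$, interpolate $\mathcal C_k\lesssim\mathcal C_0^{1-k/M}\mathcal C_M^{k/M}$, and then claim the resulting $\lambda^{s-M}\mathcal C_M$ (or $\lambda^{s-2\ell}\mathcal C_{2\ell}$ in the high-frequency regime) matches $\lambda^{-m}\mathcal C_{m+s_+}$ up to a constant ``by invoking the same interpolation.'' Interpolation runs in the wrong direction here: it bounds the \emph{weaker} norm $\mathcal C_{m+s_+}$ in terms of $\mathcal C_0,\mathcal C_M$, not the other way around. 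When $m+s$ is not an integer (the typical case in the paper's applications, where $s\in\{\alpha,\beta\}$ is a small irrational), one has $M>m+s$, hence $\mathcal C_M\geq\mathcal C_{m+s}$ with no reverse bound. A concrete counterexample to your final inequality: take $\lambda=1$ and $c$ a bump of width $\varepsilon\ll 1$; then $\lambda^{s-M}\mathcal C_M\sim\varepsilon^{-M}$ while $\lambda^{-m}\mathcal C_{m+s}\sim\varepsilon^{-(m+s)}$, and the former is strictly larger since $M>m+s$.

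The repair is to interpolate \emph{before} collapsing, term by term: for each $k\leq m+s_+$ one has $\lambda^{s-k}\mathcal C_k\lesssim(\lambda^s\mathcal C_0)^{1-k/(m+s_+)}(\lambda^{-m}\mathcal C_{m+s_+})^{k/(m+s_+)}\lesssim\lambda^s\mathcal C_0+\lambda^{-m}\mathcal C_{m+s_+}$, which is exactly what's needed. This forces you to choose the number of integrations by parts $M\leq m+s_+$ so that $k$ never exceeds $m+s_+$; in the high-frequency regime you additionally need $M'>s$ for decay, so you need an integer $M'\in(s,\,m+s_+]$, which exists precisely when $m\geq 1$. The residual case $m=0$, $s>0$ cannot be handled this way within the LP framework, and this is exactly why the paper bypasses LP altogether for $s\geq 0$, estimating the H\"older norm $\|f\|_s$ directly via Leibniz and then interpolating $\|h\|_s\lesssim\lambda^s\|h\|_0+\lambda^{-m}\|h\|_{s+m}$. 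I'd suggest you follow the paper's split: treat $s\geq 0$ by the elementary H\"older estimate (which subsumes your $m=0$ problem), and keep your LP argument---with the term-by-term interpolation fix---only for $s\in(-1,0)$, where $s_+=0$ removes the ceiling issue entirely.
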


\begin{proof}
	Let us start with the case $s\geq0$. Write $s=l+\alpha$ with $l\in\NN$ and $\alpha\in[0,1)$. The condition that at most $J_*$ of the supports of the functions~$c_j$ intersect any point $x$ ensures that
	\[
	|\nabla^l f(x)|\leq \sum_{j=1}^J|\nabla^l(c_j(x)e^{i\lambda k_j\cdot x})|\lesssim J_*(\lambda^l\mathcal{C}_0+\mathcal{C}_l)\,.
	\]
	Here we have used that
	\[
	|\nabla^l(c_j(x)e^{i\lambda k_j\cdot x})|\lesssim\sum_{n=0}^l|\nabla^nc_j(x)|\lambda^{l-n}\lesssim\|c_j\|_0\lambda^l+[c_j]_l\leq\lambda^l\mathcal{C}_0+\mathcal{C}_l\,,
	\]
	and the implicit constants depend on~$l$ and~$C_0$.
	Likewise, for any $x,y$,
	\[
	\frac{|\nabla^lf(x)-\nabla f(y)|}{|x-y|^\alpha}\lesssim 2J_*(\lambda^{l+\alpha}\mathcal{C}_0+\mathcal{C}_{l+\alpha})\,.
	\]
	Therefore, for $s\geq0$,
	\[
	\|f\|_{B^s_{\infty,\infty}}\lesssim\|f\|_{s}\lesssim J_*(\lambda^s\mathcal{C}_0+\mathcal{C}_s)\lesssim J_*(\lambda^s\mathcal C_0+\lambda^{-m}\mathcal C_{m+s})\,.
	\]
	To pass to the last line, we have used the interpolation inequality 
	\[
	\|h\|_s\lesssim\|h\|_0^{\frac m{s+m}}\|h\|_{s+m}^{\frac s{s+m}}\lesssim \lambda^s\|h\|_0+ \lambda^{-m}\|h\|_{s+m}\,.
	\]
	
	Let us now consider the case $s\in(-1,0)$. The case $m=0$ is immediate because $\|P_<h\|_0+\|P_Nh\|_0\lesssim\|h\|_0$ for any $N\geq0$ and any~$h$, so that in this case
	\[
	\|f\|_{B^s_{\infty,\infty}}\leq\|P_{<}f\|_0+\sup_{N\geq0}\|P_NFf|_0\lesssim\|f\|_0\leq J_*\mathcal C_0\,.
	\]
	
	Let us now assume that $m\geq1$. One then integrates by parts to write
	\begin{align*}
		P_<f(x)&=\int_{\RR^3}\widehat\chi(x-y)\, f(y)\, dy\\
		&=\int_{\RR^3}\widehat\chi(x-y)\sum_{j=1}^Jc_j(y)\left( \frac{k_j\cdot\nabla_y}{i\lambda|k_j|^2}\right)^m e^{i\lambda k_j\cdot y}\, dy\\
		&=\int_{\RR^3}\sum_{j=1}^Je^{i\lambda k_j\cdot y}\left(\frac{ik_j\cdot\nabla_y}{\lambda|k_j|^2}\right)^m\left[\widehat\chi(x-y)\,c_j(y) \right] \, dy\,.
	\end{align*}
	Using the hypothesis on the supports of~$c_j$ and on the size of~$k_j$, this immediately results in the bound
	\[
	\|P_<f\|_0\lesssim J_*\lambda^{-m}\mathcal{C}_m\,.
	\]
	Analogously, with $N\geq0$,
	\begin{align*}
		P_Nf(x)&=2^{3N}\int_{\RR^3}\widehat\varphi(2^N(x-y))\, f(y)\, dy\\
		&=2^{3N}\int_{\RR^3}\sum_{j=1}^Je^{i\lambda k_j\cdot y}\left(\frac{ik_j\cdot\nabla_y}{\lambda|k_j|^2}\right)^m\left[\widehat\varphi(2^N(x-y))\,c_j(y) \right] \, dy\,,
	\end{align*}
	which yields
	\[
	\|P_Nf\|_0\lesssim J_*\lambda^{-m}(2^{mN}\mathcal{C}_0+\mathcal{C}_m)\,.    \]
	This is better than the direct estimate $\|P_Nf\|\lesssim J_*\mathcal C_0$ when $2^N<\lambda$.   Therefore, for $s\in(-1,0)$,
	\begin{align*}
		\|f\|_{B^s_{\infty,\infty}}&= \|P_<f\|_0+\sup_{N\geq0}2^{Ns}\|P_Nf\|_0\\
		&\lesssim J_*\sup_{N\geq0}2^{Ns}\min\{\mathcal{C}_0,\lambda^{-m}(2^{mN}\mathcal{C}_0+\mathcal{C}_m)\}\\
		&\leq J_*\sup_{1\leq 2^N\leq \lambda}2^{Ns}\lambda^{-m}(2^{mN}\mathcal{C}_0+\mathcal{C}_m) + J_*\sup_{ 2^N\geq \lambda}2^{Ns}\mathcal C_0\\
		&\lesssim J_*(\lambda^s \mathcal{C}_0+\lambda^{-m}\mathcal{C}_m)\,.
	\end{align*}
	The lemma then follows.
\end{proof}

If the function is supported in a small region, we can still prove good estimates for the $B^{-1+\alpha}_{\infty,\infty}$-norm, even in the absence of oscillations. For a proof, see~\cite[Lemma B.4]{Extension}:
\begin{lemma}
	\label{frecuencias bajas de dominio pequeño}
	Let $\Omega\subset \RR^3$ be a bounded domain with smooth boundary. Let $\alpha\in (0,1)$ and let $r>0$ be sufficiently small (depending on $\Omega$). Consider a function $f\in C_c^\infty(\RR^3)$ supported in $\{x\in\RR^3:0<\dist(x,\Omega)<r\}$. We have
	\[\norm{f}_{B^{-1+\alpha}_{\infty,\infty}}\leq C(\Omega,\alpha)\,r^{1-\alpha}\norm{f}_0.\]
\end{lemma}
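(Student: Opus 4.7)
The plan is to prove the bound via the Littlewood--Paley characterization~(\ref{E.BesovF}) by establishing the uniform dyadic estimate
\[
\|P_N f\|_0 \;\leq\; C(\Omega)\,\min(1,\,2^N r)\,\|f\|_0 \qquad \text{for all } N\geq 0,
\]
together with the analogous low-frequency bound $\|P_< f\|_0 \leq C(\Omega)\,r\,\|f\|_0$. Granting these, and using $r\leq 1$, the Besov norm satisfies
\[
\|f\|_{B^{-1+\alpha}_{\infty,\infty}} \;\lesssim\; \Big(r \,+\, \sup_{N\geq 0} 2^{N(-1+\alpha)}\min(1,\,2^N r)\Big)\|f\|_0.
\]
Balancing the two branches of the minimum, the supremum is attained (up to constants) at $2^N\sim r^{-1}$: for $2^N\leq r^{-1}$, $2^{N(-1+\alpha)}\cdot 2^Nr = 2^{N\alpha}r$ is increasing in $N$ and equals $r^{1-\alpha}$ at the threshold; for $2^N\geq r^{-1}$, $2^{N(-1+\alpha)}$ is decreasing in $N$ and again equals $r^{1-\alpha}$ at the threshold. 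Both branches give the desired bound $r^{1-\alpha}\|f\|_0$.

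To prove the dyadic bound, the convolution kernel of $P_N$ takes the form $K_N(z)=2^{3N}\psi(2^N z)$ for a fixed Schwartz function $\psi$ (the inverse Fourier transform of $\varphi$). The ``trivial'' side $\|P_Nf\|_0\lesssim\|f\|_0$ follows from $\|K_N\|_{L^1}=\|\psi\|_{L^1}\lesssim 1$. For the more delicate side, use
\[
|P_N f(x)| \;\leq\; \|f\|_0 \int_{S_r}|K_N(x-y)|\,dy, \qquad S_r:=\{y:0<\dist(y,\Omega)<r\},
\]
and slice $S_r$ along the parallel hypersurfaces $\Sigma_s:=\{\dist(\cdot,\Omega)=s\}$, $s\in(0,r)$. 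When $r$ is smaller than the reach of $\partial\Omega$, each $\Sigma_s$ is a smooth compact surface at uniformly bounded distance from $\partial\Omega$, with curvature bounded uniformly in $s$; in particular its area in any ball of radius $\rho$ is at most $C(\Omega)\min(\rho^2,1)$. Combined with the Schwartz decay $|K_N(z)|\lesssim_M 2^{3N}(1+2^N|z|)^{-M}$, a rescaling $z=2^N(x-y)$ yields $\int_{\Sigma_s}|K_N(x-y)|\,d\sigma(y)\lesssim 2^N$ uniformly in $x\in\RR^3$ and $s\in(0,r)$. Integration in $s$ via the coarea formula (valid because $|\nabla\dist|=1$ on the tubular neighborhood of $\partial\Omega$) then gives $\int_{S_r}|K_N(x-y)|\,dy\lesssim 2^N r$, as desired; the bound for $P_<$ is identical with the fixed Schwartz kernel $\mathcal{F}^{-1}\chi$ in place of $K_N$.

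The main technical step is the uniform surface-area estimate $|\Sigma_s\cap B(x,\rho)|\lesssim\min(\rho^2,1)$ for all $s\in(0,r)$, with constant depending only on $\partial\Omega$. This is precisely the role of the hypothesis ``$r$ sufficiently small depending on $\Omega$'': it ensures that the parallel surfaces $\Sigma_s$ lie within the tubular neighborhood of $\partial\Omega$ of radius less than its reach, on which the signed distance function is smooth and the $\Sigma_s$ inherit curvature bounds from $\partial\Omega$. With this geometric input in place, the kernel rescaling, the coarea slicing, and the elementary dyadic maximization above combine to give the claimed bound.
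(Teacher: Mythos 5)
Your proof is correct. The paper does not give its own argument for this lemma --- it defers to an external reference (Lemma~B.4 of \cite{Extension}) --- so there is no in-text proof to compare against. Your strategy is nevertheless a complete and natural one: you reduce to the uniform dyadic bound $\|P_N f\|_0\lesssim\min(1,2^Nr)\|f\|_0$ (together with $\|P_<f\|_0\lesssim r\|f\|_0$), and then the conclusion follows from the elementary maximization of $2^{N(-1+\alpha)}\min(1,2^Nr)$ at $2^N\sim r^{-1}$ plus $r\leq r^{1-\alpha}$.

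The dyadic bound itself is established exactly as you sketch: the trivial side comes from $\|K_N\|_{L^1}\lesssim 1$, and the refined side from slicing the shell $S_r$ along the level sets $\Sigma_s$ via the coarea formula, then decomposing the surface integral of the rescaled Schwartz kernel into dyadic annuli. The two ingredients you invoke --- (i) uniform $2$-Ahlfors upper regularity $\mathcal{H}^2(\Sigma_s\cap B(x,\rho))\lesssim\min(\rho^2,1)$ for all $s\in(0,r)$, valid because the parallel surfaces lie inside the reach of $\partial\Omega$ and inherit uniform curvature and area bounds, and (ii) polynomial kernel decay of order $M\geq 3$ --- are exactly what is needed to make the annular sum converge to $\lesssim 2^N$, and thus $\int_{S_r}|K_N(x-\cdot)|\lesssim 2^Nr$. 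The low-frequency piece $P_<$ is handled identically with the fixed Schwartz kernel $\mathcal{F}^{-1}\chi$, yielding $\lesssim r$. All steps check out, and you have correctly identified the role of the smallness hypothesis on $r$ (staying within the reach so that the coarea slicing and the uniform geometric bounds hold).
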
	

Finally, we recall some tools concerning the divergence equation for vector fields and matrix-valued fields. 
Let us start with the case of matrices.
We denote by $\mathcal{S}^3$ the set of symmetric $3\times 3$ matrices. Let $\mathcal U$ be a bounded subset of $\RR^3$ (or  or $\TT^3$) with smooth boundary. We introduce the following operators:
\begin{align*}
	\Div: C^\infty(\overline{\mathcal U},\mathcal{S}^3)&\to C^\infty(\overline{\mathcal U},\RR^3), \quad S\mapsto v_i\coloneqq\partial_j S_{ij}, \\
	D^s: C^\infty(\overline{\mathcal U},\RR^3)&\to C^\infty(\overline{\mathcal U},\mathcal{S}^3), \hspace{12pt} v\mapsto S_{ij}\coloneqq\frac{1}{2}(\partial_i v_j+\partial_j v_i).
\end{align*}
Using the divergence theorem, it is easy to see that the following identity holds:
\begin{equation}
	\int_{\mathcal U}\xi\cdot \Div S+\int_{\mathcal U} D^s\xi:S=\int_{\partial \mathcal U}\xi^t S\,\nu.
	\label{identidad de Green matrices}
\end{equation}
With this notation, we introduce the last result we need, which concerns the divergence equation for symmetric matrices (for a proof, see~\cite[Lemma 2.9]{Extension}):
\begin{lemma}
	\label{invertir divergencia matrices}
	Let $\mathcal U\subset \RR^3$ be a bounded domain with smooth boundary and let $\rho\in C^\infty_c(\mathcal U,\RR^3)$. There exists $S\in C^\infty_c(\mathcal U,\mathcal{S}^3)$ such that $\Div S=\rho$ if and only if
	\[\int_{\mathcal U} \rho\cdot\xi=0 \qquad \forall \xi\in \ker D^S.\]
	In that case, $S$ may be chosen so that for any $N\geq 0$ and $\alpha>0$ we have
	\[\norm{S}_{N}\lesssim \norm{\rho}_{B^{N-1+\alpha}_{\infty,\infty}},\]
	where the implicit constants depend on $N$, on $\alpha$ and on $\mathcal U$.
\end{lemma}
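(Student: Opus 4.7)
The necessity direction is immediate from the Green-type identity \eqref{identidad de Green matrices}: if $S\in C^\infty_c(\mathcal U,\mathcal S^3)$ satisfies $\Div S=\rho$ and $\xi\in\ker D^s$, then the boundary integral in \eqref{identidad de Green matrices} vanishes since $S\equiv 0$ near $\partial\mathcal U$, while $\int_{\mathcal U} D^s\xi:S=0$ by definition of the kernel, leaving $\int_{\mathcal U}\rho\cdot\xi=0$.

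For sufficiency, I would first characterize $\ker D^s$ explicitly. A smooth $\xi$ with $D^s\xi=0$ is a Killing field for the Euclidean metric on $\RR^3$, and hence has the form $\xi(x)=a+b\times x$ for constants $a,b\in\RR^3$. The orthogonality hypothesis therefore reduces to the six scalar conditions $\int_{\mathcal U}\rho\,dx=0$ and $\int_{\mathcal U}x\times\rho\,dx=0$. The plan is then to construct $S$ via an explicit right inverse of $\Div$ that maps into compactly supported symmetric matrices. When $\mathcal U$ is star-shaped with respect to a ball, one can write down a Bogovski-type operator of the form
\[
(\mathcal R\rho)_{ij}(x)\;=\;\int_{\mathcal U}K_{ij}^k(x,y)\,\rho_k(y)\,dy,
\]
with kernel $K_{ij}^k$ obtained by symmetrizing in $(i,j)$ a matricial analogue of Bogovski's classical construction; the six scalar moment conditions are exactly what is needed to guarantee that $\mathcal R\rho$ is compactly supported in $\mathcal U$ and satisfies $\Div\mathcal R\rho=\rho$ pointwise. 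A general smooth bounded $\mathcal U$ is then reduced to this case by covering $\mathcal U$ with finitely many star-shaped patches, splitting $\rho$ via a subordinate partition of unity, and redistributing moments between patches using a fixed basis of smooth compactly supported fields dual to the six Killing generators; this redistribution is possible precisely because the total moments of $\rho$ vanish.

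The estimate $\|S\|_N\lesssim\|\rho\|_{B^{N-1+\alpha}_{\infty,\infty}}$ is the technically delicate point. Since $\mathcal R$ is a singular integral operator of order $-1$ with a kernel of Calder\'on--Zygmund type, it maps $B^s_{\infty,\infty}$ continuously to $B^{s+1}_{\infty,\infty}$ for every $s\in\RR$. Combined with the continuous embedding $B^{N+\alpha}_{\infty,\infty}\hookrightarrow C^N$ that holds for any $\alpha>0$, this yields the claimed bound; the small $\alpha$ loss is unavoidable because at integer indices the Besov space $B^N_{\infty,\infty}$ is strictly weaker than $C^N$. I expect the main obstacle in a full write-up to be the bookkeeping needed to control the dependence of the implicit constants on the geometry of $\mathcal U$: in the application, the lemma is invoked on rescaled domains of the form $U=sU_1+p$ with $s\in(0,1)$, so one needs a scaling argument that reduces everything to the fixed model domain $U_1$ while tracking how the integral kernel and the partition of unity behave under dilations.
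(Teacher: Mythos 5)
The paper itself does not prove this lemma; it cites \cite[Lemma 2.9]{Extension} for the argument, so there is no in-paper proof against which to compare your proposal directly. That said, the route you sketch is the standard one and is structurally sound: necessity via the Green identity \eqref{identidad de Green matrices}; the identification of $\ker D^s$ with the six-dimensional space of Killing fields $\xi(x)=a+b\times x$ (valid because $\mathcal U$ is connected); a Bogovski-type symmetric right inverse of $\Div$ on star-shaped patches; and a partition-of-unity argument with moment redistribution to pass to general smooth bounded domains.

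Two points deserve correction or more care. First, on a star-shaped patch the compact support of the Bogovski output is automatic from the geometry of the kernel; the six moment conditions are what make the residual term (a fixed bump profile times the total moments) vanish, so that $\Div\mathcal R\rho=\rho$ exactly --- they do not govern the support, contrary to what you wrote. Second, and more substantively, your handling of the Besov estimate is the weakest link. The assertion that a ``Calder\'on--Zygmund operator of order $-1$'' maps $B^s_{\infty,\infty}$ to $B^{s+1}_{\infty,\infty}$ for every $s\in\RR$ is not an off-the-shelf consequence of abstract CZ theory: the Bogovski kernel $K(x,y)$ is not a convolution kernel, its singularity structure depends on the domain geometry, and its boundedness on negative-order Besov spaces is a genuine theorem that requires either dedicated Littlewood--Paley kernel estimates adapted to that specific kernel or a duality argument. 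The $N=0$ case, $\|S\|_0\lesssim\|\rho\|_{B^{-1+\alpha}_{\infty,\infty}}$, is precisely where the content of the lemma sits and should not be passed over as folklore; this is presumably what \cite[Lemma 2.9]{Extension} supplies, and in a self-contained write-up you would need to prove it rather than cite general CZ theory.
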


In the case of vector fields, the aforementioned result (and the proof) remain valid. In the following lemma, we prove that, additionally, the implicit constants remain  uniformly bounded when the set is small. Specifically, one has the following bound (which of course also holds for the case of matrix-valued fields):

\begin{lemma}
	\label{invertir divergencia vectores}
	Given a bounded domain with smooth boundary $U\subset \RR^3$, consider the rescaled domain $\mathcal U:=s\, U$ with some $s\in(0,1]$ and a function $f\in C^\infty_c(\mathcal U)$. There exists $z\in C^\infty_c(\mathcal U,\RR^3)$ such that $\Div z=f$ if and only if
	\[\int_{\mathcal U} f=0.\]
	In that case, $z$ may be chosen so that for any $N\geq 0$ and any $\alpha>0$ we have
	\begin{equation}\label{E.myscaling}
		\norm{z}_{N}\lesssim s^{\alpha} \norm{f}_{B^{N-1+\alpha}_{\infty,\infty}},\end{equation}
	where the implicit constants depend on $N$, on $\alpha$ and on $U$, but not on $s\in(0,1]$.   
\end{lemma}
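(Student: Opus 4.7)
The plan is to reduce the estimate to the known scalar divergence-inversion lemma on the \emph{fixed} domain $U$, by rescaling. Set $\tilde f(x):=f(sx)$ for $x\in\overline U$; then $\tilde f\in C^\infty_c(U)$ and $\int_U \tilde f = s^{-3}\int_{\mathcal U} f = 0$. Applying the scalar-valued analogue of \cref{invertir divergencia matrices} on $U$ produces $\tilde z\in C^\infty_c(U,\RR^3)$ with $\Div \tilde z = \tilde f$ and $\|\tilde z\|_N \leq C(U,N,\alpha)\,\|\tilde f\|_{B^{N-1+\alpha}_{\infty,\infty}}$. Setting $z(y):=s\,\tilde z(y/s)$, one has $z\in C^\infty_c(\mathcal U,\RR^3)$ and a direct computation gives $\Div z(y) = (\Div\tilde z)(y/s) = f(y)$. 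The necessity $\int_{\mathcal U} f=0$ is of course immediate from the divergence theorem.

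The estimate $\|z\|_N\lesssim s^\alpha \|f\|_{B^{N-1+\alpha}_{\infty,\infty}}$ then follows from two elementary scaling computations. From $\partial^\beta z(y)=s^{1-|\beta|}(\partial^\beta\tilde z)(y/s)$ and $s\in(0,1]$,
\[
\|z\|_N \leq \sum_{|\beta|\leq N}s^{1-|\beta|}\|\partial^\beta \tilde z\|_0 \lesssim_N s^{1-N}\|\tilde z\|_N \lesssim_{U,N,\alpha} s^{1-N}\|\tilde f\|_{B^{N-1+\alpha}_{\infty,\infty}},
\]
since the $|\beta|=N$ term dominates. Writing $\sigma:=N-1+\alpha$, it thus suffices to establish the scaling inequality
\[
\|\tilde f\|_{B^{\sigma}_{\infty,\infty}} \;\lesssim_{U,N,\alpha}\; s^{\sigma}\,\|f\|_{B^{\sigma}_{\infty,\infty}},
\]
since combining the two displays yields $\|z\|_N\lesssim s^{1-N}\cdot s^\sigma\|f\|_{B^\sigma}=s^\alpha\|f\|_{B^{N-1+\alpha}}$, as desired.

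This scaling inequality is the crux. Its high-frequency content follows from the identity $\widehat{P_M\tilde f}(\xi)=\varphi(2^{-M}\xi)\,s^{-3}\hat f(\xi/s)$; after a frequency change of variables, $\|P_M\tilde f\|_0=\|\varphi(2^{-M}sD)f\|_0$, and estimating each block yields $\sup_{M\geq 0} 2^{M\sigma}\|P_M\tilde f\|_0 \lesssim s^\sigma\|f\|_{B^\sigma_{\infty,\infty}}$, reproducing the clean homogeneous Besov scaling $\|\tilde f\|_{\dot B^\sigma_{\infty,\infty}}=s^\sigma\|f\|_{\dot B^\sigma_{\infty,\infty}}$. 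The subtle piece is the low-frequency block $\|P_<\tilde f\|_0 \leq \|f\|_0$, which is not a priori controlled by the homogeneous seminorm. Here I use crucially that $f\in C^\infty_c(\mathcal U)$, so at every boundary point $y\in\partial\mathcal U$ all derivatives of $f$ vanish, together with the fact that $\sigma$ is non-integer (as $\alpha\in(0,1)$). For $\sigma>0$, an iterated Taylor expansion of $f$ around the nearest boundary point, which lies at distance at most $\mathrm{diam}(\mathcal U)\leq Cs$ from any $x\in\mathcal U$, yields the Poincar\'e-type bound
\[
\|f\|_0 \leq C(U,\sigma)\,s^{\sigma}\,\|f\|_{\dot C^\sigma} \sim s^\sigma\,\|f\|_{\dot B^\sigma_{\infty,\infty}},
\]
absorbing the low-frequency contribution. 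For $\sigma<0$ (that is, $N=0$), the same factor $s^\sigma$ arises directly from the Littlewood--Paley characterization by summing the blocks of $f$ at frequencies below $1/s$, a geometric series of ratio $2^{-\sigma}>1$.

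The main technical obstacle is verifying that the constant in the Poincar\'e-type estimate above is independent of $s\in(0,1]$. This follows because both sides of that bound are invariant under the dilation $f\mapsto f(\cdot/s)$ once the rescaling $\mathcal U=sU$ is taken into account, so the inequality reduces to the $s=1$ case on the fixed reference domain $U$; translation invariance of all norms involved further rules out any dependence on the center of $\mathcal U$.
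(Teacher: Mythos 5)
Your proof is correct and follows essentially the same route as the paper: rescale to the fixed domain $U$, apply the known divergence-inversion lemma there, rescale back, and control the Besov norm of the rescaled datum by a Littlewood--Paley analysis for the high-frequency blocks together with a compact-support Poincar\'e (Taylor) bound for the low-frequency piece. The only cosmetic differences are that you place the extra factor of $s$ in the definition of $z$ rather than in the right-hand side of the inversion problem, and that you establish the single scaling inequality $\norm{\tilde f}_{B^\sigma_{\infty,\infty}}\lesssim s^\sigma\norm{f}_{B^\sigma_{\infty,\infty}}$ for all $\sigma$, whereas the paper handles $N\geq1$ via the H\"older-norm identification and reserves the Littlewood--Paley argument for $N=0$ while remarking that it covers $N\geq1$ too.
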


\begin{proof}
	Consider the function $F(x):=f(s x)\in C^\infty_c(U)$, which satisfies $\int_U F=0$. By~\cite[Lemma 2.9]{Extension}, there exists $Z\in C^\infty_c(U,\RR^3)$
	satisfying\footnote{\cite[Lemma 2.9]{Extension} concerns the equation $\Div Z=F$ in the harder case where $Z$ takes values in the space of symmetric matrices. As mentioned above, the result and its proof remain valid in this case.}
	\[
	\Div Z=s F
	\]
	that is bounded as
	\begin{equation}\label{E.ZnsF}
		\|Z\|_N\lesssim s \|F\|_{B^{N-1+\alpha}_{\infty,\infty}}\,,
	\end{equation}
	with a constant independent of~$s$. Here $\alpha$ is any positive constant.
	Since $z(x):= Z(x/s)\in C^\infty_c(\mathcal U,\RR^3)$ then satisfies $\Div z=f$, all we have to do is to estimate~$z$ in terms of~$Z$ and analyze the dependence on~$s$.
	
	When $N\geq1$, this is straightforward. In this case, $B^{N-1+\alpha}_{\infty,\infty}$ coincides with the H\"older space $C^{N-1+\alpha}$, and any function $F\in C^\infty_c(U)$ satisfies
	\begin{equation}\label{E.Fn-1alpha}
		\|F\|_{N-1+\alpha}\lesssim[F]_{N-1+\alpha}
	\end{equation}
	with a constant depending on the diameter of~$U$. Indeed, if $N=1$, for any $x\in U$ and $X_*\in\partial U$, one has
	\[
	|F(x)|= |F(x)-F(x_*)|\leq [F]_\alpha|x-x_*|^\alpha\,.
	\]
	This implies $\|F\|_0\leq [F]_\alpha$, thereby showing~\eqref{E.Fn-1alpha} for $N=1$. If $N\geq2$, one can similarly use Taylor's formula to write
	\[
	|F(x)|=\frac1{(N-2)!}\left|\int_0^1\partial_t^{N-1}F((1-t)x_*+tx)\, t^{N-2}\, dt\right|\lesssim [F]_{N-1}\,,
	\]
	with a constant depending on $|x-x_*|$. Since $[F]_{N-1}\lesssim[F]_{N-1+\alpha}$ by arguing as in the case $N=1$, we arrive at~\eqref{E.Fn-1alpha}. Therefore, for all $N\geq1$ one can replace~\eqref{E.ZnsF} by
	\[
	\|Z\|_N\lesssim s [F]_{N-1+\alpha}\,.
	\]
	Thus \eqref{E.myscaling} follows directly from this bound using the scaling of the norms.
	
	The case $N=0$ requires a different proof. This proof also applies to the case of $N\geq1$ with very minor modifications, but we have chosen to keep the above elementary proof for the benefit of the reader. Our goal now is to analyze the bound
	$\|z\|_0\lesssim s\|F\|_{B^{\alpha-1}_{\infty,\infty}}$.
	Since $F(x):=f(sx)$, we have
	$P_<F(x)=[\chi(sD)f](sx)$,
	so 
	$$
	\|P_<F\|_0= \|\chi(sD)f\|_0\,.
	$$
	For this we will use that for $s\in(0,1]$, the smoothing operator $\chi(sD)$ is uniformly bounded in the symbol class~$S^0$ because
	\[
	|\partial_\xi^\beta[\chi(s\xi)]|= s^{|\beta|}|(\partial^\beta\chi)(s\xi)|\leq s^{|\beta|}\|\chi\|_{|\beta|}\,.
	\]
	Hence (see e.g.~\cite[Proposition 2.78]{BCD}), $\chi(sD):B^s_{p,q}\to B^s_{p,q}$ is bounded by a constant which depends on the Besov space parameters $s\in \RR$ and $1\leq p,q\leq \infty$ but not on~$s\in(0,1]$. 
	
	Let us write $s=:2^{-K}\tilde s$ with an integer $K\geq0$ and $\tilde s\in(\frac12,1]$. Then 
	$$
	\supp \chi(s\cdot)\subset B(0, 2^{K+1})\,, 
	$$
	and the function
	\[
	\chi(\xi) +\sum_{N=0}^{K+2}\varphi(2^{-N}\xi)
	\]
	equals~1 in this region and~0 outside $B(0, 2^{K+3})$. Therefore, we have the identity
	\[
	\chi(s\xi)=\chi(s\xi)\left[\chi(\xi) +\sum_{N=0}^{K+2}\varphi(2^{-N}\xi)\right]\,,
	\]
	which translates into the operator identity
	\[
	\chi(sD)=\left[P_<+\sum_{N=0}^{K+2}P_N\right]\chi(sD)\,.
	\]
	As a consequence of this, the first term in $s\|F\|_{B^{\alpha-1}_{\infty,\infty}}$ (see \eqref{E.BesovF}) can be estimated as
	\begin{align*}
		s\|P_<F\|_0 &\leq s \left\|P_<\chi(sD)f+\sum_{N=0}^{K+2}P_N\chi(sD)f\right\|_{0} \\[1mm]&\leq s \left[\|P_<\chi(sD)f\|_0+\sum_{N=0}^{K+2}\|P_N\chi(sD)f\|_0\right]\\[1mm]
		&\leq s\|\chi(sD)f\|_{{B^{\alpha-1}_{\infty,\infty}}}\left[ 1+\sum_{N=0}^{K+2}2^{(1-\alpha) N} \right]\\[1mm]
		&\lesssim s\|f\|_{{B^{\alpha-1}_{\infty,\infty}}}\cdot 2^{(1-\alpha)K}\\
		&\lesssim s^{\alpha}\|f\|_{{B^{\alpha-1}_{\infty,\infty}}}\,.
	\end{align*}
	
	To estimate the remaining terms, we start by noting that
	\[
	\varphi(2^{-N}D)F(x)=[\varphi(2^{-N}sD)f](sx)\,,
	\]
	so
	$
	\|P_NF\|_0= \|\varphi(2^{-N}sD)f\|_0
	$.
	As before, 
	$\varphi(2^{-N}sD):B^s_{p,q}\to B^s_{p,q}$ is also uniformly bounded on any given Besov space. 
	
	In the analysis, we distinguish two cases. First, if $2^{-N}s<\frac1{10}$ (there is no loss of generality assuming $s>10$), the support of $\varphi(2^{-N}s\,\cdot)$ is contained in $B(0,\frac15)$. As $\chi=1$ on this ball, we have \[
	\varphi(2^{-N}sD)= P_<\, \varphi(2^{-N}sD)\,.
	\]
	This implies that
	\begin{align*}
		\sup_{s2^N<\frac1{10}}2^{N(\alpha-1)}s\|\varphi(2^{-N}sD)f\|_{0}&= \sup_{s2^N<\frac1{10}}2^{N(\alpha-1)}s\|P_<\varphi(2^{-N}sD)f\|_{0}\\
		&\lesssim \|\varphi(2^{-N}sD)f\|_{B^{\alpha-1}_{\infty,\infty}}\sup_{s2^N<\frac1{10}}2^{N(\alpha-1)}s\\
		&\lesssim s^\alpha\|f\|_{B^{\alpha-1}_{\infty,\infty}}\,.
	\end{align*}
	
	Let us now assume that $2^{-N}s\geq\frac1{10}$ and write $s=:2^{-K}\tilde s$ as before. Then 
	$$
	\supp \varphi(2^{-N}s\,\cdot)\subset\{ 2^{N+K-2}<|\xi|< 2^{N+K+2}\}\,,
	$$
	so
	$$
	\varphi(2^{-N}sD)=\sum_{J=-2}^2 P_{N+K+J}\, \varphi(2^{-N}sD)\,.
	$$
	Therefore,
	\begin{align*}
		\sup_{s2^N\geq\frac1{10}}2^{N(\alpha-1)} &s\|\varphi(2^{-N}sD)f\|_{0}\leq\sup_{s2^N\geq\frac1{10}}2^{N(\alpha-1)}s\sum_{J=-2}^2\|P_{N+K+J}\varphi(2^{-N}sD)f\|_{0}\\
		& \leq\sup_{M\geq0}\sup_{s2^N\geq\frac1{10}}2^{N(\alpha-1)}s\sum_{J=-2}^2\|P_{N+K+J}\varphi(2^{-M}sD)f\|_{0}\\
		&\lesssim\sup_{M\geq0}\|\varphi(2^{-M}sD)f\|_{B^{\alpha-1}_{\infty,\infty}}\sup_{s2^N\geq\frac1{10}}2^{N(\alpha-1)}s\cdot2^{(N+K)(1-\alpha)}\\
		&\lesssim \|f\|_{B^{\alpha-1}_{\infty,\infty}} s^\alpha \,.
	\end{align*}
	
	Putting everything together, for any given $\alpha>0$, we then conclude
	\[
	s\|F\|_{B^{\alpha-1}_{\infty,\infty}}\lesssim s^{\alpha}\|f\|_{B^{\alpha-1}_{\infty,\infty}}\,.
	\]
	This completes the proof of the uniform estimate.
\end{proof}

\bibliographystyle{amsplain}

\end{document}